\LetLtxMacro{\mathbfSpecial}{\bm}
\newtheorem*{thm*}{Theorem} 
\newtheorem*{cor*}{Corollary}
\newtheorem*{lmm*}{Lemma}
\newtheorem*{prop*}{Proposition}
\newtheorem{thm}{Theorem}[section]
\newtheorem{lmm}[thm]{Lemma}
\newtheorem{prop}[thm]{Proposition}
\theoremstyle{definition}
\newtheorem*{assumptions}{Assumptions}
\newtheorem*{warningAssumptions}{Warning on the assumptions}
\newtheorem*{deff*}{Definition}
\newtheorem*{notat*}{Notation}
\newtheorem*{rem*}{Remark}
\newtheorem*{es*}{Example}
\newtheorem{rem}[thm]{Remark}
\newtheorem{es}[thm]{Example}
\newcommand*{\virgolette}[1]{``#1''}
\newcommand*{\zpiu}{{\mathbb{Z}_{\geqslant 1}}}
\newcommand*{\nn}{\mathbb{Z}_{\geqslant 0}}
\newcommand*{\zz}{\mathbb{Z}}
\newcommand*{\qq}{\mathbb{Q}}
\newcommand*{\rr}{\mathbb{R}}
\newcommand*{\ff}{\mathbb{F}}
\newcommand*{\ooo}{\mathfrak{O}}
\newcommand*{\mmm}{\mathfrak{M}}
\newcommand*{\hhh}{\mathfrak{h}}
\DeclareMathOperator{\id}{id}
\DeclareMathOperator{\gal}{Gal}
\DeclareMathOperator{\image}{Image}
\DeclareMathOperator{\spann}{span}
\newcommand*{\defeq}{\coloneqq}
\newcommand*{\abs}[1]{\left\lvert #1 \right\rvert}
\newcommand*{\set}[2]{\left\{ #1 \;\;\middle|\;\; #2 \right\}}
\newcommand*{\ideal}[2]{\left( #1 \;\;\middle|\;\; #2 \right)}
\newcommand*{\restr}[2]{%
	\mathchoice
	{#1\big|_{#2}}
	{#1\big|_{#2}}
	{#1|_{#2}}
	{#1|_{#2}}
}
\DeclareMathOperator{\homm}{Hom}
\DeclareMathOperator{\End}{End}
\DeclareMathOperator{\ext}{Ext}
\newcommand*{\smooth}[2]{\mathsf{Rep}_{#1}^{\infty} (#2)}
\newcommand*{\vect}[1]{\mathsf{Vect}(#1)}
\newcommand*{\smoothh}[3]{\mathsf{Rep}_{#1}^{\infty, #3} (#2)}
\newcommand*{\dgmod}[1]{\mathsf{DGMod}(#1)}
\newcommand*{\modules}[1]{\mathsf{Mod}(#1)}
\newcommand*{\topgps}{\mathsf{TopGps}}
\newcommand*{\func}[5]{%
	\begin{tikzcd}[ampersand replacement=\&, row sep = 0pt]
   		\displaystyle{#1 \colon} \&[-3em] \displaystyle{#2} \ar[r] \& \displaystyle{#3} \\
   		\& \displaystyle{#4} \ar[r,mapsto] \& \displaystyle{#5}
   	\end{tikzcd}
}
\newcommand*{\functor}[7]{%
	\begin{tikzcd}[ampersand replacement=\&, row sep = 0pt]
   		\displaystyle{#1 \colon} \&[-3em] \displaystyle{#2} \ar[r] \& \displaystyle{#3}\\
   		\& \displaystyle{#4} \ar[r,mapsto] \& \displaystyle{#5}\\
   		\& \displaystyle{#6} \ar[r,mapsto] \& \displaystyle{#7}
   	\end{tikzcd}
}
\newcommand*{\funcNN}[4]{%
	\begin{tikzcd}[ampersand replacement=\&, row sep = 0pt]
   		\displaystyle{#1} \ar[r] \& \displaystyle{#2} \\
   		\displaystyle{#3} \ar[r,mapsto] \& \displaystyle{#4}
   	\end{tikzcd}
}
\newlength{\mymymywidth}
\newcommand*{\funcAboveTriple}[9]{%
	\settowidth{\mymymywidth}{\ensuremath{\displaystyle{#1}}}
	\begin{tikzcd}[ampersand replacement=\&, row sep = 0pt]
   		\displaystyle{#2} \ar[r, "\displaystyle{#1}"] \&[\mymymywidth] \displaystyle{#3}\\
   		\displaystyle{#4} \ar[r,mapsto] \& \displaystyle{#5}\\
   		\displaystyle{#6} \ar[r,mapsto] \& \displaystyle{#7}\\
   		\displaystyle{#8} \ar[r,mapsto] \& \displaystyle{#9}
   	\end{tikzcd}
}
\newcommand*{\funcInline}[3]{#1 \colon #2 \longrightarrow #3}
\newcommand*{\funcInlineNN}[2]{#1 \longrightarrow #2}
\newcommand*{\funcAbove}[3]{#2 \xrightarrow{#1} #3}
\newcommand*{\downmapsto}{\rotatebox[origin=c]{-90}{$\scriptstyle\mapsto$}\mkern2mu}
\newcommand*{\cupprod}{\mathbin{\scaledScaledSmile}}
	\newcommand{\scaledScaledSmile}{{\mathpalette\scaledSmile\relax}}
	\newcommand{\scaledSmile}[2]{\raisebox{-.3ex}{\scalebox{0.7}[1.5]{$#1\smile$}}}
\DeclareMathOperator*{\bigcupprod}{\scalerel*{\cupprod}{\textstyle\sum}}
\DeclareMathOperator{\res}{res}
\DeclareMathOperator{\cores}{cores}
\DeclareMathOperator{\SL}{SL}
\DeclareMathOperator{\GL}{GL}
\DeclareMathOperator{\PGL}{PGL}
\newcommand*{\matr}[4]{%
	\mathchoice
	{\begin{pmatrix}
			#1 & #2 \\ #3 & #4
	\end{pmatrix}}
	{\begin{psmallmatrix}
			#1 & #2 \\ #3 & #4
	\end{psmallmatrix}}
	{\begin{psmallmatrix}
			#1 & #2 \\ #3 & #4
	\end{psmallmatrix}}
	{\begin{psmallmatrix}
			#1 & #2 \\ #3 & #4
	\end{psmallmatrix}}
}
\newcommand*{\invol}{\EuScript{J}} 
\newcommand*{\fratt}[1]{\left( #1 \right)_\Phi} 
\newcommand*{\field}{\mathfrak{F}}
\DeclareMathOperator{\sh}{Sh} 
\newcommand*{\sss}{\EuScript{S}}
\newcommand*{\xx}{\mathbf{X}}
\DeclareMathOperator{\aff}{aff} 
\newcommand*{\uuu}{\EuScript{U}}
\newcommand*{\biinvol}[1]{\tensor*[^{\invol}]{\left(#1\right)}{^{\invol}}}
\DeclareMathOperator{\idd}{{\underline{id}}}
\newcommand*{\bz}[1]{\beta_{#1}^0}
\newcommand*{\bp}[1]{\beta_{#1}^+}
\renewcommand*{\bm}[1]{\beta_{#1}^-}
\newcommand*{\az}[1]{\alpha_{#1}^0}
\newcommand*{\ap}[1]{\alpha_{#1}^+}
\newcommand*{\am}[1]{\alpha_{#1}^-}
\newcommand*{\gm}{\mathbb{G}_{\operatorname{m}}}
\newcommand*{\ga}{\mathbb{G}_{\operatorname{a}}}
\newcommand*{\mmu}{\mathbfSpecial{\mu}}
\newcommand*{\GG}{\mathbf{G}}
\newcommand*{\TT}{\mathbf{T}}
\newcommand*{\UU}{\mathbf{U}}
\newcommand*{\CC}{\mathbf{C}}
\DeclareMathOperator{\pr}{pr}
\DeclareMathOperator{\val}{val}
\newcommand*{\rep}{\mathcal{R}}
\DeclareMathOperator{\exterior}{{\textstyle\bigwedge}}
\newcommand*{\mult}{\mathcal{M}}
\newcommand*{\azstar}[1]{\alpha_{#1}^{0,\star}}
\renewcommand{\text}[1]{\textnormal{#1}}
\DeclareMathOperator{\opp}{op}
\newcommand*{\chamber}{\mathfrak{C}}
\newcommand*{\quozright}[2]{#1 \backslash #2}
\newcommand*{\apart}{\mathscr{A}}
\DeclareMathOperator{\finite}{finite}
\DeclareMathOperator{\cind}{c-ind}
\title{Finite generation properties of the \texorpdfstring{pro-$p$}{pro-p} Iwahori--Hecke \texorpdfstring{$\ext$-algebra}{Ext-algebra}}
\author{Emanuele Bodon}
\date{\today}
\newcommand*{\quoz}[2]{#1 / #2}
\begin{document}

\maketitle

\begin{abstract}
The pro-$p$ Iwahori--Hecke $\ext$-algebra $E^\ast$ is a graded algebra that has been introduced and studied by Ollivier--Schneider, with the long-term goal of investigating the category of smooth mod-$p$ representations of $p$-adic reductive groups and its derived category. Its $0$\textsuperscript{th} graded piece is the pro-$p$ Iwahori--Hecke algebra studied by Vignéras and others.

In the present article, we first show that the $\ext$-algebra $E^\ast$ associated with the group $\SL_2(\field)$, $\PGL_2(\field)$ or $\GL_2(\field)$, where $\field$ is an unramified extension of $\qq_p$ with $p \neq 2,3$, is finitely generated as a (non-commutative) algebra.

We then specialize to the case of the group $\SL_2(\qq_p)$, with $p \neq 2,3$, and we show that in this case the natural multiplication map from the tensor algebra $T^\ast_{E^0} E^1$ to $E^\ast$ is surjective and that its kernel is finitely generated as a two-sided ideal. Using this fact as main input, we then show that $E^\ast$ is finitely presented as an algebra. We actually compute an explicit presentation.
\end{abstract}

\tableofcontents

\section{Introduction}

\subsection{Context}

In the framework of the local Langlands program, a fundamental problem is to obtain an understanding of categories of representations of $p$-adic reductive groups $G$, such as $G = \GL_n(\qq_p)$ or $G = \SL_n(\qq_p)$. In particular, in the case of representations over fields of characteristic $p$, such an understating is still far from complete.

Let us introduce the setting: let $p$ be a prime number, let $\field$ be a finite extension of $\qq_p$ or the field of formal Laurent series $\ff_q((X))$, where $q$ is a power of $p$. Let $G$ be the group of $\field$-rational points of a connected split reductive group defined over $\field$ (for example, $G = \GL_n(\field)$ or $G = \SL_n(\field)$). It is naturally endowed with the structure of a locally profinite topological group. Let $k$ be a field of characteristic $p$, and let $\smooth{k}{G}$ be the category of smooth representations of $G$ on $k$-vector spaces.

Let $I$ be a pro-$p$ Iwahori subgroup of $G$. For example, in the case that $G = \GL_2(\qq_p)$ or $\SL_2(\qq_p)$ we may choose
\[
	I =
	\matr{1+p\zz_p}{\zz_p}{p\zz_p}{1+p\zz_p}.
\]
(with the understanding that matrices have determinant $1$ if we are considering $\SL_2(\qq_p)$).

A fundamental tool to try to study the category $\smooth{k}{G}$ is the pro-$p$ Iwahori--Hecke algebra defined by
\[
	H \defeq \End_{\smooth{k}{G}}(k[G/I])^{\opp}.
\]
Here, $k[G/I]$ is the free $k$-vector space generated by the left cosets, endowed with the natural action of $G$ (in this way $k[G/I]$ is a smooth representation, that can be described more abstractly as the compact induction $\cind_I^G k$ of the trivial representation $k$). Furthermore, $\End_{\smooth{k}{G}}(k[G/I])$ denotes the ring of endomorphisms and $R^{\opp}$ denotes the opposite ring of a ring $R$. The ring $H$ is a (not necessarily commutative) $k$-algebra that has been studied in particular by Vignéras (see for example \cite{VignI}).

The link between $H$ and the category $\smooth{k}{G}$ is the following functor:
\begin{equation}\label{eqInvariantFunctor}
	\funcNN
		{\smooth{k}{G}}
		{\modules{H}}
		{V}
		{V^I \cong \homm_{\smooth{k}{G}}(k[G/I],V).}
\end{equation}
Here, $\modules{H}$ denotes the category of left modules over $H$ and $H$ acts on $ \homm_{\smooth{k}{G}}(k[G/I],V)$ by composition (defining a left action since we considered the opposite ring in the definition of $H$).

If $k$, instead of being of characteristic $p$, were the field of complex numbers, then the functor \eqref{eqInvariantFunctor} would be exact and, if furthermore $I$ were a Iwahori subgroup of $G$, then a fundamental result of Bernstein \cite[Corollary 3.9.ii]{Bern} would show that the functor induces an equivalence of categories when restricted to the full subcategory $\smoothh{k}{G}{I}$ of those smooth representations that are generated by their $I$-invariant vectors.

In our case instead, the functor \eqref{eqInvariantFunctor} is only left exact. The advantage of considering a pro-$p$ group when $k$ has characteristic $p$ is that a non-zero smooth representation $V$ is sent to a non-zero module $V^I$ by the functor \eqref{eqInvariantFunctor}. In particular, the subcategory $\smoothh{k}{G}{I}$ contains all irreducible representations. The specific choice of a pro-$p$ Iwahori subgroup is done in order for the algebra $H$ to be \virgolette{as simple as possible}.

In this setting, the functor \eqref{eqInvariantFunctor} restricted to $\smoothh{k}{G}{I}$ defines an equivalence of categories only in a few special cases (see \cite{OllInvariants, OllSecherreGL3, KozInvariants} and, in particular, \cite[Corollary 5.3]{KozInvariants}). Schneider \cite{Schn}, however, proved that the situation does improve when one constructs a derived version
\begin{equation}\label{eqDerivedFunctor}
	\funcInlineNN{D(\smooth{k}{G})}{D(\dgmod{\mathcal{H}^\bullet})}.
\end{equation}
of the functor \eqref{eqInvariantFunctor}. Here, $D(\smooth{k}{G})$ denotes the derived category of $\smooth{k}{G}$, while $\mathcal{H}^\bullet$ denotes a certain differential-graded algebra that plays the role of $H$ (more precisely it is the $\homm$ complex $\homm^\bullet(\mathcal{J}^\bullet, \mathcal{J}^\bullet)^{\opp}$, where $\funcInlineNN{k[\quoz{G}{K}]}{\mathcal{J}^\bullet}$ is a fixed injective resolution) and $D(\dgmod{\mathcal{H}^\bullet})$ denotes the derived category of the category of differential-graded modules over $\mathcal{H}^\bullet$. The main result of \cite{Schn} is that, under the assumption that $\field$ is a finite extension of $\qq_p$ and that $I$ is torsion-free, the functor \eqref{eqDerivedFunctor} defines an equivalence of triangulated categories (without the need to restrict to any subcategory).

The objects involved in \eqref{eqDerivedFunctor} are much more complicated than those in \eqref{eqInvariantFunctor}, and an explicit description of the differential graded algebra $\mathcal{H}^\bullet$ is not known.

As an intermediate step, one may try to study instead the $\ext$-algebra
\[
	E^\ast
	\defeq
	\ext^\ast(k[G/I],k[G/I])^{\opp}.
\]
This is the cohomology algebra of $\mathcal{H}^\bullet$, and its $0$\textsuperscript{th} graded piece is of course $H$. Such idea is pursued in the work of Ollivier--Schneider \cite{ext, newpaper}. They describe $E^\ast$ via a certain isomorphism of $k$-vector spaces
\begin{equation}\label{eqDescrEastViaShapiro}
	E^\ast \cong \bigoplus_{w \in \widetilde{W}} H^\ast(I \cap wIw^{-1},k).
\end{equation}
Here $\widetilde{W}$ is a certain subquotient of $G$ called the pro-$p$ Iwahori--Weyl group, and $H^\ast(I \cap wIw^{-1},k)$ denotes the cohomology of the profinite group $I \cap wIw^{-1}$ with respect to the trivial module $k$. This decomposition generalizes the Iwahori--Matsumoto basis of $H = E^0$ studied in \cite{VignI}.

Some of the main results by Ollivier-Schneider are the following: a description of the multiplication on $E^\ast$ via cohomological operations on the right hand side of \eqref{eqDescrEastViaShapiro} \cite[Proposition 5.3]{ext}; a more detailed description of the (left) action of $E^0$ on $E^\ast$ \cite[Proposition 5.6]{ext}; the existence of an involutive anti-automorphism \cite[Proposition 6.1]{ext}; a duality theorem in the case that $\field$ is a finite extension of $\qq_p$ and $I$ is torsion-free \cite[Proposition 7.18]{ext}; a complete description of the structure of $E^\ast$ as an $E^0$-bimodule in the case that $G = \SL_2(\qq_p)$ with $p \neq 2,3$ \cite[\S7]{newpaper}. As an interesting consequence, under the latter assumptions they show that an irreducible representation $V$ is supersingular if and only if its cohomology $H^\ast(I,V)$ is supersingular as a left $H$-module \cite[Corollary 8.12]{newpaper}.

\subsection{Main results}

The present paper is devoted to the study of finite generation properties of the $\ext$-algebra $E^\ast$, especially in the case $G = \SL_2(\qq_p)$ with $p \neq 2,3$.

In \S\ref{sectionBimod}, we study the $1$\textsuperscript{st} graded piece $E^1$ as an $E^0$-bimodule, and we prove the following result, under quite general assumptions.

\begin{prop*}[Proposition \ref{propE1fg}]
Assume that $\field$ is a finite extension of $\qq_p$. One has that $E^1$ is finitely generated as an $E^0$-bimodule.
\end{prop*}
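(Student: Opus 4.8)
The plan is to use the decomposition \eqref{eqDescrEastViaShapiro} in degree $1$, namely
\[
	E^1 \cong \bigoplus_{w \in \widetilde{W}} H^1(I \cap wIw^{-1},k),
\]
and to exploit two finiteness inputs: first, that the pro-$p$ Iwahori--Weyl group $\widetilde{W}$ is finitely generated (indeed it is a finitely generated group, being an extension of a finite group by a finitely generated abelian group, since $\field/\qq_p$ is of finite degree so the relevant cocharacter lattice has finite rank); and second, that for each fixed $w$ the group $I \cap wIw^{-1}$ is a compact $p$-adic analytic group of bounded dimension, so that $H^1(I\cap wIw^{-1},k) = \homm(I\cap wIw^{-1},k)$ is finite-dimensional over $k$, with dimension bounded independently of $w$. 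The strategy is then: (1) isolate a finite set of \emph{generators}, namely the summands $H^1(I\cap wIw^{-1},k)$ for $w$ ranging over a finite generating set $S$ of $\widetilde{W}$ (together with finitely many $w$ of small length, to be safe), and (2) show that left- and right-multiplication by the elements of $E^0$ supported on the finite-order and length-one elements of $\widetilde{W}$ propagates these generators to every summand.

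First I would recall, from \cite[Proposition 5.6]{ext} and the structure results on $E^0$, the precise description of how the standard generators of $E^0$ (the idempotents $e_\chi$ attached to characters $\chi$ of the finite torus, the length-zero elements, and the elements $\tau_s$ attached to simple reflections $s$) act on the summand $H^\ast(I\cap wIw^{-1},k)$ by corestriction/restriction maps combined with conjugation, landing in summands indexed by $w'$ with $\ell(w')\le \ell(w)+1$ or by the $w'$ appearing in the braid/quadratic relations. The key point is a \emph{length induction}: I claim that every summand $H^1(I\cap wIw^{-1},k)$ lies in the $E^0$-sub-bimodule generated by the summands indexed by elements of length $\le 1$. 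For $\ell(w)=0$ or $1$ this is immediate. For $\ell(w)\ge 2$, write $w = s w'$ with $s$ a simple reflection and $\ell(w') = \ell(w)-1$; the multiplication map coming from $\tau_s \cdot (-)$ restricted to the $w'$-summand surjects onto (or at least hits a complement of) the $w$-summand, up to contributions from strictly shorter elements — this is exactly the kind of statement that the Ollivier--Schneider product formula \cite[Proposition 5.3]{ext} makes explicit, since for a \emph{reduced} product $\ell(sw')=\ell(s)+\ell(w')$ the relevant cohomological operation is a corestriction, which in degree $1$ on these Poincaré-duality pro-$p$ groups is surjective. Combining with the induction hypothesis applied to the shorter terms closes the argument.

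The main obstacle I anticipate is \textbf{controlling the non-reduced products and the failure of exact surjectivity in the length step}: the product formula in degree $\ge 1$ is not simply "concatenate reduced words" as in $E^0$, and the corestriction map $H^1(I\cap wIw^{-1},k)\to H^1(I\cap s w I w^{-1}s^{-1},k)$ need not be literally surjective — one must track the image precisely and argue that whatever is missed is already accounted for by finitely many summands of smaller length (or by the right action as well as the left action). Handling this cleanly likely requires writing $E^1$ as a quotient of $E^0 \otimes_k \big(\bigoplus_{\ell(w)\le 1} H^1(I\cap wIw^{-1},k)\big)\otimes_k E^0$ and checking the induction degree by degree in $\ell(w)$, using that at each stage only finitely many new summands of each length are produced because $\widetilde{W}$ has finitely many elements of each given length. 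A secondary, more bookkeeping-type obstacle is confirming the uniform finite-dimensionality of $H^1(I\cap wIw^{-1},k)$ and that $I\cap wIw^{-1}$ is \emph{torsion-free} (or handling the torsion uniformly) so that the degree-$1$ cohomology behaves predictably; but since $\field$ is a finite extension of $\qq_p$ this follows from standard $p$-adic analytic group theory, and for $w$ of length $\le 1$ it can be checked by hand.
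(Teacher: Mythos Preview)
Your proposal contains a factual error at its core. You write that when $\ell(sw')=\ell(s)+\ell(w')$ ``the relevant cohomological operation is a corestriction, which in degree $1$ on these Poincar\'e-duality pro-$p$ groups is surjective.'' But by \eqref{eqActionRightLeftE0IfLengthsAddUp}, when lengths add up the action of $\tau_s$ (on either side) on a summand $H^\ast(I_w,k)$ is given by \emph{restriction} (possibly preceded by a conjugation), not corestriction. Restriction in degree $1$ is the map $\homm(\fratt{I_v},k)\to\homm(\fratt{I_{vw}},k)$ dual to the inclusion-induced map on Frattini quotients, and it is typically \emph{not} surjective. Concretely, in the rank-one case Lemma~\ref{lmmExplicitFormulasE1} shows that left-multiplication by $\tau_{s_0}$ always lands in the subspace with vanishing first coordinate, and right-multiplication by $\tau_v$ always kills one of the outer coordinates; neither is surjective on its own.

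You do flag non-surjectivity as the ``main obstacle'' and suggest that the right action might make up for it, but the sketch offers no mechanism for why the images of the left and right actions together should cover the target summand; the fallback ``whatever is missed is accounted for by summands of smaller length'' does not help, since the missing classes live at the \emph{same} $w$. The paper supplies exactly this missing mechanism, and it is the heart of the proof. It does not run a length-$1$ induction. Instead, for each $w$ it writes $w=v\cdot y$ with $v$ lying in a fixed finite set $\mathcal{G}$ (built via Gordan's lemma and the observation that the kernels of the surjections $\Psi_w$ of \eqref{eqPsiw} onto $\fratt{I_w}$ take only finitely many values). The key technical input is the dichotomy of Lemma~\ref{lmmgw}: for every root $\alpha$ one has either $g_w(\alpha)=g_v(\alpha)$, so that right-multiplication by $\tau_y$ acts invertibly on the $\alpha$-piece, or $g_w(\alpha)=g_v(\alpha)-\langle\nu(vyv^{-1}),\alpha\rangle$, so that left-multiplication by $\tau_{vyv^{-1}}$ does. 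This root-by-root alternative, fed into the elementary linear-algebra Lemma~\ref{lmmVectorSpaces}, is what forces the sum of the two images to be everything. Your proposal is missing this dichotomy and any substitute for it.

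A secondary gap: the claim that ``$\widetilde{W}$ has finitely many elements of each given length'' is false whenever $\Omega$ is infinite (e.g.\ $\GG=\GL_n$), so even the base of your induction is not a finite set without further argument.
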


To prove statements on the whole algebra $E^\ast$, in \S\ref{sectionSubalgE0E1} and \S\ref{sectionFinGen} we specialize to the case of \virgolette{small $G$} (as will be made precise in the statements below), and we study the multiplication map $\funcInlineNN{T^\ast_{E^0} E^1}{E^\ast}$, where $T^\ast_{E^0} E^1$ denotes the tensor algebra of the $E^0$-bimodule $E^1$. We first prove the following result.
\begin{prop*}[Proposition \ref{propAlmostGenByE1}, Remark \ref{remFiniteCodim}]
Assume that $p \neq 2,3$, that $\field$ is a (finite) unramified extension of $\qq_p$ and that $G$ is either $\SL_2(\field)$ or $\PGL_2(\field)$. One has the image of the multiplication map $\funcInlineNN{T^\ast_{E^0} E^1}{E^\ast}$ has finite codimension in $E^\ast$. If instead $G = \GL_2(\field)$, an analogous more technical result holds.
\end{prop*}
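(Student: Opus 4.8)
The plan is to argue one cohomological degree at a time. Since $p\neq 2,3$ and $\field$ is unramified over $\qq_p$, each open subgroup $I_w\defeq I\cap wIw^{-1}$ (for $w\in\widetilde W$) is a torsion-free compact $p$-adic analytic group of dimension $N\defeq\dim_{\qq_p}G$, so $H^n(I_w,k)$ is finite-dimensional and vanishes for $n>N$; hence $E^\ast=\bigoplus_{n=0}^{N}E^n$. The multiplication map $\mu\colon T^\ast_{E^0}E^1\to E^\ast$ is graded, with degree-$n$ component $\mu_n\colon (E^1)^{\otimes_{E^0}n}\to E^n$; write $M^n\defeq\image(\mu_n)$, an $E^0$-sub-bimodule of $E^n$. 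As $M^0=E^0$ and $M^1=E^1$, one has $\image(\mu)=\bigoplus_n M^n$ and $E^\ast/\image(\mu)=\bigoplus_{n=2}^{N}E^n/M^n$, so it suffices to prove $\dim_k E^n/M^n<\infty$ for each of these finitely many $n$.

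Fix such an $n$ and decompose $E^n=\bigoplus_{w\in\widetilde W}H^n(I_w,k)$. Because $\widetilde W$ has a finite-index free abelian subgroup $\Lambda$ of translations, there is a finite set $S\subseteq\widetilde W$ such that every $w$ admits a length-additive factorization $w=a\,\theta\,b$ with $a,b\in S$ and $\theta\in\Lambda$ (one may moreover work one $\TT(\ff_q)$-isotypic component at a time, $\TT(\ff_q)$ acting semisimply on $E^\ast$ since $p\nmid q-1$). The claim is thus reduced to: for all but finitely many $w$, the whole group $H^n(I_w,k)$ lies in $M^n$. To place classes into $M^n$ I would use the multiplication formula of Ollivier--Schneider \cite[Proposition 5.3]{ext}: for a length-additive factorization $w=v_1\cdots v_n$ and classes $\xi_j\in E^1_{v_j}=H^1(I_{v_j},k)$, the product $\xi_1\cdots\xi_n$ is concentrated in the single summand $E^n_w$ and is a fixed composite of restriction, conjugation, cup-product and corestriction maps applied to the $\xi_j$. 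Combined with the finitely many $E^0$-bimodule generators of $E^1$ furnished by Proposition~\ref{propE1fg} and the description of the $E^0$-bimodule structure on $E^\ast$ of \cite[Proposition 5.6]{ext}, this realizes large subspaces of $H^n(I_w,k)$ as genuine products lying in $M^n$.

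The crux is to show that these products exhaust $H^n(I_w,k)$ up to a codimension bounded independently of $w$. This is not formal: unlike in the uniform case the cohomology ring $H^\ast(I_w,k)$ need not be generated in degree $1$ (already $H^1\cup H^1\to H^2$ can fail to be onto), and the corestriction maps entering the formula are typically not surjective. The way out is to use that $G$ has semisimple rank $1$: each $I_w$ has an Iwahori factorization $I_w=I_w^-\,I_w^0\,I_w^+$ into $\zz_p$-lattice subgroups whose sizes are governed by a bounded amount of combinatorial data attached to $w$, so that among the deep translates only finitely many \virgolette{shapes} of the relevant cup-product and (co)restriction diagrams occur. One then analyses these finitely many diagrams directly, supplementing a single length-additive factorization, where it is insufficient by itself, with a bounded number of non-length-additive products whose correction terms fall back into $E^n_w$; this is the step I expect to be the main obstacle, and it is also where the duality \cite[Proposition 7.18]{ext} can be used to replace surjectivity of corestriction by near-injectivity of restriction. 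Summing over the finitely many exceptional $w$, the bounded codimension at each remaining $w$, and the finitely many degrees $n$, yields $\dim_k E^\ast/\image(\mu)<\infty$.

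Finally, for $G=\GL_2(\field)$ the group $\widetilde W$ contains a central copy of $\zz$ (translations by the centre of $G$), along which no translate-reduction can gain anything, so a literal finite-codimension statement fails; the \virgolette{more technical result} is that $\image(\mu)$ has finite codimension in $E^\ast$ as a module over the commutative subalgebra generated by the Iwahori--Matsumoto element of a generator of that central $\zz$, proved by passing to the quotient of $\widetilde W$ by this central subgroup and rerunning the argument of the previous two paragraphs.
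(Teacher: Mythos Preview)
Your outline has a genuine gap at precisely the point you flag as the main obstacle. You write that ``unlike in the uniform case the cohomology ring $H^\ast(I_w,k)$ need not be generated in degree $1$'' and then propose a rather vague workaround involving ``finitely many shapes'' of diagrams, non-length-additive corrections, and duality. But under the standing hypotheses ($p\neq 2,3$, $\field/\qq_p$ unramified, semisimple rank $1$) the groups $I_w$ \emph{are} uniform for every $w$ with $\ell(w)\geqslant 1$: this is Lemma~\ref{lmmPoincAndUniform}, proved directly from the explicit Iwahori decomposition and commutator description of $I_w$ (Lemmas~\ref{lmmSgpA1} and~\ref{lmmCommutatorSgpA1}). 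Hence $H^\ast(I_w,k)\cong\exterior^\ast H^1(I_w,k)$ by Lazard's theorem, and the cup-product map $H^1\times\cdots\times H^1\to H^n$ is surjective. This is the key structural input you are missing; with it, no corestriction, no duality, and no ``bounded codimension independent of $w$'' argument is needed.

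The paper's actual proof then proceeds by an explicit computation. Using the length-additive multiplication formula (Remark~\ref{remMultipleCupprod}, which involves only restriction and conjugation, not corestriction) together with the explicit description of $H^1(I_w,k)$ via the Frattini quotient (Lemma~\ref{lmmFrattiniQuot}) and the explicit $E^0$-action formulas of Lemma~\ref{lmmExplicitFormulasE1}, one writes down concrete products of elements of $E^1$ that land in $H^\ast(I,\xx(w))$ and realizes them as cup products of the form $(\mathfrak a,0,0)_w\cupprod(0,\mathfrak b,0)_w\cupprod(0,0,\mathfrak c)_w$. The only constraint is that the number of ``middle'' factors $(0,\mathfrak b,0)$ one can insert is bounded by $\ell(w)$, while $\dim_{\ff_p}(T^1/(T^1)^p)=f\cdot\dim\TT$; hence once $\ell(w)\geqslant f\cdot\dim\TT$ these products span the exterior algebra, giving the inclusion of Proposition~\ref{propAlmostGenByE1}. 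The finite-codimension statement for $\SL_2$ and $\PGL_2$ then follows immediately (Remark~\ref{remFiniteCodim}) because $\Omega$ is finite for semisimple $\GG$, so only finitely many $w$ have $\ell(w)<f\cdot\dim\TT$. For $\GL_2$ the proposition itself still holds verbatim; it is only the corollary on finite codimension that fails, since $\Omega\cong\zz$ is infinite---your reformulation in terms of a central subalgebra is reasonable but is not what the paper asserts.
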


The two propositions above allow us to prove, with very little effort, that the $\ext$-algebra is finitely generated as a $k$-algebra in the above cases. This is the first main result of the paper.
\begin{thm*}[Theorem \ref{thmA1fg}]
Assume that $p \neq 2,3$, that $\field$ is a (finite) unramified extension of $\qq_p$ and that $G$ is one of $\SL_2(\field)$, $\PGL_2(\field)$ or $\GL_2(\field)$. One has that the $\ext$-algebra $E^\ast$ is finitely generated as a $k$-algebra.
\end{thm*}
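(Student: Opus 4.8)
The plan is to deduce Theorem~\ref{thmA1fg} essentially formally from the two propositions quoted above, using the grading on $E^\ast$. First I would record the elementary algebraic fact underlying the argument: if $A^\ast$ is a non-negatively graded $k$-algebra, $A^0$ is finitely generated as a $k$-algebra, $A^1$ is finitely generated as an $A^0$-bimodule, and the subalgebra generated by $A^0$ and $A^1$ has finite codimension in $A^\ast$ (as a $k$-vector space), then $A^\ast$ is finitely generated as a $k$-algebra. Indeed, pick $k$-algebra generators $x_1,\dots,x_r$ of $A^0$ (for $E^0$ this is classical: the pro-$p$ Iwahori--Hecke algebra is finitely generated, even finitely presented, by the Iwahori--Matsumoto/Bernstein-type presentation of Vignéras), bimodule generators $y_1,\dots,y_s$ of $A^1$, and finitely many homogeneous elements $z_1,\dots,z_t$ spanning a vector-space complement of the subalgebra $B^\ast \defeq \langle A^0, A^1\rangle$ generated by $A^0\cup A^1$; then $\{x_i\}\cup\{y_j\}\cup\{z_l\}$ generates $A^\ast$ as a $k$-algebra, since every element of $B^\ast$ is a $k$-linear combination of products of the $x_i$ and $y_j$, and everything else is accounted for by the finitely many $z_l$.

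Next I would check that $E^\ast$ satisfies the hypotheses of this fact in each of the three cases. The grading is the cohomological grading from \eqref{eqDescrEastViaShapiro}, which is non-negative, and $E^0 = H$ is a finitely generated (indeed finite-dimensional over $k$, as $\widetilde W$ has finitely many elements of length zero times\ldots actually $H$ is infinite-dimensional, but still finitely generated by Vignéras's presentation) $k$-algebra. That $E^1$ is finitely generated as an $E^0$-bimodule is Proposition~\ref{propE1fg} (valid for any finite extension $\field/\qq_p$, in particular for unramified $\field$). Finally, the image of the multiplication map $\funcInlineNN{T^\ast_{E^0}E^1}{E^\ast}$ is exactly the subalgebra $B^\ast = \langle E^0, E^1\rangle$ generated by $E^0$ and $E^1$, since the tensor algebra $T^\ast_{E^0}E^1$ is spanned by $E^0$ together with iterated $E^0$-tensor products of $E^1$, whose images are precisely iterated products of elements of $E^1$ over $E^0$. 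By Proposition~\ref{propAlmostGenByE1} (and Remark~\ref{remFiniteCodim}) this image has finite codimension in $E^\ast$ when $G$ is $\SL_2(\field)$ or $\PGL_2(\field)$; for $G=\GL_2(\field)$ one invokes the ``analogous more technical result'', which I would phrase so that it still yields finite generation — concretely, $E^\ast$ for $\GL_2$ decomposes according to the central character / action of the uniformizer, and on each relevant piece the codimension statement holds, with the extra generators (powers of the central element and its inverse) being finitely many; alternatively one reduces $\GL_2$ to $\PGL_2$ together with the center.

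Assembling: apply the elementary fact with $A^\ast = E^\ast$. The generating set is $\{x_i\}\cup\{y_j\}\cup\{z_l\}$ as above, which is finite, so $E^\ast$ is finitely generated as a $k$-algebra in all three cases. I expect the only genuinely delicate point to be the bookkeeping in the $\GL_2$ case, where the multiplication map from $T^\ast_{E^0}E^1$ need not have finite codimension on the nose (because of the noncompact center), so one must isolate the correct statement from Proposition~\ref{propAlmostGenByE1}/Remark~\ref{remFiniteCodim} and verify that only finitely many additional generators (coming from the center) are needed; everything else is a formal consequence of the two propositions together with the known finite generation of $E^0$.
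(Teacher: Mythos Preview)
Your argument is correct and essentially identical to the paper's for $\SL_2(\field)$ and $\PGL_2(\field)$: both combine Proposition~\ref{propE1fg}, Proposition~\ref{propAlmostGenByE1}/Remark~\ref{remFiniteCodim}, and the known finite generation of $E^0$ via exactly the elementary graded-algebra fact you isolate.

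For $\GL_2(\field)$ your treatment is too vague to count as a proof. The complement of the image of $T^\ast_{E^0}E^1$ really is infinite-dimensional (since $\Omega\cong\zz$ there are infinitely many $w\in\widetilde W$ of each bounded length), so neither ``finite codimension on each piece of a central-character decomposition'' nor ``reduce to $\PGL_2$ plus the centre'' is immediate; the latter in particular would require comparing the two $\ext$-algebras, which is not set up in the paper. The paper's fix is concrete and cheap, and it is what your parenthetical is groping towards: Proposition~\ref{propAlmostGenByE1} already shows the image contains $\bigoplus_{\ell(w)\geqslant f\cdot\dim\TT}H^\ast(I,\xx(w))$, so $E^\ast$ is generated by $E^0$, $E^1$, and $\bigoplus_{\ell(w)<f\cdot\dim\TT}H^\ast(I,\xx(w))$. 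For any $\omega\in\widetilde\Omega$ the element $\tau_\omega\in E^0$ is already among your generators, and left multiplication by $\tau_\omega$ gives an isomorphism $H^\ast(I,\xx(w))\cong H^\ast(I,\xx(\omega w))$ by \eqref{eqActionRightLeftE0IfLengthsAddUp}. Hence one may replace the last summand by $\bigoplus_{w\in\widetilde{W_{\aff}},\ \ell(w)<f\cdot\dim\TT}H^\ast(I,\xx(w))$, which is finite-dimensional because $W_{\aff}$ has finitely many elements of bounded length and each $H^\ast(I_w,k)$ is finite-dimensional (Lemma~\ref{lmmPoincAndUniform}). Stating this $\tau_\omega$-step explicitly is the one missing ingredient in your $\GL_2$ case.
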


The longest part of the paper (namely, \S\ref{sectionFinPres}) deals with the case $G = \SL_2(\qq_p)$ with $p \neq 2,3$. Under this assumption, the explicit description of the $\ext$-algebra of \cite{newpaper} becomes available, and we exploit such description to obtain more precise results.

We first show that the multiplication map $\funcInlineNN{T^\ast_{E^0} E^1}{E^\ast}$ is surjective, and then, with quite a long computation, we explicitly determine its kernel. We prove the following result.
\begin{thm*}[Theorem \ref{thmFinalResultKernel}]
Assume that $G=\SL_2(\qq_p)$ with $p \neq 2,3$.
The multiplication map
\[
	\funcInline{\mult}{T_{E^0}^\ast E^1}{E^\ast}
\]
is surjective and its kernel is generated by its $2$\textsuperscript{nd} and $3$\textsuperscript{rd} graded pieces as a two-sided ideal. Furthermore, the kernel is finitely generated as a two-sided ideal.
\end{thm*}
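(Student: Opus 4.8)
The plan is to leverage the explicit description of $E^\ast$ as an $E^0$-bimodule coming from \cite{newpaper}, together with the vector-space decomposition \eqref{eqDescrEastViaShapiro}, to make the multiplication map $\mult$ completely explicit in low degrees. First I would recall that for $G = \SL_2(\qq_p)$ the pro-$p$ Iwahori--Weyl group $\widetilde{W}$ is (virtually) a product of a finite group with an infinite dihedral group, so the groups $I \cap wIw^{-1}$ and their cohomology $H^\ast(I \cap wIw^{-1},k)$ can be written down uniformly in terms of the length and the "type" of $w$; in particular the cohomological dimension here is small (essentially $3$, matching the dimension of $\SL_2$), so $E^\ast$ vanishes in degrees $> 3$. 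This immediately bounds where any relations can live. The surjectivity of $\mult$ then reduces to checking that $E^2$ and $E^3$ are generated by products of elements of $E^1$ over $E^0$: using Proposition~\ref{propAlmostGenByE1} the image already has finite codimension, and the residual finitely many basis elements (indexed by the finitely many $w$ of each relevant length-type) can be hit by hand using the multiplication formula \cite[Proposition 5.3]{ext} together with the $E^0$-bimodule generators of $E^1$ furnished by Proposition~\ref{propE1fg}.

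Next I would attack the kernel $\ker \mult$, which by surjectivity sits in the graded short exact sequence $0 \to \ker \mult \to T^\ast_{E^0} E^1 \to E^\ast \to 0$. Since $E^\ast$ is concentrated in degrees $0,1,2,3$ and $\mult$ is an isomorphism in degrees $0$ and $1$, the kernel is concentrated in degrees $\geqslant 2$. The claim to prove is that $\ker\mult$ is generated as a two-sided ideal by $(\ker\mult)^2 \oplus (\ker\mult)^3$. The mechanism is a dimension count degree by degree: in each degree $n \geqslant 2$ one has $\dim_k (T^\ast_{E^0}E^1)^n - \dim_k E^n = \dim_k (\ker\mult)^n$, and one must show that for $n \geqslant 4$ the subspace $I_n$ spanned by $E^1 \cdot (\ker\mult)^{n-1} + (\ker\mult)^{n-1}\cdot E^1$ (with appropriate tensoring over $E^0$) already exhausts $(\ker\mult)^n$. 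For $n \geqslant 4$ we have $E^n = 0$, so $(\ker\mult)^n = (T^\ast_{E^0}E^1)^n$ is the \emph{entire} degree-$n$ piece of the tensor algebra; thus the statement in high degrees becomes: the two-sided ideal generated by $(\ker\mult)^2 \oplus (\ker\mult)^3$ contains all of $T^\ast_{E^0}E^1$ in degrees $\geqslant 4$. Equivalently, the quotient $T^\ast_{E^0}E^1 / \langle (\ker\mult)^2, (\ker\mult)^3\rangle$ vanishes in degrees $\geqslant 4$, which one can check by showing that every degree-$4$ tensor $e_1 \otimes e_2 \otimes e_3 \otimes e_4$ lies in the ideal: indeed $e_1 \otimes e_2 \otimes e_3 \otimes e_4 = e_1 \otimes (e_2 \otimes e_3 \otimes e_4)$ and $e_2 \otimes e_3 \otimes e_4 \in (\ker\mult)^3$ provided $E^3 = 0$ forces every degree-$3$ tensor into the kernel --- but that is only true if $\mult$ is zero in degree $3$, which it is not. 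So the real content is the careful analysis in degrees $2$ and $3$.

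The heart of the argument is therefore the explicit computation of $(\ker\mult)^2$ and $(\ker\mult)^3$, and the verification that together they generate enough. Concretely I would: (i) fix the $E^0$-bimodule generators $x_1,\dots,x_r$ of $E^1$ from Proposition~\ref{propE1fg}; (ii) using the multiplication formula, compute all products $x_i \cdot x_j$ in $E^2$ and express the relations among them with $E^0$-coefficients --- these span $(\ker\mult)^2$; (iii) similarly compute degree-$3$ products, but now only those relations \emph{not} already consequences of the degree-$2$ relations contribute new generators, and since $E^3$ is a small explicit space (a few one-dimensional cohomology groups $H^3(I \cap wIw^{-1},k) \cong k$ for the finitely many $w$ of maximal length-type), the quotient $(\ker\mult)^3 / \langle E^1\cdot(\ker\mult)^2 + (\ker\mult)^2\cdot E^1\rangle$ is finite-dimensional and can be read off; (iv) finally, for degree $4$ and above, use that $E^n = 0$ together with steps (ii)--(iii) to show the tensor algebra is killed. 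The finite generation of $\ker\mult$ as a two-sided ideal is then immediate: $(\ker\mult)^2$ is a finitely generated $E^0$-bimodule (as $E^0$ is a finite-dimensional $k$-algebra and $(\ker\mult)^2$ sits inside the finite-dimensional space $(E^1 \otimes_{E^0} E^1)$... wait, $E^1$ need not be finite-dimensional over $k$, but it \emph{is} finitely generated as an $E^0$-bimodule, hence so is any sub-bimodule of a finitely generated free bimodule quotient? no --- one must instead argue that $(\ker\mult)^n$ for $n \in \{2,3\}$ is finitely generated as an $E^0$-bimodule, which follows because $T^\ast_{E^0}E^1$ in each fixed degree is a finitely generated $E^0$-bimodule and $E^0$ is left and right Noetherian --- this Noetherianity of $E^0 = H$ for $\SL_2(\qq_p)$ is known). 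The main obstacle I anticipate is step (iii): disentangling which degree-$3$ relations are genuinely new versus forced by degree-$2$ ones requires a careful bookkeeping of the $E^0$-module structure, and the noncommutativity of $E^0$ together with the twisted bimodule structure of $E^1$ (the left and right $E^0$-actions differ by the explicit automorphism described in \cite{newpaper}) makes the linear algebra delicate; this is presumably why the authors describe it as "quite a long computation."
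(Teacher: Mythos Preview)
Your outline is in the right spirit but has a genuine gap and misses the paper's main technical device.

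\textbf{The missing device.} The paper does not attack $\ker\mult$ by a dimension count or by listing products of generators and hunting for relations. Instead (see \S\ref{subsecStrategy}) it fixes, for each degree $i\in\{2,3\}$, an explicit $k$-linear \emph{section} $\rep_i$ of $\mult_i$, chosen so that $\rep_i$ is already a $k[T^0/T^1]$-bimodule map and commutes (after passing to a quotient) with the anti-involution $\invol$ and the automorphism $\Gamma_\varpi$. Once such a section exists, $\ker\mult_i$ is generated as an $E^0$-bimodule by the finitely many elements $\rep_i(a_j n_l)-a_j\rep_i(n_l)$, $\rep_i(n_l a_j)-\rep_i(n_l)a_j$, and $m_r-(\rep_i\circ\mult_i)(m_r)$, where $a_j$ run over algebra generators of $E^0$, $n_l$ over a $k$-basis of $E^i$, and $m_r$ over the bimodule generators of $T^i_{E^0}E^1$ from Lemma~\ref{lmmGeneratorsT2}. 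The symmetries $\invol$ and $\Gamma_\varpi$ cut the work roughly in four. This is what produces the explicit finite list of fourteen degree-2 generators and one degree-3 generator in Theorem~\ref{thmFinalResultKernel}, and is what you are calling ``careful bookkeeping''---but without the section $\rep_i$ you have no systematic way to do it.

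\textbf{The degree-4 gap.} You correctly observe that $(\ker\mult)^4=T^4_{E^0}E^1$ and that one cannot simply write $e_1\otimes(e_2\otimes e_3\otimes e_4)$ with the bracketed part in $(\ker\mult)^3$, because $\mult_3\neq 0$. You then say ``use $E^n=0$ together with (ii)--(iii)'' without saying how. The paper's resolution (Proposition~\ref{propDeg4}) again uses the section: write $x\otimes y = x\otimes(y-\rep_3\mult_3(y)) + x\otimes\rep_3\mult_3(y)$; the first summand lies in $E^1\otimes\ker\mult_3$, and the second, by the explicit shape of $\rep_3$, is a sum of tensors ending in $\bm{1}$ or $\bp{1}$. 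Repeating the trick on the left produces tensors ending in $\bm{1}\otimes\bm{1}$, $\bm{1}\otimes\bp{1}$, $\bp{1}\otimes\bm{1}$, $\bp{1}\otimes\bp{1}$, all four of which are already known to lie in $\ker\mult_2$. Without the section this step does not go through.

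\textbf{A factual slip.} You write that $E^0$ is a finite-dimensional $k$-algebra; it is not. The pro-$p$ Iwahori--Hecke algebra $H=E^0$ is infinite-dimensional over $k$ (it has basis $(\tau_w)_{w\in\widetilde W}$). Likewise $E^1\otimes_{E^0}E^1$ is infinite-dimensional, so the ``dimension count degree by degree'' you propose is not available. Your fallback to Noetherianity of $E^0$ is correct (via \cite[Theorem~1.3]{VignII}) and would give finite generation of $(\ker\mult)^2$ and $(\ker\mult)^3$ abstractly, but the paper instead exhibits explicit generators, which is what the section-based method delivers.
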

We will actually compute an explicit finite list of generators (see again Theorem \ref{thmFinalResultKernel}).

Starting from the \virgolette{presentation} of $E^\ast$ determined by the above theorem, we then compute a presentation of $E^\ast$ as a (non-commutative) $k$-algebra. This is achieved by computing a presentation of $E^0$ (the pro-$p$ Iwahori--Hecke algebra) as a $k$-algebra and a presentation of $E^1$ as an $E^0$-bimodule. Combining these two presentations with the above theorem, one then readily obtains a presentation of $E^\ast$ as a $k$-algebra. In this way, we obtain the following result.

\begin{thm*}[Theorem \ref{thmFinitePreskalg}]
Assume that $G=\SL_2(\qq_p)$ with $p \neq 2,3$.
The $\ext$-algebra $E^\ast$ is finitely presented as a $k$-algebra.
\end{thm*}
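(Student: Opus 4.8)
The plan is to assemble the presentation of $E^\ast$ from three ingredients, each a finiteness statement that is either already available or reduces to a finite computation. First, I would use Theorem~\ref{thmFinalResultKernel}: the multiplication map $\mult \colon T_{E^0}^\ast E^1 \to E^\ast$ is surjective with kernel finitely generated as a two-sided ideal (concentrated in graded degrees $2$ and $3$). This reduces the problem to showing that $T_{E^0}^\ast E^1$ is finitely presented as a $k$-algebra; then $E^\ast$ is the quotient of a finitely presented algebra by a finitely generated two-sided ideal, hence finitely presented. (Here one uses the standard fact that if $A$ is finitely presented and $J \subseteq A$ is a finitely generated two-sided ideal, then $A/J$ is finitely presented: take a presentation $k\langle x_1,\dots,x_n\rangle / (r_1,\dots,r_m) \cong A$, lift the finitely many generators of $J$ to words in the $x_i$, and adjoin them to the $r_j$.)

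Second, I would produce a finite presentation of the tensor algebra $T_{E^0}^\ast E^1$. The tensor algebra over a ring $E^0$ of an $E^0$-bimodule $E^1$ is finitely presented as a $k$-algebra provided that $E^0$ is finitely presented as a $k$-algebra and $E^1$ is finitely presented as an $E^0$-bimodule — indeed, if $E^0 = k\langle y_1,\dots,y_a\rangle/(s_1,\dots,s_b)$ and $E^1$ is generated as an $E^0$-bimodule by $v_1,\dots,v_c$ with finitely many bimodule relations $t_1,\dots,t_d$ (each $t_\ell$ a finite $E^0$-$E^0$-linear combination of the $v_i$, which we may write as a finite sum $\sum u'_{\ell i} v_i u''_{\ell i}$ with $u', u'' \in E^0$), then $T_{E^0}^\ast E^1$ is presented by generators $y_1,\dots,y_a,v_1,\dots,v_c$ and relations $s_1,\dots,s_b$ (the algebra relations of $E^0$, now written in the $y$'s), together with the $t_\ell$ (the bimodule relations, written using the $y$'s and $v$'s). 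So the task is to write down explicitly a finite presentation of $E^0$ as a $k$-algebra, and a finite presentation of $E^1$ as an $E^0$-bimodule.

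Third — and this is where the actual work lies — I would carry out these two explicit computations for $G = \SL_2(\qq_p)$, $p \neq 2,3$. For $E^0$: this is the pro-$p$ Iwahori--Hecke algebra of $\SL_2(\qq_p)$, which has a well-understood Iwahori--Matsumoto-type presentation (generators coming from the two affine simple reflections together with the finite torus $T^0 \cong \ff_p^\times$, with braid and quadratic relations); one records a finite generating set and the finite list of defining relations. For $E^1$ as an $E^0$-bimodule: finite generation is already Proposition~\ref{propE1fg}, so what remains is to exhibit a finite set of generators and to check that the (a priori possibly infinite) module of relations is finitely generated — here I would lean on the explicit description of $E^\ast$ as an $E^0$-bimodule from \cite{newpaper} (their \S7), which in the $\SL_2(\qq_p)$ case gives $E^1$ completely explicitly in terms of the decomposition \eqref{eqDescrEastViaShapiro}, so that one can read off generators and relations by direct inspection. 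Assembling the three presentations as in the previous paragraph yields the finite presentation of $E^\ast$.

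The main obstacle I expect is the bimodule presentation of $E^1$: unlike finite generation (Proposition~\ref{propE1fg}), finite presentability of $E^1$ as an $E^0$-bimodule is not formal — $E^0$ is noncommutative and need not be noetherian in the relevant sense, so one genuinely has to control the relations, not just the generators. This is exactly the point where the explicit structural results of \cite{newpaper} for $\SL_2(\qq_p)$ are indispensable, and it is why the theorem is stated only in that case rather than in the generality of the earlier results; the computation, while elementary, is the long bookkeeping referred to in the introduction. The rest — the algebra presentation of $E^0$, and the two formal reductions (tensor algebra of a finitely presented bimodule over a finitely presented ring is finitely presented; quotient by a finitely generated ideal preserves finite presentability) — is routine once the inputs are in hand.
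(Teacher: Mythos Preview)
Your proposal is correct and follows essentially the same approach as the paper: the proof assembles Theorem~\ref{thmFinalResultKernel} (kernel of $\mult$ finitely generated), an explicit finite presentation of $E^0$ as a $k$-algebra (Lemma~\ref{lmmPresentationE0}), and an explicit finite presentation of $E^1$ as an $E^0$-bimodule (Lemma~\ref{lmmPresentationE1}) via the general combination principle you describe (Lemma~\ref{lmmTecnicoPresentation}). You have also correctly identified the bimodule presentation of $E^1$ as the step requiring genuine work beyond the earlier results.
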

We stress the fact that we obtain this result by explicitly computing a finite presentation  (see again Theorem \ref{thmFinitePreskalg}).

\subsection{Acknowledgements}

A large portion of the material in this article was part of the author's PhD thesis \cite{thesis} under the direction of Peter Schneider. I thank him for the invaluable help and support, his fruitful and precise comments, and for suggesting such a research topic.

I am indebted with Claudius Heyer for the extremely useful discussions and his great interest in my work. I thank Rachel Ollivier for very interesting discussions and her support during the write-up of this article.

I acknowledge funding by the Deutsche Forschungsgemeinschaft (DFG, German Research Foundation) under Germany's Excellence Strategy EXC 2044 -- 390685587, Mathematics Münster: Dynamics -- Geometry -- Structure, and under the CRC 1442 Geometry: Deformations and Rigidity (Project Number 427320536).

\section{Setting and background}\label{sectionSettingBackground}

\subsection{Notation and assumptions}\label{subseNotazAssump}

We fix the following data throughout the paper.

We consider a non-archimedean local field $\field$. We denote by $p > 0$ its residue characteristic and by $q = p^f$ its residue cardinality. We recall that $\field$ is either a finite extension of $\qq_p$ or the field of Laurent series $\ff_q((X))$. Almost everywhere in this paper we will exclude the second case, but we nevertheless allow such case in this section and in a few lemmas later on for consistency with \cite{ext}, for sake of generality, and to point out a case in which our first result fails (Example \ref{esLaurent}). We denote by $\ooo$ the ring of integers of $\field$, and by $\mmm$ the maximal ideal of $\ooo$. We also fix a uniformizer $\pi$.

Let $\GG$ be a split connected reductive group over $\field$. Many results in this paper will only be valid for special choices of $\GG$, as will be explained later. We will always use the convention of denoting linear algebraic groups over $\field$ with boldface letters, and we will denote the associated group of $\field$-rational points with the corresponding non-boldface letter (e.g., in the case of $\GG$, we will have $G = \GG(\field)$). Such groups will be understood as locally profinite groups, with respect to the topology induced by $\field$.

Let us choose a split maximal torus $\TT$ of $\GG$. Let $T^0$ be the unique maximal compact subgroup of $T$ (isomorphic to $(\ooo^\times)^{\dim \TT}$), and let $T^1$ be the unique maximal pro-$p$ subgroup of $T$ (isomorphic to $(1+\mmm)^{\dim \TT}$).

Let $\mathbf{N}$ be the normalizer of $\TT$ in $\GG$. We define the (discrete) groups
\begin{align*}
	&W_0 \defeq \quoz{N}{T},
	&&W \defeq \quoz{N}{T^0},
	&&\widetilde{W} \defeq \quoz{N}{T^1}.
\end{align*}
The group $W_0$ is the usual Weyl group, the group $W$ is sometimes called extended affine Weyl group or Iwahori--Weyl group, and the group $\widetilde{W}$ is sometimes called pro-$p$ Iwahori--Weyl group.

Let us consider the semisimple (or adjoint) building associated with $\GG$ (see \cite[Definition 3.12]{VignI}) and the apartment corresponding to the choice of $\TT$. Let us choose a chamber $\chamber$ in such apartment. Let $J$ be the Iwahori subgroup of $G$ corresponding to the choice of $\chamber$ (see \cite{HainesRapp} for some possible, and equivalent, definitions), and let $I$ be the corresponding pro-$p$ Iwahori subgroup, i.e., the pro-$p$ radical of $J$.

We fix a field $k$ of characteristic $p$ (i.e., the same characteristic as the residue field of $\field$), which will be the coefficient field of the algebras that we will consider.

\begin{warningAssumptions}
Many results in the paper will be stated under stronger assumptions than those above. For \S\ref{sectionBimod}, \S\ref{sectionSubalgE0E1} and \S\ref{sectionFinGen}, any further assumption will be made explicit in every single statement. In \S\ref{sectionFinPres}, instead, the assumptions will be made explicit only at the beginning and will hold throughout.
\end{warningAssumptions}

\subsection{The apartment}\label{subsectionApartment}

In this section and the next ones, we will recall a few notions from Bruhat--Tits theory. We will only need a few facts from the treatise \cite{BT1, BT2}. For a more in-depth recollection of the what is needed in the context of the pro-$p$ Iwahori--Hecke algebra (or its $\ext$-version) see for example \cite[\S3]{VignI}. The present summary is partially inspired by those in \cite{ext} and \cite{ClaudiusThesis}.

The pair $(\GG,\TT)$ defines a root datum $\big( X^\ast(\TT), \Phi, X_\ast(\TT), \Phi^\vee \big)$, where $X^\ast(\TT)$ is the group of characters, $\Phi$ is the set of roots, $X_\ast(\TT)$ is the group of cocharacters, and $\Phi^\vee$ is the set of coroots.

Associated with every root $\alpha \in \Phi$ one has an additive subgroup $\UU_\alpha$ of $\GG$. We fix a Chevalley system $\big( \funcInline{x_\alpha}{\ga}{\UU_\alpha} \big)_{\alpha \in \Phi}$ according to the definition given in \cite[3.2.1 and 3.2.2]{BT2}. For all $\alpha \in \Phi$, we denote by $\funcInline{\varphi_\alpha}{\SL_2}{\GG}$ the associated morphism defined in \cite[3.2.1]{BT2}. In particular, we recall from loc. cit. that for all $\alpha \in \Phi$ and all $u \in \field$ one has
\begin{align*}
	x_\alpha(u) &= \varphi_{\alpha} \matr{1}{u}{0}{1},
	&
	x_{-\alpha}(u) &= \varphi_{\alpha} \matr{1}{0}{-u}{1}.
\end{align*}

For all $\alpha \in \Phi$, the homomorphism of algebraic groups
\[
	\func
		{\check{\alpha}}
		{\gm}
		{\GG}
		{x}
		{\varphi_{\alpha} \matr{x}{0}{0}{x^{-1}}}
\]
has values in $\TT$ (i.e., $\check{\alpha} \in X_\ast(\TT)$). It is called the \emph{coroot} associated with $\alpha$, and seeing $\rr \otimes_\zz X_\ast(\TT)$ as the dual of $\rr \otimes_\zz X^\ast(\TT)$, the coroot $\check{\alpha}$ is indeed the coroot associated with $\alpha$ in the sense of abstract root systems (and hence we will write $\check{\alpha} \in \Phi^\vee$).

The apartment $\apart$ corresponding to $\TT$ of the semisimple building of $\GG$ is a real affine space whose underlying vector space is
\[
	V_{\apart} \defeq (\quoz{X_\ast(\TT)}{X_\ast(\CC^\circ)}) \otimes_{\zz} \rr,
\]
where $\CC^\circ$ denotes the connected centre of $\GG$. We fix the valuation $x_0$ (in the sense of Bruhat--Tits) associated with the Chevalley system $(x_\alpha)_{\alpha \in \Phi}$ (see \cite[Examples (6.2.3) b)]{BT1}). It is an element of $\apart$, and one actually has
\[
	\apart = x_0 + V_{\apart}.
\]

We assume that the closure of the chamber $\chamber$ chosen in \S\ref{subseNotazAssump} contains $x_0$. Then the choice of $\chamber$ is equivalent to the choice of a set of positive roots $\Phi^+ \subseteq \Phi$. We denote by $\Phi^-$ the corresponding set of negative roots.

Let us consider the perfect pairing $\funcInline{\left\langle {}_-, {}_- \right\rangle}{X_\ast(\TT) \times X^\ast(\TT)}{\zz}$. It is $W_0$-equivariant for the left actions of $W_0$ on $X_\ast(\TT)$ and $X^\ast(\TT)$ (defined via the conjugation action of $W_0$ on $T$). For all $t \in T$ there exists a unique cocharacter $\mu(t) \in X_\ast(\TT)$ satisfying the condition
\begin{align*}
	&\langle \mu(t) , \chi \rangle = - \val_\field (\chi(t))
	&&\text{for all $\chi \in X^\ast(\TT)$.}
\end{align*}
The map $\funcInline{\mu}{T}{X_\ast(\TT)}$ obtained in this way is thus a surjective group homomorphism, it is $W_0$-equivariant and it induces an isomorphism $\funcInlineNN{\quoz{T}{T^0}}{X_\ast(\TT)}$. We will actually use more often the induced surjective group homomorphism
\[
	\funcInline{\nu}{\quoz{T}{T^0}}{\quoz{X_\ast(\TT)}{X_\ast(\CC^\circ)} \subseteq V_\apart.}
\]
We will often write $\nu(t)$ not only for $t \in \quoz{T}{T^0}$ but also for $t \in \quoz{T}{T^1}$.

We also consider the following perfect pairing, induced by the one above:
\[
	\funcInline{\left\langle {}_-, {}_- \right\rangle}{(\quoz{X_\ast(\TT)}{X_\ast(\CC^\circ)}) \times \spann_{\zz} \Phi}{\zz.}
\]
In particular, with this notation we have that $\langle \nu(t) , \alpha \rangle = - \val_\field (\alpha(t))$ for all $t \in \quoz{T}{T^0}$ and all $\alpha \in \Phi$.

By \cite[Lemma 56]{Steinberg}, the surjection $\funcInlineNN{N}{W_0}$ becomes split after one quotients out the subgroup of $T$ generated by $\check{\alpha}(-1)$ for $\alpha$ running over $\Phi$, and moreover an explicit splitting is obtained by sending, for all $\alpha \in \Phi$, the reflection defined by $\alpha$ to (the class of) $\varphi_\alpha \matr{0}{1}{-1}{0}$ (note that the result in loc. cit. is stated for split semisimple groups, but it might be applied to general split reductive groups by working with the derived subgroup). It thus follows that the exact sequence
\[\begin{tikzcd}
	1 \ar[r]
	&
	\quoz{T}{T^0} \ar[r]
	&
	W \ar[r]
	&
	W_0 \ar[r]
	&
	1
\end{tikzcd}\]
is split: therefore we write $W = \quoz{T}{T^0} \rtimes W_0$ and we view $W_0$ as a subgroup of $W$, fixing once and for all the splitting determined by the above rule.

The group $W$ acts on the apartment (or equivalently, on its supporting vector space) by the following rule:
\[
	\funcNN
		{W \times V_{\apart}}
		{V_{\apart}}
		{\begin{array}{c}
			{(w_0 t , x)}
			\\
			\scriptstyle\text{(for $w_0 \in W_0$, $t \in \quoz{T}{T^0}$)}
			\end{array}}
		{w_0 \cdot (x + \nu(t)).}
\]
where the action of $w_0$ on $V_{\apart}$ is defined via the usual action of the Weyl group on cocharacters.

\subsection{Affine roots and the length function}\label{subsecAffineRootsLength}

We continue recalling a few notions from the literature, following in particular \cite[\S2.1.3]{ext}.

We define the set of affine roots, the set of positive affine roots and the set of negative affine roots respectively as
\begin{align*}
	&\Phi_{\aff} \defeq \Phi \times \zz,
	&&\Phi_{\aff}^+ \defeq (\Phi \times \zz_{\geqslant 1}) \cup (\Phi^+ \times \{0\}),
	&&\Phi_{\aff}^- \defeq \Phi_{\aff} \smallsetminus \Phi_{\aff}^+.
\end{align*}
The group $W$ acts on the set of affine roots via the following rule:
\[
	\funcNN
		{W \times \Phi_{\aff}}
		{\Phi_{\aff}}
		{\begin{array}{c}
			{\big(w_0 t , (\alpha, \hhh)\big)}
			\\
			\scriptstyle\text{(for $w_0 \in W_0$, $t \in \quoz{T}{T^0}$)}
			\end{array}}
		{\big(w_0 \alpha, \hhh - \langle \nu(t), \alpha \rangle\big) = \big(w_0 \alpha, \hhh + \val_\field(\alpha(t)) \big).}
\]
The length function on $W$ is defined as
\[
\func
	{\ell}
	{W}
	{\nn}
	{w}
	{\# \set{ (\alpha, \hhh) \in \Phi_{\aff}^+ }{ w(\alpha,\hhh) \in \Phi_{\aff}^- }}
\]
(see \cite[\S 1.4]{Lusztig}). It satisfies the property $\ell(wv) \leqslant \ell(w) + \ell(v)$ for all $w,v \in W$. Further properties will be recalled in a moment.

Let $(\alpha, \hhh) \in \Phi_{\aff}$. The class of the element $s_{(\alpha, \hhh)} \defeq \varphi_\alpha \matr{0}{\pi^{\hhh}}{-\pi^{-\hhh}}{0}$ acts on the apartment as the reflection through the hyperplane
\[
	\set{x \in V_{\apart}}{\langle x, \alpha \rangle + \hhh = 0}.
\]
We define $W_{\aff}$ as the subgroup of $W$ generated by the elements $\varphi_\alpha \matr{0}{\pi^{\hhh}}{-\pi^{-\hhh}}{0}$ for $(\alpha, \hhh)$ running over $\Phi_{\aff}$. This group can be identified with the group generated by the reflections through the above hyperplanes. It forms a Coxeter system $(W_{\aff}, S_{\aff})$, where
\[
	S_{\aff} \defeq \set{ s_{(\alpha,\hhh)} }{(\alpha,\hhh) \in (\Pi \times \{0\}) \cup (\Pi_{\min} \times \{1\})},
\]
and where $\Pi$ denotes the basis of $\Phi$ corresponding to the choice of $\Phi^+$ and $\Pi_{\min}$ denotes the set of minimal roots with respect to the partial order $\leqslant$ on $\Phi$ defined by the rule $\alpha \leqslant \beta$ if and only if $\beta - \alpha$ is a linear combination with positive coefficients of elements of $\Pi$.

Recall from \cite[\S2.1.3]{ext} that the group $W$ admits a semidirect product decomposition
\[
	W = W_{\aff} \rtimes \Omega,
\]
where $\Omega$ is the subgroup of elements of length zero. Moreover, the length is constant on double cosets $(\Omega w \Omega)_{w \in W_{\aff}}$. When restricted to $W_{\aff}$, the length $\ell$ actually becomes the length function of the Coxeter system $(W_{\aff}, S_{\aff})$.

The explicit formulas to compute the length are the following:
\begin{align}
\label{eqLengthFormulaW0Right}
	\ell(t w_0)
	&=
	\sum_{\alpha \in \Phi^+ \cap w_0 \Phi^+}
	\abs{\langle \nu(t), \alpha \rangle}
	+
	\sum_{\alpha \in \Phi^+ \cap w_0 \Phi^-}
	\abs{\langle \nu(t), \alpha \rangle - 1},
\\
\label{eqLengthFormulaW0Left}
	\ell(w_0 t)
	&=
	\sum_{\alpha \in \Phi^+ \cap w_0^{-1} \Phi^+}
	\abs{\langle \nu(t), \alpha \rangle}
	+
	\sum_{\alpha \in \Phi^+ \cap w_0^{-1} \Phi^-}
	\abs{\langle \nu(t), \alpha \rangle + 1},
\end{align}
for all $w_0 \in W_0 \subseteq W$ and all $t \in \quoz{T}{T^0}$ (see \cite[Corollary 5.10]{VignI}). In particular, for all $t \in \quoz{T}{T^0}$ one has
\begin{equation}\label{eqLengthFormulaTorus}
	\ell(t)
	=
	\sum_{\alpha \in \Phi^+}
	\abs{\langle \nu(t), \alpha \rangle}
	=
	\frac{1}{2}
	\sum_{\alpha \in \Phi}
	\abs{\langle \nu(t), \alpha \rangle}.
\end{equation}

For our purposes it will be fundamental to work with the already defined group $\widetilde{W} \defeq \quoz{N}{T^1}$ instead of $W$. We will slightly abuse notation and write $\ell(w)$ not only for $w \in W$ but also for $w \in \widetilde{W}$. For a subgroup $X \subseteq W$ we will write $\widetilde{X}$ for the preimage of $X$ in $\widetilde{W}$. In particular, we will write $\widetilde{\Omega}$ and $\widetilde{W_0}$ (recall that we view $W_0 \subseteq W$).

\subsection{The \texorpdfstring{pro-$p$}{pro-p} Iwahori--Weyl group\texorpdfstring{ $\widetilde{W}$}{} and the subgroups \texorpdfstring{$I_w$}{I\_w}}\label{subsecIw}

Recall the definition $\widetilde{W} \defeq \quoz{N}{T^1}$ of the pro-$p$ Iwahori--Weyl group from \S\ref{subseNotazAssump}. Its importance stems from the following Bruhat decomposition:
\begin{equation}\label{eqBruhatDecomp}
G = \coprod_{w \in \widetilde{W}} IwI
\end{equation}
(see \cite[Proposition 3.35]{VignI}), where we have not distinguished between elements of $\widetilde{W}$ and their representatives in $N$ since $T^1 \subseteq I$, as we will recall in \S\ref{subsecIwaDecomp}.

For all $w \in \widetilde{W}$ we define the following subgroup of $I$:
\[
	I_w \defeq I \cap wIw^{-1}.
\]
Here, again, we have not distinguished between elements of $\widetilde{W}$ and their representatives in $N$. Actually, since $T^0$ normalizes $I$, this definition makes sense also for $w \in W$. The group $I_w$ is useful in the context of the Bruhat decomposition \eqref{eqBruhatDecomp} because of the $I$-equivariant bijection
\begin{equation}\label{eqBijDoubleCoset}
	\funcNN
		{\quoz{IwI}{I}}
		{\quoz{I}{I_w}}
		{iwI}
		{iI_w.}
\end{equation}

\subsection{The Iwahori decomposition}\label{subsecIwaDecomp}

Recall our fixed Chevalley system $\big( \funcInline{x_\alpha}{\ga}{\UU_\alpha} \big)_{\alpha \in \Phi}$. For all $\alpha \in \Phi$ let us consider the filtration $(\uuu_{(\alpha,m)})_{m \in \zz}$ of $U_\alpha$ defined by
\[
	\uuu_{(\alpha,m)} \defeq x_\alpha(\pi^m \ooo).
\]

The groups $\uuu_{(\alpha,1)}$ for $\alpha \in \Phi^-$, the group $T^1$ and the groups $\uuu_{(\alpha,0)}$ for $\alpha \in \Phi^+$ are all contained in the pro-$p$ Iwahori subgroup $I$ and furthermore the multiplication map
\[
\funcInlineNN
	{
		\prod_{\alpha\in \Phi^-}
		\uuu_{(\alpha, 1)}
		\times
		T^1
		\times
		\prod_{\alpha\in \Phi^+}
		\uuu_{(\alpha, 0)}
	}
	{I}
\]
is bijective (see \cite[Proposition I.2.2]{SchnSt} and \cite[Lemma 4.8 and its proof]{gorenst}; here the factors in the products are ordered in some arbitrarily chosen way). It is actually a homeomorphism, because it is a continuous bijection between compact Hausdorff spaces.

There is an analogous Iwahori decomposition for the subgroups $I_w$: for all $\alpha \in \Phi$ let us define
\begin{equation}\label{eqDefGWAlpha}
	g_w(\alpha)
	\defeq
	\min \set{ m \in \zz }{ (\alpha, m) \in \Phi_{\aff}^+ \cap w \Phi_{\aff}^+ }.
\end{equation}
Then the multiplication map induce a bijection (actually, a homeomorphism)
\begin{equation}\label{eqIwaDecompIw}
\funcInlineNN
{
	\prod_{\alpha\in \Phi^-}
	\uuu_{(\alpha, g_w(\alpha))}
	\times
	T^1
	\times
	\prod_{\alpha\in \Phi^+}
	\uuu_{(\alpha, g_w(\alpha))}
}
{I_w,}
\end{equation}
where, again, the factors in the products are ordered in some arbitrarily chosen way (see \cite[Lemma 2.3 and Remark 2.4]{ext}). Also note that the indices $g_w(\alpha)$ (and hence $I_w$) only depend on the image of $w$ in $W$ (and so we will sometimes use the notation $g_v(\alpha)$ for $v \in W$).

\subsection{The \texorpdfstring{pro-$p$}{pro-p} Iwahori--Hecke algebra}

As in the introduction, we consider the compact induction $\cind_I^G k$ from $I$ to $G$ of the trivial representation $k$. Concretely, the compact induction $\cind_I^G k$ can be described as the vector space $k[\quoz{G}{I}]$ endowed with the action defined by $g \cdot g'I \defeq gg'I$ (for all $g, g' \in G$).

As in \cite[\S2.2]{ext}, we define the pro-$p$ Iwahori--Hecke algebra to be
\[
	H \defeq \End_{\smooth{k}{G}}(k[\quoz{G}{I}])^{\opp},
\]
where $R^{\opp}$ denotes the opposite ring of a ring $R$. This definition can readily be seen to be equivalent to the one given in \cite[\S4.1]{VignI}, where $H$ is defined as $\End_{\smooth{k}{G}}(k[\quozright{I}{G}])$.

The Bruhat decomposition \eqref{eqBruhatDecomp} makes clear that one has a $k$-basis $(\tau_w)_{w \in \widetilde{W}}$ where $\tau_w$ is the characteristic function of the double coset $IwI$. This is called \emph{Iwahori--Matsumoto basis}.

A fundamental property is the following \emph{braid relation} (see \cite[Theorem 2.2]{VignI}):
\begin{align*}
	&\tau_{v} \cdot \tau_{w} = \tau_{vw}
	&&\text{for $v,w \in \widetilde{W}$ such that $\ell(vw) = \ell(v) + \ell(w)$}.
\end{align*}

In the case that $\ell(vw) \neq \ell(v) + \ell(w)$, the formula is more complicated, but it is completely determined by the above braid relation and the \emph{quadratic relations} (see \cite[Proposition 4.4 and Theorem 2.2]{VignI}). We will only introduce the quadratic relations in the case $\GG = \SL_2$ (see \eqref{eqQuadrRelSL2}), since we will not need them in the general case.

\subsection{The \texorpdfstring{$\ext$-algebra $E^\ast$}{Ext-algebra E*}}

We now introduce the \emph{pro-$p$ Iwahori--Hecke $\ext$-algebra} $E^\ast$ following \cite[\S3]{ext}. The definition, as anticipated in the introduction is:
\[
	E^\ast
	\defeq
	\ext^\ast(k[G/I],k[G/I])^{\opp},
\]
where, this time, $R^{\opp}$ denotes the opposite a graded ring $R$.

There is a canonical identification of $k$-vector spaces
\[
	E^\ast \cong H^\ast \left(I, k \left[ \quoz{G}{I} \right] \right),
\]
where $H^\ast$ denotes cohomology of profinite groups. Indeed, the abelian category $\smooth{k}{G}$ has enough injective objects (see \cite[I.5.9]{VigLMod}), and so we can fix an injective resolution $\funcInlineNN{k[\quoz{G}{I}]}{J^\bullet}$ in $\smooth{k}{G}$ and compute
\begin{align*}
	E^\ast
	&=
	\ext^\ast_{\smooth{k}{G}} \left( k \left[ \quoz{G}{I} \right], k \left[ \quoz{G}{I} \right] \right)
	\\&\cong
	H^\ast \big( \homm(k \left[ \quoz{G}{I} \right], J^\bullet) \big)
	\\&\cong
	H^\ast \big( (J^\bullet)^I \big)
	\\&\cong
	H^\ast \left(I, k \left[ \quoz{G}{I} \right] \right),
\end{align*}
where we have used Frobenius reciprocity for the compact induction and the fact that restriction functor from $\smooth{k}{G}$ to $\smooth{k}{I}$ preserves injective objects (see \cite[Chapitre I, 5.9 d)]{VigLMod}).

Then, similarly to $H$, the Bruhat decomposition \eqref{eqBruhatDecomp} induces a $k$-vector space decomposition
\begin{align*}
	&E^\ast
	\cong
	\bigoplus_{w \in \widetilde{W}} H^\ast(I, \xx(w)),
	&&\text{where $\xx(w) \defeq k[\quoz{IwI}{I}]$,}
\end{align*}
since the functor $H^\ast(I, {}_-)$ commutes with arbitrary direct sums (see \cite[Chapter I, Proposition 8]{Serre}).

Let $w \in \widetilde{W}$. From \eqref{eqBijDoubleCoset} we have a fixed isomorphism of $I$-representations $k[\quoz{IwI}{I}] \cong k[\quoz{I}{I_w}]$. Composing the induced isomorphism between cohomology spaces with the Shapiro isomorphism $H^j \left( I, k \left[ \quoz{I}{I_w} \right] \right) \cong H^j \left( I_w, k \right)$, we thus obtain an isomorphism
\[\begin{tikzcd}
\sh_w \colon
	H^\ast \left( I, \xx(w) \right)
	\ar[r, "\cong"]
	&
	H^\ast \left( I, k \left[ \quoz{I}{I_w} \right] \right)
	\ar[r, "\cong"]
	&
	H^\ast \left( I_w, k \right),
\end{tikzcd}\]
which we will still call \emph{Shapiro isomorphism}.

One of the main results of \cite{ext} consists in the description of the multiplication on $E^\ast$ in terms of cohomological operations on $\bigoplus_{w \in \widetilde{W}} H^\ast(I, \xx(w))$ (equivalently, on $\bigoplus_{w \in \widetilde{W}} H^\ast \left( I_w, k \right)$), see \cite[Proposition 5.3]{ext}. Except in the case $G = \SL_2(\qq_p)$, we will not use the full multiplicative structure of $E^\ast$, and so below, after a brief discussion on the needed cohomological operations, we will only state a partial (but interesting) result.

For all $w \in \widetilde{W}$ the cohomology space $H^\ast \left( I_w, k \right)$ has the structure of a graded-commutative $k$-algebra with respect to the cup product $\cupprod$. On the other side, the $G$-equivariant bilinear map
$
	\funcInlineNN{k \left[ \quoz{G}{I} \right] \times k \left[ \quoz{G}{I} \right]}{k \left[ \quoz{G}{I} \right]}
$
gives rise to a cup product on the cohomology space $H^\ast(I,k[\quoz{G}{I}])$ (see \cite[Chapter I, \S4]{NeukAndCo}). This cup product is associative and graded-commutative but in general not unital. The two cup products are compatible in the following sense: if we transport the cup product from $H^\ast(I,k[\quoz{G}{I}])$ to $\bigoplus_{w \in \widetilde{W}} H^\ast \left( I_w, k \right)$, the resulting product is zero when we multiply two elements lying in distinct direct summands, and, when restricted to $H^\ast \left( I_w, k \right)$, coincides with the natural cup product (see \cite[\S 3.3]{ext}).

We also recall the conjugation on cohomology of profinite groups (with trivial coefficients). Let $H$ be a locally profinite group, let $K$ be a compact subgroup and let $h \in H$. The conjugation map by $h^{-1}$
\[
	\funcNN{hKh^{-1}}{K}{x}{h^{-1}xh}
\]
defines a map
\[
	\funcInline{h_\ast}{H^i(K,k)}{H^i(hKh^{-1},k)}
\]
for all $i \in \nn$.

We can now state the above-mentioned partial result about the multiplication in $E^\ast$ (see \cite[Corollary 5.5]{ext}): let us consider $v,w \in \widetilde{W}$ such that $\ell(vw) = \ell(v) + \ell(w)$, let $i,j \in \nn$, let $\alpha \in H^i(I,\xx(v))$ and let $\beta \in H^j(I,\xx(w))$. One has
\begin{align}\label{eqCupYoneda}
	\alpha \cdot \beta = (\alpha \cdot \tau_w) \cupprod (\tau_v \cdot \beta)
\end{align}
Moreover, one has
\begin{equation}\label{eqActionRightLeftE0IfLengthsAddUp}
	\begin{aligned}
		\alpha \cdot \tau_w &\in H^i(I,\xx(vw)),
		\\
		\sh_{vw}(\alpha \cdot \tau_w) &= \res_{I_{vw}}^{I_v} \big( \sh_v(\alpha) \big),
		\\
		\tau_v \cdot \beta &\in H^i(I,\xx(vw)),
		\\
		\sh_{vw}(\tau_v \cdot \beta) &= \res_{I_{vw}}^{v I_w v^{-1}} \big( v_\ast \sh_w(\beta) \big).
	\end{aligned}
\end{equation}

\subsection{A few results on cohomology of \texorpdfstring{pro-$p$}{pro-p} groups}\label{subsecCohom}

Let $K$ be a pro-$p$ group. Let us consider the \emph{Frattini subgroup} $\overline{[K,K]K^p}$, i.e., the smallest closed subgroup of $K$ containing the commutator subgroup and the $p$-powers of the elements of $K$. We consider the \emph{Frattini quotient}
\[
	\fratt{K} \defeq \quoz{K}{\overline{[K,K]K^p}}.
\]
The first cohomology with trivial coefficients can then be computed as
\[
	H^1(K,k)
	\cong
	\homm_{\topgps}(K,k)
	\cong
	\homm_{\topgps}(\fratt{K},k),
\]
where $\homm_{\topgps}({}_-,{}_-)$ denotes continuous group homomorphisms.
If furthermore $\fratt{K}$ is finite, then we obtain
\[
	H^1(K,k)
	\cong
	\homm_{\vect{\ff_p}}(\fratt{K},k),
\]
where $\homm_{\vect{\ff_p}}({}_-,{}_-)$ denotes homomorphisms of $\ff_p$-vector spaces.

In our situation we consider the case $K = I_w$ for $w \in \widetilde{W}$. Then the Iwahori decomposition \eqref{eqIwaDecompIw} induces a surjective homomorphism of topological groups
\begin{equation}\label{eqPsiw}
\func
	{\Psi_{w}}
	{\big( \quoz{T^1}{(T^1)^p} \big) \oplus \bigoplus_{\alpha \in \Phi} \quoz{\ooo}{p\ooo}}
	{\fratt{I_w}}
	{\big( \overline{t}, (\overline{u_\alpha})_{\alpha \in \Phi} \big)}
	{\overline{t} \cdot \prod\limits_{\alpha \in \Phi} \overline{x_\alpha(\pi^{g_w(\alpha)} u_\alpha)}.}
\end{equation}

If $\field$ is a finite extension of $\qq_p$ we have that both $\quoz{\ooo}{p\ooo}$ and $\quoz{1+\mmm}{(1+\mmm)^p}$ (and hence also $\quoz{T^1}{(T^1)^p}$) are finite. Therefore,
\begin{align}\label{eqFrattiniFinite}
	&H^1(I_w,k)
	\cong
	\homm_{\vect{\ff_p}}(\fratt{I_w},k)
	&&\text{if $\field$ is a finite extension of $\qq_p$.}
\end{align}

For the general structure of the cup-product algebra $H^\ast(K,k)$ of a pro-$p$ group $K$, we will use some results due to Lazard and Serre. Let $K$ be a torsion-free analytic pro-$p$ group of dimension $n$ as an analytic manifold over $\qq_p$. One has that $K$ is a Poincaré group of dimension $n$ (see \cite[Theorem 5.1.9 and the following lines]{horizonsCohom}), meaning that the following conditions are satisfied: $H^n(K,k)$ is a one-dimensional $k$-vector space; for all $i > n$ the space $H^i(K,k)$ is zero; for all $i \in \{0,\dots,n\}$ the pairing
\[
	\funcInline{\cupprod}{H^i(K,k) \times H^{n-i}(K,k)}{H^{n}(K,k)}
\]
is non-degenerate. In the literature the definition of Poincaré pro-$p$ group is usually given only in the case $k = \ff_p$, but for for a general $k$ of characteristic $p$, the same properties follow because of the natural isomorphism $H^\ast({}_-,k) \cong H^\ast({}_-,\ff_p) \otimes_{\ff_p} k$.

A pro-$p$ group $K$ is called \emph{uniform} if it is topologically finitely generated, torsion-free and powerful (see \cite[Theorem 4.5]{SegalAndCo}); by definition, powerful means that $\overline{[K,K]} \subseteq \overline{K^p}$ if $p$ is odd and $\overline{[K,K]} \subseteq \overline{K^4}$ if $p=2$. Under these assumptions, a result by Lazard (see \cite[Theorem 5.1.5]{horizonsCohom}) states that one has natural isomorphisms of $k$-algebras
\[
	H^\ast(K,k) \cong \exterior^\ast \big(H^1(K,k) \big) \cong \exterior^\ast \big(\homm_{\vect{\ff_p}} (\fratt{K},k) \big),
\]
where $\exterior^\ast({}_-)$ denotes the exterior algebra. Note that under the above assumptions $\fratt{K}$ is finite (see \cite[1.14 Proposition]{SegalAndCo}).

\section{The first graded piece \texorpdfstring{$E^1$}{E\textasciicircum{}1} is finitely generated as an \texorpdfstring{$E^0$\mbox{-}\nobreak\hspace{0pt}bimodule}{E\textasciicircum{}0-bimodule}}\label{sectionBimod}

In this section we will prove that $E^1$ is finitely generated as an $E^0$-bimodule in the case that $\field$ is a finite extension of $\qq_p$ (Proposition \ref{propE1fg}). For the moment we do not make such assumption, and we start with a lemma on the generalized affine Weyl group $W$. The proof is inspired by the arguments in \cite[Lemma 2.11]{VignII} (which the author already reused in \cite[Lemma 3.2.1]{thesis}).

\begin{lmm}\label{lmmLengthsAddUp}
Let $w_0 \in W_0 \subseteq W$, let $r \in \nn$, let $\overline{\mathfrak{c}}$ be a closed Weyl chamber in $V_{\apart} = \left( \quoz{X_\ast(\TT)}{X_\ast(\CC^\circ)} \right) \otimes_{\zz} \rr$, let $x_1, \dots, x_r \in \quoz{T}{T^0} \subseteq W$ be such that $\nu(x_i)$ lies in $\overline{\mathfrak{c}}$ for all $i \in \{1,\dots,r\}$, let $n_1, \dots, n_r \in \nn$ and let $m_1, \dots, m_r \in \nn$ with the property that if $m_i > 0$, then $n_i > 0$ (for all $i \in \{1,\dots,r\}$). One has that
\[
	\ell \big( w_0 x_1^{n_1+m_1} \dots x_r^{n_r+m_r})
	=
	\ell \big( w_0 x_1^{n_1} \dots x_r^{n_r})
	+
	\ell \big(x_1^{m_1} \dots x_r^{m_r}).
\]
\end{lmm}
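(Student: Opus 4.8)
The plan is to prove the length identity by working directly with the combinatorial definition of $\ell$ via sign changes on affine roots, reducing everything to a statement about the behaviour of the exponents $\langle \nu(x_i), \alpha\rangle$ for $\alpha \in \Phi$. First I would set $t = x_1\cdots$, more precisely write $a = w_0 x_1^{n_1}\cdots x_r^{n_r}$, $b = x_1^{m_1}\cdots x_r^{m_r}$ and $c = w_0 x_1^{n_1+m_1}\cdots x_r^{n_r+m_r} = ab$; since $\quoz{T}{T^0}$ is abelian, $b = x_1^{m_1}\cdots x_r^{m_r}$ is an honest element of $\quoz{T}{T^0}$ and $c = a b$ with $\nu(b) = \sum_i m_i \nu(x_i)$ lying in the closed chamber $\overline{\mathfrak{c}}$ (as a nonnegative combination of the $\nu(x_i)$). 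In general $\ell(ab)\le \ell(a)+\ell(b)$, so the content is the reverse inequality, and by the standard criterion this is equivalent to showing that no positive affine root is sent to a negative one by $a$ and back to a positive one by $c^{-1}$ — equivalently, $b$ should not "cancel" any of the sign changes produced by $a$.

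The key step is the explicit length formula. Using \eqref{eqLengthFormulaW0Left} (or rather its analogue for products, which one reduces to by the same proof as in \cite[Corollary 5.10]{VignI}), one has for any $u \in \quoz{T}{T^0}$
\[
\ell(w_0 u)
=
\sum_{\alpha \in \Phi^+ \cap w_0^{-1}\Phi^+} \abs{\langle \nu(u),\alpha\rangle}
+
\sum_{\alpha \in \Phi^+ \cap w_0^{-1}\Phi^-} \abs{\langle \nu(u),\alpha\rangle + 1},
\]
and likewise $\ell(b) = \sum_{\alpha\in\Phi^+} \abs{\langle \nu(b),\alpha\rangle}$ by \eqref{eqLengthFormulaTorus}. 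So the claim becomes the purely numerical identity: for every $\alpha \in \Phi$, setting $N_\alpha = \langle \nu(x_1^{n_1}\cdots x_r^{n_r}),\alpha\rangle = \sum_i n_i \langle \nu(x_i),\alpha\rangle$ and $M_\alpha = \langle \nu(b),\alpha\rangle = \sum_i m_i \langle \nu(x_i),\alpha\rangle$, one has $\abs{N_\alpha + M_\alpha + \epsilon_\alpha} = \abs{N_\alpha + \epsilon_\alpha} + \abs{M_\alpha}$, where $\epsilon_\alpha \in \{0,1\}$ (for $\alpha \in \Phi^+$, with $\epsilon_\alpha = 1$ exactly when $w_0^{-1}\alpha \in \Phi^-$), summed over $\alpha \in \Phi^+$. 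This holds provided $N_\alpha + \epsilon_\alpha$ and $M_\alpha$ have the same sign (weakly), i.e. are not strictly of opposite sign.

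The heart of the matter — and the step I expect to be the main obstacle — is checking this sign compatibility. Since all $\nu(x_i)$ lie in the common closed chamber $\overline{\mathfrak{c}}$, for each fixed $\alpha$ the quantities $\langle \nu(x_i),\alpha\rangle$ all have the same sign as $i$ varies (the chamber lies on one side of each wall $\alpha = 0$), so $N_\alpha$ and $M_\alpha$ automatically have the same sign; this already handles all $\alpha$ with $\epsilon_\alpha = 0$. The delicate case is $\epsilon_\alpha = 1$, i.e. $\alpha \in \Phi^+$ with $w_0^{-1}\alpha \in \Phi^-$: here one must rule out $N_\alpha + 1$ being positive while $M_\alpha$ is negative, which would force $M_\alpha \le -1$, hence $\langle \nu(x_i),\alpha\rangle < 0$ for some $i$ with $m_i > 0$, hence (same-sign property) $\langle \nu(x_j),\alpha\rangle \le 0$ for all $j$, hence $N_\alpha \le 0$ so $N_\alpha + 1 \le 1$; the remaining borderline subcase $N_\alpha = 0$ has to be excluded, and this is exactly where the hypothesis "$m_i > 0 \implies n_i > 0$" enters — if some $x_i$ with $m_i > 0$ has $\langle \nu(x_i),\alpha\rangle < 0$ then the corresponding $n_i > 0$ contributes a strictly negative term to $N_\alpha$, forcing $N_\alpha \le -1$ and so $N_\alpha + 1 \le 0$, matching the sign of $M_\alpha$. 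Assembling these cases and summing over $\alpha \in \Phi^+$ gives $\ell(c) = \ell(a) + \ell(b)$. I would also double-check the minor point that the product length formula needed here follows from the same braid-relation argument that yields \eqref{eqLengthFormulaW0Left}, or alternatively induct on $r$ and on $\sum m_i$ using $\ell(xy) = \ell(x) + \ell(y)$ whenever established for shorter products, to keep the write-up self-contained.
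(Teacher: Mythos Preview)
Your proposal is correct and follows essentially the same approach as the paper: reduce to the explicit length formula \eqref{eqLengthFormulaW0Left}, split $\Phi^+$ according to whether $w_0^{-1}\alpha$ is positive or negative, and verify the sign compatibility $\abs{N_\alpha + M_\alpha + \epsilon_\alpha} = \abs{N_\alpha + \epsilon_\alpha} + \abs{M_\alpha}$ root by root, using the common chamber for $\epsilon_\alpha = 0$ and the hypothesis $m_i>0 \Rightarrow n_i>0$ for the borderline subcase when $\epsilon_\alpha = 1$. Your final worry is unnecessary: since $\quoz{T}{T^0}$ is abelian, $x_1^{n_1}\cdots x_r^{n_r}$ is a single element of $\quoz{T}{T^0}$ and formula \eqref{eqLengthFormulaW0Left} applies to $\ell(w_0 t)$ directly with no further induction or braid argument needed.
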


\begin{proof}
Recall the notations in \S\ref{subsecAffineRootsLength}. Let $x \defeq x_1^{n_1} \dots x_r^{n_r}$ and let $y \defeq x_1^{m_1} \dots x_r^{m_r}$. Looking at the length formula \eqref{eqLengthFormulaW0Left}, we see that
\begin{align*}
	\ell(w_0 x y)
	&=
	\sum_{\alpha \in \Phi^+ \cap w_0^{-1} \Phi^+}
		\abs{\langle \nu(x y), \alpha \rangle}
	+
	\sum_{\alpha \in \Phi^+ \cap w_0^{-1} \Phi^-}
		\abs{\langle \nu(x y), \alpha \rangle + 1}
	\\&\leqslant
	\sum_{\alpha \in \Phi^+ \cap w_0^{-1} \Phi^+}
		\big( \abs{\langle \nu(x), \alpha \rangle} + \abs{\langle \nu(y), \alpha \rangle} \big)
	\\&\qquad+
	\sum_{\alpha \in \Phi^+ \cap w_0^{-1} \Phi^-}
		\big( \abs{\langle \nu(x), \alpha \rangle + 1} + \abs{\langle \nu(y), \alpha \rangle} \big)
	\\&=
	\ell(w_0 x) + \ell(y),
\end{align*}
and that one has equality if and only if the following two conditions hold:
\begin{enumerate}[label=(\roman*)]
	\item \label{itemCondCompatI} For all $\alpha \in \Phi^+ \cap w_0^{-1} \Phi^+$ the signs of $\langle \nu(x), \alpha \rangle$ and of $\langle \nu(y), \alpha \rangle$ are compatible, meaning that their product is bigger or equal than zero;
	\item \label{itemCondCompatII} For all $\alpha \in \Phi^+ \cap w_0^{-1} \Phi^-$ the signs of $\langle \nu(x), \alpha \rangle + 1$ and of $\langle \nu(y), \alpha \rangle$ are compatible.
\end{enumerate}
The condition \ref{itemCondCompatI} is satisfied because $\nu(x)$ and $\nu(y)$ lie in a common closed Weyl chamber. To check the thesis of the lemma, it remains to show that condition \ref{itemCondCompatII} is satisfied.

Let us consider $\alpha \in \Phi^+ \cap w_0^{-1} \Phi^-$. Since all the $\nu(x_i)$'s lie in a common closed Weyl chamber, we have either that $\langle \nu(x_i), \alpha \rangle \geqslant 0$ for all $i \in \{1,\dots,r\}$ or that $\langle \nu(x_i), \alpha \rangle \leqslant 0$ for all $i \in \{1,\dots,r\}$. In the first case both $\langle \nu(x), \alpha \rangle + 1$ and $\langle \nu(y), \alpha \rangle$ are bigger or equal than $0$, and so condition \ref{itemCondCompatII} is satisfied. Now, let us assume that we are in the second case. If furthermore there exists $i_0 \in \{1,\dots,r\}$ such that $\langle \nu(x_{i_0}), \alpha \rangle < 0$ and $m_{i_0} > 0$ (and so also $n_{i_0} > 0$, by assumption), we obtain:
\begin{align*}
	\langle \nu(x), \alpha \rangle + 1
	&\leqslant
	n_{i_0} \cdot \langle \nu(x_{i_0}), \alpha \rangle + 1
	\\&\leqslant 0,
	\\
	\langle \nu(y), \alpha \rangle
	&\leqslant
	m_{i_0} \cdot \langle \nu(x_{i_0}), \alpha \rangle
	\\&<
	0,
\end{align*}
Hence, in this case condition \ref{itemCondCompatII} is satisfied. Therefore, it remains to consider the case that $\langle \nu(x_i), \alpha \rangle \leqslant 0$ for all $i \in \{1,\dots,r\}$ and that $m_i = 0$ for all $i \in \{1,\dots,r\}$ such that $\langle \nu(x_{i}), \alpha \rangle < 0$. These conditions force $\langle \nu(y), \alpha \rangle$ to be $0$, and so condition \ref{itemCondCompatII} is satisfied.
\end{proof}

Recall from \eqref{eqDefGWAlpha} the indices $g_w(\alpha)$ that appear in the Iwahori decomposition \eqref{eqIwaDecompIw}. In the following lemma we will study a certain property of these indices that will be crucial to prove Proposition \ref{propE1fg}. The lack of symmetry in the two conditions below reflect the lack of symmetry in the formulas \eqref{eqActionRightLeftE0IfLengthsAddUp} (where in one case only restriction appears and in the other case restriction and conjugation appear). The proof of the lemma is similar to \cite[Lemma 3.2.5]{thesis}.

\begin{lmm}\label{lmmgw}
Let $w_0 \in W_0 \subseteq W$, let $r \in \nn$, let $\overline{\mathfrak{c}}$ be a closed Weyl chamber in $V_{\apart} = \left( \quoz{X_\ast(\TT)}{X_\ast(\CC^\circ)} \right) \otimes_{\zz} \rr$, let $x_1, \dots, x_r \in \quoz{T}{T^0} \subseteq W$ be such that $\nu(x_i)$ lies in $\overline{\mathfrak{c}}$ for all $i \in \{1,\dots,r\}$, let $n_1, \dots, n_r \in \nn$ and let $m_1, \dots, m_r \in \nn$ with the property that if $m_i > 0$, then $n_i > 0$ (for all $i \in \{1,\dots,r\}$). Finally, let $\alpha \in \Phi$.
One has that at least one of the following two conditions is true:
\begin{itemize}
\item $g_{w_0 x_1^{n_1+m_1} \dots x_r^{n_r+m_r}} (\alpha) = g_{w_0 x_1^{n_1} \dots x_r^{n_r}} (\alpha)$;
\item $g_{w_0 x_1^{n_1+m_1} \dots x_r^{n_r+m_r}} (\alpha) = g_{w_0 x_1^{n_1} \dots x_r^{n_r}} (\alpha) - \langle \nu(x_1^{m_1} \dots x_r^{m_r}), w_0^{-1} \alpha \rangle$.
\end{itemize}
\end{lmm}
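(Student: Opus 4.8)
The plan is to reduce the statement to an explicit computation of the quantity $g_w(\alpha)$ in terms of the action of $w$ on affine roots, and then to invoke the length additivity established in Lemma \ref{lmmLengthsAddUp} together with the sign-compatibility conditions that came out of its proof. First I would recall from \eqref{eqDefGWAlpha} that $g_w(\alpha) = \min\set{m \in \zz}{(\alpha,m) \in \Phi_{\aff}^+ \cap w\Phi_{\aff}^+}$; since $(\alpha,m) \in \Phi_{\aff}^+$ exactly when $m \geqslant 1$, or $m=0$ and $\alpha \in \Phi^+$, and since $(\alpha,m) \in w\Phi_{\aff}^+$ iff $w^{-1}(\alpha,m) \in \Phi_{\aff}^+$, one unwinds $w^{-1}(\alpha,m) = (w_0^{-1}\alpha, m + \langle \nu(\text{torus part of }w^{-1}), \dots\rangle)$ using the action formula in \S\ref{subsecAffineRootsLength}. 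The upshot is a closed formula: writing $t$ for the torus part of $w$ (so $w = t w_0$ after moving things around, or more precisely handling $w_0 x_1^{n_1}\cdots x_r^{n_r}$ directly), $g_w(\alpha)$ equals either $0$ or $1$ shifted by $\max(0, -\langle \nu(\cdot), w_0^{-1}\alpha\rangle)$ or a similar max/min expression — the precise bookkeeping is the routine part. The two cases in the statement — no change, versus a shift by exactly $-\langle \nu(x_1^{m_1}\cdots x_r^{m_r}), w_0^{-1}\alpha\rangle$ — will correspond to whether the ``$+1$'' or ``$+0$'' branch is active, i.e.\ to the sign of $\langle \nu(x_1^{m_1}\cdots x_r^{m_r}), w_0^{-1}\alpha\rangle$ relative to the corresponding quantity for $x_1^{n_1}\cdots x_r^{n_r}$.

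Concretely, I would set $v \defeq w_0 x_1^{n_1}\cdots x_r^{n_r}$ and $v' \defeq w_0 x_1^{n_1+m_1}\cdots x_r^{n_r+m_r}$, put $y \defeq x_1^{m_1}\cdots x_r^{m_r}$, and observe that $\nu(v') = \nu(v) + w_0\nu(y)$ (since the $x_i$ commute, being in $T/T^0$). Then $\langle \nu(v'), \alpha\rangle = \langle \nu(v), \alpha\rangle + \langle \nu(y), w_0^{-1}\alpha\rangle$ for every root $\alpha$, using $W_0$-equivariance of the pairing. From the unwound formula for $g_w(\alpha)$ — which will be of the shape $g_v(\alpha) = \delta(\alpha) - \min(0, \langle \nu(v), w_0^{-1}\alpha\rangle + c(\alpha))$ or similar, where $\delta, c$ depend only on whether $w_0^{-1}\alpha \in \Phi^\pm$ and not on the $x_i$ — one sees $g_{v'}(\alpha)$ differs from $g_v(\alpha)$ by $0$ when the two relevant integers have compatible signs (the $\min$ picks up the same term), and by exactly $-\langle \nu(y), w_0^{-1}\alpha\rangle$ when $\langle \nu(v), w_0^{-1}\alpha\rangle + c$ and $\langle \nu(v'), w_0^{-1}\alpha\rangle + c$ straddle zero or when the added term dominates. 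The key point making this dichotomy clean — rather than producing intermediate values — is precisely sign-compatibility: since all $\nu(x_i)$ lie in the common closed Weyl chamber $\overline{\mathfrak{c}}$, the quantities $\langle \nu(x_i), w_0^{-1}\alpha\rangle$ all have the same sign, so $\langle \nu(v), w_0^{-1}\alpha\rangle$ and $\langle \nu(y), w_0^{-1}\alpha\rangle$ have compatible signs, and adding them cannot cause the relevant $\min$ or $\max$ to ``split'' — this is the same mechanism, and essentially the same case analysis on $\alpha \in \Phi^+ \cap w_0^{-1}\Phi^\pm$ and on the hypothesis ``$m_i > 0 \Rightarrow n_i > 0$,'' that drives the proof of Lemma \ref{lmmLengthsAddUp}.

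I expect the main obstacle to be the correct handling of the boundary/degenerate subcases, exactly as in Lemma \ref{lmmLengthsAddUp}: the situation where $\langle \nu(x_i), w_0^{-1}\alpha\rangle = 0$ for those $i$ with $m_i > 0$, so that $\nu(y)$ pairs to zero with $w_0^{-1}\alpha$ and the two bullet points coincide; and the situation where the sign is negative and one must use the hypothesis $m_i > 0 \Rightarrow n_i > 0$ to guarantee that $g_v(\alpha)$ is already ``in the shifted regime'' so that the further shift by $y$ lands on the second bullet rather than creating a third value. Carefully organizing the argument so that these cases are visibly exhaustive — mirroring the enumeration ``$\langle\nu(x_i),\alpha\rangle \geqslant 0$ for all $i$'' vs.\ ``$\leqslant 0$ for all $i$, with a strict $i_0$ having $m_{i_0}>0$'' vs.\ ``$\leqslant 0$ for all $i$, with $m_i = 0$ whenever strict'' from the previous lemma — is the only delicate step; once the formula for $g_w(\alpha)$ is written down the verification in each case is a short inequality chain.
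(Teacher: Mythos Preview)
Your proposal is correct and follows essentially the same route as the paper. The paper makes your ``closed formula'' step explicit by deriving $g_{w_0 x}(\alpha) = \max\{\varepsilon_\alpha,\, \varepsilon_{w_0^{-1}\alpha} - \langle \nu(x), w_0^{-1}\alpha\rangle\}$ (with $\varepsilon_\beta = 0$ or $1$ according as $\beta \in \Phi^+$ or $\Phi^-$), and then carries out exactly the three-way case split on the sign of $\langle \nu(x_1^{m_1}\cdots x_r^{m_r}), w_0^{-1}\alpha\rangle$ that you anticipate, using the common-chamber hypothesis and the condition $m_i>0 \Rightarrow n_i>0$ in the negative-sign case just as you describe.
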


\begin{proof}
For brevity, let us define
\begin{align*}
	a &\defeq g_{w_0 x_1^{n_1+m_1} \dots x_r^{n_r+m_r}} (\alpha),
	\\
	b &\defeq g_{w_0 x_1^{n_1} \dots x_r^{n_r}} (\alpha),
	\\
	c &\defeq g_{w_0 x_1^{n_1} \dots x_r^{n_r}} (\alpha) - \langle \nu(x_1^{m_1} \dots x_r^{m_r}), w_0^{-1} \alpha \rangle.
\end{align*}
We have to check that either $a = b$ or $a = c$.

For all $\beta \in \Phi$ let us set
\[
	\varepsilon_\beta
	\defeq
	\begin{cases}
		0 &\text{if $\beta \in \Phi^+$,}
		\\
		1 &\text{if $\beta \in \Phi^-$.}
	\end{cases}
\]
Recalling the definition of $g_{({}_-)}(\alpha)$ from \eqref{eqDefGWAlpha}, for all $x \in \quoz{T}{T^0}$ we compute:
\begin{align*}
	g_{w_0x}(\alpha)
	&=
	\min \set{ m \in \zz }{ (\alpha, m) \in \Phi_{\aff}^+ \cap w_0x \Phi_{\aff}^+ }
	\\&=
	\min \set
		{ m \in \zz }
		{ \begin{matrix*}[l]
			(\alpha, m) \in \Phi_{\aff}^+,
			\\
			\left( w_0^{-1} \alpha, m + \left\langle \nu(x) , w_0^{-1} \alpha \right\rangle \right) \in \Phi_{\aff}^+
		\end{matrix*} }
	\\&=
	\max \left\{ \varepsilon_\alpha, \varepsilon_{w_0^{-1} \alpha} - \left\langle \nu(x) , w_0^{-1} \alpha \right\rangle \right\}.
\end{align*}
Plugging in, we obtain
\begin{align*}
	a &= \max \left\{ \varepsilon_\alpha, \varepsilon_{w_0^{-1} \alpha} - \left\langle \nu(x_1^{n_1+m_1} \dots x_r^{n_r+m_r}) , w_0^{-1} \alpha \right\rangle \right\},
	\\
	b &= \max \left\{ \varepsilon_\alpha, \varepsilon_{w_0^{-1} \alpha} - \left\langle \nu(x_1^{n_1} \dots x_r^{n_r}) , w_0^{-1} \alpha \right\rangle \right\},
	\\
	c &= \max \left\{ \varepsilon_\alpha, \varepsilon_{w_0^{-1} \alpha} - \left\langle \nu(x_1^{n_1} \dots x_r^{n_r}) , w_0^{-1} \alpha \right\rangle \right\} - \langle \nu(x_1^{m_1} \dots x_r^{m_r}), w_0^{-1} \alpha \rangle.
\end{align*}
We distinguish three cases on the basis of the sign of $\left\langle \nu(x_1^{m_1} \dots x_r^{m_r}) , w_0^{-1} \alpha \right\rangle$.
\begin{itemize}
\item
	Let us consider the case $\left\langle \nu(x_1^{m_1} \dots x_r^{m_r}) , w_0^{-1} \alpha \right\rangle < 0$. Using the fact that the $\nu(x_i)$'s lie in a common closed Weyl chamber, we deduce that $\left\langle \nu(x_i) , w_0^{-1} \alpha \right\rangle \leqslant 0$ for all $i \in \{1,\dots,r\}$ and that $\left\langle \nu(x_{i_0}) , w_0^{-1} \alpha \right\rangle < 0$ and $m_{i_0} > 0$ for at least one $i_0 \in \{1,\dots,r\}$. By assumption, this implies that $n_{i_0} > 0$. We thus see that $\langle \nu(x_1^{n_1} \dots x_r^{n_r}), w_0^{-1} \alpha \rangle < 0$. Of course, we also have that $\left\langle \nu(x_1^{n_1+m_1} \dots x_r^{n_r+m_r}) , w_0^{-1} \alpha \right\rangle < 0$, and we conclude that
	\begin{align*}
		a
		&=
		\varepsilon_{w_0^{-1} \alpha} - \left\langle \nu(x_1^{n_1+m_1} \dots x_r^{n_r+m_r}) , w_0^{-1} \alpha \right\rangle,
		\\
		c
		&=
		\varepsilon_{w_0^{-1} \alpha} - \left\langle \nu(x_1^{n_1} \dots x_r^{n_r}) , w_0^{-1} \alpha \right\rangle - \langle \nu(x_1^{m_1} \dots x_r^{m_r}), w_0^{-1} \alpha \rangle,
	\end{align*}
	and hence that $a=c$.
\item
	Let us consider the case $\left\langle \nu(x_1^{m_1} \dots x_r^{m_r}) , w_0^{-1} \alpha \right\rangle > 0$. With the same argument as before, we see that $\langle \nu(x_1^{n_1} \dots x_r^{n_r}), w_0^{-1} \alpha \rangle > 0$, and, of course, also that $\left\langle \nu(x_1^{n_1+m_1} \dots x_r^{n_r+m_r}) , w_0^{-1} \alpha \right\rangle > 0$. We thus conclude that
	\[
		a = \varepsilon_\alpha = b.
	\]
\item
	It remains to consider the case $\left\langle \nu(x_1^{m_1} \dots x_r^{m_r}) , w_0^{-1} \alpha \right\rangle = 0$, but under such assumption we clearly have $a=b=c$.
\qedhere
\end{itemize}
\end{proof}

Before stating the main result of this section, we will need the following two elementary lemmas.

\begin{lmm}\label{lmmStupiFiniteSet}
Let $r \in \nn$, let $F$ be a finite set, and let $\funcInline{\varphi}{\nn^r}{F}$ be a map of sets. One has that here exists a finite subset $N \subseteq \nn^r$ such that for all $(n_1, \dots n_r) \in \nn^n$ there exists $(m_1, \dots m_r) \in N$ with the property that:
\begin{itemize}
\item
	$\varphi(m_1, \dots m_r) = \varphi(n_1, \dots n_r)$,
\item
	for all $i \in \{1,\dots,r\}$ one has $m_i \leqslant n_i$,
\item
	for all $i \in \{1,\dots,r\}$ such that $n_i > 0$, one has $m_i > 0$.
\end{itemize}
\end{lmm}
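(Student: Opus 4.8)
The plan is to deduce this from Dickson's lemma --- concretely, from the statement that every subset of $\nn^r$ has only finitely many elements that are minimal with respect to the product partial order $\leqslant$ (where $(a_1,\dots,a_r) \leqslant (b_1,\dots,b_r)$ means $a_i \leqslant b_i$ for all $i$). The one thing that forces a little care is the third bullet in the statement: combined with $m_i \leqslant n_i$ it says that the tuple $m$ we produce must have \emph{exactly} the same set of nonzero coordinates as $n$. So the decomposition of $\nn^r$ to which we apply Dickson's lemma should be indexed by support, not merely by the value of $\varphi$.

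First I would, for $n = (n_1,\dots,n_r) \in \nn^r$, write $\operatorname{supp}(n) \defeq \set{i \in \{1,\dots,r\}}{n_i > 0}$, and for each pair $(S,f)$ with $S \subseteq \{1,\dots,r\}$ and $f \in F$ set
\[
	A_{S,f} \defeq \set{n \in \nn^r}{\operatorname{supp}(n) = S,\; \varphi(n) = f},
\]
so that $\nn^r$ is the disjoint union of the (finitely many) sets $A_{S,f}$. By Dickson's lemma the set $M_{S,f}$ of $\leqslant$-minimal elements of $A_{S,f}$ is finite, and I would take $N \defeq \bigcup_{S,f} M_{S,f}$, a finite union of finite sets.

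To verify that $N$ works, take $n \in \nn^r$ and put $S \defeq \operatorname{supp}(n)$, $f \defeq \varphi(n)$, so $n \in A_{S,f}$. The subset $\set{m \in A_{S,f}}{m \leqslant n}$ is finite and nonempty (it contains $n$), hence has a $\leqslant$-minimal element $m$; a one-line check shows that $m$ is then minimal in all of $A_{S,f}$, so $m \in M_{S,f} \subseteq N$. Now $m \leqslant n$ gives $m_i \leqslant n_i$ for all $i$; membership $m \in A_{S,f}$ gives $\varphi(m) = f = \varphi(n)$; and $\operatorname{supp}(m) = S = \operatorname{supp}(n)$ gives $m_i > 0$ for every $i$ with $n_i > 0$. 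Thus $m$ satisfies the three required conditions.

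There is no real obstacle here: the argument is routine once one invokes Dickson's lemma, and the only thing to watch is the support bookkeeping, which is handled automatically by indexing the decomposition of $\nn^r$ by $\operatorname{supp}$. (If one prefers to avoid quoting Dickson's lemma, the statement can instead be proved directly by induction on $r$, splitting off the last coordinate and applying the inductive hypothesis to each fiber --- but that is longer and less transparent.)
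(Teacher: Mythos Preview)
Your proof is correct: the support-indexed decomposition together with Dickson's lemma handles all three bullets cleanly, and the ``one-line check'' that a $\leqslant$-minimal element of $\set{m \in A_{S,f}}{m \leqslant n}$ is already minimal in $A_{S,f}$ is indeed immediate (any smaller element of $A_{S,f}$ would still lie below $n$). The paper itself omits the proof entirely, so there is nothing to compare against; your argument is a perfectly good way to fill the gap.
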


\begin{proof}
Omitted.
\end{proof}

\begin{lmm}\label{lmmVectorSpaces}
Let $V$ be a finite-dimensional vector space over a field. Let $\rho$ and $\lambda$ be endomorphisms of $V$ such that there exists $n \in \zpiu$ satisfying the following properties:
\begin{align*}
	\rho^{n+1} &= \rho^n,
	\\
	\ker(\rho^n) \cap \ker(\lambda) &= 0,
	\\
	\rho^n \circ \lambda &= \lambda \circ \rho^n,
\end{align*}
One has that the subspace $\image(\rho) + \image(\lambda)$ is the full $V$.
\end{lmm}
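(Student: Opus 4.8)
The plan is to recognize that the hypotheses force $e \defeq \rho^n$ to be an idempotent, and then to run a Fitting-type argument on the decomposition it induces. First I would note that from $\rho^{n+1} = \rho^n$ one gets $\rho^{n+j} = \rho^n$ for all $j \geqslant 0$ by induction, hence in particular $e^2 = \rho^{2n} = \rho^n = e$. Therefore $V = \image(e) \oplus \ker(e)$, and since $n \geqslant 1$ we have $\image(e) = \image(\rho^n) \subseteq \image(\rho)$. So it remains only to show that $\ker(e) \subseteq \image(\lambda)$.

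For this I would first use the commutation hypothesis $\rho^n \circ \lambda = \lambda \circ \rho^n$ to observe that $\lambda$ preserves $\ker(e)$: if $v \in \ker(e)$, then $e(\lambda v) = \lambda(e v) = \lambda(0) = 0$. Thus $\lambda$ restricts to an endomorphism $\lambda|_{\ker(e)}$ of the finite-dimensional space $\ker(e)$. The hypothesis $\ker(\rho^n) \cap \ker(\lambda) = \ker(e) \cap \ker(\lambda) = 0$ says exactly that this restriction is injective; being an injective endomorphism of a finite-dimensional vector space, it is surjective, so $\lambda(\ker(e)) = \ker(e)$. In particular $\ker(e) \subseteq \image(\lambda)$.

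Combining, $V = \image(e) \oplus \ker(e) \subseteq \image(\rho) + \image(\lambda)$, which is the claim. I do not expect a genuine obstacle here: the only point requiring a moment's thought is spotting that $\rho^n$ is idempotent (so that one may split $V$), after which everything is a short diagram-free verification; the finiteness of $\dim V$ is used only in the "injective $\Rightarrow$ surjective" step, which is where the hypothesis on the intersection of kernels is consumed.
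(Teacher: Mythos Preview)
Your proof is correct and uses the same underlying ideas as the paper's: decompose $V$ along $\ker(\rho^n)$, use the commutation $\rho^n\lambda=\lambda\rho^n$, and invoke finite-dimensionality for an injective-implies-surjective step. The packaging differs slightly: you make explicit that $e=\rho^n$ is idempotent and use the canonical splitting $V=\image(e)\oplus\ker(e)$, then show $\lambda$ restricts to an automorphism of $\ker(e)$; the paper instead picks an arbitrary complement $V'$ of $\ker(\rho^n)$ and shows the single map $\ker(\rho^n)\oplus V'\to V$, $(x,y)\mapsto\lambda(x)+\rho(y)$, is injective. Your version is marginally cleaner since it isolates the role of each summand, but the two arguments are essentially equivalent.
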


\begin{proof}
Let us choose a complement $V'$ of $\ker(\rho^n)$ in $V$. It suffices to prove that the endomorphism
\[
	\funcNN
		{V = \ker(\rho^n) \oplus V'}
		{V}
		{(x,y)}
		{\lambda(x) + \rho(y)}
\]
is injective (hence surjective). So, let $(x,y)$ be in the kernel. Using the first and third assumptions of the lemma, we obtain
\begin{align*}
	0
	&=
	\rho^n(\lambda(x)) + \rho^{n+1}(y)
	\\&=
	\lambda(\rho^n(x)) + \rho^{n}(y)
	\\&=
	\rho^{n}(y).
\end{align*}
Therefore, $y \in \ker(\rho^n) \cap V'$ and so $y=0$. But now $x \in \ker(\rho^n) \cap \ker(\lambda) = 0$, and so we conclude that also $x=0$.
\end{proof}

We are now able to prove the main result of \S\ref{sectionBimod}.

\begin{prop}\label{propE1fg}
Assume that $\field$ is a finite extension of $\qq_p$.
One has that $E^1$ is finitely generated as an $E^0$-bimodule.
\end{prop}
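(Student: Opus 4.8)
The plan is to work with the decomposition $E^1\cong\bigoplus_{w\in\widetilde{W}}H^1(I_w,k)$ and to exhibit a \emph{finite} set $S\subseteq\widetilde{W}$ such that each summand $H^1(I_w,k)$ lies in the $E^0$-sub-bimodule generated by the summands indexed by $S$. Since $\field$ is a finite extension of $\qq_p$, each $\fratt{I_w}$ is finite, so each $H^1(I_w,k)$ is a finite-dimensional $k$-vector space by \eqref{eqFrattiniFinite}; the only feature of the bimodule structure that will be used is that, by the braid relation and \eqref{eqActionRightLeftE0IfLengthsAddUp}, left (resp.\ right) multiplication by $\tau_a$ sends a summand into another summand as a conjugation followed by a restriction, whenever the relevant lengths add.

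First I would reduce $\widetilde{W}$ to finitely many ``rays''. The subgroup $\widetilde{T}\defeq\quoz{T}{T^1}$ is finitely generated abelian and has finite index $\#W_0$ in $\widetilde{W}$; fixing, for each $w_0\in W_0\subseteq W$, a lift $\dot w_0\in\widetilde{W}$, and observing that $V_\apart$ is covered by its finitely many closed Weyl chambers, each of which pulls back through $\nu$ to a finitely generated submonoid of $\widetilde{T}$ (Gordan's lemma, together with the finiteness of $\quoz{T^0}{T^1}$), one obtains finitely many data $(w_0;x_1,\dots,x_r)$ with $w_0\in W_0$ and $\nu(x_1),\dots,\nu(x_r)$ in one common closed chamber, such that every $w\in\widetilde{W}$ is of the form $\dot w_0\,x_1^{N_1}\cdots x_r^{N_r}$ for some $N\in\nn^r$ (the $x_i$ commuting). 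On each such ray, the computation in the proof of Lemma \ref{lmmgw} shows that the tuple $(g_{\dot w_0 x^N}(\alpha))_{\alpha\in\Phi}$ has, as a function of $N$, only finitely many ``stabilization types'' (each level is eventually constant at $\varepsilon_\alpha$, or grows linearly in $N$, according to a sign depending only on the chamber). Encoding the type by a map $\varphi\colon\nn^r\to F$ with $F$ finite and applying Lemma \ref{lmmStupiFiniteSet}, one gets a finite subset $N_0\subseteq\nn^r$ such that every $N$ admits $N'\in N_0$ with $\varphi(N')=\varphi(N)$, $N'\leqslant N$ componentwise, and $N'_i>0$ whenever $N_i>0$. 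I would take $S$ to consist of the finitely many elements $\dot w_0\,x^{N'}$ obtained in this way.

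Given $w=\dot w_0 x^N$ and $N'\in N_0$ as above, put $M\defeq N-N'$, $v\defeq\dot w_0 x^{N'}\in S$, and $z\defeq\dot w_0 x^M\dot w_0^{-1}\in\widetilde{T}$, so $w=v\cdot x^M=z\cdot v$. Because of the condition ``$N'_i>0$ whenever $N_i>0$'', Lemma \ref{lmmLengthsAddUp} applies to both factorizations, so $\ell(w)=\ell(v)+\ell(x^M)=\ell(z)+\ell(v)$ and $\tau_w=\tau_v\tau_{x^M}=\tau_z\tau_v$. Hence \eqref{eqActionRightLeftE0IfLengthsAddUp} gives two $k$-linear maps $H^1(I_v,k)\to H^1(I_w,k)$ — right multiplication by $\tau_{x^M}$, which is $\res_{I_w}^{I_v}$, and left multiplication by $\tau_z$, which is $\res_{I_w}^{zI_vz^{-1}}\circ z_\ast$ — whose images lie in the sub-bimodule generated by $S$. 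The heart of the matter is that these two images span $H^1(I_w,k)$. In the description of cohomology furnished by the Iwahori decomposition \eqref{eqIwaDecompIw} and the homomorphism $\Psi_{(-)}$ of \eqref{eqPsiw}, both maps are assembled, one root subgroup at a time, from the inclusions $\uuu_{(\alpha,m)}\supseteq\uuu_{(\alpha,m')}$ (together with the isomorphism $z_\ast$); and the key point — which is exactly what Lemma \ref{lmmgw} provides — is that in every root direction $\alpha$ in which the first map loses information (the $\alpha$-level of $I_w$ has increased past that of $I_v$) the $\alpha$-level of $I_w$ equals that of $zI_vz^{-1}$, so that the second map is injective there. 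This transversality, combined with the commutation and idempotence that result from the indices $g_{(-)}(\alpha)$ being eventually locally constant along a ray, is precisely what is needed to invoke Lemma \ref{lmmVectorSpaces} (with the two maps as $\rho$ and $\lambda$, after identifying source and target via the natural isomorphisms at hand) and conclude that the sum of the two images is all of $H^1(I_w,k)$. This proves the proposition.

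The step I expect to be hardest is the last one. The difficulty is that $H^1(I_w,k)$ does \emph{not} in general split as the direct sum of the cohomologies of the factors of the Iwahori decomposition — the map \eqref{eqPsiw} can fail to be injective, e.g.\ when $\field$ is ramified over $\qq_p$, a Chevalley commutator relation intervening — so one cannot simply argue ``one root component at a time''; the abstract Lemma \ref{lmmVectorSpaces} is tailored to get around this, reducing the spanning statement to a transversality of kernels, and Lemmas \ref{lmmLengthsAddUp}, \ref{lmmgw} and \ref{lmmStupiFiniteSet} serve to produce finitely many instances of that transversality and to verify it through the indices $g_w(\alpha)$. Correctly setting up the identification that makes the two multiplication maps into honest endomorphisms of a single finite-dimensional space is itself a delicate point.
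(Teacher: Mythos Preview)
Your outline follows essentially the same route as the paper's proof: reduce to rays $\dot w_0\,x^N$ via Gordan's lemma, apply Lemma~\ref{lmmStupiFiniteSet} to extract a finite set of ``base'' exponents, use Lemma~\ref{lmmLengthsAddUp} to factor $w=v\cdot y=(vyv^{-1})\cdot v$ with lengths adding, and then show that the images of right multiplication by $\tau_y$ and left multiplication by $\tau_{vyv^{-1}}$ together span $H^1(I_w,k)$ via Lemma~\ref{lmmVectorSpaces}, the transversality being supplied by Lemma~\ref{lmmgw}. That is exactly the architecture of the paper's argument.

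The one substantive discrepancy is your choice of the finite invariant $\varphi$ fed into Lemma~\ref{lmmStupiFiniteSet}. You propose the ``stabilization type'' of the tuple $(g_w(\alpha))_\alpha$; the paper instead takes $\varphi(N)=\ker\Psi_{\dot w_0 x^N}$, which is a sub-$\ff_p$-vector space of the finite space $(T^1/(T^1)^p)\oplus\bigoplus_\alpha\ooo/p\ooo$ and hence ranges over a finite set. This choice is not a mere convenience: it is precisely what resolves the ``delicate point'' you flag at the end. With $\ker\Psi_v=\ker\Psi_w\eqqcolon K$, both $H^1(I_v,k)$ and $H^1(I_w,k)$ are canonically identified (via $\overline{\Psi_v}^\vee$ and $\overline{\Psi_w}^\vee$) with the \emph{same} space $\bigl((T^1/(T^1)^p)\oplus\bigoplus_\alpha\ooo/p\ooo\bigr)^\vee\big/K^\perp$, so the two multiplication maps genuinely become endomorphisms $\rho,\lambda$ of one finite-dimensional vector space. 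These $\rho,\lambda$ are then seen to be restrictions of ``diagonal'' maps $\tilde\rho,\tilde\lambda$ on the ambient space, acting on each $\alpha$-summand by multiplication by $r_\alpha=\pi^{g_w(\alpha)-g_v(\alpha)}$ and $l_\alpha=\alpha(vyv^{-1})^{-1}\pi^{g_w(\alpha)-g_v(\alpha)}$ respectively; Lemma~\ref{lmmgw} shows that for each $\alpha$ either $r_\alpha=1$ or $l_\alpha\in\ooo^\times$, which is the transversality input to Lemma~\ref{lmmVectorSpaces}. Your stabilization-type invariant does not by itself force $\ker\Psi_v=\ker\Psi_w$ (the kernel depends on the actual values $g_w(\alpha)$ through the Chevalley commutator relations, not just on whether they are ``stable'' or ``growing''), so you would still owe an identification of source and target. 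Replacing your $\varphi$ by $N\mapsto\ker\Psi_{\dot w_0 x^N}$ closes this gap and your sketch then matches the paper's proof.
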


\begin{proof}
We consider the Weyl chambers of $(\quoz{T}{T^0}) \otimes_{\zz} \rr \cong X_\ast(\TT) \otimes_{\zz} \rr$ (note that in Lemmas \ref{lmmLengthsAddUp} and \ref{lmmgw} we have instead considered the Weyl chambers of $V_{\apart}$).

For all closed Weyl chambers $\overline{\mathfrak{c}}$ in $(\quoz{T}{T^0}) \otimes_{\zz} \rr$ we fix a finite set of generators $x_{\overline{\mathfrak{c}}, 1}, \dots, x_{\overline{\mathfrak{c}}, r}$ of the monoid $(\quoz{T}{T^0}) \cap \overline{\mathfrak{c}}$ (as we can do by Gordan's Lemma; moreover, we choose $r$ to be independent of $\overline{\mathfrak{c}}$, as we can, just to simplify the notation). We also fix representatives $\dot{x}_{\overline{\mathfrak{c}}, 1}, \dots, \dot{x}_{\overline{\mathfrak{c}}, r} \in \widetilde{W}$ for $x_{\overline{\mathfrak{c}}, 1}, \dots, x_{\overline{\mathfrak{c}}, r}$.

Let $w \in \widetilde{W}$. Recall from \eqref{eqPsiw} the map
\[
	\func
		{\Psi_{w}}
		{\big( \quoz{T^1}{(T^1)^p} \big) \oplus \bigoplus_{\alpha \in \Phi} \quoz{\ooo}{p\ooo}}
		{\fratt{I_w}}
		{\big( \overline{t}, (\overline{u_\alpha})_{\alpha \in \Phi} \big)}
		{\overline{t} \cdot \prod\limits_{\alpha \in \Phi} \overline{x_\alpha(\pi^{g_w(\alpha)} u_\alpha)}.}
\]
We argued that, since $\field$ is a finite extension of $\qq_p$, the map $\Psi_{w}$ is a surjective homomorphism of \emph{finite-dimensional} $\ff_p$-vector spaces.
We also consider the induced isomorphism
\[
	\funcInline
		{\overline{\Psi_{w}}}
		{\frac{\big( \quoz{T^1}{(T^1)^p} \big) \oplus \bigoplus_{\alpha \in \Phi} \quoz{\ooo}{p\ooo}}{\ker(\Psi_{w})}}
		{\fratt{I_w}}
\]
and the dual isomorphism
\[
	\funcInline
		{\overline{\Psi_{w}}^\vee}
		{H^1(I,\xx(w))}
		{\homm_{\vect{\ff_p}} \left( \frac{\big( \quoz{T^1}{(T^1)^p} \big) \oplus \bigoplus_{\alpha \in \Phi} \quoz{\ooo}{p\ooo}}{\ker(\Psi_{w})}, k \right)}
\]
(compare \eqref{eqFrattiniFinite}).

Let $\widetilde{W_0}$ denote the preimage of $W_0$ via the quotient map $\funcInlineNN{\widetilde{W}}{W}$ (recall that we view $W_0$ as a subgroup of $W$). For all $w_0 \in \widetilde{W_0}$ and all closed Weyl chambers $\overline{\mathfrak{c}}$ in $(\quoz{T}{T^0}) \otimes_{\zz} \rr$ we consider the function
\[
	\funcNN
		{\nn^r}
		{\left\{ \begin{matrix*}[c]
			\text{sub-$\ff_p$-vector spaces of}
			\\
			\big( \quoz{T^1}{(T^1)^p} \big) \oplus \bigoplus_{\alpha \in \Phi} \quoz{\ooo}{p\ooo}
		\end{matrix*} \right\}}
		{(n_1,\dots,n_r)}
		{\ker \big( \Psi_{w_0 \dot{x}_{\overline{\mathfrak{c}}, 1}^{n_1} \cdots \dot{x}_{\overline{\mathfrak{c}}, r}^{n_r}} \big).}
\]
By Lemma \ref{lmmStupiFiniteSet}, there exists a finite set $N_{w_0, \overline{\mathfrak{c}}}$ with the property that, for all $(n_1, \dots, n_r) \in \nn^r$, there exists $(m_1, \dots, m_r) \in N_{w_0, \overline{\mathfrak{c}}}$ such that:
\begin{align}
	&\text{$\ker \big( \Psi_{w_0 \dot{x}_{\overline{\mathfrak{c}}, 1}^{m_1} \cdots \dot{x}_{\overline{\mathfrak{c}}, r}^{m_r}} \big) = \ker \big( \Psi_{w_0 \dot{x}_{\overline{\mathfrak{c}}, 1}^{n_1} \cdots \dot{x}_{\overline{\mathfrak{c}}, r}^{n_r}} \big)$,}
	\label{cond1}
\\
	&\text{for all $i \in \{1,\dots,r\}$ one has $m_i \leqslant n_i$,}
	\label{cond2}
\\
	&\text{for all $i \in \{1,\dots,r\}$ such that $n_i > 0$, one has $m_i > 0$.}
	\label{cond3}
\end{align}
We consider the following \emph{finite} set:
\[
	\mathcal{G}
	\defeq
	\set{
		w_0 \dot{x}_{\overline{\mathfrak{c}}, 1}^{m_1} \cdots \dot{x}_{\overline{\mathfrak{c}}, r}^{m_r}
	}{
		\begin{matrix*}[l]
			w_0 \in \widetilde{W_0},
			\\
			\overline{\mathfrak{c}} \subseteq (\quoz{T}{T^0}) \otimes_{\zz} \rr \text{ closed Weyl chamber,}
			\\
			(m_1, \dots, m_r) \in N_{w_0, \overline{\mathfrak{c}}}
		\end{matrix*}
	}.
\]
To prove the proposition, it suffices to show that the subspace
\[
	\bigoplus_{v \in \mathcal{G}} H^1(I,\xx(v))
\]
generates $E^1$ as an $E^0$-bimodule, as we said that $\mathcal{G}$ is finite and that $H^1(I,\xx(v))$ is finite-dimensional over $k$.

Let $w \in \widetilde{W}$. We can write it in the form $w = w_0 \dot{x}_{\overline{\mathfrak{c}}, 1}^{n_1} \cdots \dot{x}_{\overline{\mathfrak{c}}, r}^{n_r}$ for some $w_0 \in \widetilde{W_0},$ some closed Weyl chamber $\overline{\mathfrak{c}} \subseteq (\quoz{T}{T^0}) \otimes_{\zz} \rr$ and some $r$-tuple $(n_1, \dots, n_r) \in \nn^r$. Let us pick an $r$-tuple $(m_1, \dots, m_r) \in N_{w_0, \overline{\mathfrak{c}}}$ satisfying properties \eqref{cond1}, \eqref{cond2} and \eqref{cond3}. By \eqref{cond2}, for all $i \in \{1,\dots,r\}$ we have that $n_i-m_i \in \nn$, and from \eqref{cond3} we see that if $n_i-m_i > 0$ then $m_i > 0$. Therefore, Lemma \ref{lmmLengthsAddUp} yields that
\[
	\ell \big( w_0 \dot{x}_{\overline{\mathfrak{c}}, 1}^{n_1} \cdots \dot{x}_{\overline{\mathfrak{c}}, r}^{n_r} \big)
	=
	\ell \big( w_0 \dot{x}_{\overline{\mathfrak{c}}, 1}^{m_1} \cdots \dot{x}_{\overline{\mathfrak{c}}, r}^{m_r} \big)
	+
	\ell \big( \dot{x}_{\overline{\mathfrak{c}}, 1}^{n_1 - m_1} \cdots \dot{x}_{\overline{\mathfrak{c}}, r}^{n_r - m_r} \big).
\]
To ease notation, let us set $v \defeq w_0 \dot{x}_{\overline{\mathfrak{c}}, 1}^{m_1} \cdots \dot{x}_{\overline{\mathfrak{c}}, r}^{m_r}$ and $y \defeq \dot{x}_{\overline{\mathfrak{c}}, 1}^{n_1 - m_1} \cdots \dot{x}_{\overline{\mathfrak{c}}, r}^{n_r - m_r}$. Note that $v \in \mathcal{G}$. With the new notation, the above equation becomes
\begin{equation}\label{eqAddUp1}
	\ell(w) = \ell(vy) = \ell(v) + \ell(y).
\end{equation}
Since $y \in \quoz{T}{T^1}$, we have that $\ell(y) = \ell(vyv^{-1})$ (as can be seen from \eqref{eqLengthFormulaTorus}), and so the above equation can also be rewritten as
\begin{equation}\label{eqAddUp2}
	\ell(w) = \ell(vyv^{-1}v) = \ell(vyv^{-1}) + \ell(v).
\end{equation}

Although we will need this only at a much later point, we also apply Lemma \ref{lmmgw} (whose assumptions are satisfied, again by \eqref{cond2} and \eqref{cond3}). It tells us that for all $\alpha \in \Phi$ at least one of the following two equalities must be true:
\begin{align*}
	g_{w_0 x_1^{n_1} \dots x_r^{n_r}} (\alpha) &= g_{w_0 x_1^{m_1} \dots x_r^{m_r}} (\alpha),
\\
	g_{w_0 x_1^{n_1} \dots x_r^{n_r}} (\alpha) &= g_{w_0 x_1^{m_1} \dots x_r^{m_r}} (\alpha) - \langle \nu(x_1^{n_1-m_1} \dots x_r^{n_r-m_r}), w_0^{-1} \alpha \rangle.
\end{align*}
With the simplified notation we have introduced, these two equalities can be rewritten as
\begin{equation}\label{eqgw}\begin{aligned}
	g_{w} (\alpha) &= g_{v} (\alpha),
\\
	g_{w} (\alpha) &= g_{v} (\alpha) - \langle \nu(y), w_0^{-1} \alpha \rangle = g_{v} (\alpha) - \langle \nu(v y v^{-1}), \alpha \rangle.
\end{aligned}\end{equation}

Now let us consider an element $\beta_v \in H^1(I,\xx(v))$. By \eqref{eqAddUp1} and \eqref{eqAddUp2}, we can apply formulas \eqref{eqActionRightLeftE0IfLengthsAddUp} to compute the following products:
\begin{equation}\label{eqSimpleForm}\begin{aligned}
		\beta_v \cdot \tau_y &\in H^1(I,\xx(vy)) = H^1(I,\xx(w))
		\\
		\sh_{w}(\beta_v \cdot \tau_y) &= \res_{I_{w}}^{I_v} \big( \sh_v(\beta_v) \big).
	\\
		\tau_{vyv^{-1}} \cdot \beta_v &\in H^1(I,\xx(vyv^{-1}v)) = H^1(I,\xx(w))
		\\
		\sh_{w}(\tau_{vyv^{-1}} \cdot \beta_v) &= \res_{I_{w}}^{vyv^{-1} I_v (vyv^{-1})^{-1}} \big( (vyv^{-1})_\ast \sh_v(\beta_v) \big).
\end{aligned}\end{equation}
If we prove that the elements of the form $\beta_v \cdot \tau_y$ and $\tau_{vyv^{-1}} \cdot \beta_v$, for $\beta_v$ running over $H^1(I,\xx(v))$, generate the full $k$-vector space $H^1(I,\xx(w))$, then the thesis of the proposition is proved.

From \eqref{eqSimpleForm} we see that for all $\beta_v \in H^1(I,\xx(v))$ and for all $\alpha \in \Phi$ we have that
\begin{equation}\label{eqbohh}\begin{aligned}
	\restr{\sh_{w}(\beta_v \cdot \tau_y)}{T^1} &= \restr{\sh_v(\beta_v)}{T^1},
	\\
	\restr{\sh_{w}(\beta_v \cdot \tau_y)}{\uuu_{\alpha,g_w(\alpha)}} &= \restr{\sh_v(\beta_v)}{\uuu_{\alpha,g_w(\alpha)}},
\\
	\restr{\sh_{w}(\tau_{vyv^{-1}} \cdot \beta_v)}{T^1}
	&=
	\restr{\sh_v(\beta_v) \big( (vyv^{-1})^{-1} \cdot {}_- \cdot vyv^{-1} \big)}{T^1}
	\\&=
	\restr{\sh_v(\beta_v)}{T^1},
	\\
	\restr{\sh_{w}(\tau_{vyv^{-1}} \cdot \beta_v)}{\uuu_{\alpha,g_w(\alpha)}}
	&=
	\restr{\sh_v(\beta_v) \big( (vyv^{-1})^{-1} \cdot {}_- \cdot vyv^{-1} \big)}{\uuu_{\alpha,g_w(\alpha)}}
	\\&=
	\restr{\sh_v(\beta_v) \big( \alpha(vyv^{-1})^{-1} \cdot {}_- \big)}{\uuu_{\alpha,g_w(\alpha)}}.
\end{aligned}\end{equation}
To ease notation, we set $({}_-)^\vee \defeq \homm_{\vect{\ff_p}}({}_-, k)$ and $K \defeq \ker(\Psi_v) = \ker(\Psi_w)$. We consider the following picture:
\[\begin{tikzcd}[column sep = 1.8em, row sep = 3em]
	H^1(I,\xx(v))
	\ar[d, "{\overline{\Psi_{v}}^\vee}", "\cong"']
	\ar[r, shift left, "{{}_- \cdot \tau_{y}}"]
	\ar[r, shift right, "{\tau_{vyv^{-1}} \cdot {}_-}"']
	&
	H^1(I,\xx(w))
	\ar[d, "{\overline{\Psi_{w}}^\vee}", "\cong"']
	\\
	\left( \frac{( \quoz{T^1}{(T^1)^p} ) \oplus \bigoplus_{\alpha \in \Phi} \quoz{\ooo}{p\ooo}}{K} \right)^\vee
	\ar[r, shift left, dashed, "{\rho}"]
	\ar[r, shift right, dashed, "{\lambda}"']
	&
	\left( \frac{( \quoz{T^1}{(T^1)^p} ) \oplus \bigoplus_{\alpha \in \Phi} \quoz{\ooo}{p\ooo}}{K} \right)^\vee.
\end{tikzcd}\]
We want to describe explicitly the map $\rho$ (respectively, the map $\lambda$) making the square with the map ${}_- \cdot \tau_{y}$ (respectively, with the map $\tau_{vyv^{-1}} \cdot {}_-$) commute. Once we do this, to prove the proposition it remains to prove that the subspace $\image(\rho) + \image(\lambda)$ is the full space. Let us consider
\[
	\gamma \in \left( \frac{( \quoz{T^1}{(T^1)^p} ) \oplus \bigoplus_{\alpha \in \Phi} \quoz{\ooo}{p\ooo}}{K} \right)^\vee,
\]
and let us compute $\rho(\gamma)$ and $\lambda(\gamma)$. To ease notation, we set $\beta_v \defeq \big( \overline{\Psi_{v}}^\vee \big)^{-1}(\gamma)$, getting that
\begin{equation}\label{eqMultExpli}\begin{aligned}
	\gamma &= \overline{\Psi_{v}}^\vee (\beta_v) = \overline{\sh_v(\beta_v)} \circ \overline{\Psi_{v}},
	\\
	\rho(\gamma) &= \overline{\Psi_{w}}^\vee (\beta_v \cdot \tau_y) = \overline{\sh_w (\beta_v \cdot \tau_y)} \circ \overline{\Psi_{w}},
	\\
	\lambda(\gamma) &= \overline{\Psi_{w}}^\vee (\tau_{vyv^{-1}} \cdot \beta_v) = \overline{\sh_w (\tau_{vyv^{-1}} \cdot \beta_v)} \circ \overline{\Psi_{w}}.
\end{aligned}\end{equation}
Here, by $\overline{\sh_v(\beta_v)}$ we mean the element of $(\fratt{I_v})^\vee$ corresponding to the element $\sh_v(\beta_v) \in H^1(I_v,k)$ (and similarly in the other two lines).

Let $t \in T^1$ and let $\overline{t}$ be its image in $\frac{( \quoz{T^1}{(T^1)^p} ) \oplus \bigoplus_{\alpha \in \Phi} \quoz{\ooo}{p\ooo}}{K}$. By \eqref{eqMultExpli} we have
\begin{align*}
	\gamma(\overline{t}) & = \sh_v(\beta_v) (t),
	\\
	\rho(\gamma)(\overline{t}) & = \sh_w (\beta_v \cdot \tau_y) (t),
	\\
	\lambda(\gamma)(\overline{t}) & = \sh_w (\tau_{vyv^{-1}} \cdot \beta_v)(t).
\end{align*}
From \eqref{eqbohh} we then see that
\begin{equation}\begin{aligned}
	\rho(\gamma)(\overline{t}) &= \gamma(\overline{t}),
	\\
	\lambda(\gamma)(\overline{t}) &= \gamma(\overline{t}).
\end{aligned}\end{equation}

Now let $\alpha \in \Phi$ and let $u_\alpha \in \ooo$. Let us denote by $\overline{u_\alpha}$ be the image of $u_\alpha$ in $\frac{( \quoz{T^1}{(T^1)^p} ) \oplus \bigoplus_{\alpha' \in \Phi} \quoz{\ooo}{p\ooo}}{K}$, when $\quoz{\ooo}{p\ooo}$ is embedded in $\bigoplus_{\alpha' \in \Phi} \quoz{\ooo}{p\ooo}$ via the inclusion on the summand indexed by $\alpha$. By \eqref{eqMultExpli} we have
\begin{align*}
	\gamma(\overline{u_\alpha}) & = \sh_v(\beta_v) \big( x_\alpha(\pi^{g_v(\alpha)} u_\alpha) \big),
	\\
	\rho(\gamma)(\overline{u_\alpha}) & = \sh_w (\beta_v \cdot \tau_y) \big( x_\alpha(\pi^{g_w(\alpha)} u_\alpha) \big),
	\\
	\lambda(\gamma)(\overline{u_\alpha}) & = \sh_w (\tau_{vyv^{-1}} \cdot \beta_v) \big( x_\alpha(\pi^{g_w(\alpha)} u_\alpha) \big).
\end{align*}
Combining this with \eqref{eqbohh}, we then see that
\begin{equation}\begin{aligned}
	\rho(\gamma)(\overline{u_\alpha})
	&=
	\sh_v (\beta_v) \big( x_\alpha(\pi^{g_w(\alpha)} u_\alpha) \big)
	\\&=
	\gamma \big( \pi^{g_w(\alpha)-g_v(\alpha)} \overline{u_\alpha} \big),
	\\
	\lambda(\gamma)(\overline{u_\alpha})
	&=
	\sh_v(\beta_v) \big( \alpha(vyv^{-1})^{-1} \cdot x_\alpha(\pi^{g_w(\alpha)} u_\alpha) \big)
	\\&=
	\gamma \big( \alpha(vyv^{-1})^{-1} \pi^{g_w(\alpha)-g_v(\alpha)} \overline{u_\alpha} \big).
\end{aligned}\end{equation}

For all $\alpha \in \Phi$, we set $r_\alpha \defeq \pi^{g_w(\alpha)-g_v(\alpha)}$ and $l_\alpha \defeq \alpha(vyv^{-1})^{-1} \pi^{g_w(\alpha)-g_v(\alpha)}$. We have obtained that the following diagrams are commutative:
\[\begin{tikzcd}[column sep = 8em, row sep = 3em]
	\left( \frac{( \quoz{T^1}{(T^1)^p} ) \times \prod_{\alpha \in \Phi} \quoz{\ooo}{p\ooo}}{K} \right)^\vee
	\ar[r, shift left, "{\rho}"]
	\ar[r, shift right, "{\lambda}"']
	\ar[d, hook]
	&
	\left( \frac{( \quoz{T^1}{(T^1)^p} ) \times \prod_{\alpha \in \Phi} \quoz{\ooo}{p\ooo}}{K} \right)^\vee
	\ar[d, hook]
	\\
	( \quoz{T^1}{(T^1)^p} )^\vee \oplus \bigoplus\limits_{\alpha \in \Phi} (\quoz{\ooo}{p\ooo})^\vee
	\ar[r, shift left, "{\tilde{\rho} \defeq \id \oplus \bigoplus\limits_{\alpha \in \Phi} (r_\alpha \cdot {}_-)^\vee}"]
	\ar[r, shift right, "{\tilde{\lambda} \defeq \id \oplus \bigoplus\limits_{\alpha \in \Phi} (l_\alpha \cdot {}_-)^\vee}"']
	&
	( \quoz{T^1}{(T^1)^p} )^\vee \oplus \bigoplus\limits_{\alpha \in \Phi} (\quoz{\ooo}{p\ooo})^\vee
\end{tikzcd}\]
If we check that there exists $n \in \zpiu$ such that
\begin{align*}
	\rho^{n+1} &= \rho^n,
	\\
	\ker(\rho^n) \cap \ker(\lambda) &= 0,
	\\
	\rho^n \circ \lambda &= \lambda \circ \rho^n,
\end{align*}
then the assumptions of Lemma \ref{lmmVectorSpaces} are satisfied and so $\image(\rho) + \image(\lambda)$ is the full space $\left( \frac{( \quoz{T^1}{(T^1)^p} ) \times \prod_{\alpha \in \Phi} \quoz{\ooo}{p\ooo}}{K} \right)^\vee$, as we wanted to show to conclude the proof of the proposition. But if we check that there exists $n \in \zpiu$ satisfying the above properties for $\tilde{\rho}$ in place of $\rho$ and for $\tilde{\lambda}$ in place of $\lambda$, then a fortiori the above properties remain true for $\rho$ and $\lambda$.

The property $\tilde{\rho}^n \circ \tilde{\lambda} = \tilde{\lambda} \circ \tilde{\rho}^n$ is completely trivial for all $n \in \zpiu$. The property $\tilde{\rho}^{n+1} = \tilde{\rho}^n$ is true for $n$ big enough because, for all $\alpha \in \Phi$, the constant $r_\alpha$ is either $1$ or a positive power of $\pi$, from which it follows that the sequence $(r_\alpha^n)_{n \in \zpiu}$ is either definitely $1$ or $0$ in $\quoz{\ooo}{p\ooo}$.

It remains to check the property $\ker(\tilde{\rho}^n) \cap \ker(\tilde{\lambda}) = 0$ (we will actually check it for all $n \in \zpiu$). Let us consider an arbitrary element
\[
	f
	=
	(t^\vee, (u_\alpha^\vee)_{\alpha \in \Phi})
	\in
	( \quoz{T^1}{(T^1)^p} )^\vee \oplus \bigoplus\limits_{\alpha \in \Phi} (\quoz{\ooo}{p\ooo})^\vee;
\]
we have to check either $\tilde{\rho}^n(f)$ or $\tilde{\lambda}(f)$ is non-zero. If $t^\vee$ is non-zero, then actually both $\tilde{\rho}^n(f)$ and $\tilde{\lambda}(f)$ are non-zero. So we can now assume that $u_\alpha^\vee$ is nonzero for some $\alpha \in \Phi$. Recall from \eqref{eqgw} that at least one of the following equalities is true:
\begin{align*}
	g_{w} (\alpha) &= g_{v} (\alpha),
\\
	g_{w} (\alpha) &= g_{v} (\alpha) - \langle \nu(v y v^{-1}), \alpha \rangle.
\end{align*}
The first equality means that $r_\alpha = 1$ (and hence that $r_\alpha^n = 1$), while the second equality means that $l_\alpha \in \ooo^\times$, because
\begin{align*}
	\val_\field(l_\alpha)
	&=
	\val_\field \big( \alpha(vyv^{-1})^{-1} \big) + g_w(\alpha) - g_v(\alpha)
	\\&=
	\langle \nu(vyv^{-1}), \alpha \rangle + g_w(\alpha) - g_v(\alpha).
\end{align*}
In particular, we get that either $\tilde{\rho}^n(f)$ or $\tilde{\lambda}(f)$ is non-zero.
\end{proof}

\begin{es}\label{esLaurent}
Assume that $\field = \ff_q((X))$ and that $\GG = \gm$. We show that $E^1$ is not finitely generated as an $E^0$-bimodule.
\end{es}

\begin{proof}
Since $\GG = \gm$, the length function on $\widetilde{W}$ is constantly zero and any conjugation by an element of $G$ is trivial. Therefore, looking at the formulas \eqref{eqCupYoneda} and \eqref{eqActionRightLeftE0IfLengthsAddUp} for the multiplication on $E^\ast$ we see that we have an isomorphism of graded $k$-algebras
\[
	E^\ast \cong E^0 \otimes_k H^\ast(I,k),
\]
where the multiplication on $H^\ast(I,k)$ is the cup product. Since left and right action of $E^0$ are the same, saying that $E^1$ is finitely generated as an $E^0$-bimodule is the same as saying that $H^1(I,k)$ is finitely generated as a $k$-vector space. This is not the case: indeed, since $\field = \ff_q((X))$, the pro-$p$ group $1+\mmm$ is isomorphic to $\prod_{i \in \nn} \zz_p$ (see \cite[Chapter I, (6.2), Proposition]{FesVost}), and therefore
\[
	H^1(I,k)
	=
	H^1(1+\mmm,k)
	\\\cong
	\textstyle H^1 \big( \prod_{i \in \nn} \zz_p,k \big),
\]
which has infinite dimension as a $k$-vector space.
\end{proof}

\section{The subalgebra generated by \texorpdfstring{$E^0$}{E\textasciicircum{}0} and \texorpdfstring{$E^1$}{E\textasciicircum{}1} in some special cases}\label{sectionSubalgE0E1}

The purpose of this section is to prove Proposition \ref{propAlmostGenByE1}, i.e., to prove that, when $G$ is \virgolette{small}, the multiplication map
\[
	\funcInlineNN{T^\ast_{E^0} E^1}{E^\ast}
\]
is \virgolette{almost surjective} (for the precise statement see the proposition as well as Remark \ref{remFiniteCodim}).
To prove the proposition, we need to specialize the setting introduced in Section \ref{sectionSettingBackground} to the case that $\GG$ has semisimple rank $1$.

Let us assume that $\GG$ has semisimple rank $1$, or, equivalently, that the root system is of type $A_1$. Let $\alpha_0$ denote the positive root. Recalling the notation and facts on the Chevalley system reviewed in \S\ref{subsectionApartment} and on the structure of the group $W$ in \S\ref{subsecAffineRootsLength}, we define
\begin{align}\label{eqDefs0s1Gneneral}
	&s_0 \defeq \overline{\varphi_{\alpha_0} \matr{0}{1}{-1}{0}},
	&&s_1 \defeq \overline{\varphi_{-\alpha_0} \matr{0}{\pi}{-\pi^{-1}}{0}},
\end{align}
where $\overline{?}$ denotes the image of $?$ in $\widetilde{W}$. Strictly speaking this definition is not compatible with the one we will give at the beginning of \S\ref{sectionFinPres} (see there for details), but this will not cause problems.

With this definition, we have $S_{\aff} = \{\overline{s_0}, \overline{s_1}\}$, where $\overline{s_i}$ denotes the image of $s_i$ in $W$ and every element of $W_{\aff}$ can be expressed as a string of the elements $\overline{s_0}$ and $\overline{s_1}$, with no two identical letters one after the other. This representation is actually unique and we have
\begin{align*}
	\ell((\overline{s_0} \cdot \overline{s_1})^i) &= i,
	&
	\ell((\overline{s_1} \cdot \overline{s_0})^i) &= i,
	\\
	\ell(\overline{s_0} \cdot (\overline{s_1} \cdot \overline{s_0})^i) &= i,
	&
	\ell(\overline{s_1} \cdot (\overline{s_0} \cdot \overline{s_1})^i) &= i,
\end{align*}
for all $i \in \nn$ (compare \cite[\S3.1]{emb} in the case $\GG = \SL_2$). Note that in the above list the string representation of the elements of $W$ is not really unique, the trivial element being repeated twice.

We also deduce that every element of $W$ can be represented in a unique way as a product of an element of $\Omega$ and a string of the elements $\overline{s_0}$ and $\overline{s_1}$, without repetitions as above, and every element of $\widetilde{W}$ can be represented in a unique way as a product of an element of $\widetilde{\Omega}$ and a string of the elements ${s_0}$ and ${s_1}$, with no two identical letters one after the other.

\begin{lmm}\label{lmmSgpA1}
Assume that $\GG$ has semisimple rank $1$. With notation as above, for all $w \in \widetilde{W}$ the following facts hold.
\begin{itemize}
\item 
	If $\ell(s_0 w)= \ell(w) + 1$, then the multiplication map defines a homeomorphism
	\[
		\funcAbove{\cong}
			{
				x_{-\alpha_0} (\mmm^{\ell(w)+1})
				\times
				T^1
				\times
				x_{\alpha_0}  (\ooo)
			}
			{I_w.}
	\]
\item 
	If $\ell(s_1 w)= \ell(w) + 1$, then the multiplication map defines a homeomorphism
	\[
		\funcAbove{\cong}
			{
				x_{-\alpha_0} (\mmm)
				\times
				T^1
				\times
				x_{\alpha_0}  (\mmm^{\ell(w)})
			}
			{I_w.}
	\]
\end{itemize}
\end{lmm}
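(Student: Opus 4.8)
The plan is to read off both decompositions from the Iwahori decomposition \eqref{eqIwaDecompIw} together with the explicit formula for the indices $g_w(\alpha)$. Since $\GG$ has semisimple rank one we have $\Phi=\{\alpha_0,-\alpha_0\}$, so \eqref{eqIwaDecompIw} (recalling $\uuu_{(\beta,m)}=x_\beta(\mmm^m)$ and ordering the two root factors as in the statement) already gives the homeomorphism
\[
	\funcAbove{\cong}{x_{-\alpha_0}(\mmm^{g_w(-\alpha_0)}) \times T^1 \times x_{\alpha_0}(\mmm^{g_w(\alpha_0)})}{I_w},
\]
where, as noted in the excerpt, the $g_w(\beta)$ depend only on the image of $w$ in $W$ (and likewise $\ell(w),\ell(s_0w),\ell(s_1w)$). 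Thus the lemma reduces to: if $\ell(s_0w)=\ell(w)+1$ then $g_w(\alpha_0)=0$ and $g_w(-\alpha_0)=\ell(w)+1$; if $\ell(s_1w)=\ell(w)+1$ then $g_w(\alpha_0)=\ell(w)$ and $g_w(-\alpha_0)=1$.

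Next I would set coordinates: write $W_0=\{1,s\}$ (so $\overline{s_0}=s$), write the image of $w$ in $W$ as $w_0t$ with $w_0\in W_0$ and $t\in\quoz{T}{T^0}$, and put $n\defeq\langle\nu(t),\alpha_0\rangle$. From the computation performed inside the proof of Lemma \ref{lmmgw}, $g_w(\beta)=\max\{\varepsilon_\beta,\ \varepsilon_{w_0^{-1}\beta}-\langle\nu(t),w_0^{-1}\beta\rangle\}$ with $\varepsilon_{\alpha_0}=0$, $\varepsilon_{-\alpha_0}=1$; evaluating at $\beta=\pm\alpha_0$ gives $(g_w(\alpha_0),g_w(-\alpha_0))=(\max\{0,-n\},\max\{1,1+n\})$ when $w_0=1$ and $=(\max\{0,1+n\},\max\{1,-n\})$ when $w_0=s$. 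By \eqref{eqLengthFormulaTorus} and \eqref{eqLengthFormulaW0Left} (with $\Phi^+=\{\alpha_0\}$) one also has $\ell(w)=|n|$ when $w_0=1$ and $\ell(w)=|n+1|$ when $w_0=s$. The one point that is an argument rather than pure computation is to rephrase the two hypotheses in terms of $n$: since $s_0,s_1$ are the reflections in the simple affine roots $(\alpha_0,0),(-\alpha_0,1)$, and each of them permutes $\Phi_{\aff}^+$ apart from interchanging its own simple affine root with the opposite one, one gets $\ell(s_iw)=\ell(w)+1$ iff $w^{-1}$ sends that simple affine root into $\Phi_{\aff}^+$; together with the identity $w^{-1}(\beta,\hhh)=(w_0^{-1}\beta,\ \hhh+\langle\nu(t),w_0^{-1}\beta\rangle)$ this yields, for $w_0=1$, that $\ell(s_0w)=\ell(w)+1\Leftrightarrow n\geq0$ and $\ell(s_1w)=\ell(w)+1\Leftrightarrow n\leq0$, and for $w_0=s$, that $\ell(s_0w)=\ell(w)+1\Leftrightarrow n\leq-1$ and $\ell(s_1w)=\ell(w)+1\Leftrightarrow n\geq-1$. (Alternatively, one checks that $\overline{s_1}=s\,t_1$ with $\langle\nu(t_1),\alpha_0\rangle=-2$ and feeds the images of $s_0w,s_1w$ straight into the length formulas.)

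Finally, substituting each of these four alternatives into the values of $g_w(\pm\alpha_0)$ and $\ell(w)$ collected above produces exactly the two asserted decompositions — for instance when $w_0=s$ and $\ell(s_0w)=\ell(w)+1$, i.e.\ $n\leq-1$, one gets $g_w(\alpha_0)=\max\{0,1+n\}=0$ and $g_w(-\alpha_0)=\max\{1,-n\}=-n=|n+1|+1=\ell(w)+1$, as desired. I would also point out that the two hypotheses may hold simultaneously, namely exactly when $\ell(w)=0$, in which case both formulas reduce to $I_w=x_{-\alpha_0}(\mmm)\times T^1\times x_{\alpha_0}(\ooo)=I$, so there is no clash. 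Beyond the translation of the hypotheses mentioned above I do not foresee any real obstacle: the remaining content is the bookkeeping of these four sign cases and elementary arithmetic with $n$, the one thing to watch being that the convention for writing elements of $W$ as $w_0t$ and the convention for the $W$-action on affine roots are applied consistently with \S\ref{subsecAffineRootsLength} and with the proof of Lemma \ref{lmmgw}.
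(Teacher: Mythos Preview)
Your proof is correct. Both arguments ultimately compute the indices $g_w(\pm\alpha_0)$ appearing in the Iwahori decomposition \eqref{eqIwaDecompIw}, but they organize the computation differently. The paper first reduces from $\widetilde{W}$ to $W_{\aff}$ by using that $I_w$ depends only on the image in $W$ and that $I_{v\omega}=I_v$ for $\omega\in\Omega$, then runs through the four explicit string types $(s_0s_1)^i$, $(s_1s_0)^i$, $s_0(s_1s_0)^i$, $s_1(s_0s_1)^i$, deferring the actual verification to \cite[Equations (25) and (26)]{emb}. Your argument bypasses both the $\Omega$-reduction and the external reference: you parametrize the image of $w$ in $W$ by the pair $(w_0,n)$ with $n=\langle\nu(t),\alpha_0\rangle$, plug directly into the closed formula $g_{w_0t}(\beta)=\max\{\varepsilon_\beta,\varepsilon_{w_0^{-1}\beta}-\langle\nu(t),w_0^{-1}\beta\rangle\}$ extracted from the proof of Lemma~\ref{lmmgw}, and translate the hypotheses $\ell(s_iw)=\ell(w)+1$ into sign conditions on $n$ via the standard criterion $w^{-1}(a_i)\in\Phi_{\aff}^+$. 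This is more self-contained and handles the $\Omega$-part automatically (since $g_w$ and $\ell$ only see the image in $W$); the paper's route is shorter on the page precisely because it outsources the arithmetic to \cite{emb}. The one step you flag as needing an argument---that $\ell(s_iw)=\ell(w)+1$ iff $w^{-1}$ sends the corresponding simple affine root into $\Phi_{\aff}^+$---is indeed the only non-mechanical point, and it follows from the definition of $\ell$ together with the fact that $s_i$ permutes $\Phi_{\aff}^+\smallsetminus\{a_i\}$, exactly as you indicate.
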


\begin{proof}
This is a variation of \cite[Equations (25) and (26)]{emb}, which only deals with the case $\GG = \SL_2$. Recall from \S\ref{subsecIw} that $I_w$ only depends on the image of $w$ in $W$, so we consider $w \in W$ rather than $w \in \widetilde{W}$. Moreover, for all $v \in W_{\aff}$ and all $\omega \in \Omega$ one has $I_{v\omega} = I_v$ (see \cite[\S2.1.5]{ext}), and the condition $\ell(\overline{s_i}v\omega) = \ell(v\omega) + 1$ is satisfied if and only if the condition $\ell(\overline{s_i}v) = \ell(v) + 1$ is. Therefore, by the above argument, one only has to consider the cases $w = ({s_0} \cdot {s_1})^i)$, $w = ({s_1} \cdot {s_0})^i$, $w = {s_0} \cdot ({s_1} \cdot {s_0})^i$, $w = {s_1} \cdot ({s_0} \cdot {s_1})^i)$ for some $i \in \nn$, and those cases can be dealt with explicitly as in \cite[Equations (25) and (26)]{emb}.
\end{proof}

\begin{lmm}\label{lmmCommutatorSgpA1}
Assume that $p \neq 2$ and that $\GG$ has semisimple rank $1$. With notation as above, for all $w \in \widetilde{W}$ the following facts hold.
\begin{itemize}
\item 
	If $\ell(s_0 w)= \ell(w) + 1$, then the following equalities hold:
	\begin{align*}
		\overline{[I_w,I_w]}
		&=
		[I_w,I_w]
		\\&=
		\varphi_{\alpha_0}
		\left(
			\matr{1+\mmm^{\ell(w)+1}}{\mmm}{\mmm^{\ell(w)+2}}{1+\mmm^{\ell(w)+1}}
			\cap
			\SL_2(\field)
		\right)
		\\&=
			x_{-\alpha_0} (\mmm^{\ell(w)+2})
			\cdot
			\check{\alpha}_0(1+\mmm^{\ell(w)+1}) 
			\cdot
			x_{\alpha_0}  (\mmm).
	\end{align*}
\item 
	If $\ell(s_1 w)= \ell(w) + 1$, then the following equalities hold:
	\begin{align*}
		\overline{[I_w,I_w]}
		&=
		[I_w,I_w]
		\\&=
		\varphi_{\alpha_0}
		\left(
			\matr{1+\mmm^{\ell(w)+1}}{\mmm^{\ell(w)+1}}{\mmm^{2}}{1+\mmm^{\ell(w)+1}}
			\cap
			\SL_2(\field)
		\right)
		\\&=
			x_{-\alpha_0} (\mmm^{2})
			\cdot
			\check{\alpha}_0(1+\mmm^{\ell(w)+1}) 
			\cdot
			x_{\alpha_0}  (\mmm^{\ell(w)+1}).
	\end{align*}
\end{itemize}
\end{lmm}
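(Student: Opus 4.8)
The plan is to treat the case $\ell(s_0 w)=\ell(w)+1$ in detail; the case $\ell(s_1 w)=\ell(w)+1$ is entirely symmetric (exchange the roles of $\alpha_0$ and $-\alpha_0$ and relabel the exponents), so I would only indicate at the end that it goes through mutatis mutandis. Write $n\defeq\ell(w)$ and set $C\defeq\varphi_{\alpha_0}\big(\matr{1+\mmm^{n+1}}{\mmm}{\mmm^{n+2}}{1+\mmm^{n+1}}\cap\SL_2(\field)\big)$. First I would record two purely formal facts. (i) The matrix conditions cutting out $C$ are stable under multiplication and inversion (using $\det=1$ and the arithmetic of powers of $\mmm$; note $a\in 1+\mmm^{n+1}\subseteq\ooo^\times$ there), so $C$ is a subgroup, and it is compact—hence closed in $G$—being a continuous image of a closed subset of $\SL_2(\ooo)$. (ii) The triangular decomposition $C=x_{-\alpha_0}(\mmm^{n+2})\cdot\check{\alpha}_0(1+\mmm^{n+1})\cdot x_{\alpha_0}(\mmm)$ holds, via the identity $\matr{a}{b}{c}{d}=\matr{1}{0}{c/a}{1}\matr{a}{0}{0}{a^{-1}}\matr{1}{b/a}{0}{1}$. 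This yields the two right-hand descriptions in the statement and reduces everything to proving $[I_w,I_w]=C$ as abstract groups; since $C$ is closed, this simultaneously gives $\overline{[I_w,I_w]}=[I_w,I_w]$.

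For the inclusion $[I_w,I_w]\subseteq C$ I would show $C\trianglelefteq I_w$ and $I_w/C$ abelian. By Lemma~\ref{lmmSgpA1} in the present case, $I_w=x_{-\alpha_0}(\mmm^{n+1})\cdot T^1\cdot x_{\alpha_0}(\ooo)$, so normality is checked by conjugating $C$ by each of the three factors: conjugation by $t\in T^1$ rescales the unipotent parts by the units $\alpha_0(t)^{\mp1}$ and fixes the middle factor, while conjugation by the unipotent factors is a short matrix computation in which the chosen exponents are precisely what keeps the off-diagonal congruences. For $I_w/C$ abelian it suffices that the three generating families pairwise commute modulo $C$, which reduces to the identities $[x_{-\alpha_0}(a),x_{\alpha_0}(b)]=\varphi_{\alpha_0}\matr{1+ab}{-ab^2}{-a^2b}{1-ab+a^2b^2}$, $[t,x_{\alpha_0}(b)]=x_{\alpha_0}\big((\alpha_0(t)-1)b\big)$ and $[t,x_{-\alpha_0}(a)]=x_{-\alpha_0}\big((\alpha_0(t)^{-1}-1)a\big)$: for $a\in\mmm^{n+1}$, $b\in\ooo$, $t\in T^1$ (so $\alpha_0(t)\in 1+\mmm$) the three entries of the first matrix lie in $1+\mmm^{n+1}$, $\mmm$, $\mmm^{n+2}$, and the other two values lie in $x_{\alpha_0}(\mmm)$ and $x_{-\alpha_0}(\mmm^{n+2})$, i.e. in $C$.

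For the reverse inclusion $C\subseteq[I_w,I_w]$ I would exhibit the three factors of $C$ inside $[I_w,I_w]$, using $\check{\alpha}_0(1+\mmm)\subseteq T^1\subseteq I_w$ and $x_{\pm\alpha_0}(\cdot)\subseteq I_w$. Here $p\neq2$ enters: squaring is bijective on $1+\mmm$, so $\{u^2-1:u\in 1+\mmm\}=\mmm=\{u^{-2}-1:u\in 1+\mmm\}$, whence the commutators $[\check{\alpha}_0(u),x_{\alpha_0}(b)]=x_{\alpha_0}((u^2-1)b)$ generate $x_{\alpha_0}(\mmm)$ and $[\check{\alpha}_0(u),x_{-\alpha_0}(a)]=x_{-\alpha_0}((u^{-2}-1)a)$ generate $x_{-\alpha_0}(\mmm^{n+2})$. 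For the diagonal factor, once these two unipotent factors are known to lie in $[I_w,I_w]$, I would take $a\in\mmm^{n+1}$ and a unit $b$ in the first commutator identity, rewrite its value via the triangular decomposition as $x_{-\alpha_0}(\tilde a)\,\check{\alpha}_0(1+ab)\,x_{\alpha_0}(\tilde b)$ with $\tilde a\in\mmm^{n+2}$, $\tilde b\in\mmm$, and deduce $\check{\alpha}_0(1+ab)\in[I_w,I_w]$; letting $ab$ run over $\mmm^{n+1}$ gives $\check{\alpha}_0(1+\mmm^{n+1})\subseteq[I_w,I_w]$. Combining the two inclusions yields $[I_w,I_w]=C$.

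The point I expect to be most delicate is the bookkeeping around the torus: in general $T^1$ is strictly larger than $\check{\alpha}_0(1+\mmm)$ (e.g. for $\GG=\GL_2$ or when $\GG$ has positive-dimensional centre), so one must argue carefully that $\alpha_0(T^1)=1+\mmm$ and that $T^1\cap C=\check{\alpha}_0(1+\mmm^{n+1})$; the latter follows from $\varphi_{\alpha_0}^{-1}(T)=(\text{diagonal torus of }\SL_2)$, together with $p\neq2$ to discard the sign ambiguity from $\ker\varphi_{\alpha_0}=\{\pm1\}$ (no matrix with diagonal in $-1+\mmm^{n+1}$ meets $1+\mmm^{n+1}$). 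Apart from this and the bijectivity of squaring on $1+\mmm$ just used, all remaining steps are routine matrix manipulations in $\varphi_{\alpha_0}(\SL_2(\field))$.
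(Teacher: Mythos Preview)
Your proof is correct; the paper itself gives no argument beyond citing \cite[Proposition~3.62]{emb} and calling the extension to general semisimple rank~$1$ a ``straightforward variation'', so your computation fills in exactly what is left implicit there. The only over-caution is the flagged equality $T^1\cap C=\check{\alpha}_0(1+\mmm^{n+1})$, which is not actually needed: the triangular decomposition of $C$ lives entirely inside $\varphi_{\alpha_0}(\SL_2(\field))$, and the reverse inclusion only requires $\check{\alpha}_0(1+\mmm)\subseteq T^1$ together with $\alpha_0(T^1)\subseteq 1+\mmm$, both of which are automatic for any split torus.
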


\begin{proof}
This is a straightforward variation of \cite[Proposition 3.62]{emb}, which only deals with the case $\GG = \SL_2$.
\end{proof}

\begin{lmm}\label{lmmFrattiniQuot}
Assume that $p \neq 2$ and that $\GG$ has semisimple rank $1$. With notation as above, for all $w \in \widetilde{W}$ the following facts hold.
\begin{itemize}
\item 
	If $\ell(s_0 w)= \ell(w) + 1$, then one has the group isomorphism
	\[
		\begin{tikzcd}[row sep = 0pt, column sep = 0.5em]
			{
				\quoz{\ooo}{\mmm}
				\times
				\textstyle \frac{T^1}{\check{\alpha}_0(1+\mmm^{\ell(w)+1}) \cdot (T^1)^p}
				\times
				\quoz{\ooo}{\mmm}
			}
			\ar[r]
			&
			{\fratt{I_w}}
			\\
			{\left(\overline{c}, \overline{t}, \overline{b}\right)}
			\ar[r, mapsto]
			&
			{\textstyle \overline{
				x_{-\alpha_0} (-\pi^{\ell(w)+1} c)
				\cdot
				t
				\cdot
				x_{\alpha_0}  (b)
			}.}
		\end{tikzcd}
	\]
\item 
	If $\ell(s_1 w)= \ell(w) + 1$, then one has the group isomorphism
	\[
		\begin{tikzcd}[row sep = 0pt, column sep = 0.5em]
			{
				\quoz{\ooo}{\mmm}
				\times
				\textstyle \frac{T^1}{\check{\alpha}_0(1+\mmm^{\ell(w)+1}) \cdot (T^1)^p}
				\times
				\quoz{\ooo}{\mmm}
			}
			\ar[r]
			&
			{\fratt{I_w}}
			\\
			{\left(\overline{c}, \overline{t}, \overline{b}\right)}
			\ar[r, mapsto]
			&
			{\textstyle \overline{
				x_{-\alpha_0} (-\pi c)
				\cdot
				t
				\cdot
				x_{\alpha_0} (\pi^{\ell(w)} b)
			}.}
		\end{tikzcd}
	\]
\end{itemize}
\end{lmm}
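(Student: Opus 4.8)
The plan is to read off $\fratt{I_w}$ from the explicit descriptions of $I_w$ and $[I_w,I_w]$ provided by Lemmas~\ref{lmmSgpA1} and~\ref{lmmCommutatorSgpA1}, treating the two cases $\ell(s_0w)=\ell(w)+1$ and $\ell(s_1w)=\ell(w)+1$ in parallel. Recall from \S\ref{subsecCohom} that $\fratt{I_w}=I_w/\overline{[I_w,I_w]I_w^p}$. By Lemma~\ref{lmmCommutatorSgpA1} the commutator subgroup $[I_w,I_w]$ is closed and normal, so $A\defeq I_w/[I_w,I_w]$ is a compact abelian pro-$p$ group; the image of $I_w^p$ in $A$ is the subgroup $pA$ generated by $p$-th powers, which is closed, being the continuous image of the compact group $A$ under multiplication by $p$. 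Hence $\overline{[I_w,I_w]I_w^p}$ is the preimage of $pA$ and $\fratt{I_w}\cong A/pA$ (written additively). So it suffices to compute $A$ and then kill $p$-th powers.

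To compute $A$, I would overlay the two triangular decompositions. By Lemma~\ref{lmmSgpA1} multiplication is a homeomorphism $U^-\times T^1\times U^+\xrightarrow{\cong}I_w$, where $U^\pm$ is the explicitly given subgroup of the root subgroup $x_{\pm\alpha_0}(\field)$; by Lemma~\ref{lmmCommutatorSgpA1}, $[I_w,I_w]=U_0^-\cdot T_0\cdot U_0^+$ with $U_0^\pm\subseteq U^\pm$ and $T_0=\check{\alpha}_0(1+\mmm^{\ell(w)+1})\subseteq T^1$. Since the root subgroups and $T^1$ are abelian, the composite $U^-\times T^1\times U^+\to I_w\to A$ is a surjective homomorphism out of the external direct product (all commutators dying in $A$), and by the uniqueness of the decomposition in Lemma~\ref{lmmSgpA1} a triple $(u,t,v)$ lies in its kernel if and only if $u\in U_0^-$, $t\in T_0$, $v\in U_0^+$. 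Therefore
\[
	A\;\cong\;\frac{U^-}{U_0^-}\;\times\;\frac{T^1}{\check{\alpha}_0(1+\mmm^{\ell(w)+1})}\;\times\;\frac{U^+}{U_0^+}.
\]
Using $x_{\pm\alpha_0}$ to identify $x_{\pm\alpha_0}(\field)$ with $(\field,+)$, the two outer factors become $\mmm^{\ell(w)+1}/\mmm^{\ell(w)+2}$ and $\ooo/\mmm$ when $\ell(s_0w)=\ell(w)+1$, and $\mmm/\mmm^2$ and $\mmm^{\ell(w)}/\mmm^{\ell(w)+1}$ when $\ell(s_1w)=\ell(w)+1$; in either case both are one-dimensional over $\ooo/\mmm$.

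Next I would pass to $A/pA$. Since $p\in\mmm$, multiplication by $p$ annihilates every $\mmm^j/\mmm^{j+1}$, hence both outer factors; on the middle factor its image is $(T^1)^p\check{\alpha}_0(1+\mmm^{\ell(w)+1})/\check{\alpha}_0(1+\mmm^{\ell(w)+1})$. Thus, in both cases,
\[
	\fratt{I_w}\;=\;A/pA\;\cong\;\frac{\ooo}{\mmm}\;\times\;\frac{T^1}{\check{\alpha}_0(1+\mmm^{\ell(w)+1})\,(T^1)^p}\;\times\;\frac{\ooo}{\mmm}.
\]
Unwinding the identifications gives the explicit map: send $\overline{c}$ to the class of $-\pi^{\ell(w)+1}c$ (resp.\ $-\pi c$) inside $U^-/U_0^-$ through $x_{-\alpha_0}$, send $\overline{b}$ to the class of $b$ (resp.\ $\pi^{\ell(w)}b$) inside $U^+/U_0^+$ through $x_{\alpha_0}$, keep $\overline{t}$, and multiply the three representatives back together in $I_w$; this reproduces precisely the formula in the statement. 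The minus signs and the powers of $\pi$ only pin down a generator of the relevant $\ooo/\mmm$-line and do not affect bijectivity.

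I expect the middle step to be the only substantive point: one must check that $[I_w,I_w]$ respects the triangular decomposition of $I_w$ factor by factor, so that $A$ splits as the displayed three-fold product. This is where the uniqueness in Lemma~\ref{lmmSgpA1} is combined with the precise matrix/valuation description of $[I_w,I_w]$ from Lemma~\ref{lmmCommutatorSgpA1}, notably the containments $U_0^\pm\subseteq U^\pm$ and $\check{\alpha}_0(1+\mmm^{\ell(w)+1})\subseteq T^1$ (valid for $\SL_2$, $\PGL_2$ and $\GL_2$ alike). Everything else is routine bookkeeping with valuations and the inclusion $p\ooo\subseteq\mmm$.
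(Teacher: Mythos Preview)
Your proof is correct and follows essentially the same approach as the paper, which simply says the result ``follows from the previous lemma'' (i.e., Lemma~\ref{lmmCommutatorSgpA1}) and remarks that the minus signs are inessential. You have carefully spelled out the details that the paper leaves implicit: the passage from $I_w$ to its abelianisation via the triangular decomposition, the identification of the kernel using uniqueness of the factorisation, and the final passage modulo $p$-th powers.
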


\begin{proof}
This follows from the previous lemma. The minus signs in \virgolette{$-\pi^{\ell(w)+1} c$} and \virgolette{$-\pi c$} are of course completely inessential but we keep them as they seem more natural (for example, this choice is compatible with \cite[Proposition 3.62.ii]{emb} and $x_{-\alpha_0} (-u) = \varphi_{\alpha_0} \matr{1}{0}{u}{1}$).
\end{proof}

\begin{rem}\label{remIsoParentesi}
Assume that $p \neq 2$ and that $\GG$ has semisimple rank $1$. For all $w \in \widetilde{W}$ let us define
\[
	T^1_w
	\defeq
	\frac{T^1}{\check{\alpha}_0(1+\mmm^{\ell(w)+1}) \cdot (T^1)^p}.
\]
Fixing the isomorphism
\[
	\quoz{\ooo}{\mmm} \times T^1_w \times \quoz{\ooo}{\mmm} \cong \fratt{I_w}
\]
defined in the previous lemma, and using the notation $({}_-)^\vee \defeq \homm_{\vect{\ff_p}}({}_-, k)$, we have an induced isomorphism
\begin{equation}\label{eqIsoE1AsATriple}
	\funcInline
		{({}_-, {}_-, {}_-)_w}
		{(\ooo/\mmm)^\vee \oplus (T^1_w)^\vee \oplus (\ooo/\mmm)^\vee}
		{H^1(I,\xx(w)).}
\end{equation}
\end{rem}

For the next lemma, recall from \S\ref{subsecCohom} the definition of Poincaré pro-$p$ group and of uniform pro-$p$ group.

\begin{lmm}\label{lmmPoincAndUniform}
Assume that $p \neq 2,3$, that $\field$ is an unramified extension of $\qq_p$ and that $\GG$ has semisimple rank $1$. One has that $I$ is torsion-free (and so $I$, as well as all the subgroup $I_w$ with $w \in \widetilde{W}$ are Poincaré pro-$p$ groups) and that $I_w$ is a uniform pro-$p$ group for all $w \in \widetilde{W}$ such that $\ell(w) \geqslant 1$.
\end{lmm}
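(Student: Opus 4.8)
\emph{Plan.} The lemma asserts three things: that $I$ is torsion-free; that, consequently, $I$ and all the $I_w$ are Poincaré pro-$p$ groups; and that $I_w$ is uniform whenever $\ell(w)\geq 1$. I would prove them in this order, the first being the only one requiring real work.

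\textbf{Torsion-freeness of $I$.} Since $I$ is pro-$p$, it suffices to rule out elements of order $p$. Let $g\in I$ with $g^p=1$; as $\charr\field=0$, the element $g$ is semisimple. If $g$ is central in $\GG$, then $g\in Z(\GG)(\field)\cap I\subseteq T\cap I=T^1\cong(1+\mmm)^{\dim\TT}$ (the last equality by uniqueness of the Iwahori decomposition), and $1+\mmm$ is torsion-free because $\field$, being unramified over $\qq_p$ with $p$ odd, does not contain $\mu_p$ (the extension $\qq_p(\mu_p)/\qq_p$ being totally ramified of degree $p-1\geq 2$); hence $g=1$. If $g$ is non-central, I pass to the adjoint quotient $\GG^{\mathrm{ad}}\cong\PGL_2$ (split, adjoint, of type $A_1$): the image $\bar g$ has order $p$ and lies in the pro-$p$ Iwahori of $\PGL_2(\field)$ attached to the image chamber, since the image of $I$ is a pro-$p$ subgroup of the corresponding Iwahori, whose pro-$p$ radical has prime-to-$p$ index. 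This reduces the problem to $\GG=\PGL_2$; lifting $\bar g$ to the pro-$p$ Iwahori of $\GL_2(\field)$ and rescaling by a suitable central element of $1+\mmm$ (using, after a short computation modulo $\mmm^2$, that $\bar g^{\,p}$ lifts to a scalar in $1+\mmm^2=(1+\mmm)^p$) reduces it further to $\GG=\GL_2$. Finally, for $g\in I\subseteq\GL_2(\field)$ of order $p$: since $I$-membership forces $\det g\in 1+\mmm$ and $\det g$ is a $p$-th root of unity, we get $\det g=1$, so the eigenvalues of $g$ are $\zeta,\zeta^{-1}$ for a $p$-th root of unity $\zeta$; if $\zeta=1$ then $g$ is unipotent of finite order, so $g=1$, and if $\zeta\neq 1$ then $\zeta+\zeta^{-1}=\trace g\in\field$ generates the totally ramified extension $\qq_p(\mu_p)^+=\qq_p(\zeta+\zeta^{-1})$ of $\qq_p$, of degree $(p-1)/2$, which is $\geq 2$ exactly when $p\geq 5$ --- contradicting that $\field/\qq_p$ is unramified, as $p\neq 2,3$. (For $\GG=\SL_2$, this is \cite{emb}.)

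\textbf{Poincaré property.} The group $G=\GG(\field)$ is $\qq_p$-analytic, so its open compact subgroup $I$ is a torsion-free $\qq_p$-analytic pro-$p$ group of dimension $n=[\field:\qq_p]\dim\GG$; by the results of Lazard--Serre recalled in \S\ref{subsecCohom} (\cite[Theorem 5.1.9]{horizonsCohom}), $I$ is therefore a Poincaré pro-$p$ group of dimension $n$. For $w\in\widetilde W$, the subgroup $I_w=I\cap wIw^{-1}$ is the intersection of $I$ with the open subgroup $wIw^{-1}$, hence open in $I$, hence again a torsion-free $\qq_p$-analytic pro-$p$ group of dimension $n$, hence also Poincaré.

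\textbf{Uniformity of $I_w$ for $\ell(w)\geq 1$.} As an open compact subgroup of a $p$-adic analytic group, $I_w$ is topologically finitely generated; it is torsion-free by the first part; and, $p$ being odd, it remains to check that $I_w$ is powerful, i.e. $[I_w,I_w]\subseteq\overline{I_w^{\,p}}$. Since $\field$ is unramified, $p\ooo=\mmm$, so $x_{\pm\alpha_0}(\mmm^j)^p=x_{\pm\alpha_0}(\mmm^{j+1})$ for all $j$; and since $p$ is odd and $\field$ is unramified, $(1+\mmm)^p=1+\mmm^2$, whence $\check{\alpha}_0(1+\mmm^2)=\check{\alpha}_0\big((1+\mmm)^p\big)\subseteq(T^1)^p$. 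Take $w$ with $\ell(s_0w)=\ell(w)+1$ (the case $\ell(s_1w)=\ell(w)+1$ is symmetric). By Lemma \ref{lmmSgpA1}, $x_{-\alpha_0}(\mmm^{\ell(w)+1})$, $T^1$ and $x_{\alpha_0}(\ooo)$ all lie in $I_w$, so $\overline{I_w^{\,p}}$ contains $x_{-\alpha_0}(\mmm^{\ell(w)+2})$, $\check{\alpha}_0(1+\mmm^2)$ and $x_{\alpha_0}(\mmm)$. As $\ell(w)\geq 1$, we have $1+\mmm^{\ell(w)+1}\subseteq 1+\mmm^2$, hence $\check{\alpha}_0(1+\mmm^{\ell(w)+1})\subseteq\check{\alpha}_0(1+\mmm^2)\subseteq\overline{I_w^{\,p}}$; comparing with the description $[I_w,I_w]=x_{-\alpha_0}(\mmm^{\ell(w)+2})\cdot\check{\alpha}_0(1+\mmm^{\ell(w)+1})\cdot x_{\alpha_0}(\mmm)$ from Lemma \ref{lmmCommutatorSgpA1} gives $[I_w,I_w]\subseteq\overline{I_w^{\,p}}$.

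\textbf{Main obstacle.} The heart of the matter is the torsion-freeness --- specifically the reduction to $\GG=\GL_2$/$\SL_2$ and the ramification argument on $\qq_p(\mu_p)^+$, which is exactly where the hypotheses that $\field/\qq_p$ is unramified and that $p\neq 2,3$ enter essentially (both are needed: $I$ acquires elements of order $p$ already for $\GG=\SL_2$ and $\field=\qq_p$ with $p\in\{2,3\}$). Once Lemmas \ref{lmmSgpA1} and \ref{lmmCommutatorSgpA1} are available the uniformity is routine bookkeeping, the hypothesis $\ell(w)\geq 1$ entering only to make the torus part $\check{\alpha}_0(1+\mmm^{\ell(w)+1})$ of the commutator subgroup fit inside $(T^1)^p\supseteq\check{\alpha}_0(1+\mmm^2)$.
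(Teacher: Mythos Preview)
Your proof is correct, and the uniformity argument (via Lemmas \ref{lmmSgpA1} and \ref{lmmCommutatorSgpA1}, together with $(1+\mmm)^p=1+\mmm^2$ and $p\ooo=\mmm$) is essentially identical to the paper's.

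The torsion-freeness argument, however, proceeds differently. The paper does not pass to the adjoint quotient; instead it \emph{covers} $\GG$ by $\CC^\circ\times\SL_2$ via a central isogeny with kernel $\mmu\subseteq\mmu_2$, and uses the Galois-cohomology exact sequence
\[
\mmu(\field)\longrightarrow(\field^\times)^{\dim\CC^\circ}\times\SL_2(\field)\longrightarrow G\longrightarrow H^1\!\big(\gal(\overline{\qq_p}/\field),\mmu(\overline{\qq_p})\big)
\]
to reduce $p$-torsion-freeness of $G$ (not just of $I$) to that of $(\field^\times)^{\dim\CC^\circ}\times\SL_2(\field)$. For $\SL_2(\field)$ the paper argues that an element of order $p$ would have characteristic polynomial divisible by the $p$-th cyclotomic polynomial, which has degree $p-1\geqslant 4>2$ over the unramified $\field$. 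Your route---quotient to $\GG^{\mathrm{ad}}\cong\PGL_2$, lift to $\GL_2$, and use $\trace g=\zeta+\zeta^{-1}$ to embed the totally ramified $\qq_p(\mu_p)^+$ in $\field$---ultimately rests on the same ramification fact about $\qq_p(\mu_p)$. The paper's reduction is slightly cleaner in that it avoids the lifting step from $\PGL_2$ to $\GL_2$ (your ``short computation modulo $\mmm^2$'' is most transparently done via determinants: $c^2=\det(h^p)=(\det h)^p\in(1+\mmm)^p=1+\mmm^2$, whence $c\in 1+\mmm^2$ since $2\in\ooo^\times$), and it yields the stronger conclusion that all of $G$ is $p$-torsion-free; your approach is more elementary and stays closer to the concrete matrix groups.
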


\begin{proof}
We claim that $G$ is $p$-torsion-free and that $I_w$ is a powerful pro-$p$ group for all $w \in \widetilde{W}$ such that $\ell(w) \geqslant 1$. Taking these two claims for granted, the two theses of the lemma follow: indeed $I$ is $p$-torsion-free and hence torsion-free, and furthermore, if $\ell(w) \geqslant 1$, saying that the pro-$p$ group $I_w$ is uniform amounts to saying that it is topologically finitely generated, powerful and torsion-free. We have the last two properties, and the fact that it is topologically finitely generated follows from the fact $\ooo$ and $1+\mmm$ are topologically finitely generated (since $\field$ is an extension of $\qq_p$) and from the Iwahori decomposition of $I_w$.

Let us consider the derived group $\GG'$ of $\GG$ (which must be isomorphic either to $\SL_2$ or to $\PGL_2$) and the semisimple cover $\SL_2$ of $\GG'$: we have central isogenies
\[\begin{tikzcd}
	\gm^{\dim \CC^\circ} \times \SL_2
	\cong
	\CC^\circ \times \SL_2
	\ar[r]
	&
	\CC^\circ \times \GG'
	\ar[r]
	&
	\GG.
\end{tikzcd}\]
Let $\mmu$ be the kernel of the composite central isogeny; it must be isomorphic a central subgroup of $\SL_2$: i.e., either to the trivial group or to the group of $2$-units $\mmu_2$. Considering the exact sequence induced by Galois cohomology, we obtain an exact sequence
\[\begin{tikzcd}
	\mmu(\field)
	\ar[r]
	&
	(\field^\times)^{\dim \CC^\circ} \times \SL_2(\field)
	\ar[r]
	&
	G
	\ar[r]
	&
	H^1(\gal(\overline{\qq_p}/\field), \mmu(\overline{\qq_p})).
\end{tikzcd}\]
The group $(\field^\times)^{\dim \CC^\circ} \times \SL_2(\field)$ does not have $p$-torsion because $\field$ is an extension of $\qq_p$ without nontrivial $p$-th roots of unity and because $p \neq 2,3$ (an element of $\SL_2(\field)$ of order $p$ would have characteristic polynomial divisible by the $p$-th cyclotomic polynomial, which is absurd). The groups $\mmu(\field)$ and $H^1(\gal(\overline{\qq_p}/\field), \mmu(\overline{\qq_p}))$ are (finite) $2$-groups. It then follows formally that $G$ is $p$-torsion-free: indeed, assume by contradiction that we have a non-trivial $p$-torsion element $x \in G$: it lifts to an element of $y \in (\field^\times)^{\dim \CC^\circ} \times \SL_2(\field)$ since $H^1(\gal(\overline{\qq_p}/\field), \mmu(\overline{\qq_p}))$ is $p$-torsion free. The element $y^p$ must lift to an element $z \in \mmu(\field)$. Since $z^{2^r} = 1$ for some $r \in \zpiu$, it follows that $y^{2^r}$ is a $p$-torsion element in $(\field^\times)^{\dim \CC^\circ} \times \SL_2(\field)$. It cannot be trivial, because it maps to $x^{2^r}$.

Now it remains to prove that $I_w$ is a powerful pro-$p$ group for all $w \in \widetilde{W}$ such that $\ell(w) \geqslant 1$. We refer to the description of $I_w$ and of $\overline{[I_w,I_w]}$ we gave, respectively, in Lemmas \ref{lmmSgpA1} \ref{lmmCommutatorSgpA1} and for brevity we only consider the case $\ell(s_0 w)= \ell(w) + 1$. It is enough to check that the elements of the form $\varphi_{\alpha_0} \matr{1}{0}{\mmm^{\ell(w)+2}}{1}$, of the form $\check{\alpha}_0(1+\mmm^{\ell(w)+1})$ and of the form $\varphi_{\alpha_0}  \matr{1}{\mmm}{0}{1}$ are contained in the subgroup topologically generated by the $p$-powers (we will actually show that all such elements are $p$-powers). Since $\field$ is an unramified extension of $\qq_p$, we have that
\[
	\varphi_{\alpha_0} \matr{1}{0}{\mmm^{\ell(w)+2}}{1}
	=
	\left( \varphi_{\alpha_0} \matr{1}{0}{\mmm^{\ell(w)+1}}{1} \right)^p,
\]
and so we see that the elements in $\varphi_{\alpha_0} \matr{1}{0}{\mmm^{\ell(w)+2}}{1}$ are $p$-powers of elements in $I_w$. The same argument shows that the elements in $\varphi_{\alpha_0}  \matr{1}{\mmm}{0}{1}$ are $p$-powers of elements in $I_w$. It remains to work with $\check{\alpha}_0(1+\mmm^{\ell(w)+1})$. Since $\field$ is an unramified extension of $\qq_p$ with $p \neq 2$, the $p$-adic logarithm induces an isomorphism $1+\mmm \cong \mmm$, under which $1+\mmm^2$ corresponds to $\mmm^2$ (which is equal to $p \mmm$, using again that $\field$ is unramified). Therefore the elements in $1+\mmm^2$ are $p$-powers of elements in $1+\mmm$ and so, since $\ell(w) \geqslant 1$, the elements in $\check{\alpha}_0(1+\mmm^{\ell(w)+1})$ are $p$-powers of elements in $\check{\alpha}_0(1+\mmm) \subseteq I_w$. This concludes the proof that $I_w$ is a powerful pro-$p$ group.
\end{proof}

\begin{rem}\label{remMultipleCupprod}
Let $n \in \zpiu$ and let $w_1, \dots, w_n \in \widetilde{W}$ with the propriety that $\ell(w_1 \cdots w_n) = \ell(w_1) + \dots + \ell(w_n)$. For all $i \in \{1,\dots,n\}$ let $j_i \in \nn$ and let $\beta_i \in H^{j_i}(I,\xx(w_i))$. The following formula holds:
\[
	\beta_1 \cdots \beta_n
	=
	(\beta_1 \cdot \tau_{w_2 \cdots w_n})
	\cupprod
	\dots
	\cupprod
	(\tau_{w_1 \cdots w_{i-1}} \cdot \beta_i \cdot \tau_{w_{i+1} \cdots w_n})
	\cupprod
	\dots
	\cupprod
	(\tau_{w_1 \cdots w_{n-1}} \cdot \beta_n).
\]
\end{rem}

\begin{proof}
Applying the formula relating (the opposite of) the Yoneda product with the cup product \eqref{eqCupYoneda}, we have
\[
	\beta_1 \cdots \beta_n
	=
	(\beta_1 \cdots \beta_{n-1} \cdot \tau_{w_n})
	\cupprod
	(\tau_{w_1 \cdots w_{n-1}} \cdot \beta_n).
\]
So, by induction on $n$, it suffices to check that the map
\[
	\funcInline
		{{}_- \cdot \tau_{w_n}}
		{H^\ast(I,\xx(w_1 \cdots w_{n-1}))}
		{H^\ast(I,\xx(w_1 \cdots w_{n}))}
\]
commutes with cup products. Recall from \eqref{eqActionRightLeftE0IfLengthsAddUp} that we have a commutative diagram
\[\begin{tikzcd}[column sep = large, row sep = large]
	H^\ast(I,\xx(w_1 \cdots w_{n-1}))
	\ar[r, "{{}_- \cdot \tau_{w_n}}"]
	\ar[d, "\sh_{w_1 \cdots w_{n-1}}", "\cong"']
	&
	H^\ast(I,\xx(w_1 \cdots w_{n}))
	\ar[d, "\sh_{w_1 \cdots w_{n}}", "\cong"']
	\\
	H^\ast(I_{w_1 \cdots w_{n-1}}, k)
	\ar[r, "{\res^{I_{w_1 \cdots w_{n-1}}}_{I_{w_1 \cdots w_{n}}}}"]
	&
	H^\ast(I_{w_1 \cdots w_{n}}, k),
\end{tikzcd}\]
and the claim follows since restriction and Shapiro isomorphism commute with cup products.
\end{proof}

\begin{lmm}\label{lmmExplicitFormulasE1}
Assume that $p \neq 2$ and that $\GG$ has semisimple rank $1$. Let $w \in \widetilde{W}$, let $c^-, c^+ \in \homm_{\vect{\ff_p}}(\quoz{\ooo}{\mmm}, k)$ and let $c^0 \in \homm_{\vect{\ff_p}}(T^1_w,k)$. With notation as at the beginning of this subsection and as in Remark \ref{remIsoParentesi}, the following formulas hold.
\begin{enumerate}[label=(\roman*)]
\item \label{item1ExplFormulasE1}
	If $v \in \widetilde{W}$ is such that $\ell(v) \geqslant 1$ and $\ell(s_0 w v) = 1 + \ell(w) + \ell(v)$, one has
	\[
		\big( c^-,c^0,c^+ \big)_w \cdot \tau_v
		=
		\big(0,c^0(\pr({}_-)),c^+\big)_{wv},
	\]
	where $\funcInline{\pr}{T^1_{wv}}{T^1_{w}}$ is the quotient map.
\item \label{item2ExplFormulasE1}
	If $\ell(s_0w) = \ell(w) + 1$, one has
	\[
		\tau_{s_0} \cdot \big( c^-,c^0,c^+ \big)_w
		=
		\big( 0,c^0(w_0^{-1} \pr({}_-) w_0),-c^- \big)_{s_0w},
	\]
	where $\funcInline{\pr}{{T^1_{s_0w}}}{{T^1_w}}$ is the quotient map, and where $w_0$ is any element of $N \smallsetminus T$.
\item \label{item3ExplFormulasE1}
	If $v \in \widetilde{W}$ is such that $\ell(v) \geqslant 1$ and $\ell(s_1 w v) = 1 + \ell(w) + \ell(v)$, one has
	\[
		\big(c^-,c^0,c^+\big)_w \cdot \tau_v
		=
		\big(c^-,c^0(\pr({}_-)),0\big)_{wv},
	\]
	where $\funcInline{\pr}{{T^1_{wv}}}{{T^1_{w}}}$ is the quotient map.
\item \label{item4ExplFormulasE1}
	If $\ell(s_1w) = \ell(w) + 1$, one has
	\[
		\tau_{s_1} \cdot \big(c^-,c^0,c^+\big)_w
		=
		\big(-c^+,c^0(w_0^{-1} \pr({}_-) w_0),0\big)_{s_1w},
	\]
	where $\funcInline{\pr}{{T^1_{s_1w}}}{{T^1_w}}$ is the quotient map, and where $w_0$ is any element of $N \smallsetminus T$.
\end{enumerate}
\end{lmm}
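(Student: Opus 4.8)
The plan is to establish all four formulas by the same three-step mechanism: (1) unwind the length hypothesis to unlock the relevant additivity relations and hence the explicit structure results of \S\ref{sectionSubalgE0E1}; (2) rewrite multiplication by $\tau_v$, resp. by $\tau_{s_i}$, via the restriction/conjugation formulas \eqref{eqActionRightLeftE0IfLengthsAddUp}; (3) transport the resulting map on cohomology to the explicit Frattini quotients of Lemma \ref{lmmFrattiniQuot} and read off its effect on the triples of Remark \ref{remIsoParentesi}. For the length bookkeeping in, say, case \ref{item1ExplFormulasE1}: from $\ell(s_0 wv) = 1 + \ell(w) + \ell(v)$ and subadditivity of $\ell$ one extracts the chain $\ell(s_0 wv) = 1 + \ell(wv)$, $\ell(wv) = \ell(w) + \ell(v)$ and $\ell(s_0 w) = 1 + \ell(w)$; the middle equality lets \eqref{eqActionRightLeftE0IfLengthsAddUp} apply to the product $w \cdot v$, while the outer two feed into the first bullets of Lemmas \ref{lmmSgpA1} and \ref{lmmFrattiniQuot} for $w$, $wv$ and their subgroups. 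In cases \ref{item2ExplFormulasE1} and \ref{item4ExplFormulasE1} one additionally notes that $\overline{s_0 w}$ is a nontrivial element of the rank-$2$ Coxeter group $W_{\aff}$ on which left multiplication by $\overline{s_0}$ is length-decreasing, so left multiplication by $\overline{s_1}$ must be length-increasing; thus $\ell(s_1 s_0 w) = 1 + \ell(s_0 w)$, which places $I_{s_0 w}$ in the second bullets of Lemmas \ref{lmmSgpA1} and \ref{lmmFrattiniQuot}.

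Right multiplication (cases \ref{item1ExplFormulasE1} and \ref{item3ExplFormulasE1}). Here \eqref{eqActionRightLeftE0IfLengthsAddUp} reduces everything to $\sh_{wv}(\beta \cdot \tau_v) = \res^{I_w}_{I_{wv}} \sh_w(\beta)$, so one only needs the map $\fratt{I_{wv}} \to \fratt{I_w}$ induced by the inclusion $I_{wv} \subseteq I_w$. Pushing a generator $x_{-\alpha_0}(-\pi^{\ell(wv)+1}c)\,t\,x_{\alpha_0}(b)$ of $\fratt{I_{wv}}$ (notation of Lemma \ref{lmmFrattiniQuot}, first bullet) into $\fratt{I_w}$ and re-expanding $-\pi^{\ell(wv)+1}c = -\pi^{\ell(w)+1}(\pi^{\ell(v)}c)$: since $\ell(v) \geqslant 1$ the scalar $\pi^{\ell(v)}c$ lies in $\mmm$, so the $x_{-\alpha_0}$-coordinate dies in $\fratt{I_w}$, the $T^1$-coordinate is pushed forward by the projection $\pr$, and the $x_{\alpha_0}$-coordinate is unchanged. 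Dualizing gives exactly $(0, c^0 \circ \pr, c^+)_{wv}$; case \ref{item3ExplFormulasE1} is the mirror image, obtained from the second bullets of the same lemmas, so that it is the $x_{\alpha_0}$-coordinate that acquires the factor $\pi^{\ell(v)}$ and vanishes.

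Left multiplication (cases \ref{item2ExplFormulasE1} and \ref{item4ExplFormulasE1}). Now \eqref{eqActionRightLeftE0IfLengthsAddUp} gives $\sh_{s_0 w}(\tau_{s_0} \cdot \beta) = \res^{s_0 I_w s_0^{-1}}_{I_{s_0 w}}\big((s_0)_\ast \sh_w(\beta)\big)$, so evaluating the result on a generator $y = x_{-\alpha_0}(-\pi c)\,t\,x_{\alpha_0}(\pi^{\ell(s_0 w)}b)$ of $\fratt{I_{s_0 w}}$ amounts to computing $\sh_w(\beta)(s_0^{-1} y s_0)$. A direct $2\times 2$ matrix computation from the definitions in \S\ref{subsectionApartment} and from \eqref{eqDefs0s1Gneneral} yields the identities $s_0^{-1} x_{\mp\alpha_0}(u) s_0 = x_{\pm\alpha_0}(u)$ and $s_0^{-1} t s_0 = w_0^{-1} t w_0$; conjugating $y$ by $s_0^{-1}$ thus interchanges the two unipotent factors, so $x_{-\alpha_0}(-\pi c)$ becomes $x_{\alpha_0}(-\pi c) \in x_{\alpha_0}(\mmm) \subseteq [I_w, I_w]$ (Lemma \ref{lmmCommutatorSgpA1}) and contributes nothing to $\sh_w(\beta)$, whereas $x_{\alpha_0}(\pi^{\ell(s_0 w)}b) = x_{\alpha_0}(\pi^{\ell(w)+1}b)$ becomes $x_{-\alpha_0}(\pi^{\ell(w)+1}b) = x_{-\alpha_0}\big(-\pi^{\ell(w)+1}(-b)\big)$, which $\sh_w(\beta)$ reads in its first coordinate as $-c^-$ applied to the $x_{\alpha_0}$-coordinate of $y$, and the torus factor $w_0^{-1} t w_0$ produces the functional $c^0(w_0^{-1}\pr({}_-)w_0)$ once one checks that $\pr$ and $w_0$-conjugation descend compatibly to $T^1_w$ (using that $w_0$ negates $\check{\alpha}_0$ and that $\ell(s_0 w) > \ell(w)$). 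Reassembling gives $(0, c^0(w_0^{-1}\pr({}_-)w_0), -c^-)_{s_0 w}$, and case \ref{item4ExplFormulasE1} is the identical computation with $s_1$ in place of $s_0$, using the second bullets of Lemmas \ref{lmmSgpA1}, \ref{lmmCommutatorSgpA1} and \ref{lmmFrattiniQuot}. The main source of friction throughout — and the step most prone to sign or indexing errors — is keeping track, case by case, of which normalization of the $x_{\pm\alpha_0}$-coordinates is built into each Frattini-quotient isomorphism of Lemma \ref{lmmFrattiniQuot}, since this is exactly what controls both which coordinate vanishes and the signs in front of $c^-$ and $c^+$.
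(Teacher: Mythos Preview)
Your proof is correct and follows essentially the same route as the paper: reduce via \eqref{eqActionRightLeftE0IfLengthsAddUp} to a restriction (resp.\ restriction-after-conjugation) map, trace it through the Frattini quotients of Lemma~\ref{lmmFrattiniQuot}, and dualize. The paper organizes steps \ref{item2ExplFormulasE1} and \ref{item4ExplFormulasE1} as two stacked commutative squares (inclusion, then conjugation) on Frattini quotients rather than your single evaluation $\sh_w(\beta)(s_0^{-1} y s_0)$, but the content is identical. Your explicit justification that $\ell(s_1 s_0 w)=\ell(s_0 w)+1$ via the rank-$2$ Coxeter dichotomy is a point the paper uses silently when invoking the second bullets of Lemmas~\ref{lmmSgpA1}--\ref{lmmFrattiniQuot} for $I_{s_0 w}$.
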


\begin{proof}
We prove the four statements using the description of the first cohomology of finitely generated pro-$p$ groups as
\[
	H^1({}_-, k)
	\cong
	\homm_{\vect{\ff_p}} \big( \fratt{{}_-}, k \big).
\]
and the description of the relevant Frattini quotients given in Lemma \ref{lmmFrattiniQuot}.
\begin{enumerate}
\item[\ref{item1ExplFormulasE1}]
	Let us assume that $\ell(v) \geqslant 1$ and $\ell(s_0 w v) = 1 + \ell(w) + \ell(v)$ (in particular, if follows that $\ell(wv) = \ell(w) + \ell(v)$, that $\ell(s_0wv) = \ell(wv) + 1$ and that $\ell(s_0w) = \ell(w) + 1$) and let us prove the first formula.
	
	We want to compute $\beta \cdot \tau_v$ where $\beta \defeq \big( c^-,c^0,c^+ \big)_w$. Since lengths add up, we can apply the formula \eqref{eqActionRightLeftE0IfLengthsAddUp}, obtaining
	\begin{align*}
		\beta \cdot \tau_v &\in H^1(I,\xx(wv))
		&&\text{ and }
		&\sh_{wv}(\alpha \cdot \tau_v) &= \res_{I_{wv}}^{I_w} \big( \sh_w(\beta) \big).
	\end{align*}
	
	Therefore, we have to compute the action of the restriction on cohomology, which corresponds to computing the map induced by the inclusion on the Frattini quotients. Such action can be computed via the following commutative diagram (where we use the description of the Frattini quotients given in Lemma \ref{lmmFrattiniQuot}: note that the conditions $\ell(s_0wv) = \ell(wv) + 1$ and $\ell(s_0w) = \ell(w) + 1$ tell us which of the descriptions in Lemma \ref{lmmFrattiniQuot} we have to consider; also note that the condition $\ell(v) \geqslant 1$ is used to say that $\left(\overline{c}, 0, 0\right)$ is mapped to zero by the map on the left):
	\[\begin{tikzcd}[row sep = 3.5em, column sep = 16em]
		{
			\quoz{\ooo}{\mmm}
			\times
			{T^1_{wv}}
			\times
			\quoz{\ooo}{\mmm}
		}
		\ar[r, "{
			\left(\overline{c}, \overline{t}, \overline{b}\right)
			\mapsto
			\overline{
				\varphi_{\alpha_0} \matr{1}{0}{\pi^{\ell(wv)+1} c}{1}
				\cdot
				t
				\cdot
				\varphi_{\alpha_0}  \matr{1}{b}{0}{1}
			}
		}"']
		\ar[d, "{\substack{
			\left(\overline{c}, \overline{t}, \overline{b}\right)
			\\
			\downmapsto
			\\
			\left(0, \overline{t}, \overline{b}\right)
		}}"']
		&
		{\fratt{I_{wv}}}
		\ar[d, "{\substack{\text{ind. by} \\ I_{wv} \hookrightarrow I_{w}}}"'  pos=0.6]
		\\
		{
			\quoz{\ooo}{\mmm}
			\times
			{T^1_{w}}
			\times
			\quoz{\ooo}{\mmm}
		}
		\ar[r, "{
			\left(\overline{c}, \overline{t}, \overline{b}\right)
			\mapsto
			\overline{
				\varphi_{\alpha_0} \matr{1}{0}{\pi^{\ell(w)+1} c}{1}
				\cdot
				t
				\cdot
				\varphi_{\alpha_0}  \matr{1}{b}{0}{1}
			}
		}"']
		&
		{\fratt{I_{w}}}.
	\end{tikzcd}\]
	Dualizing (i.e., applying $\homm_{\vect{\ff_p}} \big( \fratt{{}_-}, k \big)$), we get the formula in the statement.

\item[\ref{item2ExplFormulasE1}]
	Now, let us assume that $\ell(s_0w) = \ell(w) + 1$ and let us prove the second formula.
	
	We first remark that since lengths add up we can apply the formula \eqref{eqActionRightLeftE0IfLengthsAddUp}, obtaining
	\begin{align*}
		\tau_{s_0} \cdot \beta &\in H^i(I,\xx(wv)),
		\\
		\sh_{s_0w}(\tau_{s_0} \cdot \beta) &= \res_{I_{s_0w}}^{s_0 I_w s_0^{-1}} \big( (s_0)_\ast \sh_w(\beta) \big),
	\end{align*}
	where $\beta \defeq \big(c^-,c^0,c^+\big)_w$.
	
	This time we have to compute the action of the inclusion map and of conjugation on the level of the Frattini quotients. We do this via the following commutative diagram (which uses again the description of the Frattini quotients given in Lemma \ref{lmmFrattiniQuot}: note that the description of $\fratt{s_0 I_{w} s_0^{-1}}$ follows from the description of $\fratt{I_w}$):
	\[\begin{tikzcd}[row sep = 3.5em, column sep = 13em]	
		{
			\quoz{\ooo}{\mmm}
			\times
			{T^1_{s_0w}}
			\times
			\quoz{\ooo}{\mmm}
		}
		\ar[r, "{
			\left(\overline{c}, \overline{t}, \overline{b}\right)
			\mapsto
			\overline{
				\varphi_{\alpha_0} \matr{1}{0}{\pi c}{1}
				\cdot
				t
				\cdot
				\varphi_{\alpha_0}  \matr{1}{\pi^{\ell(s_0w)} b}{0}{1}
			}
		}"']
		\ar[d, "{\substack{
			\left(\overline{c}, \overline{t}, \overline{b}\right)
			\\
			\downmapsto
			\\
			\left(0, \overline{t}, \overline{b}\right)
		}}"']
		&
		{\fratt{I_{s_0w}}}
		\ar[d, "{\substack{\text{ind. by} \\ I_{s_0w} \hookrightarrow s_0 I_{w} s_0^{-1}}}"' pos=0.6]
		\\
		{
			\quoz{\ooo}{\mmm}
			\times
			{T^1_{w}}
			\times
			\quoz{\ooo}{\mmm}
		}
		\ar[r, "{
			\left(\overline{c}, \overline{t}, \overline{b}\right)
			\mapsto
			\overline{
				\varphi_{\alpha_0} \matr{1}{0}{c}{1}
				\cdot
				t
				\cdot
				\varphi_{\alpha_0}  \matr{1}{\pi^{\ell(w)+1} b}{0}{1}
			}
		}"']
		\ar[d, "{\substack{
			\left(\overline{c}, \overline{t}, \overline{b}\right)
			\\
			\downmapsto
			\\
			\left(-\overline{b}, \overline{w_0^{-1} t w_0}, -\overline{c}\right)
		}}"']
		&
		{\fratt{s_0 I_{w} s_0^{-1}}}
		\ar[d, "{{\varphi_{\alpha_0}\matr{0}{1}{-1}{0}}^{-1} \cdot {}_- \cdot \varphi_{\alpha_0}\matr{0}{1}{-1}{0}}"' pos=0.6]
		\\
		{
			\quoz{\ooo}{\mmm}
			\times
			{T^1_{w}}
			\times
			\quoz{\ooo}{\mmm}
		}
		\ar[r, "{
			\left(\overline{c}, \overline{t}, \overline{b}\right)
			\mapsto
			\overline{
				\varphi_{\alpha_0} \matr{1}{0}{\pi^{\ell(w)+1} c}{1}
				\cdot
				t
				\cdot
				\varphi_{\alpha_0}  \matr{1}{b}{0}{1}
			}
		}"']
		&
		{\fratt{I_{w}}}.
	\end{tikzcd}\]
	Dualizing, we get the formula in the statement.

\item[\ref{item3ExplFormulasE1}]
	The proof is analogous to \ref{item1ExplFormulasE1}. The commutative diagram in this case is
	\[\begin{tikzcd}[row sep = 3.5em, column sep = 15em]
		{
			\quoz{\ooo}{\mmm}
			\times
			{T^1_{wv}}
			\times
			\quoz{\ooo}{\mmm}
		}
		\ar[r, "{
			\left(\overline{c}, \overline{t}, \overline{b}\right)
			\mapsto
			\overline{
				\varphi_{\alpha_0} \matr{1}{0}{\pi c}{1}
				\cdot
				t
				\cdot
				\varphi_{\alpha_0}  \matr{1}{\pi^{\ell(wv)} b}{0}{1}
			}
		}"']
		\ar[d, "{\substack{
			\left(\overline{c}, \overline{t}, \overline{b}\right)
			\\
			\downmapsto
			\\
			\left(\overline{c}, \overline{t}, 0\right)
		}}"']
		&
		{\fratt{I_{wv}}}
		\ar[d, "{\substack{\text{ind. by} \\ I_{wv} \hookrightarrow I_{w}}}"' pos=0.6]
		\\
		{
			\quoz{\ooo}{\mmm}
			\times
			{T^1_{w}}
			\times
			\quoz{\ooo}{\mmm}
		}
		\ar[r, "{
			\left(\overline{c}, \overline{t}, \overline{b}\right)
			\mapsto
			\overline{
				\varphi_{\alpha_0} \matr{1}{0}{\pi c}{1}
				\cdot
				t
				\cdot
				\varphi_{\alpha_0}  \matr{1}{\pi^{\ell(w)} b}{0}{1}
			}
		}"']
		&
		{\fratt{I_{w}}}.
	\end{tikzcd}\]
	
\item[\ref{item4ExplFormulasE1}]
	The proof is analogous to \ref{item1ExplFormulasE1}. The commutative diagram in this case is
	\[\begin{tikzcd}[row sep = 3.5em, column sep = 13.5em]
		{
			\quoz{\ooo}{\mmm}
			\times
			{T^1_{s_1w}}
			\times
			\quoz{\ooo}{\mmm}
		}
		\ar[r, "{
			\left(\overline{c}, \overline{t}, \overline{b}\right)
			\mapsto
			\overline{
				\varphi_{\alpha_0} \matr{1}{0}{\pi^{\ell(s_1w)+1} c}{1}
				\cdot
				t
				\cdot
				\varphi_{\alpha_0}  \matr{1}{b}{0}{1}
			}
		}"']
		\ar[d, "{\substack{
			\left(\overline{c}, \overline{t}, \overline{b}\right)
			\\
			\downmapsto
			\\
			\left(\overline{c}, \overline{t}, 0\right)
		}}"']
		&
		{\fratt{I_{s_1w}}}
		\ar[d, "{\substack{\text{ind. by} \\ I_{s_1w} \hookrightarrow s_1 I_{w} s_1^{-1}}}"'  pos=0.6]
		\\
		{
			\quoz{\ooo}{\mmm}
			\times
			{T^1_{w}}
			\times
			\quoz{\ooo}{\mmm}
		}
		\ar[r, "{
			\left(\overline{c}, \overline{t}, \overline{b}\right)
			\mapsto
			\overline{
				\varphi_{\alpha_0} \matr{1}{0}{\pi^{\ell(w)+2} c}{1}
				\cdot
				t
				\cdot
				\varphi_{\alpha_0}  \matr{1}{\pi^{-1} b}{0}{1}
			}
		}"']
		\ar[d, "{\substack{
			\left(\overline{c}, \overline{t}, \overline{b}\right)
			\\
			\downmapsto
			\\
			\left(-\overline{b}, \overline{w_0^{-1} t w_0}, -\overline{c}\right)
		}}"']
		&
		{\fratt{s_1 I_{w} s_1^{-1}}}
		\ar[d, "{{\varphi_{\alpha_0}\matr{0}{-\pi^{-1}}{\pi}{0}}^{-1} \cdot {}_- \cdot \varphi_{\alpha_0}\matr{0}{-\pi^{-1}}{\pi}{0}}"' pos=0.6]
		\\
		{
			\quoz{\ooo}{\mmm}
			\times
			{T^1_{w}}
			\times
			\quoz{\ooo}{\mmm}
		}
		\ar[r, "{
			\left(\overline{c}, \overline{t}, \overline{b}\right)
			\mapsto
			\overline{
				\varphi_{\alpha_0} \matr{1}{0}{\pi c}{1}
				\cdot
				t
				\cdot
				\varphi_{\alpha_0}  \matr{1}{\pi^{\ell(w)} b}{0}{1}
			}
		}"']
		&
		{\fratt{I_{w}}}.
	\end{tikzcd}\]
\qedhere
\end{enumerate}
\end{proof}

\begin{prop}\label{propAlmostGenByE1}
Assume that $p \neq 2,3$, that $\field$ is an unramified extension of $\qq_p$ and that $\GG$ has semisimple rank $1$. Let us denote by $f$ the residue degree of $\field$ over $\qq_p$. Let us consider the tensor algebra $T^\ast_{E^0} E^1$ associated with the $E^0$-bimodule $E^1$ and the natural multiplication map $\funcInlineNN{T^\ast_{E^0} E^1}{E^\ast}$. One has that
\[
	\image \big( \funcInlineNN{T^\ast_{E^0} E^1}{E^\ast} \big)
	\supseteq
	\bigoplus_{\substack{w \in \widetilde{W} \\ \text{s.t. $\ell(w) \geqslant f \cdot \dim \TT$}}}
		H^\ast(I,\xx(w)).
\]
\end{prop}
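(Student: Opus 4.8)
The image $A$ of the map $\mult\colon T^\ast_{E^0}E^1\to E^\ast$ is the graded subalgebra of $E^\ast$ generated by $E^0$ and $E^1$. Since $E^0=\bigoplus_v H^0(I,\xx(v))$ and $E^1=\bigoplus_v H^1(I,\xx(v))$, one has $H^0(I,\xx(w)),H^1(I,\xx(w))\subseteq A$ for \emph{every} $w$, so it suffices to prove $H^j(I,\xx(w))\subseteq A$ for all $j\geqslant 2$ and all $w$ with $\ell(w)\geqslant f\dim\TT$. As $f\dim\TT\geqslant 1$ we have $\ell(w)\geqslant 1$, hence $I_w$ is uniform by Lemma \ref{lmmPoincAndUniform}, and Lazard's theorem (recalled in \S\ref{subsecCohom}) combined with the Shapiro isomorphism identifies $H^\ast(I,\xx(w))$ with the exterior algebra $\exterior^\ast H^1(I_w,k)$. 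In particular $H^j(I_w,k)=0$ for $j>\dim_{\qq_p}I_w=f\dim\GG$, only finitely many degrees are at stake, and each $H^j(I_w,k)$ is spanned by $j$-fold cup products of elements of $H^1(I_w,k)$. Using the coordinates of Remark \ref{remIsoParentesi} (and $T^1_w\cong T^1/(T^1)^p$, valid as $\ell(w)\geqslant 1$) we write $H^1(I_w,k)=U^-\oplus T\oplus U^+$ with $\dim_k U^\pm=f$, $\dim_k T=f\dim\TT$, so $m:=\dim_k H^1(I_w,k)=f\dim\GG=2f+f\dim\TT$.

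The mechanism for putting cup products into $A$ is Remark \ref{remMultipleCupprod}: if $w=w_1\cdots w_n$ with $\ell(w)=\sum_l\ell(w_l)$, and $\beta_l\in H^{\varepsilon_l}(I,\xx(w_l))$ with $\varepsilon_l\in\{0,1\}$, then $\beta_1\cdots\beta_n$ lies in $H^{\sum_l\varepsilon_l}(I,\xx(w))$, lies in $A$ (each $\beta_l\in E^0\cup E^1$), and its Shapiro image is the cup product, over the indices $l$ with $\varepsilon_l=1$, of the classes $\res^{u_lI_{w_l}u_l^{-1}}_{I_w}\big((u_l)_\ast\sh_{w_l}(\beta_l)\big)$ with $u_l=w_1\cdots w_{l-1}$ (the factors with $\varepsilon_l=0$ contribute only $\tau_{w_l}$; iterate \eqref{eqActionRightLeftE0IfLengthsAddUp}). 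Write $V_l\subseteq H^1(I_w,k)$ for the image of $H^1(I_{w_l},k)$ under this restriction-of-a-conjugate. It then suffices to produce a single factorization of $w$, a basis $e_1,\dots,e_m$ of $H^1(I_w,k)$, and strictly increasing indices $l_1<\cdots<l_m$ with $e_r\in V_{l_r}$ for all $r$: given such a staircase, every basis monomial $e_{r_1}\cupprod\cdots\cupprod e_{r_j}$ of $\exterior^jH^1(I_w,k)$ is the Shapiro image of the product $\beta_1\cdots\beta_n$ in which $\beta_{l_{r_s}}$ realizes $e_{r_s}$ and all other $\beta_l$ equal $\tau_{w_l}$, whence $H^j(I,\xx(w))\subseteq A$ for every $j$.

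To build the staircase I would use the factorization $w=(\text{$f$ trivial factors})\cdot s_{i_1}\cdots s_{i_L}\cdot(\text{$f$ trivial factors})$, where $L=\ell(w)$ and consecutive $i_l$ differ (all prefixes have the expected length, and inserting trivial factors preserves additivity of lengths). For an inner string factor $w_l=s_{i_l}$ the iterated formulas of Lemma \ref{lmmExplicitFormulasE1} give $V_l\supseteq T$ (the quotient maps $\pr$ are isomorphisms and conjugation by $w_0$ induces only an automorphism of $T$), so the $f\dim\TT$ basis vectors of $T$ can be placed on $f\dim\TT$ distinct string factors — which exist precisely because $L\geqslant f\dim\TT$, and this is the source of the numerical bound. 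For the trivial factors of the first block $u_l$ is trivial, so $V_l=\res^I_{I_w}(H^1(I,k))$; for those of the last block $u_l=w$, so $V_l=\res^{wIw^{-1}}_{I_w}(w_\ast H^1(I,k))$. Comparing the levels of the Iwahori filtrations via the explicit descriptions of $I_w$, $[I_w,I_w]$, $\fratt{I_w}$ (Lemmas \ref{lmmSgpA1}, \ref{lmmCommutatorSgpA1}, \ref{lmmFrattiniQuot}) one checks $\res^I_{I_w}(H^1(I,k))\supseteq U^+$ and $\res^{wIw^{-1}}_{I_w}(w_\ast H^1(I,k))\supseteq U^-$ (swapping $U^+\leftrightarrow U^-$ according to whether $\ell(s_0w)$ or $\ell(s_1w)$ exceeds $\ell(w)$). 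Placing the $U^+$-basis on the first-block trivial factors, the $T$-basis on string factors, and the $U^-$-basis on the last-block trivial factors — in this increasing order — produces the required staircase of length $m=f+f\dim\TT+f$.

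The main obstacle is the bookkeeping of the previous paragraph, in two respects. First, a single Hecke multiplication annihilates one of the two unipotent coordinates (visible in the commutative diagrams proving Lemma \ref{lmmExplicitFormulasE1}), so the interior string factors see only the torus direction and the $U^\pm$ directions must be supplied from the extreme positions and from the trivial factors; making this precise requires tracking the iterated conjugations. Second, and more substantially, one must verify that $\res^I_{I_w}(H^1(I,k))$ and $\res^{wIw^{-1}}_{I_w}(w_\ast H^1(I,k))$ genuinely contain $U^+$ and $U^-$, which is a filtration-level computation resting on the explicit structure results of \S\ref{sectionSubalgE0E1}. Everything else — additivity of lengths under the chosen factorizations, the formula of Remark \ref{remMultipleCupprod}, and the passage from a staircase to all of $\exterior^\ast H^1(I_w,k)$ — is formal.
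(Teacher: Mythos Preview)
Your argument is essentially the paper's proof, repackaged in the abstract language of a ``staircase'' rather than carried out as four explicit computations. The paper writes down products of the form
\[
\prod_{i=1}^{a}(\mathfrak a_i,0,0)_1\cdot\prod_{i=1}^{b}(0,\mathfrak b_i,0)_{s_{\overline{i-1}}}\cdot\tau_{s_{\overline b}\cdots s_{\overline m}}\cdot\prod_{i=1}^{c}(?,0,?)_1
\]
and evaluates them via Lemma~\ref{lmmExplicitFormulasE1} and Remark~\ref{remMultipleCupprod}, obtaining exactly your staircase: the $(\mathfrak a_i,0,0)_1$ at the front supply one unipotent direction, the $(0,\mathfrak b_i,0)_{s_{\overline{i-1}}}$ on the string factors supply the torus direction (whence the bound $\ell(w)\geqslant f\dim\TT$), and the length-zero factors at the back supply the other unipotent direction. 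The paper splits into four cases according to the first letter of $w$ and the parity of $\ell(w)$, which is precisely the bookkeeping you flag as the main obstacle (and your parenthetical ``swapping $U^+\leftrightarrow U^-$'' is one half of this; the parity of $\ell(w)$ determines which unipotent coordinate survives iterated left multiplication by $\tau_{s_i}$, cf.\ the sentence ``here is the only point where we use that $m$ is odd'' in the paper). One small step you omit: after treating $w$ of the string form $s_{i_1}\cdots s_{i_L}$, the paper closes by writing an arbitrary $w\in\widetilde W$ as $\omega w'$ with $\omega\in\widetilde\Omega$ and using that left multiplication by $\tau_\omega$ is an isomorphism $H^\ast(I,\xx(w'))\to H^\ast(I,\xx(\omega w'))$.
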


\begin{proof}
We first note that $f \cdot \dim \TT = \dim_{\ff_p} \quoz{T^1}{(T^1)^p}$. Indeed, we have
\[
	\quoz{T^1}{(T^1)^p}
	\cong 
	\big( \quoz{1+\mmm}{(1+\mmm)^p} \big)^{\oplus \dim \TT}
	\cong
	\big( \quoz{\mmm}{p\mmm} \big)^{\oplus \dim \TT}
	\cong
	\big( \quoz{\ooo}{\mmm} \big)^{\oplus \dim \TT},
\]
where in the second step we have used the $p$-adic logarithm/exponential (which we may apply since $\field$ is an unramified extension of $\qq_p$ and $p \neq 2$) and in the third step we have further used that $\field$ is an unramified extension of $\qq_p$. The claimed formula $f \cdot \dim \TT = \dim_{\ff_p} \quoz{T^1}{(T^1)^p}$ now follows.

Now, for the moment let us consider $a, b, c \in \nn$ and arbitrary elements
\begin{align*}
	\mathfrak{a}_1, \dots \mathfrak{a}_{a} &\in \homm_{\vect{\ff_p}}(\quoz{\ooo}{\mmm}, k),
	\\
	\mathfrak{b}_1, \dots \mathfrak{b}_{b} &\in \homm_{\vect{\ff_p}}(\quoz{T^1}{(T^1)^p}, k),
	\\
	\mathfrak{c}_1, \dots \mathfrak{c}_{c} &\in \homm_{\vect{\ff_p}}(\quoz{\ooo}{\mmm}, k).
\end{align*}
In the next computations it will be handy to work with the following notation: for all $n \in \nn$ we define $s_{\overline{n}}$ to be $s_0$ if $n$ is even and $s_1$ if $n$ is odd.

Let $m \in \nn$ be an odd number with $m \geqslant b-1$. With the notation of Remark \ref{remIsoParentesi}, we make the following computation, whose steps will be explained below.
\begin{equation}\label{eqFirstBigComputation}\begin{aligned}
	&\prod_{i=1}^{a} (\mathfrak{a}_i,0,0)_1
	\cdot
	\prod_{i=1}^{b} (0,\mathfrak{b}_i,0)_{s_{\overline{i-1}}}
	\cdot
	\tau_{s_{\overline{b}} \cdots s_{\overline{m}}}
	\cdot
	\prod_{i=1}^{c} (0,0,\mathfrak{c}_i)_1
\\&\qquad=
	\bigcupprod_{i=1}^{a} \big( (\mathfrak{a}_i,0,0)_1 \cdot \tau_{s_{\overline{0}} \cdots s_{\overline{m}}} \big)
	\cupprod
	\bigcupprod_{i=1}^{b}
		\big(
			\tau_{s_{\overline{0}} \cdots s_{\overline{i-2}}}
			\cdot
			(0,\mathfrak{b}_i,0)_{s_{\overline{i-1}}}
			\cdot
			\tau_{s_{\overline{i}} \cdots s_{\overline{m}}}
		\big)
	\\&\qquad\qquad\cupprod
	\tau_{s_{\overline{0}} \cdots s_{\overline{m}}}
	\cupprod
	\bigcupprod_{i=1}^{c} \big( \tau_{s_{\overline{0}} \cdots s_{\overline{m}}} \cdot (0,0,\mathfrak{c}_i)_1 \big)
\\&\qquad=
	\bigcupprod_{i=1}^{a} (\mathfrak{a}_i,0,0)_{s_{\overline{0}} \cdots s_{\overline{m}}} 
	\cupprod
	\bigcupprod_{i=1}^{b}
			\big( 0,\mathfrak{b}_i(w_0^{-(i-1)} {}_- w_0^{i-1}),0 \big)_{s_{\overline{0}} \cdots s_{\overline{m}}}
	\\&\qquad\qquad\cupprod
	\bigcupprod_{i=1}^{c} (0,0,\mathfrak{c}_i)_{s_{\overline{0}} \cdots s_{\overline{m}}}.
\end{aligned}\end{equation}
Here, $s_{\overline{b}} \cdots s_{\overline{m}}$ means the empty product in the case $m = b-1$. To prove the first equality it suffices to apply the formula of Remark \ref{remMultipleCupprod}. Also note that cup product by $\tau_{s_{\overline{0}} \cdots s_{\overline{m}}}$ is the identity on $H^\ast(I,\xx(s_{\overline{0}} \cdots s_{\overline{m}}))$, and so it can be ignored. Regarding the second equality, the fact that
$
(\mathfrak{a}_i,0,0)_1 \cdot \tau_{s_{\overline{0}} \cdots s_{\overline{m}}}
=
(\mathfrak{a}_i,0,0)_{s_{\overline{0}} \cdots s_{\overline{m}}} 
$
follows from Lemma \ref{lmmExplicitFormulasE1} part \ref{item3ExplFormulasE1}; the fact that
\[
	\tau_{s_{\overline{0}} \cdots s_{\overline{i-2}}}
	\cdot
	(0,\mathfrak{b}_i,0)_{s_{\overline{i-1}}}
	\cdot
	\tau_{s_{\overline{i}} \cdots s_{\overline{m}}}
=
	\big( 0,\mathfrak{b}_i(w_0^{-(i-1)} {}_- w_0^{i-1}),0 \big)_{s_{\overline{0}} \cdots s_{\overline{m}}}
\]
follows from Lemma \ref{lmmExplicitFormulasE1} (applying iteratively parts \ref{item2ExplFormulasE1} and \ref{item4ExplFormulasE1} to compute the action of the left multiplication by $\tau_{s_{\overline{0}} \cdots s_{\overline{i-2}}}$ and applying a single time parts \ref{item1ExplFormulasE1} or \ref{item3ExplFormulasE1}, depending on the parity of $i$, to compute the action of the right multiplication by $\tau_{s_{\overline{i}} \cdots s_{\overline{m}}}$.
Finally, the fact that
$
\tau_{s_{\overline{0}} \cdots s_{\overline{m}}} \cdot (0,0,\mathfrak{c}_i)_1
=
(0,0,\mathfrak{c}_i)_{s_{\overline{0}} \cdots s_{\overline{m}}}
$
follows from Lemma \ref{lmmExplicitFormulasE1} (applying iteratively parts \ref{item2ExplFormulasE1} and \ref{item4ExplFormulasE1}): here is the only point where we use that $m$ is odd.

We have thus checked the computation \eqref{eqFirstBigComputation}. Before deducing some consequences, let us make some completely similar computations.

Let $m \in \nn$ be an even number with $m \geqslant b-1$. The following computation is exactly as \eqref{eqFirstBigComputation} except that the factors on the right are different because now $m$ is even (recall that in \eqref{eqFirstBigComputation} we have used that $m$ was odd only in the very final step).
\begin{equation}\label{eqSecondBigComputation}\begin{aligned}
	&\prod_{i=1}^{a} (\mathfrak{a}_i,0,0)_1
	\cdot
	\prod_{i=1}^{b} (0,\mathfrak{b}_i,0)_{s_{\overline{i-1}}}
	\cdot
	\tau_{s_{\overline{b}} \cdots s_{\overline{m}}}
	\cdot
	\prod_{i=1}^{c} (\mathfrak{c}_i,0,0)_1
\\&\qquad=
	\bigcupprod_{i=1}^{a} (\mathfrak{a}_i,0,0)_{s_{\overline{0}} \cdots s_{\overline{m}}} 
	\cupprod
	\bigcupprod_{i=1}^{b}
			\big( 0,\mathfrak{b}_i(w_0^{-(i-1)} {}_- w_0^{i-1}),0 \big)_{s_{\overline{0}} \cdots s_{\overline{m}}}
	\\&\qquad\qquad\cupprod
	\bigcupprod_{i=1}^{c} (0,0,-\mathfrak{c}_i)_{s_{\overline{0}} \cdots s_{\overline{m}}}.
\end{aligned}\end{equation}

Now, let $m \in \zpiu$ be an odd number with $m \geqslant b$. We make the following computation
\begin{equation}\label{eqThirdBigComputation}\begin{aligned}
	&\prod_{i=1}^{a} (0,0,\mathfrak{a}_i)_1
	\cdot
	\prod_{i=1}^{b} (0,\mathfrak{b}_i,0)_{s_{\overline{i}}}
	\cdot
	\tau_{s_{\overline{b+1}} \cdots s_{\overline{m}}}
	\cdot
	\prod_{i=1}^{c} (0,0,\mathfrak{c}_i)_1
\\&\qquad=
	\bigcupprod_{i=1}^{a} \big( (0,0,\mathfrak{a}_i)_1 \cdot \tau_{s_{\overline{1}} \cdots s_{\overline{m}}} \big)
	\cupprod
	\bigcupprod_{i=1}^{b}
		\big(
			\tau_{s_{\overline{1}} \cdots s_{\overline{i-1}}}
			\cdot
			(0,\mathfrak{b}_i,0)_{s_{\overline{i}}}
			\cdot
			\tau_{s_{\overline{i+1}} \cdots s_{\overline{m}}}
		\big)
	\\&\qquad\qquad\cupprod
	\tau_{s_{\overline{1}} \cdots s_{\overline{m}}}
	\cupprod
	\bigcupprod_{i=1}^{c} \big( \tau_{s_{\overline{1}} \cdots s_{\overline{m}}} \cdot (0,0,\mathfrak{c}_i)_1 \big)
\\&\qquad=
	\bigcupprod_{i=1}^{a} (0,0,\mathfrak{a}_i)_{s_{\overline{1}} \cdots s_{\overline{m}}} 
	\cupprod
	\bigcupprod_{i=1}^{b}
			\big( 0,\mathfrak{b}_i(w_0^{-(i-1)} {}_- w_0^{i-1}),0 \big)_{s_{\overline{1}} \cdots s_{\overline{m}}}
	\\&\qquad\qquad\cupprod
	\bigcupprod_{i=1}^{c} (-\mathfrak{c}_i,0,0)_{s_{\overline{1}} \cdots s_{\overline{m}}}.
\end{aligned}\end{equation}

Now, let $m \in \zpiu$ be an even number with $m \geqslant b$. We make the following computation
\begin{equation}\label{eqFourthBigComputation}\begin{aligned}
	&\prod_{i=1}^{a} (0,0,\mathfrak{a}_i)_1
	\cdot
	\prod_{i=1}^{b} (0,\mathfrak{b}_i,0)_{s_{\overline{i}}}
	\cdot
	\tau_{s_{\overline{b+1}} \cdots s_{\overline{m}}}
	\cdot
	\prod_{i=1}^{c} (\mathfrak{c}_i,0,0)_1
\\&\qquad=
	\bigcupprod_{i=1}^{a} (0,0,\mathfrak{a}_i)_{s_{\overline{1}} \cdots s_{\overline{m}}} 
	\cupprod
	\bigcupprod_{i=1}^{b}
			\big( 0,\mathfrak{b}_i(w_0^{-(i-1)} {}_- w_0^{i-1}),0 \big)_{s_{\overline{1}} \cdots s_{\overline{m}}}
	\\&\qquad\qquad\cupprod
	\bigcupprod_{i=1}^{c} (\mathfrak{c}_i,0,0)_{s_{\overline{1}} \cdots s_{\overline{m}}}.
\end{aligned}\end{equation}

Now let us go back to \eqref{eqFirstBigComputation} and let us draw some consequences from it. Recall that all of $a$, $b$, $c$ and all of $\mathfrak{a}_i$, $\mathfrak{b}_i$, $\mathfrak{c}_i$ were arbitrary, and $m \in \nn$ was an odd integer with $m \geqslant b-1$.

Let $m \in \nn$ be an arbitrary odd natural number. Recall that (since $m+1 \geqslant 1$) the pro-$p$ group $I_{s_{\overline{0}} \cdots s_{\overline{m}}}$ is uniform (Lemma \ref{lmmPoincAndUniform}). Therefore, the cup product algebra on $H^\ast(I, \xx(s_{\overline{0}} \cdots s_{\overline{m}})) \cong H^\ast(I_{s_{\overline{0}} \cdots s_{\overline{m}}},k)$ is the exterior algebra on $H^1$, as recalled in \S\ref{subsecCohom}. The computation \eqref{eqFirstBigComputation} tells us that the elements of the following form lie in the image of the multiplication map $\funcInlineNN{T^\ast_{E^0} E^1}{E^\ast}$:
\begin{equation}\label{eqCupObtained}
	\bigcupprod_{i=1}^{a} (\mathfrak{a}_i,0,0)_{s_{\overline{0}} \cdots s_{\overline{m}}} 
	\cupprod
	\bigcupprod_{i=1}^{b}
			\big( 0,\mathfrak{b}_i',0 \big)_{s_{\overline{0}} \cdots s_{\overline{m}}}
	\cupprod
	\bigcupprod_{i=1}^{c} (0,0,\mathfrak{c}_i)_{s_{\overline{0}} \cdots s_{\overline{m}}},
\end{equation}
for $a, c \in \nn$, for $b \in \nn$ with $b \leqslant m+1$, for $\mathfrak{a}_i, \mathfrak{c}_{i} \in \homm_{\vect{\ff_p}}(\quoz{\ooo}{\mmm}, k)$ and for $\mathfrak{b}_i' \in \homm_{\vect{\ff_p}}(\quoz{T^1}{(T^1)^p}, k)$. Now let $m \geqslant f \cdot \dim \TT - 1$: in this way, since $\quoz{T^1}{(T^1)^p}$ has dimension $f \cdot \dim \TT$ as an $\ff_p$-vector space, a $k$-basis of $\homm_{\vect{\ff_p}}(\quoz{T^1}{(T^1)^p}, k)$ has cardinality less or equal than $m+1$. Then the elements of the form \eqref{eqCupObtained} (with $b \leqslant m+1$) span the cup product algebra/exterior algebra $H^\ast(I, \xx(s_{\overline{0}} \cdots s_{\overline{m}}))$.

All in all, this shows that the image of the multiplication map $\funcInlineNN{T^\ast_{E^0} E^1}{E^\ast}$ contains $H^\ast(I, \xx(s_{\overline{0}} \cdots s_{\overline{m}}))$ for $m \in \nn$ odd with $m \geqslant f \cdot \dim \TT - 1$. Reasoning exactly in the same way for the other computations we made, we obtain that the image of the multiplication map $\funcInlineNN{T^\ast_{E^0} E^1}{E^\ast}$ contains the following vector spaces:
\begin{align*}
	&H^\ast(I, \xx(s_{\overline{0}} \cdots s_{\overline{m}}))
	&&\text{for $m \in \nn$ odd with $m \geqslant f \cdot \dim \TT - 1$,}
	\\
	&H^\ast(I, \xx(s_{\overline{0}} \cdots s_{\overline{m}}))
	&&\text{for $m \in \nn$ even with $m \geqslant f \cdot \dim \TT - 1$,}
	\\
	&H^\ast(I, \xx(s_{\overline{1}} \cdots s_{\overline{m}}))
	&&\text{for $m \in \zpiu$ odd with $m \geqslant f \cdot \dim \TT$,}
	\\
	&H^\ast(I, \xx(s_{\overline{1}} \cdots s_{\overline{m}}))
	&&\text{for $m \in \zpiu$ even with $m \geqslant f \cdot \dim \TT$.}
\end{align*}

By the discussion at the beginning of this subsection, every element of $\widetilde{W}$ of length greater or equal than $f \cdot \dim \TT$ can be represented as a product $\omega w$, where $\omega \in \widetilde{\Omega}$ (i.e., $\omega$ has length zero) and $w$ is of the form $s_{\overline{0}} \cdots s_{\overline{m}}$ or $s_{\overline{1}} \cdots s_{\overline{m}}$ as above.

With this notation, we thus know that the image of the multiplication map $\funcInlineNN{T^\ast_{E^0} E^1}{E^\ast}$ contains $H^\ast(I,\xx(w))$, and to show that it also contains $H^\ast(I,\xx(\omega w))$ it suffices to recall that, since $\omega$ has length zero, the formula \eqref{eqActionRightLeftE0IfLengthsAddUp} implies that multiplication on the left by $\tau_\omega$ induces an isomorphism between $H^\ast(I,\xx(w))$ and $H^\ast(I,\xx(\omega w))$.
\end{proof}

\begin{rem}\label{remFiniteCodim}
Assume that $p \neq 2,3$, that $\GG$ is $\SL_2$ or $\PGL_2$ and that $\field$ is an unramified extension of $\qq_p$. One has that the image of the natural multiplication map $\funcInlineNN{T^\ast_{E^0} E^1}{E^\ast}$ has finite codimension in $E^\ast$.
\end{rem}

\begin{proof}
For all $w \in \widetilde{W}$ the $k$-vector space $H^\ast(I,\xx(w)) \cong H^\ast(I_w,k)$ is finite-dimensional since $I_w$ is a Poincaré group (Lemma \ref{lmmPoincAndUniform}). If one shows that, since $\GG$ is semisimple, there are only finitely many elements of $\widetilde{W}$ of any given length, then by Proposition \ref{propAlmostGenByE1} we conclude.

It suffices to show the claim for $W$ in place of $\widetilde{W}$ by the definition of length and since $W$ is a quotient of $\widetilde{W}$ of finite index. Recall from \S\ref{subsecAffineRootsLength} that the length $\ell$ is constant on double cosets $(\Omega w \Omega)_{w \in W_{\aff}}$. Therefore, it suffices to show that $\Omega$ is finite. But $\Omega$ is isomorphic to $X_\ast(\TT)/\spann_{\zz} \Phi^\vee$ (see \cite[\S1.5]{Lusztig}, with the difference that in the construction of $W$ in loc.cit. roots are swapped with coroots and characters with cocaracters with respect to our conventions). Since $\GG$ is semisimple, $X_\ast(\TT)/\spann_{\zz} \Phi^\vee$ is indeed finite (and isomorphic to the fundamental group).
\end{proof}

\section{The \texorpdfstring{$\ext$-algebra}{Ext-algebra} \texorpdfstring{$E^\ast$}{E*} is finitely generated in some special cases}\label{sectionFinGen}

Putting together the results obtained in Proposition \ref{propE1fg} and Proposition \ref{propAlmostGenByE1}, we are able to show that, under the assumptions under which those results have been proven, the $\ext$-algebra $E^\ast$ is finitely generated as a (non-commutative) $k$-algebra.

\begin{thm}\label{thmA1fg}
Assume that $p \neq 2,3$, that $\field$ is an unramified extension of $\qq_p$ and that $\GG$ has semisimple rank $1$. One has that $E^\ast$ is finitely generated as a $k$-algebra.
\end{thm}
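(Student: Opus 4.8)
The plan is to combine Proposition~\ref{propE1fg} (applicable since an unramified extension of $\qq_p$ is in particular a finite extension) with Proposition~\ref{propAlmostGenByE1} and two easy additional observations. First I would record that $E^0=H$ is finitely generated as a $k$-algebra: by the braid relation it is generated by $\{\tau_w:\ell(w)\le 1\}$, the length-zero ones among these being exactly the image of the group homomorphism $\funcInlineNN{\widetilde{\Omega}}{H^\times}$, $\omega\mapsto\tau_\omega$ (again the braid relation, as $\ell$ vanishes on $\widetilde{\Omega}$); and $\widetilde{\Omega}$ is a finitely generated group, being an extension of the finitely generated abelian group $\Omega\cong X_\ast(\TT)/\spann_\zz\Phi^\vee$ (see \cite{Lusztig}) by the finite group $\quoz{T^0}{T^1}$. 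Fix a finite generating set $G_0$ of $E^0$, and, using Proposition~\ref{propE1fg}, a finite set $e_1,\dots,e_n$ generating $E^1$ as an $E^0$-bimodule. Since $E^1=\sum_i E^0e_iE^0$ and the tensor algebra $T^\ast_{E^0}E^1$ is generated as a $k$-algebra by $E^0\cup E^1$, the image $A$ of the multiplication map $\funcInlineNN{T^\ast_{E^0}E^1}{E^\ast}$ is precisely the $k$-subalgebra of $E^\ast$ generated by the finite set $G_0\cup\{e_1,\dots,e_n\}$.

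By Proposition~\ref{propAlmostGenByE1}, $A$ contains every $H^\ast(I,\xx(w))$ with $\ell(w)\ge f\cdot\dim\TT$, and, containing $E^0$, it also contains the degree-$0$ part of $E^\ast$. Hence $E^\ast=A+V$, where $V$ denotes the span of the positive-degree parts $H^j(I,\xx(w))$ (for $j\ge 1$) with $\ell(w)<f\cdot\dim\TT$. The key point is that, although $V$ need not be finite-dimensional (when $\GG$ is reductive but not semisimple, e.g.\ $\GG=\GL_2$, there are infinitely many $w\in\widetilde{W}$ of any given length), it is contained in $V_0\cdot E^0$ for a finite-dimensional subspace $V_0$. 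Take $V_0$ to be the sum of the positive-degree parts of $H^\ast(I,\xx(v))$ over $v\in\widetilde{W_{\aff}}$ with $\ell(v)<f\cdot\dim\TT$; this is finite-dimensional because the affine Coxeter group $W_{\aff}$ of rank $1$ has at most two elements of each length, the map $\funcInlineNN{\widetilde{W_{\aff}}}{W_{\aff}}$ has finite fibres (isomorphic to $\quoz{T^0}{T^1}$), and each $H^\ast(I,\xx(v))\cong H^\ast(I_v,k)$ is finite-dimensional by Lemma~\ref{lmmPoincAndUniform} ($I_v$ being a Poincaré pro-$p$ group). Now every $w\in\widetilde{W}$ can be written $w=v\omega$ with $v\in\widetilde{W_{\aff}}$ and $\omega\in\widetilde{\Omega}$; then $\ell(w)=\ell(v)$, one has $I_w=I_v$ (length-zero elements normalize $I$), and $\ell(v\omega)=\ell(v)+\ell(\omega)$, so formula~\eqref{eqActionRightLeftE0IfLengthsAddUp} shows that right multiplication by $\tau_\omega$ is an isomorphism $\funcAbove{\cong}{H^\ast(I,\xx(v))}{H^\ast(I,\xx(w))}$. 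Therefore $H^j(I,\xx(w))=H^j(I,\xx(v))\cdot\tau_\omega\subseteq V_0\cdot E^0$ whenever $\ell(w)<f\cdot\dim\TT$, so $V\subseteq V_0\cdot E^0$.

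Putting this together, $E^\ast=A+V\subseteq A+V_0\cdot E^0\subseteq A+V_0\cdot A$ (using $E^0\subseteq A$), and the right-hand side lies in the $k$-subalgebra of $E^\ast$ generated by $A\cup V_0$, i.e.\ by the finite set $G_0\cup\{e_1,\dots,e_n\}\cup B_0$, where $B_0$ is any finite $k$-basis of $V_0$; thus $E^\ast$ is finitely generated as a $k$-algebra. The substance is already contained in Propositions~\ref{propE1fg} and~\ref{propAlmostGenByE1}; the only genuinely new ingredients are the (classical) finite generation of $E^0$ as a $k$-algebra, which needs $\widetilde{\Omega}$ to be a finitely generated group, and the bookkeeping of the second paragraph, whose one subtlety is the reductive non-semisimple case. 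I expect that last point to be the main thing to get right; for $\GG=\SL_2$ or $\PGL_2$ it can be bypassed entirely, since by Remark~\ref{remFiniteCodim} the subalgebra $A$ has finite codimension in $E^\ast$ and one may simply take $V_0$ to be a complement.
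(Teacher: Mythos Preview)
Your proof is correct and follows essentially the same approach as the paper's: both combine Proposition~\ref{propE1fg}, Proposition~\ref{propAlmostGenByE1}, the finite generation of $E^0$ as a $k$-algebra, and the reduction from $\widetilde{W}$ to $\widetilde{W_{\aff}}$ via multiplication by $\tau_\omega$ for $\omega\in\widetilde{\Omega}$. The only cosmetic difference is that you use right multiplication by $\tau_\omega$ (writing $w=v\omega$) where the paper uses left multiplication (writing $w=\omega v$); both are equally valid by~\eqref{eqActionRightLeftE0IfLengthsAddUp}.
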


\begin{proof}
Proposition \ref{propAlmostGenByE1} shows that
\[
	\image \big( \funcInlineNN{T^\ast_{E^0} E^1}{E^\ast} \big)
	\supseteq
	\bigoplus_{\substack{w \in \widetilde{W} \\ \text{s.t. $\ell(w) \geqslant f \cdot \dim \TT$}}}
		H^\ast(I,\xx(w)).
\]
Therefore $E^\ast$ is generated as a $k$-algebra by $E^0$, $E^1$ and
\[
	\bigoplus_{\substack{w \in \widetilde{W} \\ \text{s.t. $\ell(w) < f \cdot \dim \TT$}}} H^\ast(I,\xx(w)).
\]
By \eqref{eqActionRightLeftE0IfLengthsAddUp}, for all $w \in \widetilde{W}$ and all $\omega \in \widetilde{\Omega}$, multiplication on the left by $\tau_\omega$ induces an isomorphism between $H^\ast(I,\xx(w))$ and $H^\ast(I,\xx(\omega w))$. Furthermore, $E^1$ is finitely generated as an $E^0$-bimodule by Proposition \ref{propE1fg} (in this setting a more explicit proof could also be provided starting from the formulas in Lemma \ref{lmmExplicitFormulasE1}).

This shows that $E^\ast$ is generated as a $k$-algebra by $E^0$, a finite subset of $E^1$ and
\[
	\bigoplus_{\substack{w \in \widetilde{W_{\aff}} \\ \text{s.t. $\ell(w) < f \cdot \dim \TT$}}} H^\ast(I,\xx(w)).
\]
The last set is finite-dimensional as a $k$-vector space because there are only finitely many elements of bounded length in $W_{\aff}$ (and so in $\widetilde{W_{\aff}}$) and because for all $w \in \widetilde{W}$ the $k$-vector space $H^\ast(I,\xx(w)) \cong H^\ast(I_w,k)$ is finite-dimensional since $I_w$ is a Poincaré group (Lemma \ref{lmmPoincAndUniform}).

Therefore, to show the thesis of the proposition it suffices to notice that $E^0$ is finitely generated as a $k$-algebra. This is clear from the fact that $E^0$ is a finitely generated module over its centre, which is a finitely generated (commutative) $k$-algebra (see \cite[Theorem 1.3]{VignII}), but it can also be shown more directly from the braid relations, the fact that $\widetilde{\Omega}$ is a finitely generated group, and the fact that $S_{\aff}$ is finite.
\end{proof}

\section{The \texorpdfstring{$\ext$-algebra}{Ext-algebra} \texorpdfstring{$E^\ast$}{E*} is finitely presented when \texorpdfstring{$G=\SL_2(\qq_p)$}{G = SL\_2(Q\_p)} with \texorpdfstring{$p \neq 2,3$}{p ≠ 2,3}}\label{sectionFinPres}

\begin{assumptions}
For the whole section we make the following assumptions. We consider $\field \defeq \qq_p$ and $G \defeq \SL_2(\qq_p)$, and we assume that $p \neq 2,3$. Furthermore, we fix the uniformizer $\pi$ to be $p$. We consider the ($\qq_p$-split maximal) torus $\TT$ of diagonal matrices, and we fix the following pro-$p$ Iwahori subgroup $I$:
\begin{equation}\label{eqIwaSL2}
	I = \matr{1+p\zz_p}{\zz_p}{p\zz_p}{1+p\zz_p} \cap \SL_2(\qq_p).
\end{equation}
(the corresponding choices of the positive root and the Chevalley system will be made explicit in \S\ref{subsecNotationSL2}).
\end{assumptions}

\subsection{The \texorpdfstring{$\ext$-algebra}{Ext-algebra} in the case \texorpdfstring{$G=\SL_2(\qq_p)$}{G = SL\_2(Q\_p)}: notation and recollection of  and results}\label{subsecNotationSL2}

Under the current assumptions, the $\ext$-algebra $E^\ast$ has been studied extensively in \cite{newpaper}. We need to recall quite a few facts from loc. cit. and from \cite{ext}.

First of all, it follows from \cite[\S7.2]{ext} that $E^\ast$ is supported in the following degrees:
\[
	E^\ast = E^0 \oplus E^1 \oplus E^2 \oplus E^3.
\]

With reference to \S\ref{subsectionApartment}, we have a Chevalley system such that, denoting the positive root by $\alpha_0$, the map $\varphi_{\alpha_0}$ is the identity of $\SL_2$ and $\varphi_{-\alpha_0}$ is the inverse transpose. With reference to \S\ref{subsecAffineRootsLength}, in this setting the group $\Omega$ is trivial. Following \cite[\S2.3.1]{newpaper}, we define
\begin{align}\label{eqDefs0s1Special}
	&s_0 \defeq \overline{\matr{0}{1}{-1}{0}},
	&&s_1 \defeq \overline{\matr{0}{-\pi^{-1}}{\pi}{0}},
\end{align}
where $\overline{?}$ denotes the image of $?$ in $\widetilde{W}$. This is \virgolette{almost} compatible with the definition we gave in \eqref{eqDefs0s1Gneneral}, the only difference being that the two definitions of $s_1$ differ by multiplication by the central element $\overline{\matr{-1}{0}{0}{-1}}$. However, this will not cause any problems. For example, in \cite[Proposition 7.1]{ext} some general formulas are stated only for the specific choices \eqref{eqDefs0s1Gneneral} (see the comment at the end of the statement), but the only difference being multiplication by a central element, the formulas remain valid for the lifts \eqref{eqDefs0s1Special}. We make the choice \eqref{eqDefs0s1Special} to preserve compatibility of notation with \cite[\S2.3.1]{newpaper}.

We also define
\begin{equation}\label{eqcminus1}
	c_{-1} \defeq \overline{\matr{-1}{0}{0}{-1}} = s_0^2 = s_1^2.
\end{equation}

Since the group $\Omega$ is trivial, every element of $\widetilde{W}$ can be represented in a unique way as a product of an element of $T^0/T^1$ and a string of the elements ${s_0}$ and ${s_1}$, with no two identical letters one after the other (compare the beginning of \S\ref{sectionSubalgE0E1}). In other words, the following is a basis of $E^0$:
\begin{align*}
	&\tau_{\omega} &&\text{for $\omega \in T^0/T^1$,}
	\\
	&\tau_{\omega s_0(s_1s_0)^i} &&\text{for $\omega \in T^0/T^1$ and $i \in \nn$,}
	\\
	&\tau_{\omega s_1(s_0s_1)^i} &&\text{for $\omega \in T^0/T^1$ and $i \in \nn$,}
	\\
	&\tau_{\omega(s_0s_1)^i} &&\text{for $\omega \in T^0/T^1$ and $i \in \zpiu$,}
	\\
	&\tau_{\omega(s_1s_0)^i} &&\text{for $\omega \in T^0/T^1$ and $i \in \zpiu$.}
\end{align*}

As in \cite[Equation (31)]{newpaper}, we parametrize the group $T^0/T^1 \cong \ff_p^\times$ as follows: for all $u \in \ff_p^\times$ we define
\begin{equation}\label{eqOmegaU}
	\omega_u \defeq \overline{\matr{[u]^{-1}}{0}{0}{[u]}},
\end{equation}
where $[?]$ denotes the Teichmüller lift of $?$ and $\overline{?}$ denotes the class of $?$ in $T^0/T^1$.

In \cite[\S6]{ext}, it is shown that the map $\invol_w$ defined by the commutativity of the diagram
\begin{equation}\label{eqqDefOfTheAntiInvol}
\begin{tikzcd}
	H^\ast(I,\xx(w))
	\arrow[r, "{\invol_w}", "{\cong}"']
	\arrow[d, "{\sh_w}"', "{\cong}"]
	&
	H^\ast(I,\xx(w^{-1}))
	\arrow[d, "{\sh_{w^{-1}}}", "{\cong}"']
	\\
	H^\ast(I_w,k)
	\arrow[r, "{(w^{-1})_\ast}"', "{\cong}"]
	&
	H^\ast(I_{w^{-1}},k).
\end{tikzcd}
\end{equation}
gives rise to a map $\funcInline{\invol \defeq \bigoplus_{w \in \widetilde{W}} \invol_w}{E^\ast}{E^\ast}$ that is an involutive anti-automorphism of $E^\ast$ (anti-automorphism meaning that it is an automorphism of $k$-vector spaces and that it satisfies the equation $\invol(\alpha \cdot \beta) = (-1)^{\abs{\alpha} \cdot \abs{\beta}} \invol(\beta) \cdot \invol(\alpha)$ for all homogeneous $\alpha \in E^{\abs{\alpha}}$ and $\beta \in E^{\abs{\beta}}$).

In \cite[\S2.2.6]{newpaper} a certain involutive automorphism $\Gamma_\varpi$ of $E^\ast$ is constructed as follows. One defines $\varpi \defeq \matr{0}{1}{p}{0} \in \GL_2(\qq_p)$ and one considers the map $\Gamma_{\varpi,w}$ defined by the commutativity of the diagram
\[
\begin{tikzcd}
	H^\ast(I,\xx(w))
	\ar[d, "{\sh_w}"', "{\cong}"]
	\ar[r, "{\Gamma_{\varpi,w}}"]
	&
	H^\ast(I,\xx(\varpi w \varpi^{-1}))
	\ar[d, "{\sh_{\varpi w \varpi^{-1}}}", "{\cong}"']	
	\\
	H^\ast(I_w,k)
	\ar[r, "{\varpi_\ast}"]
	&
	H^\ast(I_{\varpi w \varpi^{-1}},k).
\end{tikzcd}
\]
Defining $\funcInline{\Gamma_\varpi \defeq \bigoplus_{w \in \widetilde{W}} \Gamma_{\varpi,w}}{E^\ast}{E^\ast}$, one has that $\Gamma_\varpi$ is an involutive automorphism of $E^\ast$ (as a graded $k$-algebra).
Moreover, the involutions $\Gamma_\varpi$ and $\invol$ commute (see again \cite[\S2.2.6]{newpaper}):
\begin{equation}\label{eqCCCNInvolCommute}
	\Gamma_\varpi \circ \invol = \invol \circ \Gamma_\varpi.
\end{equation}

The Poincaré duality of \S\ref{subsecCohom} for the groups $I_w$ implies that there is a duality between $H^i(I,\xx(w))$ and $H^{3-i}(I,\xx(w))$ for all $i \in \{0,1,2,3\}$ and all $w \in \widetilde{W}$. In \cite[Proposition 7.18]{ext}, such duality is extended to a suitably defined \virgolette{duality} of $E^0$-bimodules involving $E^i$ and $E^{3-i}$. We recall a few details. One fixes (and we will make a specific choice later) an isomorphism of $1$-dimensional vector spaces
\begin{equation}\label{eqEta}
	\funcInline{\eta}{H^3(I,k)}{k}
\end{equation}
as well as  the $G$-equivariant map
\begin{equation}\label{eqDefSSS}
	\func
		{\sss}
		{k \left[ \quoz{G}{I} \right]}
		{k}
		{f}
		{\sum_{\overline{g} \in \quoz{G}{I}} f(g)}
\end{equation}
and its induced map
\[
	\sss^i \defeq \funcInline{H^i(I, \sss)}{E^i = H^i\left(I, k \left[ \quoz{G}{I} \right]\right)}{H^i(I, k)}.
\]
Furthermore, one defines the \emph{finite twisted dual} $\biinvol{(E^{i})^{\vee, \finite}}$ of $E^{i}$ as follows: as a $k$-vector space it is defined as
\[
	\bigoplus_{w \in \widetilde{W}} H^{i}(I,\xx(w))^\vee \subseteq (E^{i})^{\vee},
\]
where $({}_-)^\vee \defeq \homm_{\vect{k}}({}_-, k)$. One may check that this vector space is actually a sub-$E^0$-bimodule of the full dual $(E^{i})^{\vee}$, but we endow $\biinvol{(E^{i})^{\vee, \finite}}$ with the following \virgolette{twisted} action instead:
\begin{align*}
	h \cdot \varphi &\defeq \varphi(\invol(h) \cdot {}_-),
	&
	\varphi \cdot h &\defeq \varphi( {}_- \cdot \invol(h))
\end{align*}
for all $h \in E^0$ and all $\varphi \in \biinvol{(E^{i})^{\vee, \finite}}$, thus justifying the complicated notation.
Having introduced these notations, we can recall the statement of the above-mentioned duality theorem; namely, the map
\[
	\func
		{\Delta^i}
		{E^i}
		{\biinvol{(E^{3-i})^{\vee, \finite}}}
		{\alpha}
		{\Big( \beta \mapsto \big( \eta \circ \sss^d \big) (\alpha \cupprod \beta) \Big)}
\]
is a well-defined isomorphism of $E^0$-bimodules. The link with Poincaré duality of the groups $I_w$ is as follows: The Poincaré duality of \S\ref{subsecCohom} defines an isomorphism of $k$-vector spaces
\begin{equation}\label{eqDualityIso}
	\func
		{\Delta^i_w}
		{H^i(I_w,k)}
		{\big( H^{3-i}(I_w,k) \big)^\vee}
		{\alpha}
		{\big( \beta \mapsto \eta_w (\alpha \cupprod \beta) \big),}
\end{equation}
once one fixes an isomorphism of $1$-dimensional $k$-vector spaces
\[
	\funcInline{\eta_w}{H^3(I_w,k)}{k}
\]
and we make the choice $\eta_w \defeq \eta \circ \cores^{I_w}_I$ (recalling from \cite[Chapter 1, Proposition 30 (4)]{Serre} that the corestriction is an isomorphism on the top graded piece). With these choices the following diagram of solid arrows commutes (see \cite[Diagram in the proof of Lemma 7.5]{ext}):
\begin{equation}\label{eqDiagramDualities}\begin{tikzcd}
	H^i(I_w,k)
	\ar[d, "\sh_w", "\cong"']
	\ar[r, "{\Delta^i_w}"]
	&
	H^{3-i}(I_w,k)^\vee
	\ar[d, leftarrow, "\sh_w^\vee"', "\cong"]
	\\
	H^i(I,\xx(w))
	\ar[d, hook]
	\ar[r, dashed]
	&
	H^{3-i}(I,\xx(w))^\vee
	\ar[d, hook]
	\\
	E^i
	\ar[r, "\Delta^i"]
	&
	\biinvol{(E^{3-i})^{\vee, \finite}}
\end{tikzcd}\end{equation}
One can thus define a dashed map making the whole diagram commute either as the unique map making the upper square commute or as the restriction of $\Delta^i$.

Following \cite[\S4.2.3]{newpaper}, we want to fix $k$-bases for the graded pieces of $E^\ast$ \virgolette{in a compatible way}.
Denoting $({}_-)^\vee \defeq \homm_{\vect{\ff_p}}({}_-,k)$ let us recall from \eqref{eqIsoE1AsATriple} that for all $w \in \widetilde{W}$ we have fixed a specific isomorphism
\begin{equation}\label{eqIsoE1AsATripleSL2Qp}
	\funcInline
		{({}_-, {}_-, {}_-)_w}
		{(\ff_p)^\vee \oplus (T^1_w)^\vee \oplus \ff_p^\vee}
		{H^1(I,\xx(w)).}
\end{equation}
Using the $p$-adic logarithm and exponential, one sees that $(1+\mmm)^p = 1+\mmm^2$, and therefore
\begin{align*}
	T^1_w
	&\cong
	\quoz{(1+\mmm)}{\big( (1+\mmm^{\ell(w)+1})\cdot (1+\mmm)^p \big)}
	\\&\cong
	\begin{cases}
		\ff_p &\text{if $\ell(w) \geqslant 1$,}
		\\
		0 &\text{if $\ell(w) = 0$.}
	\end{cases}
\end{align*}
Henceforth, in the case that $\ell(w) \geqslant 1$ we fix the following isomorphism, induced by the $p$-adic logarithm:
\begin{equation}
	\label{eqDefIota}
	\func
		{\iota}
		{T^1_w \cong \quoz{(1+\mmm)}{(1+\mmm^2)}}
		{\quoz{\ooo}{\mmm} = \ff_p}
		{\overline{1+px}}
		{\overline{x}.}
\end{equation}

Dualizing the isomorphism \eqref{eqIsoE1AsATripleSL2Qp} we obtain an isomorphism
\[
	\funcAbove{\cong}{H^1(I,\xx(w))^\vee}{\ff_p \oplus T^1_w \oplus \ff_p}
\]
We combine the inverse of this isomorphism with the inverse of the isomorphism $\funcInlineNN{H^2(I,\xx(w))}{H^1(I,\xx(w))^\vee}$ defined in the diagram \eqref{eqDiagramDualities}, obtaining a composite isomorphism
\[\begin{tikzcd}
	\ff_p \oplus T^1_w \oplus \ff_p
	\ar[r, "\cong"]
	&
	H^1(I,\xx(w))^\vee
	\ar[r, "\cong"]
	&
	H^2(I,\xx(w)),
\end{tikzcd}\]
for which we will use the notation
\[
	\funcInline{({}_-,{}_-,{}_-)_w}{\ff_p \oplus T^1_w \oplus \ff_p}{H^2(I,\xx(w))}
\]
similarly to the isomorphism describing $H^1(I,\xx(w))$.

Now, let us fix an element $\mathbf{c} \in \homm_{\vect{\ff_p}} \left( \ff_p, k \right)$ and an element $\mathbfSpecial{\alpha} \in \ff_p$ with the property that
\[
	\mathbf{c}(\mathbfSpecial{\alpha}) = 1.
\]
For all $w \in \widetilde{W}$ we define the following $k$-basis of $H^1(I,\xx(w))$:
\begin{align*}
	\bm{w} &\defeq (\mathbf{c},0,0)_w,
	\\
	\bp{w} &\defeq (0,0,\mathbf{c})_w,
	\\
	\bz{w} &\defeq (0,\mathbf{c}\iota,0)_w \qquad\qquad\text{if $\ell(w) \geqslant 1$,}
\end{align*}
and for all $w \in \widetilde{W}$ we define the following $k$-basis of $H^2(I,\xx(w))$:
\begin{align*}
	\am{w} &\defeq (\mathbfSpecial{\alpha},0,0)_w,
	\\
	\ap{w} &\defeq (0,0,\mathbfSpecial{\alpha})_w,
	\\
	\az{w} &\defeq (0,\iota^{-1}(\mathbfSpecial{\alpha}),0)_w \qquad\qquad\text{if $\ell(w) \geqslant 1$.}
\end{align*}

Furthermore, we define the basis $(\phi_v)_{v \in \widetilde{W}}$ of $E^3$ as the dual basis of $(\tau_v)_{v \in \widetilde{W}}$ (dual with respect to the duality isomorphism \eqref{eqDualityIso} with $i = 3$), meaning that for all $w \in \widetilde{W}$ the element $\phi_w$ is characterized by the properties
\begin{align*}
	\left( \eta \circ \sss^d \right) (\phi_w \cupprod \tau_w) &= 1,
	\\
	\left( \eta \circ \sss^d \right) (\phi_w \cupprod \tau_v) &= 0 &&\text{for all $v \in \widetilde{W} \smallsetminus \{w\}$.}
\end{align*}

As anticipated above, we make a specific choice for $\funcInline{\eta}{H^3(I,k)}{k}$. In \cite[Lemma 4.5]{newpaper}, it is shown that there exists a (necessarily unique) choice of $\eta$ such that the following property holds:
\begin{align}\label{eqCond2}
	&\bm{w} \cupprod \bz{w} \cupprod \bp{w} = \phi_w &&\text{for all $w \in \widetilde{W}$ such that $\ell(w) \geqslant 1$.}
\end{align}
Henceforth, we will always work with this fixed choice of $\eta$.

It is possible to show that the following relations hold (see \cite[Lemma 5.3]{newpaper}):
\begin{align}
\az{w} &= \bp{w} \cupprod \bm{w}
&&\text{for all $w \in \widetilde{W}$ such that $\ell(w) \geqslant 1$,}
\notag
\\
\am{w} &= \bz{w} \cupprod \bp{w}
&&\text{for all $w \in \widetilde{W}$ such that $\ell(w) \geqslant 1$,}
\label{eqFormulaCupUniform}
\\
\ap{w} &= \bm{w} \cupprod \bz{w}
&&\text{for all $w \in \widetilde{W}$ such that $\ell(w) \geqslant 1$.}
\notag
\end{align}

We have thus fixed the following $k$-basis of $H^\ast(I,\xx(w))$:
\begin{equation}\label{eqBasis}
\begin{aligned}
	&\tau_w,&&&&
	\\
	&\bm{w}, &&\qquad\qquad\bz{w} \quad\text{(if $\ell(w) \geqslant 1$),} &&\qquad\qquad\bp{w},
	\\
	&\am{w}, &&\qquad\qquad\az{w} \quad\text{(if $\ell(w) \geqslant 1$),} &&\qquad\qquad\ap{w},
	\\
	&\phi_w.&&&&
\end{aligned}
\end{equation}

It is useful to introduce some idempotent elements of $E^0$. We denote by $\widehat{T^0/T^1}$ the group of characters $\funcInlineNN{T^0/T^1}{k^\times}$. To each $\lambda \in \widehat{T^0/T^1}$ we attach
\begin{equation}\label{eqDefELambda}
	e_\lambda \defeq - \sum_{t \in T^0/T^1} \lambda(t)^{-1} \tau_{t}.
\end{equation}
Using the braid relations and that $T^0/T^1 \cong \ff_p^\times$, one sees that $(e_\lambda)_{\lambda \in \widehat{T^0/T^1}}$ is a system of orthogonal idempotents, that they form a basis of the subalgebra $\spann_k \set{\tau_t}{t \in \quoz{T^0}{T^1}} \cong k[T^0/T^1]$ of $E^0$, and that the formula
\begin{equation}\label{eqELambdaTauT}
	e_\lambda \cdot \tau_t = \tau_t \cdot e_\lambda = \lambda(t) e_\lambda
\end{equation}
holds for all $\lambda \in \widehat{T^0/T^1}$ and all $t \in T^0/T^1$ (compare \cite[\S2.2.1]{ext}). With this notation, $e_1$ denotes the idempotent associated with the trivial character. Furthermore, we define
\begin{equation}\label{eqDefIdd}
	\func
		{\idd}
		{T^0/T^1}
		{k^\times}
		{\omega_u}
		{u,}
\end{equation}
where the notation $\omega_u$ has been introduced in \eqref{eqOmegaU}. We may thus write $e_{\idd}$ for the corresponding idempotent. More generally, every idempotent of the form \eqref{eqDefELambda} can be written as $e_{\idd^m}$ for some uniquely determined $m \in \{0,\dots,p-2\}$.

Using the basis \eqref{eqBasis}, we will now recall the formulas for the multiplication on $E^0$, for the left action of $E^0$ on $E^\ast$ (and partially, also on the right) as well as the action of the anti-involution $\invol$ and of the involutive automorphism $\Gamma_\varpi$.
\begin{itemize}
\item Braid and quadratic relations on $E^0$ (see \cite[Equations (19) and (20)]{newpaper}):

One has:
\begin{align}\label{eqQuadrRelSL2}
	&\tau_{v} \cdot \tau_w = \tau_{vw} &&\text{for $v,w \in \widetilde{W}$ such that $\ell(vw)=\ell(v)+\ell(w)$,}
\\
	&\tau_{s_i}^2 = - e_1 \cdot \tau_{s_i} = - \tau_{s_i} \cdot e_1,
	&&\text{for $i \in \{0,1\}$.}
\end{align}

\item Action of the anti-involution $\invol$ on $E^0$ (see \cite[Proposition 6.1]{ext}):

For all $w \in \widetilde{W}$ one has:
\begin{equation}\label{eqInvolTauw}
	\invol(\tau_w) = \tau_{w^{-1}}.
\end{equation}

\item Action of the involutive automorphism $\Gamma_\varpi$ on $E^0$:

For all $w \in \widetilde{W}$ one has that
\[
	\Gamma_\varpi(\tau_w) = \tau_{\varpi w \varpi^{-1}}
\]
(immediate from the definition of $\Gamma_\varpi$). With an explicit computation, this can be made explicit as follows:
\begin{equation}\label{eqCCCNOnH}\begin{aligned}
	\Gamma_\varpi(\tau_\omega) &= \tau_{\omega^{-1}} \qquad\qquad\qquad\qquad\qquad \text{for all $\omega \in \quoz{T^0}{T^1}$,}
	\\
	\Gamma_\varpi(\tau_{s_0}) &= \tau_{s_1},
	\\
	\Gamma_\varpi(\tau_{s_1}) &= \tau_{s_0}.
\end{aligned}\end{equation}

\item Left and right action of $\tau_{\omega}$ on $E^1$ for $\omega \in \quoz{T^0}{T^1}$ (see \cite[Equations (64) and (66)]{newpaper}):

Let $u \in \ff_p^\times$ and let $w \in \widetilde{W}$ (with the understanding that $\ell(w) \geqslant 1$ when we speak about $\bz{w}$). Recall the notation $\omega_u$ from \eqref{eqOmegaU}. One has:
\begin{equation}\label{eqOmegaOnE1Left}\begin{aligned}
	\tau_{\omega_{u}} \cdot \bm{w} &= u^{-2} \bm{\omega_u w},
\\
	\tau_{\omega_{u}} \cdot \bz{w} &= \bz{\omega_u w},
\\
	\tau_{\omega_{u}} \cdot \bp{w} &= u^2 \bp{\omega_u w},
\end{aligned}\end{equation}
and
\begin{equation}\label{eqOmegaOnE1Right}\begin{aligned}
	\bm{w} \cdot \tau_{\omega_{u}} &=\bm{w \omega_{u}},
\\
	\bz{w} \cdot \tau_{\omega_{u}} &= \bz{w \omega_{u}},
\\	
	\bp{w} \cdot \tau_{\omega_{u}} &= \bp{w \omega_{u}}.
\end{aligned}\end{equation}

\item Action of the idempotents on $E^1$:

Let $\lambda \in \widehat{\quoz{T^0}{T^1}}$ and let $w \in \widetilde{W}$ (with the understanding that $\ell(w) \geqslant 1$ when we speak about $\bz{w}$). Recalling the notation \eqref{eqDefELambda} and \eqref{eqDefIdd}, one has:
\begin{equation}\label{eqIdempotentsE1}\begin{aligned}
	\bm{w} \cdot e_\lambda &= e_{\lambda^{(-1)^{\ell(w)}} \idd^{-2}} \cdot \bm{w},
	\\
	\bz{w} \cdot e_\lambda &= e_{\lambda^{(-1)^{\ell(w)}}} \cdot \bz{w},
	\\
	\bp{w} \cdot e_\lambda &= e_{\lambda^{(-1)^{\ell(w)}} \idd^{2}} \cdot \bp{w}.
\end{aligned}\end{equation}
These formulas can be easily computed from formulas \eqref{eqOmegaOnE1Left} and \eqref{eqOmegaOnE1Right}, and they are also proven in \cite[Equation (69)]{newpaper} (for $\lambda = \idd^m$ for some $m \in \zz$, i.e., for every $\lambda$).

\item Left action of $\tau_{s_0}$ and $\tau_{s_1}$ on $E^1$ when lengths add up:

Let $w \in \widetilde{W}$ (with the understanding that $\ell(w) \geqslant 1$ when we speak about $\bz{w}$). One has:
\begin{equation}\label{eqFormulasTauQp}\begin{aligned}
	&\left.\begin{array}{l}
		\tau_{s_0} \cdot \bm{w}
		=
		-\bp{s_0w}
	\\
		\tau_{s_0} \cdot \bz{w}
		=
		-\bz{s_0w}
	\\
		\tau_{s_0} \cdot \bp{w}
		=
		0
	\end{array}
	\quad\right\}\qquad\text{if $\ell(s_0 w) = \ell(w) + 1$,}
	\\
	&\left.\begin{array}{l}
		\tau_{s_1} \cdot \bm{w}
		=
		0
	\\
		\tau_{s_1} \cdot \bz{w}
		=
		-\bz{s_1 w}
	\\
		\tau_{s_1} \cdot \bp{w}
		=
		-\bm{s_1 w}
	\end{array}
	\quad\right\}\qquad\text{if $\ell(s_1 w) = \ell(w) + 1$.}
\end{aligned}\end{equation}
These formulas are proved in \cite[Proposition 4.9]{newpaper}, and they also follow from Lemma \ref{lmmExplicitFormulasE1} parts \ref{item2ExplFormulasE1} and \ref{item4ExplFormulasE1}.

\item Right action of $\tau_v$ on $E^1$ when lengths add up (for $v \in \widetilde{W}$):

Let $w, v \in \widetilde{W}$ such that $\ell(wv) = \ell(w) + \ell(v)$ and such that $\ell(v) \geqslant 1$ (and furthermore let us assume that $\ell(w) \geqslant 1$ when we speak about $\bz{w}$). One has:
\begin{equation}\label{eqFormulaRighteasyQp}\begin{aligned}
	&\left.\begin{array}{l}
		\bm{w} \cdot \tau_{v}
		=
		\bm{wv}
	\\
		\bz{w} \cdot \tau_{v}
		=
		\bz{wv}
	\\
		\bp{w} \cdot \tau_{v}
		=
		0
	\end{array}
	\quad\right\}\qquad\text{if $\ell(s_1 w v) = \ell(w v) + 1$,}
	\\
	&\left.\begin{array}{l}
		\bm{w} \cdot \tau_{v}
		=
		0
	\\
		\bz{w} \cdot \tau_{v}
		=
		\bz{wv}
	\\
		\bp{w} \cdot \tau_{v}
		=
		\bp{wv}
	\end{array}
	\quad\right\}\qquad\text{if $\ell(s_0 w v) = \ell(w v) + 1$.}
\end{aligned}\end{equation}
These formulas are proved in \cite[Lemma 4.12]{newpaper}, and they also follow from Lemma \ref{lmmExplicitFormulasE1} parts \ref{item1ExplFormulasE1} and \ref{item3ExplFormulasE1}.

\item Left action of $\tau_{s_0}$ and $\tau_{s_1}$ on $E^1$ when lengths do not add up:

	Let $w \in \widetilde{W}$ be such that $\ell(s_0w) = \ell(w) - 1$. One has:
	\begin{equation}\label{eqTauS0OnE1LeftNotAddUp}\begin{aligned}
		\tau_{s_0} \cdot \bm{w}
		&=
		\begin{cases}
			-e_1\bm{w} -2e_{\idd} \bz{w} -\bp{s_0w}
			&\text{if $\ell(w) \geqslant 2$,}
			\\
			-e_1\bm{w} -2e_{\idd} \bz{w} + e_{\idd^2} \bp{w} - \bp{s_0w}
			&\text{if $\ell(w) = 1$,}
		\end{cases}
	\\
		\tau_{s_0} \cdot \bz{w}
		&=
		\begin{cases}
			-e_1\bz{w}
			&\text{if $\ell(w) \geqslant 2$,}
			\\
			-e_1\bz{w} + e_{\idd} \bp{w}
			&\text{if $\ell(w) = 1$,}
		\end{cases}
	\\
		\tau_{s_0} \cdot \bp{w}
		&=
		-e_1\bp{w}.
	\end{aligned}\end{equation}

	Let $w \in \widetilde{W}$ be such that $\ell(s_1w) = \ell(w) - 1$. One has:
	\begin{equation}\label{eqTauS1OnE1LeftNotAddUp}\begin{aligned}
		\tau_{s_1} \cdot \bm{w}
		&=
		-e_1 \bm{w}
	\\	
		\tau_{s_1} \cdot \bz{w}
		&=
		\begin{cases}
			-e_1 \bz{w}
			&\text{if $\ell(w) \geqslant 2$,}
			\\
			-e_1 \bz{w} - e_{\idd^{-1}}\bm{w}
			&\text{if $\ell(w) = 1$,}
		\end{cases}
	\\
		\tau_{s_1} \cdot \bp{w}
		&=
		\begin{cases}
			-e_1 \bp{w} + 2e_{\idd^{-1}}\bz{w} - \bm{s_1w}
			&\text{if $\ell(w) \geqslant 2$,}
			\\
			-e_1 \bp{w} + 2e_{\idd^{-1}} \bz{w} + e_{\idd^{-2}} \bm{w} - \bm{s_1w}
			&\text{if $\ell(w) = 1$,}
		\end{cases}
	\end{aligned}\end{equation}

This is proved in \cite[Proposition 4.9]{newpaper}.

\item Right action of $\tau_{s_0}$ and $\tau_{s_1}$ on $E^1$ when lengths do not add up (some cases):

	Let $v \in \widetilde{W}$ such that $\ell(s_0v) = \ell(v) - 1$. One has:
	\begin{equation}\label{eqRightBadLengthS0}
		\bz{s_0} \cdot \tau_v = -e_1\bz{v} - e_{\idd^{-1}}\bm{v}.
	\end{equation}	

	Let $v \in \widetilde{W}$ such that $\ell(s_1v) = \ell(v) - 1$. One has:
	\begin{equation}\label{eqRightBadLengthS1}
		\bz{s_1} \cdot \tau_v = -e_1\bz{v} + e_{\idd}\bp{v}.
	\end{equation}

This is proved in \cite[Lemma 4.12]{newpaper}.

\item Action of the anti-involution $\invol$ on $E^1$ (see \cite[Lemma 4.7]{newpaper}):

Let $w \in \widetilde{W}$ (with the understanding that $\ell(w) \geqslant 1$ when we speak about $\bz{w}$) and let $u_w \in \ff_p^\times$ be such that $\omega_{u_w}^{-1}w$ lies in the subgroup of $\widetilde{W}$ generated by $s_0$ and $s_1$ (it is easy to see that such $u_w$ exists and, although it is not unique, it has the property that $u_w^2$ is uniquely determined by $w$; for the notation $\omega_?$ see \eqref{eqOmegaU}). One has:
\begin{equation}
	\label{eqInvolOnE1}
\begin{aligned}
	\invol (\bm{w})
	&=
	\begin{cases}
		u_w^2 \bm{w^{-1}} &\text{if $\ell(w)$ is even,}
		\\
		- u_w^{2} \bp{w^{-1}} &\text{if $\ell(w)$ is odd,}
	\end{cases}
\\
	\invol (\bz{w})
	&=
	\begin{cases}
		\bz{w^{-1}} &\text{if $\ell(w)$ is even,}
		\\
		-\bz{w^{-1}} &\text{if $\ell(w)$ is odd,}
	\end{cases}
\\	\invol (\bp{w})
	&=
	\begin{cases}
		u_w^{-2} \bp{w^{-1}} &\text{if $\ell(w)$ is even,}
		\\
		- u_w^{-2} \bm{w^{-1}} &\text{if $\ell(w)$ is odd.}
	\end{cases}
\end{aligned}
\end{equation}

\item Action of the involutive automorphism $\Gamma_\varpi$ on $E^1$ (see \cite[Lemma 4.4]{newpaper}):

Let $w \in \widetilde{W}$ (with the understanding that $\ell(w) \geqslant 1$ when we speak about $\bz{w}$). One has:
\begin{equation}\label{eqCCCNOnE1}\begin{aligned}
	\Gamma_\varpi (\bm{w})
	&=
	\bp{\varpi w \varpi^{-1}},
	\\
	\Gamma_\varpi (\bz{w})
	&=
	-\bz{\varpi w \varpi^{-1}},
	\\
	\Gamma_\varpi (\bp{w})
	&=
	\bm{\varpi w \varpi^{-1}}.
\end{aligned}\end{equation}

\item Left and right action of $\tau_{\omega}$ on $E^2$ for $\omega \in \quoz{T^0}{T^1}$:

Let $u \in \ff_p^\times$, let $w \in \widetilde{W}$ (with the understanding that $\ell(w) \geqslant 1$ when we speak about $\az{w}$). Recall the notation $\omega_u$ from \eqref{eqOmegaU}. One has:
\begin{equation}\label{eqTauOmegaE2Left}\begin{aligned}
	\tau_{\omega_u} \cdot \am{w}
	&=
	u^2 \am{\omega_u w},
\\
	\tau_{\omega_u} \cdot \az{w}
	&=
	\az{\omega_u w},
\\
	\tau_{\omega_u} \cdot \ap{w}
	&=
	u^{-2} \ap{\omega_u w},
\end{aligned}\end{equation}
and
\begin{equation}\label{eqTauOmegaE2Right}\begin{aligned}
	\am{w} \cdot \tau_{\omega_u}
	&=
	\am{w \omega_u},
\\
	\az{w} \cdot \tau_{\omega_u}
	&=
	\az{w \omega_u},
\\
	\ap{w} \cdot \tau_{\omega_u}
	&=
	\ap{w \omega_u}.
\end{aligned}\end{equation}
For the proof of the first formula see \cite[Equation (89)]{newpaper}. The second formula can be proved exactly in the same way using the corresponding formula for $E^1$, or from the first formula by using the anti-involution (see \eqref{eqInvolOnE2} later).
	
\item Action of the idempotents on $E^2$:

Let $\lambda \in \widehat{\quoz{T^0}{T^1}}$ and let $w \in \widetilde{W}$ (with the understanding that $\ell(w) \geqslant 1$ when we speak about $\az{w}$). One has:
\begin{equation}\label{eqFormulasIdempotentsLeftRight}
\begin{aligned}
	\am{w} \cdot e_\lambda
	&=
		e_{\lambda^{(-1)^{\ell(w)}} \cdot \idd^2} \cdot \am{w},
	\\
	\az{w} \cdot e_\lambda
	&=
		e_{\lambda^{(-1)^{\ell(w)}}} \cdot \az{w}
		\qquad\qquad\qquad\text{(if $\ell(w) \geqslant 1$)},
	\\
	\ap{w} \cdot e_\lambda
	&=
		e_{\lambda^{(-1)^{\ell(w)}} \cdot \idd^{-2}} \cdot \ap{w}.
\end{aligned}
\end{equation}
These formulas can be proved using \eqref{eqTauOmegaE2Left} and \eqref{eqTauOmegaE2Right}.

\item Left action of $\tau_{s_0}$ and $\tau_{s_1}$ when lengths add up (see \cite[Proposition 5.5]{newpaper}):

Let $w \in \widetilde{W}$ (with the understanding that $\ell(w) \geqslant 1$ when we speak about $\az{w}$). One has:
\begin{equation}\label{eqExplicitDeg2AddUp}\begin{aligned}
	&\left.\begin{array}{l}
		\tau_{s_0} \cdot \am{z}
		=
		0
	\\
		\tau_{s_0} \cdot \az{z}
		=
		0
	\\
		\tau_{s_0} \cdot \ap{z}
		=
		-\am{s_0 w}
	\end{array}
	\quad\right\}\qquad\text{if $\ell(s_0 w) = \ell(w) + 1$,}
	\\
	&\left.\begin{array}{l}
		\tau_{s_1} \cdot \am{z}
		=
		-\ap{s_1 w}
	\\
		\tau_{s_1} \cdot \az{z}
		=
		0
	\\
		\tau_{s_1} \cdot \ap{z}
		=
		0
	\end{array}
	\quad\right\}\qquad\text{if $\ell(s_1 w) = \ell(w) + 1$.}
\end{aligned}\end{equation}

\item Left action of $\tau_{s_0}$ and $\tau_{s_1}$ when lengths do not add up (see \cite[Proposition 5.5]{newpaper}):

	Let $w \in \widetilde{W}$ be such that $\ell(s_0w) = \ell(w) - 1$. One has:
	\begin{equation}\label{eqTauS0OnE2LeftNotAddUp}\begin{aligned}
		\tau_{s_0} \cdot \am{w}		
		&=
		- e_1 \am{w}
	\\
		\tau_{s_0} \cdot \az{w}
		&=
		\begin{cases}
			-e_1 \az{w} + 2 e_{\idd} \am{w} - \az{s_0 w}
			&\text{if $\ell(w) \geqslant 2$,}
			\\
			-e_1 \az{w} + 2 e_{\idd} \am{w}
			&\text{if $\ell(w) = 1$,}
		\end{cases}
	\\
		\tau_{s_0} \cdot \ap{w}
		&=
		\begin{cases}
			-e_1 \ap{w} - \am{s_0 w}
			&\text{if $\ell(w) \geqslant 2$,}
			\\
			- e_1 \ap{w} - e_{\idd} \az{w} + e_{\idd^2} \am{w} - \am{s_0 w}
			&\text{if $\ell(w) = 1$,}
		\end{cases}
	\end{aligned}\end{equation}

	Let $w \in \widetilde{W}$ be such that $\ell(s_1w) = \ell(w) - 1$. One has:
	\begin{equation}\label{eqTauS1OnE2LeftNotAddUp}\begin{aligned}
		\tau_{s_1} \cdot \am{w}
		&=
		\begin{cases}
			- e_1 \am{w} - \ap{s_1 w}
			&\text{if $\ell(w) \geqslant 2$,}
			\\
			- e_1 \am{w} + e_{\idd^{-1}} \az{w} + e_{\idd^{-2}} \ap{w} - \ap{s_1 w}
			&\text{if $\ell(w) = 1$,}
		\end{cases}
	\\
		\tau_{s_1} \cdot \az{w}
		&=
		\begin{cases}
			- e_1 \az{w} -2 e_{\idd^{-1}} \ap{w} - \az{s_1 w}
			&\text{if $\ell(w) \geqslant 2$,}
			\\
			- e_1 \az{w} -2 e_{\idd^{-1}} \ap{w}
			&\text{if $\ell(w) = 1$,}
		\end{cases}
	\\
		\tau_{s_1} \cdot \ap{w}
		&=
		- e_1 \ap{w}
	\end{aligned}\end{equation}

\item Action of the anti-involution $\invol$ on $E^2$ (see \cite[Equations (86) and (87)]{newpaper}):

Let $w \in \widetilde{W}$ (with the understanding that $\ell(w) \geqslant 1$ when we speak about $\az{w}$) and let $u_w \in \ff_p^\times$ as in \eqref{eqInvolOnE1}. One has:
\begin{equation}\label{eqInvolOnE2}\begin{aligned}
	\invol (\am{w})
	&=
	\begin{cases}
		u_w^{-2} \am{w^{-1}} &\text{if $\ell(w)$ is even,}
		\\
		- u_w^{-2} \ap{w^{-1}} &\text{if $\ell(w)$ is odd,}
	\end{cases}
\\
	\invol (\az{w})
	&=
	\begin{cases}
		\az{w^{-1}} &\text{if $\ell(w)$ is even,}
		\\
		-\az{w^{-1}} &\text{if $\ell(w)$ is odd,}
	\end{cases}
\\
	\invol (\ap{w})
	&=
	\begin{cases}
		u_w^{2} \ap{w^{-1}} &\text{if $\ell(w)$ is even,}
		\\
		- u_w^{2} \am{w^{-1}} &\text{if $\ell(w)$ is odd.}
	\end{cases}
\end{aligned}\end{equation}

\item Action of the involutive automorphism $\Gamma_\varpi$ on $E^2$ (see \cite[Lemma 5.2]{newpaper}):

Let $w \in \widetilde{W}$ (with the understanding that $\ell(w) \geqslant 1$ when we speak about $\az{w}$). One has:
\begin{equation}\label{eqCCCNOnE2}\begin{aligned}
	\Gamma_\varpi (\am{w})
	&=
	\ap{\varpi w \varpi^{-1}},
\\
	\Gamma_\varpi (\az{w})
	&=
	-\az{\varpi w \varpi^{-1}},
\\
	\Gamma_\varpi (\ap{w})
	&=
	\am{\varpi w \varpi^{-1}}.
\end{aligned}\end{equation}

\item Left and right action of $E^0$ on $E^3$:

For all $\omega \in \quoz{T^0}{T^1}$, for all $j \in \{0,1\}$ and for all $w \in \widetilde{W}$ one has
\begin{equation}\label{eqFormulasTopDegSL2}\begin{aligned}
	\tau_\omega \cdot \phi_w &= \phi_{\omega w},
	\\
	\phi_w \cdot \tau_\omega &= \phi_{w \omega},
	\\
	\tau_{s_j} \cdot \phi_w
	&=
	\begin{cases}
		\phi_{s_j w} - e_1 \cdot \phi_w &\text{if } \ell(s_j w) = \ell(w) - 1,\\
		0 &\text{if } \ell(s_j w) = \ell(w) + 1,
	\end{cases}
	\\
	\phi_w \cdot \tau_{s_j}
	&=
	\begin{cases}
		\phi_{w s_j} - \phi_w \cdot e_1 &\text{if } \ell(w s_j) = \ell(w) - 1,\\
		0 &\text{if } \ell(w s_j) = \ell(w) + 1.
	\end{cases}
\end{aligned}\end{equation}
These formulas follow from the general ones in \cite[Proposition 8.2]{ext}.

\item Action of the anti-involution $\invol$ on $E^3$ (see \cite[Equation (89)]{ext}):

For all $w \in \widetilde{W}$ one has
\begin{equation}\label{eqInvolOnE3}
	\invol(\phi_w) = \phi_{w^{-1}}.
\end{equation}

\item Action of the involutive automorphism $\Gamma_\varpi$ on $E^3$ (see \cite[Lemma 4.1]{newpaper}):

For all $w \in \widetilde{W}$ one has:
\begin{equation}\label{eqCCCNOnE3}
	\Gamma_\varpi(\phi_w) = \phi_{\varpi w \varpi^{-1}}.
\end{equation}
\end{itemize}

Recall that $E^1$ is finitely-generated as an $E^0$-bimodule (Proposition \ref{propE1fg}). In the next lemma we compute a (nice) finite set of generators, along with some useful formulas.

\begin{lmm}\label{lmmDeg1GeneratedByTheFourElements}
One has the following facts.
\begin{enumerate}[label=(\roman*)]
\item
	The following formulas hold:
	\begin{align}
	\bm{1} \cdot \tau_w &= \bm{w}
	\label{eqA}
	&&\substack{\text{for $w \in \widetilde{W}$ with $\ell(s_1w) = \ell(w) + 1$,}}
	\\
	\bp{1} \cdot \tau_w &= \bp{w}
	\label{eqB}
	&&\substack{\text{for $w \in \widetilde{W}$ with $\ell(s_0w) = \ell(w) + 1$,}}
	\\
	\tau_{(s_1s_0)^i} \cdot \bm{1} &= \bm{(s_1s_0)^i}
	\label{eqC}
	&&\substack{\text{for all $i \in \nn$,}}
	\\
	\tau_{s_0(s_1s_0)^i} \cdot \bm{1} &= - \bp{s_0(s_1s_0)^i}
	\label{eqD}
	&&\substack{\text{for all $i \in \nn$,}}
	\\
	\tau_{(s_0s_1)^i} \cdot \bp{1} &= \bp{(s_0s_1)^i}
	\label{eqE}
	&&\substack{\text{for all $i \in \nn$,}}
	\\
	\tau_{s_1(s_0s_1)^i} \cdot \bp{1} &= -\bm{s_1(s_0s_1)^i}
	\label{eqF}
	&&\substack{\text{for all $i \in \nn$,}}
	\\
	\bz{s_i} \cdot \tau_{w} &= \bz{s_iw}
	\label{eqG}
	&&\substack{\text{for $i \in \{0,1\}$ and $w \in \widetilde{W}$}\\\text{with $\ell(s_iw) = \ell(w) + 1$,}}
	\\
	\tau_w \cdot \bz{s_i} &= (-1)^{\ell(w)} \bz{ws_i}
	\label{eqH}
	&&\substack{\text{for $i \in \{0,1\}$ and $w \in \widetilde{W}$}\\\text{with $\ell(ws_i) = \ell(w) + 1$.}}
	\end{align}
\item
	The following elements generate $E^1$ as an $E^0$-bimodule:
	\begin{align*}
		&\bm{1},
		&&\bp{1},
		&&\bz{s_0},
		&&\bz{s_1}.
	\end{align*}
\end{enumerate}
\end{lmm}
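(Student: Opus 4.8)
The plan is to establish the eight identities of part (i) first --- each is an immediate consequence of the explicit multiplication formulas collected in \S\ref{subsecNotationSL2} --- and then to deduce part (ii) using the normal form of elements of $\widetilde{W}$.

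For \eqref{eqA} and \eqref{eqB} I would apply the right-action formula \eqref{eqFormulaRighteasyQp} with first factor the identity of $\widetilde{W}$ (falling back on \eqref{eqOmegaOnE1Right} when $\ell(w)=0$): the hypothesis $\ell(s_1 w)=\ell(w)+1$ picks out the branch of \eqref{eqFormulaRighteasyQp} whose $\bm{}$-line reads $\bm{1}\cdot\tau_w=\bm{w}$, and symmetrically for \eqref{eqB}. The same two formulas, now with first factor $s_i$, give \eqref{eqG} (here $\ell(s_i)=1$, so no length restriction on $w$ is needed, using \eqref{eqOmegaOnE1Right} again if $\ell(w)=0$). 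For \eqref{eqC} I would induct on $i$: by the braid relations $\tau_{(s_1s_0)^{i+1}}=\tau_{s_1}\tau_{s_0}\tau_{(s_1s_0)^i}$, and the inductive step is the pair of applications of \eqref{eqFormulasTauQp} given by $\tau_{s_0}\cdot\bm{(s_1s_0)^i}=-\bp{s_0(s_1s_0)^i}$ followed by $\tau_{s_1}\cdot\bp{s_0(s_1s_0)^i}=-\bm{(s_1s_0)^{i+1}}$, the two signs cancelling; \eqref{eqD} then follows at once since $\tau_{s_0(s_1s_0)^i}\cdot\bm{1}=\tau_{s_0}\cdot\bm{(s_1s_0)^i}=-\bp{s_0(s_1s_0)^i}$.

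Formulas \eqref{eqE} and \eqref{eqF} I would obtain by pushing \eqref{eqC} and \eqref{eqD} through the graded algebra automorphism $\Gamma_\varpi$, using that $\Gamma_\varpi(\tau_{s_0})=\tau_{s_1}$ and $\Gamma_\varpi(\tau_{s_1})=\tau_{s_0}$ (equivalently $\varpi s_0\varpi^{-1}=s_1$, $\varpi s_1\varpi^{-1}=s_0$ in $\widetilde{W}$) by \eqref{eqCCCNOnH}, together with $\Gamma_\varpi(\bm{1})=\bp{1}$ and $\Gamma_\varpi(\bp{1})=\bm{1}$ by \eqref{eqCCCNOnE1}. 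For \eqref{eqH} I would write $w$ in its normal form $w=\omega\,s_{j_1}\cdots s_{j_r}$ with $\omega\in\quoz{T^0}{T^1}$, the $s_j$ alternating, and $r=\ell(w)$; the hypothesis $\ell(ws_i)=\ell(w)+1$ forces $j_r\neq i$, so $s_{j_1}\cdots s_{j_r}s_i$ is again alternating and I can strip the factors $\tau_{s_{j_k}}$ off the left one at a time. Each such step is an instance of the $\bz{}$-line of \eqref{eqFormulasTauQp} (the relevant length increases by one at every stage) and contributes a sign $-1$; after $r$ steps this leaves $(-1)^r\bz{s_{j_1}\cdots s_{j_r}s_i}$, and one application of $\tau_\omega$ via \eqref{eqOmegaOnE1Left} produces $(-1)^r\bz{ws_i}=(-1)^{\ell(w)}\bz{ws_i}$.

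For part (ii), let $M\subseteq E^1$ be the sub-$E^0$-bimodule generated by $\bm{1},\bp{1},\bz{s_0},\bz{s_1}$. Since $E^1=\bigoplus_{w\in\widetilde{W}}H^1(I,\xx(w))$ and each summand is spanned over $k$ by $\bm{w},\bp{w}$ (plus $\bz{w}$ when $\ell(w)\geqslant 1$), it suffices to put every such vector in $M$; writing $w=\omega v$ in normal form with $v$ an alternating string, I would argue: $\bz{w}=\tau_\omega\cdot\bz{s_{j_1}}\cdot\tau_{v'}\in M$ by splitting off the first letter of $v$ and using \eqref{eqG} and \eqref{eqOmegaOnE1Left}; for $\bm{w}$, either $\ell(s_1 w)=\ell(w)+1$ and then $\bm{w}=\bm{1}\cdot\tau_w\in M$ by \eqref{eqA}, or $v$ begins with $s_1$, in which case $\bm{v}\in M$ by \eqref{eqC} or \eqref{eqF} according to the parity of $\ell(v)$, whence $\bm{w}=u^2\tau_{\omega_u}\cdot\bm{v}\in M$ via \eqref{eqOmegaOnE1Left} (writing $\omega=\omega_u$); and $\bp{w}$ is handled by the mirror argument with \eqref{eqB}, \eqref{eqE}, \eqref{eqD}. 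This exhibits a $k$-spanning set of every graded piece inside $M$, so $M=E^1$. The only real difficulty is organizational: three case splits run in parallel throughout --- whether lengths add up, which simple reflection lengthens the element, and the parity of the length --- each carrying its own signs, so the bookkeeping must be done with care, even though no single step goes beyond quoting a formula recorded in \S\ref{subsecNotationSL2}.
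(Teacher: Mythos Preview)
Your proposal is correct and follows essentially the same approach as the paper: formulas \eqref{eqA}, \eqref{eqB}, \eqref{eqG} from the right-action formulas \eqref{eqFormulaRighteasyQp} and \eqref{eqOmegaOnE1Right}; formulas \eqref{eqC}--\eqref{eqF} and \eqref{eqH} from the left-action formulas \eqref{eqFormulasTauQp} and \eqref{eqOmegaOnE1Left}; and part (ii) by reducing via the $T^0/T^1$-action to alternating words and quoting part (i). The only cosmetic difference is that you derive \eqref{eqE} and \eqref{eqF} by applying $\Gamma_\varpi$ to \eqref{eqC} and \eqref{eqD}, whereas the paper simply appeals to \eqref{eqFormulasTauQp} directly; both routes are equally short.
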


\begin{proof}
Let us prove the two statements.
\begin{enumerate}[label=(\roman*)]
\item
	Formulas \eqref{eqA}, \eqref{eqB} and \eqref{eqG} are immediate consequence of the formulas describing the right action of $E^0$ when lengths add up (namely, formulas \eqref{eqFormulaRighteasyQp} in the case that $\ell(w) \geqslant 1$ and formulas \eqref{eqOmegaOnE1Right} in the case that $\ell(w)=0$). Formulas \eqref{eqC}, \eqref{eqD}, \eqref{eqE} and \eqref{eqF} can be shown using the formulas describing the left action of $E^0$ when lengths add up (namely, formulas \eqref{eqFormulasTauQp}). The same is true for formula \eqref{eqH}, also recalling the left action of $\tau_\omega$ for $\omega \in \quoz{T^0}{T^1}$ of formula \eqref{eqOmegaOnE1Left}.
\item
	This is a consequence of part (i). More precisely, for all $v \in \widetilde{W}$ we want to show that the elements $\bm{v}$, $\bp{v}$ and $\bz{v}$ (the last one if $\ell(v) \geqslant 1$) lie in the sub-$E^0$ bimodule generated by the four elements in the statement. For $\bz{v}$ this is clear form formula \eqref{eqG} (or from formula \eqref{eqH}). For $\bm{v}$ and $\bp{v}$, using \eqref{eqOmegaOnE1Right} (or \eqref{eqOmegaOnE1Left}), we might assume that $v$ is of the form $(s_1s_0)^i$, $s_0(s_1s_0)^i$, $(s_0s_1)^i$ or $s_1(s_0s_1)^i$ for some $i \in \nn$. Then we can apply the formulas in part (i) to conclude.
	\qedhere
\end{enumerate}
\end{proof}

\subsection{Outline of the strategy for the first main result}\label{subsecStrategy}

Let us consider the multiplication map 
\[
	\funcInline
		{\mult}
		{T_{E^0}^\ast E^1}
		{E^\ast,}
\]
where the left hand side is the tensor algebra generated by the $E^0$-bimodule $E^1$. Proposition \ref{propAlmostGenByE1} can be used to prove surjectivity of $\mult$, even if it does not directly imply such property (see Lemmas \ref{lmmDefRep2} and \ref{lmmDefRep3}).
The \virgolette{first main result} we want to prove is the computation of the kernel of $\mult$ (Theorem \ref{thmFinalResultKernel}), hence obtaining a \virgolette{presentation} of $E^\ast$.

This computation will be quite long and will require many intermediate statements, and we now outline the strategy of the proof, working in a general abstract setting to simplify the notation.

Let $A$ be an associative $k$-algebra with $1$ (in our case $A=E^0$); let $M$ and $N$ be $A$-bimodules (in our case, at first, $M = E^1 \otimes_{E^0} E^1$ and $N=E^2$) and let $\funcInline{\mult}{M}{N}$ be a surjective homomorphism of $A$-bimodules (in our case $\mult$ will be the above multiplication map, at first restricted to $E^1 \otimes_{E^0} E^1$). To compute the kernel of $\mult$ we fix the following:
\begin{itemize}
\item generators $(a_i)_{i \in \mathbf{I}}$ of $A$ as a $k$-algebra (for a suitable index set $\mathbf{I}$);
\item generators $(m_j)_{j \in \mathbf{J}}$ of $M$ as an $A$-bimodule (for a suitable index set $\mathbf{J}$);
\item a basis $(n_l)_{l \in \mathbf{L}}$ of $N$ as a $k$-vector space (for a suitable index set $\mathbf{L}$).
\end{itemize}
We fix a splitting $\rep$ of $\mult$ as a map of $k$-vector spaces:
\[\begin{tikzcd}[ampersand replacement = \&, column sep = 6em]
	M
	\ar[r, twoheadrightarrow, "{ \mult}"]
	\&
	N
	\ar[l, bend left = 50, dashed, "{ \rep}"']
\end{tikzcd}\]
Let $M'$ be the sub-$A$-bimodule of $M$ generated by the elements $\rep(a_i n_l) - a_i \rep(n_l)$ and by the elements $\rep(n_l a_i) - \rep(n_l) a_i$ for $i \in \mathbf{I}$ and $l \in \mathbf{L}$. The maps $\mult$ and $\rep$ define maps $\overline{\mult}$ and $\overline{\rep}$:
\[\begin{tikzcd}[ampersand replacement = \&, column sep = 9em]
	\quoz{M}{M'}
	\ar[r, twoheadrightarrow, "{ \overline{\mult}}", "{\overline{m} \mapsto \mult(m)}"']
	\&
	N,
	\ar[l, bend left = 50, dashed, "{ \overline{\rep}}"', "\overline{\rep(n)} \mapsfrom n"]
\end{tikzcd}\]
with $\overline{\rep}$ being a splitting of $\overline{\mult}$, this time not only as a homomorphism of $k$-vector spaces, but as a homomorphism of $A$-bimodules. It follows that the $A$-bimodule $\ker \overline{\mult}$ (i.e., $\quoz{(\ker \mult)}{M'}$) is generated by the elements $\overline{m_j - (\rep \circ \mult)(m_j)}$ for $j \in \mathbf{J}$, and we thus conclude that $\ker \mult$ is the sub-$A$-bimodule of $M$ generated by the following elements:
\begin{itemize}
\item $\rep(a_i n_l) - a_i \rep(n_l)$, for $i \in \mathbf{I}$ and $l \in \mathbf{L}$;
\item $\rep(n_l a_i) - \rep(n_l) a_i$, for $i \in \mathbf{I}$ and $l \in \mathbf{L}$;
\item $m_j - (\rep \circ \mult)(m_j)$, for $j \in \mathbf{J}$.
\end{itemize}

\subsection{The tensor algebra}

As explained, we work with the tensor algebra $T_{E^0}^\ast E^1$ and the multiplication map
$
	\funcInline
		{\mult}
		{T_{E^0}^\ast E^1}
		{E^\ast}
$.
We denote its graded pieces by
\[
	\funcInline
		{\mult_i}
		{T_{E^0}^i E^1}
		{E^i.}
\]

Let us define maps $\Gamma_\varpi$ and $\invol$ on $T_{E^0}^\ast E^1$ that are compatible with the namesake maps on $E^\ast$. We start by defining a map
\[
	\functor
		{\Gamma_\varpi}
		{T_{E^0}^\ast E^1}
		{T_{E^0}^\ast E^1}
		{E^0 \ni x}
		{\Gamma_\varpi(x),}
		{\beta_1 \otimes \dots \otimes \beta_i}
		{\Gamma_\varpi(\beta_1) \otimes \dots \otimes \Gamma_\varpi(\beta_i).}
\]
extending the map $\Gamma_\varpi$ defined on $E^0$ and $E^1$. We see that it is an involutive automorphism the graded $k$-algebra $T_{E^0}^\ast E^1$. Similarly, we define the map
\[
	\functor
		{\invol}
		{T_{E^0}^\ast E^1}
		{T_{E^0}^\ast E^1}
		{E^0 \ni x}
		{\invol(x),}
		{\beta_1 \otimes \dots \otimes \beta_i}
		{(-1)^{\frac{(i-1)i}{2}} \invol(\beta_i) \otimes \dots \otimes \invol(\beta_1).}
\]
Note that the sign $(-1)^{\frac{(i-1)i}{2}}$ is the sign of the permutation $\begin{psmallmatrix} 1 & \dots & i \\ i & \dots & 1 \end{psmallmatrix}$. The map $\invol$ that we have just defined is an involutive anti-automorphism on $T_{E^0}^\ast E^1$.

The above definitions are given in such a way that the following diagrams commute:
\begin{align}\label{eqMultiplicationAndCCCNAndInvol}
&\begin{tikzcd}[ampersand replacement = \&]
	T_{E^0}^\ast E^1
	\ar[r, "{ \mult}"]
	\ar[d, "{ \Gamma_\varpi}"']
	\&
	E^\ast
	\ar[d, "{ \Gamma_\varpi}"]
	\\
	T_{E^0}^\ast E^1
	\ar[r, "{ \mult}"']
	\&
	E^\ast.
\end{tikzcd}
&&\begin{tikzcd}[ampersand replacement = \&]
	T_{E^0}^\ast E^1
	\ar[r, "{ \mult}"]
	\ar[d, "{ \invol}"']
	\&
	E^\ast
	\ar[d, "{ \invol}"]
	\\
	T_{E^0}^\ast E^1
	\ar[r, "{ \mult}"']
	\&
	E^\ast.
\end{tikzcd}
\end{align}

As explained in the outline of the strategy in \S\ref{subsecStrategy}, we want to compute a (quite simple) set of generators of $T_{E_0}^2 E^1$ as an $E^0$-bimodule.

We recall that the following are generators of $E^1$ as an $E^0$-bimodule (see Lemma \ref{lmmDeg1GeneratedByTheFourElements}):
\begin{align*}
	&\bm{1},
	&&\bp{1},
	&&\bz{s_0},
	&&\bz{s_1}.
\end{align*}
It follows that $E^1 \otimes_{E^0} E^1$ is generated by the following elements as a left $E^0$-module (in particular also as an $E^0$-bimodule):
\begin{align}\label{eqGeneratorsE1tensorE1OnTheLeft}
	&\begin{aligned}
		&\bm{1} \otimes \beta^{\sigma}_{w},
		&&\qquad\bp{1} \otimes \beta^{\sigma}_{w},
		&&\qquad\bz{s_0} \otimes \beta^{\sigma}_{w},
		&&\qquad\bz{s_1} \otimes \beta^{\sigma}_{w}
	\end{aligned}
	\\&\qquad
		\text{for $w \in \widetilde{W}$ and $\sigma \in \{-,0,+\}$ (with $\ell(w) \geqslant 1$ in the case $\sigma = 0$).}
		\notag
\end{align}
In the next lemma, we compute a simpler set of $E^0$-bimodule generators.

\begin{lmm}\label{lmmGeneratorsT2}
The following elements generate $E^1 \otimes_{E^0} E^1$ as an $E^0$-bimodule:
\begin{align*}
&\begin{aligned}
	&\bm{1} \otimes \bm{1}, &&\qquad\bp{1} \otimes \bm{1}, &&\qquad\bz{s_0} \otimes \bm{1}, &&\qquad\bz{s_1} \otimes \bm{1},
	\\
	&\bm{1} \otimes \bp{1}, &&\qquad\bp{1} \otimes \bp{1}, &&\qquad\bz{s_0} \otimes \bp{1}, &&\qquad\bz{s_1} \otimes \bp{1},
	\\
	&\bm{1} \otimes \bz{s_0}, &&\qquad\bp{1} \otimes \bz{s_0}, &&\qquad\bz{s_0} \otimes \bz{s_0}, &&\qquad\bz{s_1} \otimes \bz{s_0},
	\\
	&\bm{1} \otimes \bz{s_1}, &&\qquad\bp{1} \otimes \bz{s_1}, &&\qquad\bz{s_0} \otimes \bz{s_1}, &&\qquad\bz{s_1} \otimes \bz{s_1},
\end{aligned}
\\	
	&\bp{1} \otimes \bm{(s_1s_0)^i} = \bp{(s_1s_0)^i} \otimes \bm{1} &&\mkern-170mu\text{for $i \in \zpiu$,}
	\\
	&\bp{1} \otimes \bm{s_1(s_0s_1)^i} = - \bp{s_1(s_0s_1)^i} \otimes \bp{1} &&\mkern-170mu\text{for $i \in \nn$,}
	\\
	&\bm{1} \otimes \bp{(s_0s_1)^i} = \bm{(s_0s_1)^i} \otimes \bp{1} &&\mkern-170mu\text{for $i \in \zpiu$,}
	\\
	&\bm{1} \otimes \bp{s_0(s_1s_0)^i} = - \bm{s_0(s_1s_0)^i} \otimes \bm{1} &&\mkern-170mu\text{for $i \in \nn$.}
\end{align*}
\end{lmm}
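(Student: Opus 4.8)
The plan is to start from the left-$E^0$-module generators of $E^1 \otimes_{E^0} E^1$ exhibited in \eqref{eqGeneratorsE1tensorE1OnTheLeft}, namely the elements $\beta \otimes \beta^{\sigma}_{w}$ with $\beta \in \{\bm{1},\bp{1},\bz{s_0},\bz{s_1}\}$, $w \in \widetilde{W}$ and $\sigma \in \{-,0,+\}$, and to show that every such element lies in the sub-$E^0$-bimodule $N$ generated by the $16$ elements $\beta\otimes\beta'$ together with the four one-parameter families in the statement. As a warm-up one checks the four displayed equalities (such as $\bp{1}\otimes\bm{(s_1s_0)^i} = \bp{(s_1s_0)^i}\otimes\bm{1}$): each follows by transporting the appropriate $\tau$ across the tensor product and invoking the formulas of Lemma \ref{lmmDeg1GeneratedByTheFourElements}(i). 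Then one reduces to the case that $w$ is a reduced alternating word in $s_0,s_1$: since $\quoz{T^0}{T^1}$ is normal in $\widetilde{W}$, every $w\in\widetilde{W}$ is of the form $w'\omega_u$ with $w'$ alternating and $\omega_u\in\quoz{T^0}{T^1}$, and $\beta\otimes\beta^{\sigma}_{w'\omega_u} = (\beta\otimes\beta^{\sigma}_{w'})\cdot\tau_{\omega_u}$ by \eqref{eqOmegaOnE1Right}.

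Next I would use the involutive automorphism $\Gamma_\varpi$ of $T^\ast_{E^0}E^1$. Since conjugation by $\varpi$ swaps $s_0$ and $s_1$, formula \eqref{eqCCCNOnE1} shows that $\Gamma_\varpi$ permutes $\{\bm{1},\bp{1},\bz{s_0},\bz{s_1}\}$ up to sign and permutes the four families among themselves; hence $\Gamma_\varpi(N)=N$, and it suffices to treat $\sigma\in\{-,0\}$. The case $\sigma=0$ is immediate: if $s_i$ is the first letter of $w$ then $\bz{w} = \bz{s_i}\cdot\tau_{w'}$ by \eqref{eqG}, so $\beta\otimes\bz{w} = (\beta\otimes\bz{s_i})\cdot\tau_{w'}\in N$.

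The substance is the case $\sigma=-$. If $w$ does not begin with $s_1$ (in particular if $w=1$), then $\bm{w} = \bm{1}\cdot\tau_w$ by \eqref{eqA}, so $\beta\otimes\bm{w} = (\beta\otimes\bm{1})\cdot\tau_w\in N$. If $w$ begins with $s_1$, write $w = s_1 w''$ with $\ell(w'')=\ell(w)-1$; then $\bm{w} = -\tau_{s_1}\bp{w''}$ by \eqref{eqFormulasTauQp}, so $\beta\otimes\bm{w} = -(\beta\cdot\tau_{s_1})\otimes\bp{w''}$, and one evaluates $\beta\cdot\tau_{s_1}$ case by case: $\bm{1}\cdot\tau_{s_1}=0$ (by \eqref{eqFormulaRighteasyQp}), so that term vanishes; $\bp{1}\otimes\bm{w}$ is literally one of the $\bp{1}$-families; $\bz{s_0}\cdot\tau_{s_1} = \bz{s_0s_1} = -\tau_{s_0}\bz{s_1}$, so $\bz{s_0}\otimes\bm{w} = \tau_{s_0}\cdot(\bz{s_1}\otimes\bp{w''})$; and $\bz{s_1}\cdot\tau_{s_1} = -e_1\bz{s_1}+e_{\idd}\bp{s_1}$ by \eqref{eqRightBadLengthS1}, so $\bz{s_1}\otimes\bm{w} = e_1\cdot(\bz{s_1}\otimes\bp{w''}) - e_{\idd}\cdot(\bp{s_1}\otimes\bp{w''})$ with $\bp{s_1}\otimes\bp{w''} = -\bp{1}\otimes\bm{w}\in N$. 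Thus everything is pushed onto the single element $\bz{s_1}\otimes\bp{w''}$ with $w''$ beginning with $s_0$; applying the $\Gamma_\varpi$-mirror of this computation to $\bp{w''} = \bp{s_0w'''} = -\tau_{s_0}\bm{w'''}$ rewrites it as $\tau_{s_1}\cdot(\bz{s_0}\otimes\bm{w'''})$ with $\ell(w''') = \ell(w)-2$. Every reduction therefore lands among the listed generators and families or refers to an element $\beta'\otimes\beta^{\sigma'}_v$ with $\ell(v)<\ell(w)$, and the argument closes by strong induction on $\ell(w)$, the cases $\ell(w)\in\{0,1\}$ being either among the $16$ generators or instances of the above.

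The main obstacle is exactly this recursive bookkeeping: the $\bz{s_0}$- and $\bz{s_1}$-first-factor subcases are mutually entangled --- each one, in the opposite length-parity, reduces to the other through the ``bad length'' multiplication formulas of \S\ref{subsecNotationSL2} --- so one must verify that each step strictly decreases $\ell(w)$ and that the stray $e_1$- and $e_{\idd}$-terms produced by \eqref{eqRightBadLengthS1} (and its $\Gamma_\varpi$-image) are absorbed into $N$. Granting that, the rest is a routine, if somewhat lengthy, manipulation of the formulas collected in \S\ref{subsecNotationSL2}.
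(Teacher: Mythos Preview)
Your argument is correct and uses essentially the same ingredients as the paper's proof: the reduction from \eqref{eqGeneratorsE1tensorE1OnTheLeft}, transport of $\tau_w$ across the tensor via Lemma~\ref{lmmDeg1GeneratedByTheFourElements}, and the ``bad length'' formulas \eqref{eqRightBadLengthS0}--\eqref{eqRightBadLengthS1} to absorb the $\bz{s_i}$-first-factor cases.

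The organization differs in two respects. First, you invoke $\Gamma_\varpi$-invariance of $N$ to halve the $\sigma$-cases, whereas the paper treats all four $\sigma$-sign/parity cases by hand. Second, and more substantively, you apply \eqref{eqRightBadLengthS1} only with $v=s_1$ (and its $\Gamma_\varpi$-image with $v=s_0$) and then recurse on $\ell(w)$, while the paper applies it once with $v=(s_1s_0)^i$ (and the three analogous choices) to rewrite, e.g.,
\[
(\bz{s_1}\cdot\tau_{(s_1s_0)^i})\otimes\bm{1}
= e_1\,\tau_{(s_1s_0)^{i-1}s_1}\cdot(\bz{s_0}\otimes\bm{1}) + e_{\idd}\cdot(\bp{(s_1s_0)^i}\otimes\bm{1})
\]
in a single step, so that no induction is needed. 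Your inductive formulation is equally valid, and the length-drop you flag as the crux is genuine: each pass through the $\bz{s_0}/\bz{s_1}$ subcases lowers $\ell$ by one (or two when you unwind both halves), so the recursion terminates.
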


Note that the equalities in the last four lines follow from Lemma \ref{lmmDeg1GeneratedByTheFourElements}.

\begin{proof}
We start from the generators in \eqref{eqGeneratorsE1tensorE1OnTheLeft}, and we recall from Lemma \ref{lmmDeg1GeneratedByTheFourElements} that
\begin{align*}
	\bm{w} &= \bm{1} \cdot \tau_{w}
	&&\text{for $w \in \widetilde{W}$ such that $\ell(s_1w)=\ell(w)+1$,}
	\\
	\bp{w} &= \bp{1} \cdot \tau_{w}
	&&\text{for $w \in \widetilde{W}$ such that $\ell(s_0w)=\ell(w)+1$,}
	\\
	\bz{s_1w} &= \bz{s_1} \cdot \tau_{w}
	&&\text{for $w \in \widetilde{W}$ such that $\ell(s_1w)=\ell(w)+1$,}
	\\
	\bz{s_0w} &= \bz{s_0} \cdot \tau_{w}
	&&\text{for $w \in \widetilde{W}$ such that $\ell(s_0w)=\ell(w)+1$.}
\end{align*}
From this (and from the behaviour of multiplication by $\tau_\omega$ for $\omega \in \quoz{T^0}{T^1}$, see \eqref{eqOmegaOnE1Right}), we obtain that the following elements generate $E^1 \otimes_{E^0} E^1$ as an $E^0$-bimodule:
\begin{align*}
	&\gamma \otimes \bm{1}
	&&\text{for $\gamma \in \left\{ \bm{1}, \bp{1}, \bz{s_0}, \bz{s_1} \right\}$,}
	\\
	&\gamma \otimes \bp{1}
	&&\text{for $\gamma$ as above,}
	\\
	&\gamma \otimes \bz{s_0}
	&&\text{for $\gamma$ as above,}
	\\
	&\gamma \otimes \bz{s_1}
	&&\text{for $\gamma$ as above.}
	\\
	&\gamma \otimes \bm{(s_1s_0)^i}
	&&\text{for $\gamma$ as above and for $i \in \zpiu$,}
	\\
	&\gamma \otimes \bm{s_1(s_0s_1)^i}
	&&\text{for $\gamma$ as above and for $i \in \zpiu$,}
	\\
	&\gamma \otimes \bp{(s_0s_1)^i}
	&&\text{for $\gamma$ as above and for $i \in \zpiu$,}
	\\
	&\gamma \otimes \bp{s_0(s_1s_0)^i}
	&&\text{for $\gamma$ as above and for $i \in \zpiu$.}
\end{align*}
The first four of these lines consist exactly of the first four lines of the claimed generators in the statement of the lemma. Now, let us look at the remaining four lines: we certainly get the families of generators in the remaining four lines of the statement of the lemma, and we have to argue that the remaining generators are superfluous.
Up to changing signs if necessary, using the formulas of Lemma \ref{lmmDeg1GeneratedByTheFourElements}, we can rewrite the last four lines as:
\begin{align*}
	&(\gamma \cdot \tau_{(s_1s_0)^i}) \otimes \bm{1}
	&&\text{for $\gamma$ as above and for $i \in \zpiu$,}
	\\
	&(\gamma \cdot \tau_{s_1(s_0s_1)^i}) \otimes \bp{1}
	&&\text{for $\gamma$ as above and for $i \in \zpiu$,}
	\\
	&(\gamma \cdot \tau_{(s_0s_1)^i}) \otimes \bp{1}
	&&\text{for $\gamma$ as above and for $i \in \zpiu$,}
	\\
	&(\gamma \cdot \tau_{s_0(s_1s_0)^i}) \otimes \bm{1}
	&&\text{for $\gamma$ as above and for $i \in \zpiu$.}
\end{align*}
From Lemma \ref{lmmDeg1GeneratedByTheFourElements}, one has
\begin{align*}
	&
	\bm{1} \cdot \tau_{(s_1s_0)^i} = 0,
	&&
	\bm{1} \cdot \tau_{s_1(s_0s_1)^i} = 0,
	&&
	\bp{1} \cdot \tau_{(s_0s_1)^i} = 0,
	&&
	\bp{1} \cdot \tau_{s_0(s_1s_0)^i} = 0.
\end{align*}
This shows that some of the remaining generators are superfluous, and now it remains to study the cases where $\gamma \in \{\bz{s_0}, \bz{s_1}\}$.

Let us start with the following case:
\begin{align*}
	(\bz{s_0} \cdot \tau_{(s_1s_0)^i}) \otimes \bm{1}
	&=
	\bz{s_0(s_1s_0)^i} \otimes \bm{1}
	\\&=
	\tau_{(s_0s_1)^i} \bz{s_0} \otimes \bm{1},
\end{align*}
where we used the formulas \eqref{eqG} and \eqref{eqH}. We see that the element $\bz{s_0} \otimes \bm{1}$ is a generator already appearing in the list in the statement of the lemma, and so the generator $(\bz{s_0} \cdot \tau_{(s_1s_0)^i}) \otimes \bm{1}$ is superfluous. The other three cases where lengths add up are similar.

Now it remains to prove that the generators
\begin{align*}
	&(\bz{s_1} \cdot \tau_{(s_1s_0)^i}) \otimes \bm{1},
	&&(\bz{s_1} \cdot \tau_{s_1(s_0s_1)^i}) \otimes \bp{1},
	\\&(\bz{s_0} \cdot \tau_{(s_0s_1)^i}) \otimes \bp{1},
	&&(\bz{s_0} \cdot \tau_{s_0(s_1s_0)^i}) \otimes \bm{1}
\end{align*}
are superfluous. To this end, we can use the formulas \eqref{eqRightBadLengthS0} and \eqref{eqRightBadLengthS1}: we spell the details for the first generator, the other three being similar. 
\begin{align*}
	(\bz{s_1} \cdot \tau_{(s_1s_0)^i}) \otimes \bm{1}
	&=
	\left( -e_1 \cdot \bz{(s_1s_0)^i} + e_{\idd} \cdot \bp{(s_1s_0)^i} \right) \otimes \bm{1}
	\\&=
	e_1 \cdot \tau_{(s_1s_0)^{i-1}s_1} \cdot \bz{s_0} \otimes \bm{1} + e_{\idd} \cdot \bp{(s_1s_0)^i} \otimes \bm{1},
\end{align*}
where, besides \eqref{eqRightBadLengthS1}, we have also used \eqref{eqH}.
\end{proof}

\subsection{The \texorpdfstring{$2$\textsuperscript{nd}}{2nd} graded piece of the kernel}

The multiplication map $\mult_2$ is surjective: indeed, Proposition \ref{propAlmostGenByE1} shows that $H^2(I,\xx(w))$ is contained in its image for all $w \in \widetilde{W}$ with $\ell(w) \geqslant 1$, and for all $\omega \in T^0/T^1$ (i.e., $\omega \in \widetilde{W}$ with $\ell(\omega) = 0$) one concludes with the following consequence of formulas \eqref{eqTauS0OnE2LeftNotAddUp} and \eqref{eqTauS1OnE2LeftNotAddUp}:
\begin{equation}\label{eqToQuoteDeg2E}\begin{aligned}
	\tau_{s_0} \cdot \ap{s_0^{-1}\omega} + \am{\omega}
	&\in
	\bigoplus_{\substack{v \in \widetilde{W}\\\text{s.t. $\ell(v)=1$}}}
		H^2(I,\xx(v)),
	\\
	\tau_{s_1} \cdot \am{s_1^{-1}\omega} + \ap{\omega}
	&\in
	\bigoplus_{\substack{v \in \widetilde{W}\\\text{s.t. $\ell(v)=1$}}}
		H^2(I,\xx(v)).
\end{aligned}\end{equation}
The purpose of this subsection is to compute the kernel of $\mult_2$, and following the outline in \S\ref{subsecStrategy} we start by constructing a section of $\mult_2$ as a homomorphism of $k$-vector space.

\begin{lmm}\label{lmmDefRep2}
The multiplication map $\funcInline{\mult_2}{T^2_{E^0} E^1}{E^2}$ is surjective, and the following is a section of it as a homomorphism of $k$-vector spaces.
\[\begin{tikzcd}[row sep = -0.6em]
\rep_2 \colon &[-3em] E^2 \ar[r] &[-1em] T_{E^0}^2E^1 = E^1 \otimes_{E^0} E^1 &[-4.5em]
\\
& \am{s_1v} \ar[r, mapsto] & - \bp{1} \otimes \bz{s_1v} &&\scriptstyle\text{if $\ell(s_1v) = \ell(v) + 1$,}
\\
& \az{s_1v} \ar[r, mapsto] & \bp{1} \otimes \bm{s_1v} &&\scriptstyle\text{if $\ell(s_1v) = \ell(v) + 1$,}
\\
& \ap{s_1v} \ar[r, mapsto] & \bz{s_1} \otimes \bp{v} &&\scriptstyle\text{if $\ell(s_1v) = \ell(v) + 1$,}
\\
& \am{s_0w} \ar[r, mapsto] & - \bz{s_0} \otimes \bm{w} &&\scriptstyle\text{if $\ell(s_0w) = \ell(w) + 1$,}
\\
& \az{s_0w} \ar[r, mapsto] & - \bm{1} \otimes \bp{s_0w} &&\scriptstyle\text{if $\ell(s_0w) = \ell(w) + 1$,}
\\
& \ap{s_0w} \ar[r, mapsto] & \bm{1} \otimes \bz{s_0w} &&\scriptstyle\text{if $\ell(s_0w) = \ell(w) + 1$,}
\\
& \am{\omega} \ar[r, mapsto] & \rep_2 \big( \am{\omega} + \tau_{s_0} \cdot \ap{s_0^{-1}\omega} \big) - \tau_{s_0} \cdot \rep_2 \big( \ap{s_0^{-1}\omega} \big) &&\scriptstyle\text{for $\omega \in \quoz{T^0}{T^1}$,}
\\
& \ap{\omega} \ar[r, mapsto] & \rep_2 \big( \ap{\omega} + \tau_{s_1} \cdot \am{s_1^{-1}\omega} \big) - \tau_{s_1} \cdot \rep_2 \big( \am{s_1^{-1}\omega} \big) &&\scriptstyle\text{for $\omega \in \quoz{T^0}{T^1}$.}
\end{tikzcd}\]
\end{lmm}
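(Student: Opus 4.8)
The plan is to verify two things: that $\rep_2$ is a well-defined $k$-linear map, and that $\mult_2 \circ \rep_2 = \id_{E^2}$; surjectivity of $\mult_2$ is then automatic.

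\textbf{Well-definedness.} I would first check that $\rep_2$ is unambiguously specified on the basis \eqref{eqBasis} of $E^2$. Every $w \in \widetilde{W}$ with $\ell(w) \geq 1$ can be written in exactly one of the two forms $w = s_1 v$ with $\ell(s_1 v) = \ell(v) + 1$ or $w = s_0 w'$ with $\ell(s_0 w') = \ell(w') + 1$ (a reduced word for the image of $w$ in $W$ is a nonempty alternating word in $s_0, s_1$, and its first letter is determined), so the six explicit clauses assign unique values to $\am{w}, \az{w}, \ap{w}$ for all such $w$. For the two length-zero clauses, note that $s_0^{-1}\omega$ and $s_1^{-1}\omega$ have length $1$, so $\ap{s_0^{-1}\omega}$ and $\am{s_1^{-1}\omega}$ already fall under the explicit clauses; moreover \eqref{eqToQuoteDeg2E} shows that $\am{\omega} + \tau_{s_0}\cdot\ap{s_0^{-1}\omega}$ and $\ap{\omega} + \tau_{s_1}\cdot\am{s_1^{-1}\omega}$ lie in $\bigoplus_{\ell(v)=1}H^2(I,\xx(v))$, on which $\rep_2$ is defined by the explicit clauses. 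Hence the recursion terminates and $\rep_2$ is a well-defined $k$-linear map.

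\textbf{The section property.} Since $\mult_2$ is the degree-$2$ component of the graded $k$-algebra map $\funcInline{\mult}{T_{E^0}^\ast E^1}{E^\ast}$, which is the identity in degree $0$, it is a homomorphism of $E^0$-bimodules. The heart of the argument is to check $\mult_2(\rep_2(x)) = x$ for each basis vector $x$ of $E^2$. For the six positive-length clauses the computation is uniform: in each case $\rep_2(x) = \pm\, \beta' \otimes \beta''$ with $\beta' \in H^1(I,\xx(v'))$, $\beta'' \in H^1(I,\xx(v''))$ and $\ell(v' v'') = \ell(v') + \ell(v'')$, so by \eqref{eqCupYoneda} one has $\mult_2(\beta'\otimes\beta'') = \beta'\cdot\beta'' = (\beta'\cdot\tau_{v''}) \cupprod (\tau_{v'}\cdot\beta'')$. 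I would evaluate $\beta'\cdot\tau_{v''}$ with the "lengths add up" right-action formulas \eqref{eqA}, \eqref{eqB}, \eqref{eqG}, and $\tau_{v'}\cdot\beta''$ with \eqref{eqFormulasTauQp} (or with $\tau_1$ acting trivially), obtaining a cup product of two degree-$1$ classes inside the single cohomology ring $H^\ast(I_w,k)$ for the relevant $w$ with $\ell(w) \geq 1$; this ring is exterior on $H^1$ (Lemma \ref{lmmPoincAndUniform} and \S\ref{subsecCohom}), so one rewrites the product using the relations $\az{w} = \bp{w}\cupprod\bm{w}$, $\am{w} = \bz{w}\cupprod\bp{w}$, $\ap{w} = \bm{w}\cupprod\bz{w}$ (see around \eqref{eqFormulaCupUniform}) together with graded-commutativity of $\cupprod$, and verifies the result is $x$. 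The sign in each clause of $\rep_2$ is exactly the one needed to cancel the graded-commutativity sign, so all six cases close. For the two length-zero clauses, $E^0$-linearity of $\mult_2$ and the identity already established on the span of the length-$\le 1$ basis vectors give $\mult_2(\rep_2(\am{\omega})) = \bigl(\am{\omega} + \tau_{s_0}\cdot\ap{s_0^{-1}\omega}\bigr) - \tau_{s_0}\cdot\ap{s_0^{-1}\omega} = \am{\omega}$ and similarly $\mult_2(\rep_2(\ap{\omega})) = \ap{\omega}$. Finally, surjectivity of $\mult_2$ is immediate from the existence of the section $\rep_2$, in agreement with the direct argument preceding the lemma that used Proposition \ref{propAlmostGenByE1} and \eqref{eqToQuoteDeg2E}.

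\textbf{Expected main obstacle.} The only real work is the six positive-length cases: for each basis vector one must select the correct right-action and left-action formulas — equivalently, decide which of $s_0, s_1$ lengthens the word on which side — and then keep track of the sign produced when the two degree-$1$ cup factors are reordered to match one of the three displayed relations. These computations are short but sign-sensitive, and arranging that all six signs agree with the statement of $\rep_2$ is the step that requires care.
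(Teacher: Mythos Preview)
Your proposal is correct and follows essentially the same approach as the paper. The paper's proof invokes the pre-computed identity \eqref{eqFirstBigComputation} (which is itself derived from Remark \ref{remMultipleCupprod}, i.e.\ from \eqref{eqCupYoneda}) for words of the specific shape $(s_0s_1)^i$, then rewrites via Lemma \ref{lmmDeg1GeneratedByTheFourElements} and extends by $\tau_\omega$; you instead apply \eqref{eqCupYoneda} directly to each of the six clauses and read off the result from \eqref{eqFormulaCupUniform}, which is the same computation organized slightly differently.
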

Note that the last two lines make sense: by \eqref{eqToQuoteDeg2E}, on the right hand side of such lines only the values of $\rep_2$ on \( \bigoplus_{\substack{v \in \widetilde{W}\\\text{s.t. $\ell(v)=1$}}} H^2(I,\xx(v)) \) are used, and on such subspace $\rep_2$ is defined in the preceding lines.

\begin{proof}
Combining formula \eqref{eqFirstBigComputation} in the proof of Proposition \ref{propAlmostGenByE1} with \eqref{eqFormulaCupUniform}, we obtain
\begin{align*}
	\bz{(s_0s_1)^i} \cdot \bp{1}
	&=
	\bz{(s_0s_1)^i} \cupprod \bp{(s_0s_1)^i}
	=
	\am{(s_0s_1)^i},
\\
	\bm{1} \cdot \tau_{(s_0s_1)^i} \cdot \bp{1}
	&=
	\bm{(s_0s_1)^i} \cupprod \bp{(s_0s_1)^i}
	=
	- \az{(s_0s_1)^i}.
\\
	\bm{1} \cdot \bz{(s_0s_1)^i}
	&=
	\bm{(s_0s_1)^i} \cupprod \bz{(s_0s_1)^i}
	=
	\ap{(s_0s_1)^i},
\end{align*}
We rewrite this as follows:
\begin{align*}
	\am{(s_0s_1)^i}
	&=
	\bz{(s_0s_1)^i} \cdot \bp{1}
	=
	\bz{s_0} \cdot \tau_{s_1(s_0s_1)^{i-1}} \cdot \bp{1}
	=
	- \bz{s_0} \cdot \bm{s_1(s_0s_1)^{i-1}},
\\
	\az{(s_0s_1)^i}
	&=
	- \bm{1} \cdot \tau_{(s_0s_1)^i} \cdot \bp{1}
	=
	- \bm{1} \cdot \bp{(s_0s_1)^i},
\\
	\ap{(s_0s_1)^i}
	&=
	\bm{1} \cdot \bz{(s_0s_1)^i},
\end{align*}
where we have used the formulas of Lemma \ref{lmmDeg1GeneratedByTheFourElements}.

This proves that the map $\mult_2 \circ \rep_2$ is the identity at $\am{s_0w}$, $\az{s_0w}$, $\ap{s_0w}$ for all $w$ of the form $s_1 (s_0s_1)^{i-1}$ for some $i \in \zpiu$. From the formulas \eqref{eqOmegaOnE1Right} it follows that the same is true for $w$ of the form $s_1 (s_0s_1)^{i-1} \omega$ for some $i \in \zpiu$ and some $\omega \in \quoz{T^0}{T^1}$. The proof that the same claim is true for all $w$ of the form $(s_1s_0)^{i-1}\omega$ for some $i \in \zpiu$ and some $\omega \in \quoz{T^0}{T^1}$ is similar, and this shows that the map $\mult_2 \circ \rep_2$ is the identity at $\am{s_0w}$, $\az{s_0w}$, $\ap{s_0w}$ for all $w \in \widetilde{W}$ such that $\ell(s_0w) = \ell(w) + 1$.

Similarly one shows that $\mult_2 \circ \rep_2$ is the identity at $\am{s_1v}$, $\az{s_1v}$, $\ap{s_1v}$ for all $v \in \widetilde{W}$ such that $\ell(s_1v) = \ell(v) + 1$. Finally, the fact that $\mult_2 \circ \rep_2$ is the identity at $\am{\omega}$ and $\ap{\omega}$ for all $\omega \in \Omega$ follows from the fact that $\mult_2 \circ \rep_2$ is the identity on \( \bigoplus_{\substack{v \in \widetilde{W}\\\text{s.t. $\ell(v)=1$}}} H^2(I,\xx(v)) \).
\end{proof}

For the next lemma, we use the identification of $k$-algebras
\[
	\spann_k \set{\tau_\omega}{\omega \in T^0/T^1} \cong k[T^0/T^1].
\]

\begin{lmm}\label{lmmBimodulesGpAlgebraFiniteTorus}
The map $\rep_2$ is a homomorphism of $k[\quoz{T^0}{T^1}]$-bimodules.
\end{lmm}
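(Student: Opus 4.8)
The plan is to check, on a $k$-basis of $E^2$, the two identities $\rep_2(\tau_\omega\cdot n)=\tau_\omega\cdot\rep_2(n)$ and $\rep_2(n\cdot\tau_\omega)=\rep_2(n)\cdot\tau_\omega$ for all $\omega\in\quoz{T^0}{T^1}$ and all $n$ in the basis of $E^2$, i.e. $n$ running over $\am{w},\az{w},\ap{w}$ (for $w\in\widetilde{W}$ with $\ell(w)\geqslant1$) and $\am{\omega},\ap{\omega}$ (for $\omega\in\quoz{T^0}{T^1}$). First I would note that $\bigoplus_{\ell(w)\geqslant1}H^\ast(I,\xx(w))$ and $\bigoplus_{\ell(\omega)=0}H^\ast(I,\xx(\omega))$ are $k[\quoz{T^0}{T^1}]$-subbimodules of $E^2$, because left and right multiplication by $\tau_\omega$ preserve $\ell(w)$ (by \eqref{eqTauOmegaE2Left} and \eqref{eqTauOmegaE2Right}, since $\ell(\omega)=0$); so the length-$\geqslant1$ part and the length-$0$ part may be treated independently.

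For $w$ with $\ell(w)\geqslant1$, exactly one of $\ell(s_0w)=\ell(w)-1$ and $\ell(s_1w)=\ell(w)-1$ holds (the length on $\widetilde{W}$ is pulled back from the infinite dihedral Coxeter group $W_{\aff}=\langle\overline{s_0},\overline{s_1}\rangle$, in which every non-identity element has a unique reduced word, and $\Omega$ is trivial). Say the second holds and put $v:=s_1^{-1}w$; then by the defining formulas of $\rep_2$ one has $\rep_2(\am{w})=-\bp{1}\otimes\bz{w}$, $\rep_2(\az{w})=\bp{1}\otimes\bm{w}$, $\rep_2(\ap{w})=\bz{s_1}\otimes\bp{v}$ (and symmetrically in the $s_0$-descent case). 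The key mechanism is that, for each of the four $E^0$-bimodule generators $\gamma\in\{\bm{1},\bp{1},\bz{s_0},\bz{s_1}\}$, one has an identity $\tau_{\omega_u}\cdot\gamma=c_\gamma(u)\,\gamma\cdot\tau_{\omega'}$ with $c_{\bm{1}}(u)=u^{-2}$, $c_{\bp{1}}(u)=u^2$, $c_{\bz{s_i}}(u)=1$ and $\omega'$ an explicit element of $\quoz{T^0}{T^1}$: this follows from \eqref{eqOmegaOnE1Left}, \eqref{eqOmegaOnE1Right}, \eqref{eqG} and the identity $s_i^{-1}\omega_u s_i=\omega_{u^{-1}}$ (conjugation by a Weyl element inverts the diagonal torus, cf. \eqref{eqOmegaU}). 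One then slides $\tau_{\omega_u}$ across the tensor sign in $E^1\otimes_{E^0}E^1$, reapplies the $E^1$-formulas to the other factor, and compares with $\rep_2(\tau_{\omega_u}\cdot n)$ computed from \eqref{eqTauOmegaE2Left}; the twist exponents cancel exactly (for instance $\tau_{\omega_u}\cdot\az{w}=\az{\omega_u w}$ is twist-free, and on the right-hand side the $+2$ from $\bp{1}$ cancels the $-2$ from $\bm{w}$), and one uses that $\omega_u w$ has the same left descent as $w$ so that the same formula for $\rep_2$ applies. The right-multiplication checks are the same but easier, as \eqref{eqTauOmegaE2Right}, \eqref{eqOmegaOnE1Right}, \eqref{eqG} are twist-free.

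For the length-$0$ part I would use the recursive definition $\rep_2(\am{\omega})=\rep_2(\xi_\omega)-\tau_{s_0}\cdot\rep_2(\ap{s_0^{-1}\omega})$ with $\xi_\omega:=\am{\omega}+\tau_{s_0}\cdot\ap{s_0^{-1}\omega}$, which by \eqref{eqToQuoteDeg2E} lies in $\bigoplus_{\ell(v)=1}H^2(I,\xx(v))$ — so both $\xi_\omega$ and $\ap{s_0^{-1}\omega}$ lie in the length-$\geqslant1$ part already handled. Using \eqref{eqTauOmegaE2Left}, the identity $\omega_u s_0=s_0\omega_{u^{-1}}$, and the braid relation $\tau_{\omega_u}\cdot\tau_{s_0}=\tau_{s_0}\cdot\tau_{\omega_{u^{-1}}}$ (lengths add up), one checks $\tau_{\omega_u}\cdot\xi_\omega=u^2\,\xi_{\omega_u\omega}$ and $\tau_{\omega_u}\cdot\tau_{s_0}\cdot\rep_2(\ap{s_0^{-1}\omega})=u^2\,\tau_{s_0}\cdot\rep_2(\ap{s_0^{-1}\omega_u\omega})$; substituting these into the recursive formula and applying the already-proven bimodule property to $\xi_\omega$ and $\ap{s_0^{-1}\omega}$ gives $\tau_{\omega_u}\cdot\rep_2(\am{\omega})=u^2\,\rep_2(\am{\omega_u\omega})=\rep_2(\tau_{\omega_u}\cdot\am{\omega})$, and likewise on the right; the element $\ap{\omega}$ is handled identically with $s_1$ in place of $s_0$. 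The only real obstacle is bookkeeping: keeping the twist exponents $u^{\pm2}$ and the conjugations $\omega_u\leftrightarrow\omega_{u^{-1}}$ consistent across the roughly sixteen elementary verifications; nothing conceptual is involved, and one could halve the work by first recording that $\rep_2$ is compatible with $\Gamma_\varpi$ and $\invol$ (which swap $s_0\leftrightarrow s_1$ and interchange $\alpha^{\pm}$ suitably) and then checking only, say, the $s_1$-descent cases.
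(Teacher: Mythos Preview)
Your argument is correct and follows the same overall structure as the paper (treat the length-$\geqslant 1$ and length-$0$ parts separately, the latter via the recursive definition), but the paper's handling of the length-$\geqslant 1$ left action is slicker and avoids your explicit exponent bookkeeping. After observing that right-$k[\quoz{T^0}{T^1}]$-linearity of $\rep_2$ is immediate from the definition and \eqref{eqOmegaOnE1Right}, \eqref{eqTauOmegaE2Right}, the paper notes that for any $\omega,\sigma,w$ with $\ell(w)\geqslant 1$ one has $\tau_\omega\cdot\alpha_w^\sigma = c_{\omega,\sigma}\,\alpha_w^\sigma\cdot\tau_{\omega^{(-1)^{\ell(w)}}}$ for some $c_{\omega,\sigma}\in k^\times$, and likewise $\tau_\omega\cdot\rep_2(\alpha_w^\sigma) = c'_{\omega,\sigma}\,\rep_2(\alpha_w^\sigma)\cdot\tau_{\omega^{(-1)^{\ell(w)}}}$ for some $c'_{\omega,\sigma}\in k^\times$. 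Combining these with right-linearity gives $\tau_\omega\cdot\rep_2(\alpha_w^\sigma) = (c'_{\omega,\sigma}c_{\omega,\sigma}^{-1})\,\rep_2(\tau_\omega\cdot\alpha_w^\sigma)$, and then applying $\mult_2$ to both sides (using that $\mult_2$ is left-$E^0$-linear and $\mult_2\circ\rep_2=\id$) forces the scalar to be $1$. This sidesteps all the $u^{\pm 2}$ matching you carry out by hand. Your approach is equally valid and arguably more transparent, just more laborious.

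One caution on your closing aside: in the paper's logical order, the $\invol$-compatibility of $\rep_2$ (Lemma~\ref{lmmInvolRep2}) \emph{uses} the present lemma, so invoking it here would be circular; the $\Gamma_\varpi$-compatibility (Lemma~\ref{lmmCommutesCCCN}) is independent and could indeed be used to halve the work, but the paper does not bother.
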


\begin{proof}
From the definition of $\rep_2$ and from the formulas for the right action of $k[\quoz{T^0}{T^1}]$ on $E^1$ and $E^2$ (see \eqref{eqOmegaOnE1Right} \eqref{eqTauOmegaE2Right}), it is clear that $\rep_2$ is a homomorphism of right $k[\quoz{T^0}{T^1}]$-modules.

Regarding the left action, from the formulas \eqref{eqTauOmegaE2Left} and \eqref{eqTauOmegaE2Right} describing the action of $k[\quoz{T^0}{T^1}]$ on $E^2$, we see that for all $\omega \in \quoz{T^0}{T^1}$, for all $\sigma \in \{-,0,+\}$ and for all $w \in \widetilde{W}$ (of length greater or equal than $1$ if $\sigma = 0$) we have
\[
	\tau_\omega \cdot \alpha_w^\sigma = c_{\omega, \sigma} \alpha_w^\sigma \cdot \tau_{\omega^{(-1)^{\ell(w)}}},
\]
for some $c_{\omega, \sigma} \in k^\times$. Assume now that $\ell(w) \geqslant 1$. Looking at the definition of $\rep_2$ in Lemma \ref{lmmDefRep2}, and using the formulas \eqref{eqOmegaOnE1Left} and \eqref{eqOmegaOnE1Right} describing the action of $k[\quoz{T^0}{T^1}]$ on $E^1$, we see that we also have the completely analogous result
\[
	\tau_\omega \cdot \rep_2(\alpha_w^\sigma) = c_{\omega, \sigma}' \rep_2(\alpha_w^\sigma) \cdot \tau_{\omega^{(-1)^{\ell(w)}}},
\]
for some $c_{\omega, \sigma}' \in k^\times$. Using that $\rep_2$ is a homomorphism of right $k[\quoz{T^0}{T^1}]$-modules, we thus see that
\[
	\tau_\omega \cdot \rep_2(\alpha_w^\sigma) = c_{\omega, \sigma}' c_{\omega, \sigma}^{-1} \rep_2(\tau_\omega \cdot \alpha_w^\sigma),
\]
and by applying $\mult_2$ to both sides, we see that the coefficient $c_{\omega, \sigma}' c_{\omega, \sigma}^{-1}$ must be $1$.

So far, we have shown that the maps $\tau_\omega \cdot \rep_2({}_-)$ and $\rep_2(\tau_\omega \cdot {}_-)$ coincide on the subspace \( \bigoplus_{\substack{v \in \widetilde{W}\\\text{s.t. $\ell(v) \geqslant 1$}}} H^2(I,\xx(v)) \), and looking at the definition of $\rep_2$ on the subspace \( \bigoplus_{\substack{v \in \widetilde{W}\\\text{s.t. $\ell(v)=0$}}} H^2(I,\xx(v)) \), we then deduce that the maps $\tau_\omega \cdot \rep_2({}_-)$ and $\rep_2(\tau_\omega \cdot {}_-)$ coincide on the whole $E^2$.
\end{proof}

\begin{lmm}\label{lmmCommutesCCCN}
The maps $\rep_2$ and $\Gamma_\varpi$ commute. More precisely, the following diagram is commutative:
\[\begin{tikzcd}
	E^2
	\ar[r, "{ \rep_2}"]
	\ar[d, "{ \Gamma_\varpi}"]
	&
	E^1 \otimes_{E^0} E^1
	\ar[d, "{ \Gamma_\varpi}"]
	\\
	E^2
	\ar[r, "{ \rep_2}"]
	&
	E^1 \otimes_{E^0} E^1.
\end{tikzcd}\]
\end{lmm}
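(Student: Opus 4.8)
The plan is to check the identity $\Gamma_\varpi \circ \rep_2 = \rep_2 \circ \Gamma_\varpi$ on the $k$-basis of $E^2$ given by the elements $\am{w}$, $\az{w}$, $\ap{w}$ of \eqref{eqBasis}, using the explicit formula for $\rep_2$ from Lemma \ref{lmmDefRep2}, the formulas \eqref{eqCCCNOnE1} and \eqref{eqCCCNOnE2} for the action of $\Gamma_\varpi$ on $E^1$ and $E^2$, and the definition of $\Gamma_\varpi$ on $E^1 \otimes_{E^0} E^1$ (factor-wise, with no sign on two tensor factors). The bookkeeping device that makes the cases go through uniformly is that conjugation by $\varpi$ normalizes $I$, hence induces a length-preserving automorphism of $\widetilde{W}$ which interchanges $s_0$ and $s_1$ (compatibly with \eqref{eqCCCNOnH}, since $\tau_{\varpi s_i \varpi^{-1}} = \Gamma_\varpi(\tau_{s_i}) = \tau_{s_{1-i}}$). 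Consequently it carries an element of the form $s_1 v$ with $\ell(s_1 v) = \ell(v) + 1$ to an element of the form $s_0 w$ with $\ell(s_0 w) = \ell(w) + 1$, where $w \defeq \varpi v \varpi^{-1}$, and conversely; in particular $\Gamma_\varpi$ permutes the six ``length-adding'' defining lines of $\rep_2$ via the $s_0 \leftrightarrow s_1$ swap.

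First I would verify the six cases in which lengths add up. For a representative one, take $\am{s_1 v}$ with $\ell(s_1 v) = \ell(v) + 1$, and set $w \defeq \varpi v \varpi^{-1}$, so that $\varpi s_1 v \varpi^{-1} = s_0 w$ with $\ell(s_0 w) = \ell(w) + 1$. Then on one side $\Gamma_\varpi(\rep_2(\am{s_1 v})) = \Gamma_\varpi(-\bp{1} \otimes \bz{s_1 v}) = -\bm{1} \otimes (-\bz{s_0 w}) = \bm{1} \otimes \bz{s_0 w}$, while on the other side $\rep_2(\Gamma_\varpi(\am{s_1 v})) = \rep_2(\ap{s_0 w}) = \bm{1} \otimes \bz{s_0 w}$, and the two coincide. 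The remaining five cases ($\az{s_1 v}$, $\ap{s_1 v}$, $\am{s_0 w}$, $\az{s_0 w}$, $\ap{s_0 w}$) are handled in exactly the same way, the only points to watch being the signs produced by the relations $\Gamma_\varpi(\bz{v}) = -\bz{\varpi v \varpi^{-1}}$ and $\Gamma_\varpi(\az{v}) = -\az{\varpi v \varpi^{-1}}$.

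Next I would treat the two remaining basis vectors $\am{\omega}$ and $\ap{\omega}$ for $\omega \in T^0/T^1$. Here the last two lines of the definition of $\rep_2$ express $\rep_2(\am{\omega})$ and $\rep_2(\ap{\omega})$ only through: the value of $\rep_2$ on the length-$1$ subspace $\bigoplus_{\ell(v) = 1} H^2(I, \xx(v))$ (where commutativity has just been established), left multiplication by $\tau_{s_0}$ and $\tau_{s_1}$ on $E^1 \otimes_{E^0} E^1$ and on $E^2$ (with which $\Gamma_\varpi$ commutes, swapping the two indices since $\Gamma_\varpi(\tau_{s_j}) = \tau_{s_{1-j}}$), and the elements $\am{\omega} + \tau_{s_0} \cdot \ap{s_0^{-1}\omega}$ and $\ap{\omega} + \tau_{s_1} \cdot \am{s_1^{-1}\omega}$, which $\Gamma_\varpi$ interchanges because $\Gamma_\varpi(\tau_\omega) = \tau_{\omega^{-1}}$ and $\varpi s_j^{-1}\omega \varpi^{-1} = s_{1-j}^{-1}\omega^{-1}$. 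Chaining these facts gives $\Gamma_\varpi(\rep_2(\am{\omega})) = \rep_2(\ap{\omega^{-1}}) = \rep_2(\Gamma_\varpi(\am{\omega}))$, and symmetrically for $\ap{\omega}$; this uses \eqref{eqToQuoteDeg2E} to guarantee that those defining formulas only involve $\rep_2$ on the length-$1$ subspace, so that the reduction to the previously checked case is legitimate.

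I do not expect a genuine obstacle: $\Gamma_\varpi$ is a graded algebra automorphism compatible with $\mult$ by \eqref{eqMultiplicationAndCCCNAndInvol}, and both the defining formula for $\rep_2$ and the $\Gamma_\varpi$-action formulas are ``diagonal'' with respect to the basis \eqref{eqBasis}, so the entire verification reduces to careful tracking of the $s_0 \leftrightarrow s_1$ symmetry and of a handful of signs. The only mildly delicate point is ensuring that the $\omega \in T^0/T^1$ case is reduced to the already-verified length-$1$ case, which is exactly the content of the remark following Lemma \ref{lmmDefRep2}.
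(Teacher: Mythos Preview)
Your proposal is correct and follows essentially the same approach as the paper: both verify the commutativity on the basis \eqref{eqBasis} using the explicit formulas for $\rep_2$ and for the $\Gamma_\varpi$-action, treating first the length-$\geqslant 1$ pieces and then reducing the length-$0$ pieces to the length-$1$ case via the recursive definition of $\rep_2$. The only difference is that the paper checks just the three $s_1v$ cases and the $\ap{\omega}$ case, then invokes that $\Gamma_\varpi$ is an involution (so coincidence at $x$ implies coincidence at $\Gamma_\varpi(x)$) to obtain the $s_0w$ cases and $\am{\omega}$ for free, whereas you work through all six cases and both length-$0$ families explicitly.
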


\begin{proof}
Let $v \in \widetilde{W}$ with $\ell(s_1v) = \ell(v) + 1$. We compute
\begin{align*}
\Gamma_\varpi \big( \rep_2(\am{s_1v}) \big)
&=
\Gamma_\varpi \big( - \bp{1} \otimes \bz{s_1v} \big)
\\&=
\bm{1} \otimes \bz{s_0 \varpi v \varpi^{-1}},
\\
\Gamma_\varpi \big( \rep_2(\az{s_1v}) \big)
&=
\Gamma_\varpi \big( \bp{1} \otimes \bm{s_1v} \big)
\\&=
\bm{1} \otimes \bp{s_0 \varpi v \varpi^{-1}},
\\
\Gamma_\varpi \big( \rep_2(\ap{s_1v}) \big)
&=
\Gamma_\varpi \big( \bz{s_1} \otimes \bp{v} \big)
\\&=
- \bz{s_0} \otimes \bm{\varpi v \varpi^{-1}}.
\end{align*}
Here we have used the definition of $\Gamma_\varpi$ on $T^\ast_{E^0} E^1$, the formulas for the action of $\Gamma_\varpi$ on $E^1$ \eqref{eqCCCNOnE1}, and the fact that $\varpi s_1 \varpi^{-1} = s_0$.
On the other side, we compute
\begin{align*}
\rep_2 \big( \Gamma_\varpi(\am{s_1v}) \big)
&=
\rep_2 (\ap{s_0 \varpi v \varpi^{-1}}),
\\
\rep_2 \big( \Gamma_\varpi(\az{s_1v}) \big)
&=
- \rep_2 (\az{s_0 \varpi v \varpi^{-1}}),
\\
\rep_2 \big( \Gamma_\varpi(\ap{s_1v}) \big)
&=
\rep_2 (\am{s_0 \varpi v \varpi^{-1}}),
\end{align*}
where we have used the formulas for the action of $\Gamma_\varpi$ on $E^2$ \eqref{eqCCCNOnE2}. Comparing the two above computations with the definition of $\rep_2$, we see that the maps $\Gamma_\varpi \circ \rep_2$ and $\rep_2 \circ \Gamma_\varpi$ coincide at $\am{s_1v}$, $\az{s_1v}$ and $\ap{s_1v}$.

We also see that
\begin{align*}
	\Gamma_\varpi(\rep_2(\ap{\omega}))
	&=
	\Gamma_\varpi \left( \rep_2 \big( \ap{\omega} + \tau_{s_1\omega} \cdot \am{s_1^{-1}\omega} \big) - \tau_{s_1} \cdot \rep_2 \big( \am{s_1^{-1}\omega} \big) \right)
	\\&=
	\Gamma_\varpi \left( \rep_2 \big( \ap{\omega} + \tau_{s_1} \cdot \am{s_1^{-1}\omega} \big) \right) - \tau_{s_0} \cdot \Gamma_\varpi \left( \rep_2 \big( \am{s_1^{-1}\omega} \big) \right)
	\\&=
	\rep_2 \left( \Gamma_\varpi \big( \ap{\omega} + \tau_{s_1} \cdot \am{s_1^{-1}\omega} \big) \right) - \tau_{s_0} \cdot \rep_2 \left( \Gamma_\varpi \big( \am{s_1^{-1}\omega} \big) \right)
	\\&\qquad\left( \substack{\text{since $\Gamma_\varpi \circ \rep_2$ and $\rep_2 \circ \Gamma_\varpi$ coincide on $\bigoplus_{\vartheta \in \quoz{T^0}{T^1}} H^2(I,\xx(s_1 \vartheta))$}} \right)
	\\&=
	\rep_2 \big( \am{\varpi \omega \varpi^{-1}} + \tau_{s_0} \cdot \ap{s_0^{-1}\varpi \omega \varpi^{-1}} \big) - \tau_{s_0} \cdot \rep_2 \big( \ap{s_0^{-1}\varpi \omega \varpi^{-1}} \big)
	\\&=
	\rep_2 \left( \am{\varpi \omega \varpi^{-1}} \right)
	\\&=
	\rep_2(\Gamma_\varpi(\ap{\omega})),
\end{align*}
where we have used the same formulas as before (and the fact that $\Gamma_\varpi(\tau_{s_1}) = \tau_{s_0}$, see \eqref{eqCCCNOnH}).

So far, we have shown that the maps $\Gamma_\varpi \circ \rep_2$ and $\rep_2 \circ \Gamma_\varpi$ coincide on \virgolette{half} of the elements of the $k$-basis of $E^2$ used in the definition of $\rep_2$.
To conclude the proof, we remark that, since $\Gamma_\varpi$ is an involution, if the maps $\Gamma_\varpi \circ \rep_2$ and $\rep_2 \circ \Gamma_\varpi$ coincide on an element $x \in E^2$, then they coincide also on $\Gamma_\varpi(x)$.
\end{proof}

\begin{lmm}\label{lmmInvolRep2}
Let us consider the following $\invol$-invariant $k$-subspace of $E^2$:
\[
	F^1E^2
	\defeq
	\bigoplus_{\mathclap{\substack{w \in \widetilde{W} \\\text{s.t. } \ell(w) \geqslant 1}}} H^2(I,\xx(w)).
\]
One has that map $\restr{\rep_2}{F^1E^2}$ commutes with the anti-involution $\invol$. More precisely, the following diagram is commutative:
\[\begin{tikzcd}[row sep = 2.7em, column sep = 4em]
	F^1E^2
	\ar[r, "{ \restr{\rep_2}{F^1E^2}}"]
	\ar[d, "{ \invol}"']
	&
	E^1 \otimes_{E^0} E^1
	\ar[d, "{ \invol}", "{\substack{ \beta \otimes \beta' \\ \downmapsto \\ - \invol(\beta') \otimes \invol(\beta) }}"']
	\\
	F^1E^2
	\ar[r, "{ \restr{\rep_2}{F^1E^2}}"']
	&
	E^1 \otimes_{E^0} E^1.
\end{tikzcd}\]
\end{lmm}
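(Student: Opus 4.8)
The plan is to mimic the proof of Lemma \ref{lmmCommutesCCCN}. First I would verify the desired identity $\invol \circ \restr{\rep_2}{F^1E^2} = \restr{\rep_2}{F^1E^2} \circ \invol$ on the elements $\am{s_1v}, \az{s_1v}, \ap{s_1v}$ with $\ell(s_1v) = \ell(v) + 1$; all of these lie in $F^1E^2$ since they have length $\geq 1$, and together with the elements $\am{s_0w}, \az{s_0w}, \ap{s_0w}$ with $\ell(s_0w) = \ell(w)+1$ they form the $k$-basis of $F^1E^2$ implicitly used in the definition of $\rep_2$. I would then propagate the identity from the first family of basis elements to the second by means of the automorphism $\Gamma_\varpi$.

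The first step is a direct computation using the definition of $\rep_2$ from Lemma \ref{lmmDefRep2}, the definition $\invol(\beta\otimes\beta') = -\invol(\beta')\otimes\invol(\beta)$ of $\invol$ on $T^2_{E^0}E^1$, the explicit formulas \eqref{eqInvolOnE1} and \eqref{eqInvolOnE2} for the action of $\invol$ on $E^1$ and $E^2$, and Lemma \ref{lmmDeg1GeneratedByTheFourElements}. For instance, on the one side $\invol(\rep_2(\ap{s_1v})) = \invol(\bz{s_1}\otimes\bp{v}) = -\invol(\bp{v})\otimes\invol(\bz{s_1})$, which one rewrites via \eqref{eqInvolOnE1}; on the other side $\invol(\ap{s_1v})$ is, by \eqref{eqInvolOnE2}, a scalar multiple of $\am{(s_1v)^{-1}}$ or $\ap{(s_1v)^{-1}}$ according to the parity of $\ell(s_1v)$, and one then evaluates $\rep_2$ on that element (which has length $\geq 1$, hence is again of the form $s_1v'$ or $s_0w'$ with lengths adding up, the case being dictated by the parity of $\ell(s_1v)$, since inverting an element reverses its reduced word). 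Comparing the two sides one finds that they agree; the cases of $\am{s_1v}$ and $\az{s_1v}$ are entirely analogous.

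For the second step, note that $\varpi s_0 \varpi^{-1} = s_1$, so by the formulas \eqref{eqCCCNOnE2} every basis element $\am{s_0w}, \az{s_0w}, \ap{s_0w}$ with $\ell(s_0w) = \ell(w)+1$ is, up to a sign, the image under $\Gamma_\varpi$ of one of the elements $\ap{s_1v}, \az{s_1v}, \am{s_1v}$ treated in the first step. Using that $\Gamma_\varpi$ commutes with $\rep_2$ (Lemma \ref{lmmCommutesCCCN}) and that $\Gamma_\varpi$ and $\invol$ commute on $T^\ast_{E^0}E^1$ (which follows from \eqref{eqCCCNInvolCommute} together with the factorwise definitions of the two maps on the tensor algebra), one transports the identity $\invol\circ\rep_2 = \rep_2\circ\invol$ from the first family of basis elements to the second, which finishes the argument since these two families exhaust the basis of $F^1E^2$.

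I expect the only genuine difficulty to be the bookkeeping in the first step: one must keep precise track of the signs $(-1)^{\ell(w)}$, of the scalars $u_w^{\pm 2}$ occurring in \eqref{eqInvolOnE1} and \eqref{eqInvolOnE2}, and of the fact that passing from $w$ to $w^{-1}$ inside $\widetilde{W}$ produces a twist by the central element $c_{-1}$ (so that one must in addition invoke \eqref{eqOmegaOnE1Left} and \eqref{eqOmegaOnE1Right} for the action of $\tau_\omega$), and then verify that all of these contributions cancel. Everything else is formal.
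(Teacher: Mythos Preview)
Your proposal is correct and follows essentially the same approach as the paper's proof: verify the identity on the basis elements $\am{s_1v}, \az{s_1v}, \ap{s_1v}$ by direct computation, then use the commutation of $\Gamma_\varpi$ with both $\rep_2$ (Lemma~\ref{lmmCommutesCCCN}) and $\invol$ (from~\eqref{eqCCCNInvolCommute}) to propagate to the remaining basis elements.

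The one organizational difference worth noting: where you plan to track the scalars $u_w^{\pm 2}$ for general $v$, the paper first invokes Lemma~\ref{lmmBimodulesGpAlgebraFiniteTorus} (that $\rep_2$ is a $k[T^0/T^1]$-bimodule map) together with the anti-multiplicativity of $\invol$ to observe that $\invol\circ\rep_2\circ\invol$ is also a $k[T^0/T^1]$-bimodule map. This reduces the verification to $v$ of the form $(s_0s_1)^i$ or $s_0(s_1s_0)^i$, for which $u_w = 1$ and the explicit computation becomes cleaner (only the sign and the $c_{-1}$ twist remain). Your approach of carrying the $u_w^{\pm 2}$ through directly also works, but the paper's reduction saves some bookkeeping.
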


\begin{proof}
We first recall from \eqref{eqCCCNInvolCommute} that the involutions $\invol$ and $\Gamma_\varpi$ commute on $E^\ast$. Using this property, let us see that if the maps $\invol \circ \rep_2$ and $\rep_2 \circ \invol$ coincide at a certain $\alpha \in E^2$, then they also coincide at $\Gamma_\varpi(\alpha)$: indeed, one has
\begin{align*}
	(\invol \circ \rep_2) (\Gamma_\varpi(\alpha))
	&=
	(\invol \circ \Gamma_\varpi \circ \rep_2)(\alpha)
	&&\text{as $\rep_2$ and $\Gamma_\varpi$ commute (Lemma \ref{lmmCommutesCCCN})}
	\\&=
	(\Gamma_\varpi \circ \invol \circ \rep_2)(\alpha)
	&&\begin{matrix*} \text{as $\invol$ and $\Gamma_\varpi$ commute on $E^1$} \\ \text{and hence on $E^1 \otimes_{E^0} E^1$} \end{matrix*}
	\\&=
	(\Gamma_\varpi \circ \rep_2 \circ \invol)(\alpha)
	&&\text{by assumption}
	\\&=
	(\rep_2 \circ \Gamma_\varpi \circ \invol)(\alpha)
	&&\text{as $\rep_2$ and $\Gamma_\varpi$ commute (Lemma \ref{lmmCommutesCCCN})}
	\\&=
	(\rep_2 \circ \invol) (\Gamma_\varpi(\alpha))
	&&\text{as $\invol$ and $\Gamma_\varpi$ commute on $E^2$.}
\end{align*}
Now, let us look at the definition of $\rep_2$ in Lemma \ref{lmmDefRep2}. By what we have just remarked, we only need to show that $\invol \circ \rep_2$ and $\rep_2 \circ \invol$ coincide at $\am{s_1v}$, $\az{s_1v}$ and $\ap{s_1v}$ for all $v \in \widetilde{W}$ such that $\ell(s_1v) = \ell(v) + 1$. Since $\rep_2$ is a homomorphism of $k[\quoz{T^0}{T^1}]$-bimodules (Lemma \ref{lmmBimodulesGpAlgebraFiniteTorus}) and since $\funcInline{\invol}{E^\ast}{E^\ast}$ and $\funcInline{\invol}{T^\ast_{E^0}E^1}{T^\ast_{E^0}E^1}$ are anti-multiplicative, it follows that $\invol \circ \rep_2 \circ \invol$ is a homomorphism of $k[\quoz{T^0}{T^1}]$-bimodules, and so (looking at formulas \eqref{eqTauOmegaE2Right}) we can assume that $v$ is of the form $(s_0s_1)^i$ or $s_0(s_1s_0)^i$ for some $i \in \nn$.
Using the formulas for the action of $\invol$ on $E^2$ \eqref{eqInvolOnE2} as well as the definition of $\rep_2$ and recalling the definition of $c_{-1}$ from \eqref{eqcminus1}, we compute
\begin{align*}
	\rep_2 \big(\invol( \am{s_1(s_0s_1)^i} ) \big)
	&=
	- \rep_2 \big( \ap{s_1(s_0s_1)^ic_{-1}} \big)
	=
	- \bz{s_1} \otimes \bp{(s_0s_1)^ic_{-1}}.
\end{align*}
Similarly, using the formulas for the action of $\invol$ on $E^1$ \eqref{eqInvolOnE1} (together with the definition of $\invol$ on $E^1 \otimes_{E^0} E^1$ and the definition of $\rep_2$), we compute
\begin{align*}
	\invol \big( \rep_2( \am{s_1(s_0s_1)^i} ) \big)
	&=
	- \invol \big( \bp{1} \otimes \bz{s_1(s_0s_1)^i} \big)
	=
	- \bz{s_1(s_0s_1)^ic_{-1}} \otimes \bp{1}.
\end{align*}
And, using the formulas in Lemma \ref{lmmDeg1GeneratedByTheFourElements}, we conclude that the two simple tensors we have obtained are equal, and hence that $\rep_2 \circ \invol$ and $\invol \circ \rep_2$ coincide at $\am{s_1(s_0s_1)^i}$.

Similarly, one computes that $\rep_2 \circ \invol$ and $\invol \circ \rep_2$ coincide at $\az{s_1(s_0s_1)^i}$, $\ap{s_1(s_0s_1)^i}$, $\am{(s_1s_0)^{i+1}}$, $\az{(s_1s_0)^{i+1}}$ and $\ap{(s_1s_0)^{i+1}}$.
\end{proof}

The above result leaves open the question of whether $\rep_2$ and $\invol$ could commute on the whole $E^2$. This is dealt with in the following lemma and remark.

\begin{lmm}\label{lmmInvolBadBehaviour}
One has the following equalities:
\begin{align*}
	(\rep_2 \circ \invol)(\am{1}) - (\invol \circ \rep_2)(\am{1})
	&=
	\bp{s_0} \otimes \bz{s_0^{-1}}
	+
	\bz{s_0} \otimes \bm{s_0^{-1}}
	\\&=
	- \tau_{s_0} \cdot \bm{1} \otimes \bz{s_0^{-1}}
	+
	\bz{s_0} \otimes \bm{1} \cdot \tau_{s_0^{-1}}
\\
	(\rep_2 \circ \invol)(\ap{1}) - (\invol \circ \rep_2)(\ap{1})
	&=
	- \bm{s_1} \otimes \bz{s_1^{-1}}
	-
	\bz{s_1} \otimes \bp{s_1^{-1}}
	\\&=
	\tau_{s_1} \cdot \bp{1} \otimes \bz{s_1^{-1}}
	-
	\bz{s_1} \otimes \bp{1} \cdot \tau_{s_1^{-1}}
\end{align*}
\end{lmm}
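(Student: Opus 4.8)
The plan is as follows. Since $\ell(1)=0$ is even, \eqref{eqInvolOnE2} gives $\invol(\am{1})=\am{1}$, so the first claimed difference equals $\rep_2(\am{1})-\invol(\rep_2(\am{1}))$, and I would start by unwinding $\rep_2(\am{1})$ from its definition. In the last-but-one line of Lemma \ref{lmmDefRep2} with $\omega=1$ one has $\rep_2(\am{1})=\rep_2(\am{1}+\tau_{s_0}\cdot\ap{s_0^{-1}})-\tau_{s_0}\cdot\rep_2(\ap{s_0^{-1}})$. Writing $s_0^{-1}=s_0c_{-1}$ with $\ell(s_0c_{-1})=\ell(c_{-1})+1$, the $s_0$-rule in Lemma \ref{lmmDefRep2} gives $\rep_2(\ap{s_0^{-1}})=\bm{1}\otimes\bz{s_0^{-1}}$, and \eqref{eqFormulasTauQp} gives $\tau_{s_0}\cdot\bm{1}=-\bp{s_0}$; hence the second summand contributes $\bp{s_0}\otimes\bz{s_0^{-1}}$. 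The first summand is $\rep_2$ of the element $\am{1}+\tau_{s_0}\cdot\ap{s_0^{-1}}$, which by the $\ell(w)=1$ case of \eqref{eqTauS0OnE2LeftNotAddUp} (with $w=s_0^{-1}$, $s_0w=1$) equals $-e_1\ap{s_0^{-1}}-e_{\idd}\az{s_0^{-1}}+e_{\idd^2}\am{s_0^{-1}}$, and in particular lies in $F^1E^2$. Call this contribution $X$, so that $\rep_2(\am{1})=X+\bp{s_0}\otimes\bz{s_0^{-1}}$.

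Next I would apply $\invol$. On the simple tensor, \eqref{eqInvolOnE1} (with $\ell(s_0)=\ell(s_0^{-1})=1$ odd and $u_{s_0}^2=u_{s_0^{-1}}^2=1$) together with the twisted sign rule $\invol(\beta\otimes\beta')=-\invol(\beta')\otimes\invol(\beta)$ on $T^2_{E^0}E^1$ gives $\invol(\bp{s_0}\otimes\bz{s_0^{-1}})=-\bz{s_0}\otimes\bm{s_0^{-1}}$. On $X$ I use that $\am{1}+\tau_{s_0}\cdot\ap{s_0^{-1}}\in F^1E^2$ and invoke Lemma \ref{lmmInvolRep2} to commute $\invol$ past $\rep_2$: $\invol(X)=\rep_2\big(\invol(\am{1}+\tau_{s_0}\cdot\ap{s_0^{-1}})\big)$. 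Since $\invol$ is an anti-automorphism, $\invol(\tau_{s_0}\cdot\ap{s_0^{-1}})=\invol(\ap{s_0^{-1}})\cdot\invol(\tau_{s_0})=-\am{s_0}\cdot\tau_{s_0^{-1}}$ by \eqref{eqInvolOnE2} and \eqref{eqInvolTauw}, so $\invol(X)=\rep_2(\am{1}-\am{s_0}\cdot\tau_{s_0^{-1}})$. Collecting terms, $\rep_2(\am{1})-\invol(\rep_2(\am{1}))=\rep_2\big(\tau_{s_0}\cdot\ap{s_0^{-1}}+\am{s_0}\cdot\tau_{s_0^{-1}}\big)+\bp{s_0}\otimes\bz{s_0^{-1}}+\bz{s_0}\otimes\bm{s_0^{-1}}$.

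The heart of the argument — and the only genuinely computational step — is to show that the cross term vanishes, i.e.\ that $\am{s_0}\cdot\tau_{s_0^{-1}}=-\tau_{s_0}\cdot\ap{s_0^{-1}}$ in $E^2$. I would obtain $\tau_{s_0}\cdot\ap{s_0^{-1}}$ from \eqref{eqTauS0OnE2LeftNotAddUp} as above and $\am{s_0}\cdot\tau_{s_0^{-1}}=-\invol(\tau_{s_0}\cdot\ap{s_0^{-1}})$ by applying $\invol$ to that same identity (using \eqref{eqInvolOnE2}, \eqref{eqInvolTauw}, and $\invol(e_\lambda)=e_{\lambda^{-1}}$); the two copies of $\am{1}$ cancel, and after expanding $e_\lambda=-\sum_{t}\lambda(t)^{-1}\tau_t$ and using the explicit left/right actions of $\tau_\omega$ on $E^2$ (\eqref{eqTauOmegaE2Left}, \eqref{eqTauOmegaE2Right}) together with the conjugation relations $\omega_u\,s_0^{-1}=\omega_{-u}\,s_0$ and $s_0\,\omega_v=\omega_{v^{-1}}\,s_0$, the $\am{}$-, $\az{}$- and $\ap{}$-contributions each cancel in pairs after index substitutions. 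Hence the $\rep_2$-term is $\rep_2(0)=0$ and the first equality follows; its alternative form is just the rewriting $-\tau_{s_0}\cdot\bm{1}=\bp{s_0}$, $\bm{1}\cdot\tau_{s_0^{-1}}=\bm{s_0^{-1}}$ via \eqref{eqFormulasTauQp} and \eqref{eqFormulaRighteasyQp}. This bimodule relation in $E^2$ is the main obstacle; everything else is formal manipulation with the structure constants recorded in \S\ref{subsecNotationSL2}.

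Finally, the second equality (for $\ap{1}$) will follow formally by applying the automorphism $\Gamma_\varpi$ to the first. Indeed $\Gamma_\varpi(\am{1})=\ap{1}$ by \eqref{eqCCCNOnE2}; $\Gamma_\varpi$ commutes with $\invol$ on $E^\ast$ by \eqref{eqCCCNInvolCommute}, hence, by inspection of their definitions, also on $T^\ast_{E^0}E^1$; and $\Gamma_\varpi$ commutes with $\rep_2$ by Lemma \ref{lmmCommutesCCCN}. Applying $\Gamma_\varpi$ to both sides of the first equality and evaluating the right-hand side via \eqref{eqCCCNOnE1} and $\varpi s_0\varpi^{-1}=s_1$ yields $-\bm{s_1}\otimes\bz{s_1^{-1}}-\bz{s_1}\otimes\bp{s_1^{-1}}$, with the alternative expression obtained as before from \eqref{eqFormulasTauQp} and \eqref{eqFormulaRighteasyQp}.
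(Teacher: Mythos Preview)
Your proof is correct and follows essentially the same path as the paper's: both reduce to showing that $\tau_{s_0}\cdot\ap{s_0^{-1}}$ is $\invol$-fixed (equivalently, your identity $\am{s_0}\cdot\tau_{s_0^{-1}}=-\tau_{s_0}\cdot\ap{s_0^{-1}}$), and both derive the second formula from the first via $\Gamma_\varpi$. The only cosmetic difference is in that verification: the paper moves the right idempotents to the left via \eqref{eqFormulasIdempotentsLeftRight} and then converts $s_0$ to $s_0^{-1}$ using the $\tau_{c_{-1}}$-action, which is a bit cleaner than your proposed expansion of $e_\lambda$ in the $\tau_\omega$-basis with index substitutions, but both arguments are valid.
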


\begin{proof}
First note that the second formula can be obtained from the first one by applying $\Gamma_\varpi$: indeed $\Gamma_\varpi$ commutes with $\rep_2$ (Lemma \ref{lmmCommutesCCCN}) and with $\invol$ (see \eqref{eqCCCNInvolCommute}), and $\Gamma_\varpi(\am{1}) = \ap{1}$ (see \eqref{eqCCCNOnE2}); hence applying $\Gamma_\varpi$ on $(\rep_2 \circ \invol)(\am{1}) - (\invol \circ \rep_2)(\am{1})$ we obtain exactly $(\rep_2 \circ \invol)(\ap{1}) - (\invol \circ \rep_2)(\ap{1})$. Using the formulas for the action of $\Gamma_\varpi$ on $E^0$ \eqref{eqCCCNOnH} and on $E^1$ \eqref{eqCCCNOnE1}, we also see that the terms on the right of the equations correspond.

The equality
$
	\bp{s_0} \otimes \bz{s_0^{-1}}
	+
	\bz{s_0} \otimes \bm{s_0^{-1}}
	=
	- \tau_{s_0} \cdot \bm{1} \otimes \bz{s_0^{-1}}
	+
	\bz{s_0} \otimes \bm{1} \cdot \tau_{s_0^{-1}}
$
is immediate from the formulas \eqref{eqFormulasTauQp} and \eqref{eqFormulaRighteasyQp}. Therefore, to prove the lemma it remains to check that
\[
	(\rep_2 \circ \invol)(\am{1}) - (\invol \circ \rep_2)(\am{1})
	=
	- \tau_{s_0} \cdot \bm{1} \otimes \bz{s_0^{-1}}
	+
	\bz{s_0} \otimes \bm{1} \cdot \tau_{s_0^{-1}}
\]

Using \eqref{eqInvolOnE2} and the definition of $\rep_2$, we see that
\begin{align*}
	(\rep_2 \circ \invol)(\am{1})
	&=
	\rep_2(\am{1})
	\\&=
	\rep_2 \big( \am{1} + \tau_{s_0} \cdot \ap{s_0^{-1}} \big) - \tau_{s_0} \cdot \rep_2 ( \ap{s_0^{-1}} ).
\end{align*}
On the other side, using that the maps $\invol \circ \rep_2$ and $\rep_2 \circ \invol$ coincide on $F^1E^2$, we deduce that
\begin{equation}\label{eqInvolRep2}\begin{aligned}
	(\invol \circ \rep_2)(\am{1})
	&=
	(\invol \circ \rep_2) \big( \am{1} + \tau_{s_0} \cdot \ap{s_0^{-1}} \big)
	-
	(\invol \circ \rep_2) ( \ap{s_0^{-1}} ) \cdot \tau_{s_0^{-1}}
	\\&=
	(\rep_2 \circ \invol) \big( \am{1} + \tau_{s_0} \cdot \ap{s_0^{-1}} \big)
	-
	(\rep_2 \circ \invol) ( \ap{s_0^{-1}} ) \cdot \tau_{s_0^{-1}}
	\\&=
	\rep_2 \big( \am{1} + \invol (\tau_{s_0} \cdot \ap{s_0^{-1}}) \big)
	+
	\rep_2 ( \am{s_0} ) \cdot \tau_{s_0^{-1}},
\end{aligned}\end{equation}
where we have also used that $\invol$ is anti-multiplicative, that $\invol(\tau_{s_0}) = \tau_{s_0^{-1}}$ by \eqref{eqInvolTauw} and, again, that $\invol(\am{1}) = \am{1}$. We thus deduce that
\begin{equation}\label{eqRepInvolInvolRep}\begin{aligned}
	&(\rep_2 \circ \invol)(\am{1}) - (\invol \circ \rep_2)(\am{1})
	\\&\qquad=
	\rep_2 \big( \tau_{s_0} \cdot \ap{s_0^{-1}} - \invol (\tau_{s_0} \cdot \ap{s_0^{-1}}) \big)
	-
	\tau_{s_0} \cdot \rep_2 ( \ap{s_0^{-1}} )
	-
	\rep_2 ( \am{s_0} ) \cdot \tau_{s_0^{-1}}
\end{aligned}\end{equation}
We now focus on the first term in this sum. Using \eqref{eqTauS0OnE2LeftNotAddUp}, we compute
\[
	\tau_{s_0} \cdot \ap{s_0^{-1}}
	=
	- e_1 \ap{s_0^{-1}} - e_{\idd} \az{s_0^{-1}} + e_{\idd^2} \am{s_0^{-1}} - \am{1}.
\]
Using formulas \eqref{eqInvolOnE2} and \eqref{eqFormulasIdempotentsLeftRight} and using the fact that $\invol(e_\lambda) = e_{\lambda^{-1}}$ for all $\lambda \in \widehat{\quoz{T^0}{T^1}}$ (which follows from \eqref{eqInvolTauw}), we compute
\begin{align*}
	\invol \big( \tau_{s_0} \cdot \ap{s_0^{-1}} \big)
	&=
	\invol \big( - e_1 \ap{s_0^{-1}} - e_{\idd} \az{s_0^{-1}} + e_{\idd^2} \am{s_0^{-1}} - \am{1} \big)
	\\&=
	\am{s_0} e_1 + \az{s_0} e_{\idd^{-1}} - \ap{s_0} e_{\idd^{-2}} - \am{1}
	\\&=
	e_{\idd^2} \am{s_0}  + e_{\idd} \az{s_0}  - e_1 \ap{s_0} - \am{1}
	\\&=
	e_{\idd^2} \am{s_0^{-1}}  - e_{\idd} \az{s_0^{-1}}  - e_1 \ap{s_0^{-1}} - \am{1}.
\end{align*}
In the last line we have used the formulas for the left action of $\tau_{c_{-1}}$ on $E^2$ (see \eqref{eqOmegaOnE1Left} and recall the definition of $c_{-1}$ from \eqref{eqcminus1}) and the fact $e_\lambda \tau_{c_{-1}} = \lambda(c_{-1}) e_\lambda$ for all $\lambda \in \widehat{\quoz{T^0}{T^1}}$ (see \eqref{eqELambdaTauT}).

We have shown that $\tau_{s_0} \cdot \ap{s_0^{-1}} = \invol (\tau_{s_0} \cdot \ap{s_0^{-1}})$, and so going back to \eqref{eqRepInvolInvolRep}, we obtain
\begin{align*}
	(\rep_2 \circ \invol)(\am{1}) - (\invol \circ \rep_2)(\am{1})
	&=
	-
	\tau_{s_0} \cdot \rep_2 ( \ap{s_0^{-1}} )
	-
	\rep_2 ( \am{s_0} ) \cdot \tau_{s_0^{-1}}
	\\&=
	- \tau_{s_0} \cdot \big( \bm{1} \otimes \bz{s_0^{-1}} \big)
	-
	\big( - \bz{s_0} \otimes \bm{1} \big) \cdot \tau_{s_0^{-1}}
\end{align*}
This is the formula we had to check.
\end{proof}

\begin{rem}
It can be proved that the elements of $E^1 \otimes_{E^0} E^1$ in the previous lemma are nonzero, thus showing that $\invol \circ \rep_2 \neq \rep_2 \circ \invol$. For the details see \cite[Remark 4.5.8]{thesis}. Furthermore, in loc.cit. it is shown that one can modify the definition of $\rep_2$ on \( \bigoplus_{\substack{w \in \widetilde{W}\\\text{s.t. $\ell(w)=0$}}} H^2(I, \xx(w)) \) in such a way that one gets a new section $\rep_2'$ of $\mult_2$ that is a homomorphism of $k[\quoz{T^0}{T^1}]$-bimodules, that commutes with $\Gamma_\varpi$ and that commutes with $\invol$. However, we do not change the definition of $\rep_2$ to reduce the amount of computations.
\end{rem}

\begin{lmm}\label{lmmSomeElementsInTheKer}
The sub-$E^0$-bimodule of $E^1 \otimes_{E^0} E^1$ generated by the elements of the form
\[
	x - \rep_2(\mult_2(x)) \in \ker(\mult_2),
\]
where $x$ runs through the set of generators of $E^1 \otimes_{E^0} E^1$ as an $E^0$-bimodule computed in Lemma \ref{lmmGeneratorsT2}, is the sub-$E^0$-bimodule of $E^1 \otimes_{E^0} E^1$ generated by the following elements.
\begin{align*}
&\begin{aligned}
	&\bm{1} \otimes \bm{1}, &&\qquad\qquad\qquad\qquad\bp{1} \otimes \bm{1}, &&\qquad\qquad\qquad\qquad\bz{s_1} \otimes \bm{1},
	\\
	&\bm{1} \otimes \bp{1}, &&\qquad\qquad\qquad\qquad\bp{1} \otimes \bp{1}, &&\qquad\qquad\qquad\qquad\bz{s_0} \otimes \bp{1},
	\\
	&\bp{1} \otimes \bz{s_0}, &&\qquad\qquad\qquad\qquad\bz{s_1} \otimes \bz{s_0},
	\\
	&\bm{1} \otimes \bz{s_1}, &&\qquad\qquad\qquad\qquad\bz{s_0} \otimes \bz{s_1},
\end{aligned}
\\
	&\bz{s_0} \otimes \bz{s_0} + e_{\idd^{-1}} \cdot \bm{1} \otimes \bz{s_0} + e_{\idd} \cdot \bz{s_0} \otimes \bm{1} - e_1 \cdot \bm{1} \otimes \bp{s_0}, 
	\\
	&\bz{s_1} \otimes \bz{s_1} - e_{\idd} \cdot \bp{1} \otimes \bz{s_1} - e_{\idd^{-1}} \cdot \bz{s_1} \otimes \bp{1} - e_1 \cdot \bp{1} \otimes \bm{s_1}.
\end{align*}
In particular, all the above elements lie in $\ker(\mult_2)$.
\end{lmm}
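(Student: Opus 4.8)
The plan is to compute, one generator at a time, the element $x - \rep_2(\mult_2(x)) \in \ker(\mult_2)$ for $x$ ranging over the finite list of $E^0$-bimodule generators of $E^1 \otimes_{E^0} E^1$ furnished by Lemma \ref{lmmGeneratorsT2} (the sixteen ``short'' tensors $\beta \otimes \beta'$ with $\beta, \beta' \in \{\bm{1}, \bp{1}, \bz{s_0}, \bz{s_1}\}$, together with the four infinite families), and then to read off that the collection of these elements generates the same sub-$E^0$-bimodule as the twelve displayed elements. For a short tensor $\beta \otimes \beta'$ with $\beta \in H^\ast(I, \xx(v))$, $\beta' \in H^\ast(I, \xx(w))$ and $\ell(vw) = \ell(v) + \ell(w)$ — which holds for all short generators except $\bz{s_0} \otimes \bz{s_0}$ and $\bz{s_1} \otimes \bz{s_1}$, and for each member of the four families — one first computes $\mult_2(\beta \otimes \beta') = (\beta \cdot \tau_w) \cupprod (\tau_v \cdot \beta')$ via \eqref{eqCupYoneda}, rewrites the two cup factors with \eqref{eqFormulasTauQp} and \eqref{eqFormulaRighteasyQp}, and then evaluates the cup product in the exterior algebra $H^\ast(I_{vw}, k)$ using \eqref{eqFormulaCupUniform} (when $\ell(vw) \geq 1$) or in the Poincaré algebra $H^\ast(I, k)$ (when $\ell(vw) = 0$). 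One then applies the explicit formula for $\rep_2$ of Lemma \ref{lmmDefRep2} and subtracts. The symmetry $\Gamma_\varpi$ will halve the bookkeeping: since $\bz{s_1} \otimes \bz{s_1} = \Gamma_\varpi(\bz{s_0} \otimes \bz{s_0})$, while $\mult_2$ and $\rep_2$ are $\Gamma_\varpi$-equivariant (Lemma \ref{lmmCommutesCCCN}, \eqref{eqMultiplicationAndCCCNAndInvol}) and $\Gamma_\varpi$ sends $e_{\idd^{\pm 1}} \mapsto e_{\idd^{\mp 1}}$, $\bm{1} \mapsto \bp{1}$, $s_0 \mapsto s_1$, the relation found for $\bz{s_1} \otimes \bz{s_1}$ will be the $\Gamma_\varpi$-image of the one for $\bz{s_0} \otimes \bz{s_0}$.

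The outcome of this is as follows. Exactly the ten short generators displayed in the statement have $\mult_2(x) = 0$, so that $x - \rep_2(\mult_2(x)) = x$: for $\bm{1} \otimes \bm{1}$, $\bp{1} \otimes \bp{1}$, $\bz{s_1} \otimes \bz{s_0}$, $\bz{s_0} \otimes \bz{s_1}$ the relevant cup product is the square of a degree-$1$ class (hence zero, $p \neq 2$); for $\bz{s_1} \otimes \bm{1}$, $\bz{s_0} \otimes \bp{1}$, $\bp{1} \otimes \bz{s_0}$, $\bm{1} \otimes \bz{s_1}$ one of the two factors supplied by \eqref{eqCupYoneda} already vanishes by \eqref{eqFormulasTauQp} or \eqref{eqFormulaRighteasyQp}; and for $\bp{1} \otimes \bm{1}$, $\bm{1} \otimes \bp{1}$ one invokes $\bm{1} \cupprod \bp{1} = 0$ in $H^2(I,k)$, a fact one proves by observing that $\bm{1} \cupprod \bp{1}$ cup-pairs to zero with both $\bm{1}$ and $\bp{1}$ (because $\bm{1}^2 = \bp{1}^2 = 0$ and the cup product is graded-commutative) together with the non-degeneracy of the Poincaré pairing $H^1(I,k) \times H^2(I,k) \to H^3(I,k)$ of the Poincaré pro-$p$ group $I = I_1$ (Lemma \ref{lmmPoincAndUniform}, \S\ref{subsecCohom}). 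For the remaining four short generators $\bz{s_0} \otimes \bm{1}$, $\bm{1} \otimes \bz{s_0}$, $\bz{s_1} \otimes \bp{1}$, $\bp{1} \otimes \bz{s_1}$ one finds $\mult_2(x)$ equal to a non-zero scalar multiple of $\am{s_i}$ or $\ap{s_i}$, which $\rep_2$ sends back to a non-zero multiple of $x$, so that $x - \rep_2(\mult_2(x)) = 0$; similarly each of the four families, e.g.\ $\bp{1} \otimes \bm{(s_1 s_0)^i} = \bp{(s_1 s_0)^i} \otimes \bm{1}$, has $\mult_2(x) = \pm \az{w}$ with $\ell(w) \geq 1$, which $\rep_2$ returns to $\pm x$, giving $x - \rep_2(\mult_2(x)) = 0$. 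Thus among all generators of Lemma \ref{lmmGeneratorsT2} only $\bz{s_0} \otimes \bz{s_0}$ and $\bz{s_1} \otimes \bz{s_1}$ contribute something non-zero.

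It therefore remains to treat $\bz{s_0} \otimes \bz{s_0}$, and this is the one genuinely hard step. Here $s_0^2 = c_{-1}$ has length $0$, so lengths do not add up, \eqref{eqCupYoneda} is unavailable, and the Yoneda product $\bz{s_0} \cdot \bz{s_0} \in E^2$ must be computed by hand — either directly from the general multiplication rule \cite[Proposition 5.3]{ext}, or from the ``lengths do not add up'' action formulas \eqref{eqTauS0OnE1LeftNotAddUp} together with associativity of $E^\ast$. The value is $\bz{s_0} \cdot \bz{s_0} = e_{\idd} \am{s_0} - e_{\idd^{-1}} \ap{s_0} - e_1 \az{s_0}$; since $\rep_2$ is a $k[\quoz{T^0}{T^1}]$-bimodule homomorphism (Lemma \ref{lmmBimodulesGpAlgebraFiniteTorus}) the idempotents pass through it, and plugging in $\rep_2(\am{s_0}) = -\bz{s_0} \otimes \bm{1}$, $\rep_2(\ap{s_0}) = \bm{1} \otimes \bz{s_0}$, $\rep_2(\az{s_0}) = -\bm{1} \otimes \bp{s_0}$ (Lemma \ref{lmmDefRep2}) gives $\rep_2(\mult_2(\bz{s_0} \otimes \bz{s_0})) = -e_{\idd} \bz{s_0} \otimes \bm{1} - e_{\idd^{-1}} \bm{1} \otimes \bz{s_0} + e_1 \bm{1} \otimes \bp{s_0}$, so that $x - \rep_2(\mult_2(x))$ is exactly the first displayed non-trivial element, and applying $\Gamma_\varpi$ yields the second. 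Finally one assembles: the sub-$E^0$-bimodule generated by all the $x - \rep_2(\mult_2(x))$ equals the one generated by the ten short displayed elements together with the two $\bz{s_i} \otimes \bz{s_i}$-elements (every other contribution being zero), i.e.\ by the twelve displayed elements. Moreover every element of the form $x - \rep_2(\mult_2(x))$ lies in $\ker(\mult_2)$ because $\rep_2$ is a $k$-linear section of $\mult_2$; in particular, the ten short displayed elements (each being such an $x$ with $\mult_2(x) = 0$) and the two $\bz{s_i} \otimes \bz{s_i}$-elements all lie in $\ker(\mult_2)$, proving the last assertion of the statement.
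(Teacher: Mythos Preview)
Your approach is essentially the same as the paper's: both compute $x - \rep_2(\mult_2(x))$ generator by generator, observing that the four short generators $\bz{s_0}\otimes\bm{1}$, $\bm{1}\otimes\bz{s_0}$, $\bz{s_1}\otimes\bp{1}$, $\bp{1}\otimes\bz{s_1}$ and the four infinite families lie in $\image(\rep_2)$ (hence contribute zero), that ten of the remaining short generators have $\mult_2(x)=0$, and that only $\bz{s_i}\otimes\bz{s_i}$ give something nontrivial. The one point to flag is the product $\bz{s_0}\cdot\bz{s_0}$: your stated value $e_{\idd}\am{s_0}-e_{\idd^{-1}}\ap{s_0}-e_1\az{s_0}$ is correct, but your proposed justifications (the general rule of \cite[Proposition 5.3]{ext}, or associativity with \eqref{eqTauS0OnE1LeftNotAddUp}) are not fleshed out and neither route is straightforward---the paper instead extracts this product from the formula $\mathbf{x}_0\cdot\mathbf{x}_0=e_1\cdot\upalpha_{s_0}^\star$ proved in \cite[Proposition 9.5]{newpaper}, where $\mathbf{x}_0=-\bz{s_0}-e_{\idd^{-1}}\bm{1}$, and then expands.
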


\begin{proof}
Let us look at the generators of $E^1 \otimes_{E_0} E_1$ as an $E^0$-bimodule computed in Lemma \ref{lmmGeneratorsT2}.
From the definition of $\rep_2$ (Lemma \ref{lmmDefRep2}), we immediately see that some of these lie in the image of $\rep_2$, and so we can discard them immediately, because if $x$ is one of these elements then $x - \rep_2(\mult_2(x)) = 0$. We are left with the following elements:
\begin{align*}
	&\bm{1} \otimes \bm{1}, &&\bp{1} \otimes \bm{1}, &&//////// &&\bz{s_1} \otimes \bm{1},
	\\
	&\bm{1} \otimes \bp{1}, &&\bp{1} \otimes \bp{1}, &&\bz{s_0} \otimes \bp{1}, &&////////
	\\
	&//////// &&\bp{1} \otimes \bz{s_0}, &&\bz{s_0} \otimes \bz{s_0}, &&\bz{s_1} \otimes \bz{s_0},
	\\
	&\bm{1} \otimes \bz{s_1}, &&//////// &&\bz{s_0} \otimes \bz{s_1}, &&\bz{s_1} \otimes \bz{s_1}.
\end{align*}
We consider these remaining elements.
\begin{itemize}
\item
	The elements $\bm{1} \otimes \bm{1}$, $\bp{1} \otimes \bm{1}$, $\bm{1} \otimes \bp{1}$ and $\bp{1} \otimes \bp{1}$ are all in the kernel of $\mult_2$: indeed products are cup products by \eqref{eqCupYoneda}, and then trivially $\bm{1} \cdot \bm{1}$ and $\bp{1} \cdot \bp{1}$ are both zero; moreover, the fact that $\bp{1} \cupprod \bm{1}$ and $\bm{1} \cupprod \bp{1}$ are both zero can be shown with a simple computation using Poincaré duality (see \cite[Example 4.6]{newpaper}).
	
	In particular,  if $x$ is one of the above four elements, then, trivially,
	\[ x - \rep_2(\mult_2(x)) = x. \]
\item
	Now let us consider the four elements $\bz{s_1} \otimes \bm{1}$, $\bz{s_0} \otimes \bp{1}$, $\bp{1} \otimes \bz{s_0}$ and $\bm{1} \otimes \bz{s_1}$. To compute the product, one can use the formula \eqref{eqCupYoneda} relating cup product and (opposite of the) Yoneda product. But then we see that all such products are zero, because
	\begin{align*}
		\tau_{s_0} \cdot \bp{1} &= 0,
		&\bp{1} \cdot \tau_{s_0} &= 0,
		\\
		\tau_{s_1} \cdot \bm{1} &= 0,
		&\bm{1} \cdot \tau_{s_1} &= 0
	\end{align*}
	(see formulas \eqref{eqFormulasTauQp} and \eqref{eqFormulaRighteasyQp}).
	
	So, if $x$ is one of the above four elements, one has $x - \rep_2(\mult_2(x)) = x$.
\item
	Now let us consider the two elements $\bz{s_1} \otimes \bz{s_0}$ and $\bz{s_1} \otimes \bz{s_0}$. We compute the first product using the formula \eqref{eqCupYoneda} relating cup product and (opposite of the) Yoneda product, and the formulas \eqref{eqFormulasTauQp} and \eqref{eqFormulaRighteasyQp} for the action of $E^0$ on $E^1$ (the other product can be computed exactly in the same way, or alternatively one can use $\Gamma_\varpi$ or $\invol$):
	\begin{align*}
		\bz{s_1} \cdot \bz{s_0}
		&=
		(\bz{s_1} \cdot \tau_{s_0}) \cupprod (\tau_{s_1} \cdot \bz{s_0})
		\\&=
		\bz{s_1s_0} \cupprod (-\bz{s_1s_0})
		\\&=
		0.
	\end{align*}
	
	So, if $x$ is one of the above two elements, one has $x - \rep_2(\mult_2(x)) = x$.
\item
	It remains to consider the two elements $\bz{s_0} \otimes \bz{s_0}$ and $\bz{s_1} \otimes \bz{s_1}$; we start with the first one and then we use $\Gamma_\varpi$ for the second one. In the end of the proof of \cite[Proposition 9.5]{newpaper} the following formula is computed:
	\[
		\mathbf{x}_0 \cdot \mathbf{x}_0 = e_1 \cdot \upalpha_{s_0}^\star,
	\]
	where $\mathbf{x}_0$ is defined as
	\[
		\mathbf{x}_0 \defeq - \bz{s_0} - e_{\idd^{-1}} \bm{1}
	\]
	(see \cite[Lemma 4.17 and the subsequent lines]{newpaper}) and $\upalpha_{s_0}^\star$ is a certain element of $E^2$ (defined in \cite[Equation (123)]{newpaper}) with the property that
	\[
		\azstar{s_0} + \az{s_0} \in e_{\{\idd, \idd^{-1}\}} E^2
	\]
	(see \cite[Equation (124)]{newpaper}).
	Therefore, we deduce that
	\begin{align*}
		- e_1 \cdot \az{s_0}
		&=
		\big(-\bz{s_0} - e_{\idd^{-1}} \cdot \bm{1}\big) \cdot \big(-\bz{s_0} - e_{\idd^{-1}} \cdot \bm{1}\big)
		\\&=
		\bz{s_0} \cdot \bz{s_0} + e_{\idd^{-1}} \cdot \bm{1} \cdot \bz{s_0} + \bz{s_0} \cdot e_{\idd^{-1}} \cdot \bm{1}
		\\&\qquad+ e_{\idd^{-1}} \cdot \bm{1} \cdot e_{\idd^{-1}} \cdot \bm{1}
		\\&=
		\bz{s_0} \cdot \bz{s_0} + e_{\idd^{-1}} \cdot \bm{1} \cdot \bz{s_0} + e_{\idd} \cdot \bz{s_0} \cdot \bm{1}
		\\&\qquad+ e_{\idd^{-1}} \cdot e_{\idd^{-3}}  \cdot \bm{1} \cdot \bm{1}
		\\&=
		\bz{s_0} \cdot \bz{s_0} + e_{\idd^{-1}} \cdot \ap{s_0} - e_{\idd} \cdot \am{s_0}
	\end{align*}
	(here we have used the behaviour of the left and right action of the idempotents \eqref{eqIdempotentsE1}; moreover, in the last step one can compute products explicitly, but actually we have already computed them -- see the definition of $\rep_2$ in Lemma \ref{lmmDefRep2}). Now we can compute $\rep_2(\bz{s_0} \cdot \bz{s_0})$:
	\begin{align*}
		\rep_2(\bz{s_0} \cdot \bz{s_0})
		&=
		\rep_2 \left( -e_{\idd^{-1}} \cdot \ap{s_0} + e_{\idd} \cdot \am{s_0} - e_1 \cdot \az{s_0} \right)
		\\&=
		-e_{\idd^{-1}} \cdot \bm{1} \otimes \bz{s_0} - e_{\idd} \cdot \bz{s_0} \otimes \bm{1} + e_1 \cdot \bm{1} \otimes \bp{s_0}
	\end{align*}
	(here we have used that $\rep_2$ is a homomorphism of left $E^0$-modules, by Lemma \ref{lmmBimodulesGpAlgebraFiniteTorus}). Hence, the value of $x - \rep_2(\mult_2(x))$ for $x = \bz{s_0} \otimes \bz{s_0}$ is
	\[
		\bz{s_0} \otimes \bz{s_0}
		+ e_{\idd^{-1}} \cdot \bm{1} \otimes \bz{s_0} + e_{\idd} \cdot \bz{s_0} \otimes \bm{1} - e_1 \cdot \bm{1} \otimes \bp{s_0}.
	\]
	Now it remains to compute the value of $x - \rep_2(\mult_2(x))$ for $x = \bz{s_1} \otimes \bz{s_1}$, but since $\Gamma_\varpi(\bz{s_0}) = -\bz{s_1}$ and since $\rep_2$ is $\Gamma_\varpi$-invariant (Lemma \ref{lmmCommutesCCCN}), we see that such value can be obtained by applying $\Gamma_\varpi$ to the last displayed equality. Hence we get
	\[
		\bz{s_1} \otimes \bz{s_1}
		- e_{\idd} \cdot \bp{1} \otimes \bz{s_1} - e_{\idd^{-1}} \cdot \bz{s_1} \otimes \bp{1} - e_1 \cdot \bp{1} \otimes \bm{s_1}
	\]
	(using the formulas for the action of $\Gamma_\varpi$ on $E^0$ \eqref{eqCCCNOnH} and on $E^1$ \eqref{eqCCCNOnE1}).
	\qedhere
\end{itemize}
\end{proof}

\begin{rem}\label{remDefCandidateKernel2}
Let us define $K_2$ as the sub-$E^0$-bimodule of $E^1 \otimes_{E^0} E^1$ generated by the following elements:
\begin{align*}
&\begin{aligned}
	&\bm{1} \otimes \bm{1}, &&\qquad\qquad\qquad\qquad\bp{1} \otimes \bm{1}, &&\qquad\qquad\qquad\qquad\bz{s_1} \otimes \bm{1},
	\\
	&\bm{1} \otimes \bp{1}, &&\qquad\qquad\qquad\qquad\bp{1} \otimes \bp{1}, &&\qquad\qquad\qquad\qquad\bz{s_0} \otimes \bp{1},
	\\
	&\bp{1} \otimes \bz{s_0}, &&\qquad\qquad\qquad\qquad\bz{s_1} \otimes \bz{s_0},
	\\
	&\bm{1} \otimes \bz{s_1}, &&\qquad\qquad\qquad\qquad\bz{s_0} \otimes \bz{s_1},
\end{aligned}
\\
	&\bz{s_0} \otimes \bz{s_0} + e_{\idd^{-1}} \cdot \bm{1} \otimes \bz{s_0} + e_{\idd} \cdot \bz{s_0} \otimes \bm{1} - e_1 \cdot \bm{1} \otimes \bp{s_0}, 
\\
	&\bz{s_1} \otimes \bz{s_1} - e_{\idd} \cdot \bp{1} \otimes \bz{s_1} - e_{\idd^{-1}} \cdot \bz{s_1} \otimes \bp{1} - e_1 \cdot \bp{1} \otimes \bm{s_1},
\\
	&\bp{s_0} \otimes \bz{s_0} + \bz{s_0} \otimes \bm{s_0} = - \tau_{s_0} \cdot \bm{1} \otimes \bz{s_0} + \bz{s_0} \otimes \bm{1} \cdot \tau_{s_0},
\\
	&\bm{s_1} \otimes \bz{s_1} + \bz{s_1} \otimes \bp{s_1} = - \tau_{s_1} \cdot \bp{1} \otimes \bz{s_1} + \bz{s_1} \otimes \bp{1} \cdot \tau_{s_1}.
\end{align*}
The first twelve elements were obtained in Lemma \ref{lmmSomeElementsInTheKer} (in particular, they lie in $\ker(\mult_2)$). The last two elements are slight modifications of those obtained in Lemma \ref{lmmInvolBadBehaviour}: more precisely, recalling the definition of $c_{-1}$ from \eqref{eqcminus1}, they are respectively equal to
		\begin{align*}
			(\rep_2 \circ \invol) (\am{1}) \cdot \tau_{c_{-1}} &- (\invol \circ \rep_2) (\am{1}) \cdot \tau_{c_{-1}},
			\\
			- (\rep_2 \circ \invol) (\ap{1}) \cdot \tau_{c_{-1}} &+ (\invol \circ \rep_2) (\ap{1}) \cdot \tau_{c_{-1}}
		\end{align*}
(in particular, they lie in $\ker(\mult_2)$, too, since $\invol$ and $\mult_2$ commute by \eqref{eqMultiplicationAndCCCNAndInvol}).
\end{rem}

We want to prove that $K_2$ is actually the full $\ker(\rep_2)$. Let us start with some lemmas.

\begin{lmm}\label{lmmK2Invariant}
One has that $K_2$ is $\Gamma_\varpi$-invariant and $\invol$-invariant.
\end{lmm}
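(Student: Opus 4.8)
The plan is to show that $\Gamma_\varpi$ and $\invol$ both map the chosen generating set of $K_2$ into $K_2$ (equivalently, since both maps are involutive, that they permute the generating set up to $E^0$-bimodule combinations). Since $\Gamma_\varpi$ is an automorphism of $T^\ast_{E^0}E^1$ as a graded $k$-algebra and $\invol$ is an anti-automorphism, in either case it suffices to check that each of the fourteen listed generators is sent to an element of $K_2$; then $\Gamma_\varpi(K_2) \subseteq K_2$ and $\invol(K_2) \subseteq K_2$, and applying the involutions again gives the reverse inclusions.

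First I would handle $\Gamma_\varpi$. Using the formulas for the action of $\Gamma_\varpi$ on $E^1$ (see \eqref{eqCCCNOnE1}), which give $\Gamma_\varpi(\bm{1}) = \bp{1}$, $\Gamma_\varpi(\bp{1}) = \bm{1}$, $\Gamma_\varpi(\bz{s_0}) = -\bz{s_1}$, $\Gamma_\varpi(\bz{s_1}) = -\bz{s_0}$ (recalling $\varpi s_0 \varpi^{-1} = s_1$ and $\varpi s_1 \varpi^{-1} = s_0$), together with the definition of $\Gamma_\varpi$ on tensors as $\Gamma_\varpi(\beta \otimes \beta') = \Gamma_\varpi(\beta) \otimes \Gamma_\varpi(\beta')$, one checks directly that the twelve "simple" generators are permuted (up to sign) among themselves: e.g. $\bm{1} \otimes \bm{1} \mapsto \bp{1} \otimes \bp{1}$, $\bz{s_1} \otimes \bm{1} \mapsto -\bz{s_0} \otimes \bp{1}$, and so on. For the thirteenth generator $\bz{s_0} \otimes \bz{s_0} + e_{\idd^{-1}} \cdot \bm{1} \otimes \bz{s_0} + e_{\idd} \cdot \bz{s_0} \otimes \bm{1} - e_1 \cdot \bm{1} \otimes \bp{s_0}$, I would apply $\Gamma_\varpi$ term by term, using $\Gamma_\varpi(e_{\idd^m}) = e_{\idd^{-m}}$ (which follows from \eqref{eqCCCNOnH} and \eqref{eqDefELambda}) and $\Gamma_\varpi(\bp{s_0}) = \bm{s_1}$; the result should be exactly (a sign times) the fourteenth generator, so it lands in $K_2$. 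The fourteenth goes back to the thirteenth symmetrically. For the two "right-hand" generators of Remark \ref{remDefCandidateKernel2}, the cleanest route is to use the already-established facts that $\Gamma_\varpi$ commutes with $\rep_2$ (Lemma \ref{lmmCommutesCCCN}) and with $\invol$ (equation \eqref{eqCCCNInvolCommute}), together with $\Gamma_\varpi(\am{1}) = \ap{1}$, $\Gamma_\varpi(\ap{1}) = \am{1}$ and $\Gamma_\varpi(\tau_{c_{-1}}) = \tau_{c_{-1}}$; since these generators are expressed as $(\rep_2 \circ \invol - \invol \circ \rep_2)(\am{1}) \cdot \tau_{c_{-1}}$ and its $\ap{1}$-analogue, applying $\Gamma_\varpi$ just swaps the two, so again $\Gamma_\varpi(K_2) \subseteq K_2$.

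For $\invol$, the same strategy applies but using \eqref{eqInvolOnE1}, \eqref{eqInvolTauw}, and the definition of $\invol$ on $T^2_{E^0}E^1$ as $\beta \otimes \beta' \mapsto -\invol(\beta') \otimes \invol(\beta)$. Here one must be careful about the factors $u_w^{\pm 2}$ appearing in \eqref{eqInvolOnE1}; for the relevant elements $w \in \{1, s_0, s_1\}$ one has $u_1 = u_{s_0} = u_{s_1} = 1$ (they already lie in the subgroup generated by $s_0, s_1$), so $\invol(\bm{1}) = \bm{1}$ ($\ell=0$ even), $\invol(\bp{1}) = \bp{1}$, $\invol(\bz{s_0}) = -\bz{s_0}$ and $\invol(\bz{s_1}) = -\bz{s_1}$ (using $s_0^{-1} = s_0 c_{-1}$, $s_1^{-1} = s_1 c_{-1}$ and the right $T^0/T^1$-action to move $c_{-1}$ — here one should note $\invol(\bz{s_0}) = -\bz{s_0^{-1}} = -\bz{s_0 c_{-1}} = -\bz{s_0} \cdot \tau_{c_{-1}}$ in general, but since the generating set is an $E^0$-bimodule generating set, it is enough that these land in $K_2$, which they do because $\bz{s_0} \otimes \bm{1}$ etc. are among the generators and the $\tau_{c_{-1}}$-translates are obtained by the bimodule action). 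The twelve simple generators are then easily seen to be sent into $K_2$: for instance $\invol(\bm{1} \otimes \bm{1}) = -\invol(\bm{1}) \otimes \invol(\bm{1}) = -\bm{1} \otimes \bm{1}$, $\invol(\bz{s_1} \otimes \bm{1}) = -\bm{1} \otimes \bz{s_1} \cdot \tau_{c_{-1}}^{\pm 1}$ type expressions, all visibly in $K_2$. For the thirteenth and fourteenth generators I would again invoke that the last two generators of $K_2$ were \emph{constructed} precisely as $\invol$-defect terms, so $\invol$ maps the span of $\{$generators 13, 14, and the $\tau_{c_{-1}}$-translates of 1--12$\}$ into $K_2$; concretely, one uses Lemma \ref{lmmInvolBadBehaviour} and the fact that $\rep_2$ restricted to $F^1 E^2$ commutes with $\invol$ (Lemma \ref{lmmInvolRep2}) to rewrite $\invol$ of generator 13 in terms of $\rep_2$-images plus the defect, landing in $K_2$.

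The main obstacle I anticipate is purely bookkeeping: correctly tracking the signs and the $e_{\idd^{\pm m}}$ and $\tau_{c_{-1}}$ factors when applying $\invol$ to the idempotent-weighted generators 13 and 14, since $\invol$ is anti-multiplicative (so $\invol(e_\lambda \cdot \beta \otimes \beta') = \invol(\beta \otimes \beta') \cdot \invol(e_\lambda) = \invol(\beta \otimes \beta') \cdot e_{\lambda^{-1}}$, and one must then move $e_{\lambda^{-1}}$ back to the left through $E^1 \otimes_{E^0} E^1$ using \eqref{eqIdempotentsE1}, picking up further character twists). A clean way to sidestep most of this is to first prove the statement for the two \emph{auxiliary} spanning elements of Lemma \ref{lmmInvolBadBehaviour} using Lemma \ref{lmmInvolRep2} and \eqref{eqMultiplicationAndCCCNAndInvol}, and only then transfer to the $\tau_{c_{-1}}$-twisted versions that actually appear in $K_2$ via the right $E^0$-action, which is harmless since $K_2$ is by definition an $E^0$-bimodule. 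With that organization the verification reduces to a finite, if slightly tedious, check on the explicit generators, and no genuinely new idea is required beyond the formulas already recalled in \S\ref{subsecNotationSL2} and Lemmas \ref{lmmCommutesCCCN}, \ref{lmmInvolRep2}, \ref{lmmInvolBadBehaviour}.
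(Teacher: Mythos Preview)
Your overall strategy is correct and matches the paper's: check each generator is sent into $K_2$ by $\Gamma_\varpi$ and by $\invol$, then use involutivity for the reverse inclusions. Two points deserve tightening.

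First, your bookkeeping is off: there are ten ``simple'' tensor generators, then the two generators $x_0 = \bz{s_0}\otimes\bz{s_0}+\cdots$ and $x_1 = \bz{s_1}\otimes\bz{s_1}+\cdots$, then the two $\invol$-defect generators from Lemma~\ref{lmmInvolBadBehaviour}, for fourteen in total. In your $\invol$ paragraph you conflate $x_0,x_1$ with the defect generators. The defect generators are handled exactly as you do the simple ones: applying $\invol$ term by term (using \eqref{eqInvolOnE1}) sends each to minus a $\tau_{c_{-1}}$-conjugate of itself, which is in $K_2$ since $K_2$ is an $E^0$-bimodule.

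Second, for $\invol(x_0)$ your proposal is vague. The paper simply computes $\invol(x_0)$ term by term using \eqref{eqInvolOnE1} and \eqref{eqIdempotentsE1} (moving each $e_{\lambda^{-1}}$ back to the left through the tensor) and finds $\invol(x_0)=-x_0$; then $\invol(x_1)=\Gamma_\varpi(\invol(x_0))=-x_1$ via \eqref{eqCCCNInvolCommute}. Your alternative route via Lemma~\ref{lmmInvolRep2} also works, but you must make explicit the missing step: since $x_0=\bz{s_0}\otimes\bz{s_0}-\rep_2(\mult_2(\bz{s_0}\otimes\bz{s_0}))$ and $\mult_2(\bz{s_0}\otimes\bz{s_0})$ lies in $F^1E^2$ (it is supported at words of length~$1$, as computed in the proof of Lemma~\ref{lmmSomeElementsInTheKer}), Lemma~\ref{lmmInvolRep2} gives $\invol(\rep_2(\mult_2(\bz{s_0}\otimes\bz{s_0})))=\rep_2(\mult_2(\invol(\bz{s_0}\otimes\bz{s_0})))$; combined with $\invol(\bz{s_0}\otimes\bz{s_0})=-\bz{s_0}\otimes\bz{s_0}$ (using that $\tau_{c_{-1}}$ commutes with $\bz{s_0}$ and $\tau_{c_{-1}}^2=1$), this yields $\invol(x_0)=-x_0$. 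Either way the conclusion is the same; the paper's direct computation is shorter.
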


\begin{proof}
It suffices to prove that the generators listed in the definition of $K_2$ (Remark \ref{remDefCandidateKernel2}) are $\Gamma_\varpi$-invariant and $\invol$-invariant. From the formulas for the action of $\Gamma_\varpi$ \eqref{eqCCCNOnE1} and of $\invol$ \eqref{eqInvolOnE1} on $E^1$, it is immediate to see that applying $\Gamma_\varpi$ or $\invol$ to each of the first ten generators we get, up to a sign, again one of such generators. The same is true for the last two generators. It remains to study the behaviour of $\Gamma_\varpi$ and $\invol$ on the following two elements:
\begin{align*}
	x_0 &\defeq \bz{s_0} \otimes \bz{s_0} + e_{\idd^{-1}} \cdot \bm{1} \otimes \bz{s_0} + e_{\idd} \cdot \bz{s_0} \otimes \bm{1} - e_1 \cdot \bm{1} \otimes \bp{s_0}, 
	\\
	x_1 &\defeq \bz{s_1} \otimes \bz{s_1} - e_{\idd} \cdot \bp{1} \otimes \bz{s_1} - e_{\idd^{-1}} \cdot \bz{s_1} \otimes \bp{1} - e_1 \cdot \bp{1} \otimes \bm{s_1}.
\end{align*}
In the proof of Lemma \ref{lmmSomeElementsInTheKer}, the element $x_1$ was obtained by applying $\Gamma_\varpi$ to $x_0$: in other words $\Gamma_\varpi(x_0) = x_1$ and hence $\Gamma_\varpi(x_1) = x_0$. Now let us compute $\invol(x_0)$, using the formula \eqref{eqInvolOnE1} for the action of $\invol$ on $E^1$ and the formula \eqref{eqIdempotentsE1} for the action of the idempotents on $E^1$:
\begin{align*}
	\invol(x_0)
	&=
	\invol \left(\bz{s_0} \otimes \bz{s_0} + e_{\idd^{-1}} \cdot \bm{1} \otimes \bz{s_0} + e_{\idd} \cdot \bz{s_0} \otimes \bm{1} - e_1 \cdot \bm{1} \otimes \bp{s_0}\right)
	\\&=
	- \bz{s_0^{-1}} \otimes \bz{s_0^{-1}} + \bz{s_0^{-1}} \otimes \bm{1} \cdot e_{\idd} + \bm{1} \otimes \bz{s_0^{-1}} \cdot e_{\idd^{-1}} - \bm{s_0^{-1}} \otimes \bm{1} \cdot e_1
	\\&=
	- \bz{s_0} \otimes \bz{s_0} + \bz{s_0^{-1}} \otimes e_{\idd^{-1}} \bm{1} + \bm{1} \otimes e_{\idd} \bz{s_0^{-1}} - \bm{s_0^{-1}} \otimes e_{\idd^{-2}} \bm{1}
	\\&=
	- \bz{s_0} \otimes \bz{s_0} + e_{\idd} \cdot \bz{s_0^{-1}} \otimes \bm{1} + e_{\idd^{-1}} \cdot \bm{1} \otimes \bz{s_0^{-1}} - e_{1} \cdot \bm{s_0^{-1}} \otimes \bm{1}
	\\&=
	- \bz{s_0} \otimes \bz{s_0} - e_{\idd} \cdot \bz{s_0} \otimes \bm{1} - e_{\idd^{-1}} \cdot \bm{1} \otimes \bz{s_0} + e_{1} \cdot \bm{1} \otimes \bp{s_0}
	\\&=
	-x_0.
\end{align*}
Now it remains to treat $x_1$: we have already recalled that $x_1 = \Gamma_\varpi(x_0)$. We can use the fact that $\Gamma_\varpi$ and $\invol$ commute (on $E^\ast$, as recalled in \eqref{eqCCCNInvolCommute}, and hence also on $T_{E^0}^\ast E^1$), getting that
\[
	\invol(x_1)
	=
	\invol(\Gamma_\varpi( x_0))
	=
	\Gamma_\varpi(\invol(x_0))
	=
	\Gamma_\varpi(-x_0)
	=
	-x_1.
	\qedhere
\]
\end{proof}

\begin{lmm}\label{lmmInvolRep2AfterQuot}
The two composite maps
\[\begin{tikzcd}[row sep = 0em]
	E^2 \ar[r, "{\rep_2}"] & E^1 \otimes_{E^0} E^1 \ar[r, "{\invol}"] & E^1 \otimes_{E^0} E^1 \ar[r, "{\text{quot.}}"] & \quoz{(E^1 \otimes_{E^0} E^1)}{K_2},
	\\
	E^2 \ar[r, "{\invol}"] & E^2 \ar[r, "{\rep_2}"] & E^1 \otimes_{E^0} E^1 \ar[r, "{\text{quot.}}"] & \quoz{(E^1 \otimes_{E^0} E^1)}{K_2}
\end{tikzcd}\]
coincide.
\end{lmm}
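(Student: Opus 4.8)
The plan is to reduce everything to what has already been assembled in Lemmas \ref{lmmInvolRep2}, \ref{lmmBimodulesGpAlgebraFiniteTorus}, \ref{lmmInvolBadBehaviour} and Remark \ref{remDefCandidateKernel2}. Write $q$ for the quotient map $E^1 \otimes_{E^0} E^1 \to \quoz{(E^1 \otimes_{E^0} E^1)}{K_2}$ and set $D \defeq \invol \circ \rep_2 - \rep_2 \circ \invol \colon E^2 \to E^1 \otimes_{E^0} E^1$. Since both $q$ and $D$ are $k$-linear, the statement is equivalent to $D(\alpha) \in K_2$ for every $\alpha$ in some $k$-basis of $E^2$, and for this I would use the basis $\{\am{w}, \az{w}, \ap{w} : \ell(w) \geqslant 1\} \cup \{\am{\omega}, \ap{\omega} : \omega \in \quoz{T^0}{T^1}\}$, whose first part spans $F^1E^2$.

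On $F^1E^2$ there is nothing to do: Lemma \ref{lmmInvolRep2} asserts that $\invol \circ \rep_2$ and $\rep_2 \circ \invol$ already agree there (before passing to the quotient), so $D$ vanishes on $F^1E^2$. For the remaining basis vectors I would first record a symmetry of $D$ under $\quoz{T^0}{T^1}$: because $\rep_2$ is a homomorphism of $k[\quoz{T^0}{T^1}]$-bimodules (Lemma \ref{lmmBimodulesGpAlgebraFiniteTorus}) while $\invol$ is anti-multiplicative on $E^\ast$ and on $T^\ast_{E^0}E^1$ with $\invol(\tau_\omega) = \tau_{\omega^{-1}}$ (see \eqref{eqInvolTauw}), both $\invol \circ \rep_2$ and $\rep_2 \circ \invol$ satisfy $f(\tau_\omega \cdot x) = f(x) \cdot \tau_{\omega^{-1}}$ for $\omega \in \quoz{T^0}{T^1}$; hence so does $D$. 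Since, by \eqref{eqTauOmegaE2Left}, $\am{\omega_u}$ is a nonzero scalar multiple of $\tau_{\omega_u} \cdot \am{1}$ and $\ap{\omega_u}$ is a nonzero scalar multiple of $\tau_{\omega_u} \cdot \ap{1}$, one gets that $D(\am{\omega_u})$ lies in $k^\times \cdot D(\am{1}) \cdot \tau_{\omega_u^{-1}}$ and likewise for $\ap{\omega_u}$. As $K_2$ is stable under right multiplication by $E^0$, it therefore suffices to show $D(\am{1}) \in K_2$ and $D(\ap{1}) \in K_2$.

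These last two inclusions are exactly what Remark \ref{remDefCandidateKernel2} records: its final two generators of $K_2$ are, respectively, $- D(\am{1}) \cdot \tau_{c_{-1}}$ and $D(\ap{1}) \cdot \tau_{c_{-1}}$ (this is the reformulation, via Lemma \ref{lmmInvolBadBehaviour}, spelled out just after the list of generators in Remark \ref{remDefCandidateKernel2}). Since $\ell(c_{-1}) = 0$ and $c_{-1}^2 = 1$, the element $\tau_{c_{-1}}$ is a unit of $E^0$ with $\tau_{c_{-1}}^{-1} = \tau_{c_{-1}}$; multiplying on the right by $\tau_{c_{-1}}$ and again invoking that $K_2$ is a right $E^0$-submodule yields $D(\am{1}), D(\ap{1}) \in K_2$. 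Combining the three cases, $D$ has image in $K_2$, so $q \circ D = 0$, which is the assertion.

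I do not expect a genuine obstacle: the technical heart (the failure of $\invol$ to commute with $\rep_2$ on the length-zero part, and its precise error term) was already isolated in Lemma \ref{lmmInvolBadBehaviour}, and the definition of $K_2$ was tailored to absorb it. The only thing to watch carefully is the bookkeeping of the central twist $c_{-1}$, arising from $\omega_u^{-1} = \omega_{u^{-1}}$ and $s_i^{-1} = s_i\, c_{-1}$; but this has been built into the statements of Lemma \ref{lmmInvolBadBehaviour} and Remark \ref{remDefCandidateKernel2} already.
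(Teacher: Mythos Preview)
Your proof is correct and follows essentially the same approach as the paper's own proof: reduce to $F^1E^2$ via Lemma \ref{lmmInvolRep2}, handle $\am{1}$ and $\ap{1}$ using the last two generators of $K_2$ from Remark \ref{remDefCandidateKernel2}, and propagate to all $\am{\omega},\ap{\omega}$ via the $k[\quoz{T^0}{T^1}]$-bimodule compatibility of Lemma \ref{lmmBimodulesGpAlgebraFiniteTorus}. Your write-up is simply more explicit about the $\tau_{c_{-1}}$ bookkeeping and the $D(\tau_\omega \cdot x) = D(x)\cdot\tau_{\omega^{-1}}$ identity than the paper's one-paragraph version.
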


\begin{proof}
By Lemma \ref{lmmInvolRep2} the two maps coincide on $F^1E^2$, and by Remark \ref{remDefCandidateKernel2} they also coincide at $\am{1}$ and at $\ap{1}$. For $\omega \in \quoz{T^0}{T^1}$, both maps transform left multiplication by $\tau_\omega$ into right multiplication by $\tau_{\omega^{-1}}$ by Lemma \ref{lmmBimodulesGpAlgebraFiniteTorus}, and so we conclude that the two maps coincide on the whole $E^2$.
\end{proof}

\begin{lmm}\label{lmmLeftModules}
One has that the composite map
\[
		\begin{tikzcd}[column sep = 4em]
			E^2
			\ar[r, "{\rep_2}"]
			&
			E^1 \otimes_{E^0} E^1
			\ar[r, "\text{quot. map}"]
			&
			\quoz{(E^1 \otimes_{E^0} E^1)}{K_2}
		\end{tikzcd}
\]
is a homomorphism of $E^0$-bimodules.
\end{lmm}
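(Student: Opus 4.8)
The plan is to upgrade, modulo $K_2$, the section $\rep_2$ from a homomorphism of $k[\quoz{T^0}{T^1}]$-bimodules (Lemma \ref{lmmBimodulesGpAlgebraFiniteTorus}) to a homomorphism of $E^0$-bimodules, by checking compatibility with a set of $k$-algebra generators of $E^0$. Write $\overline{\rep_2}$ for the composite $E^2 \xrightarrow{\rep_2} E^1 \otimes_{E^0} E^1 \to \quoz{(E^1 \otimes_{E^0} E^1)}{K_2}$, and recall that $E^0$ is generated as a $k$-algebra by the elements $\tau_\omega$ (for $\omega \in \quoz{T^0}{T^1}$), $\tau_{s_0}$ and $\tau_{s_1}$ (a consequence of the braid relations \eqref{eqQuadrRelSL2} and of the fact, recalled in \S\ref{subsecNotationSL2}, that every element of $\widetilde{W}$ is the product of an element of $\quoz{T^0}{T^1}$ and a reduced word in $s_0$ and $s_1$). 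Set
\[
	S \defeq \set{h \in E^0}{\overline{\rep_2}(h\alpha) = h\,\overline{\rep_2}(\alpha) \text{ and } \overline{\rep_2}(\alpha h) = \overline{\rep_2}(\alpha)\,h \text{ for all } \alpha \in E^2}.
\]
One checks at once that $S$ is a $k$-subalgebra of $E^0$ (closure under multiplication uses that $h_2 \alpha \in E^2$ whenever $\alpha \in E^2$), so it suffices to prove that $S$ contains $\tau_\omega$ for all $\omega$ and contains $\tau_{s_0}$ and $\tau_{s_1}$.

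The inclusion $\tau_\omega \in S$ is exactly Lemma \ref{lmmBimodulesGpAlgebraFiniteTorus}. For $\tau_{s_0}$ and $\tau_{s_1}$, I would first remove the right‑module conditions and the index $s_1$ by symmetry. Since $K_2$ is $\invol$‑invariant and $\Gamma_\varpi$‑invariant (Lemma \ref{lmmK2Invariant}), both $\invol$ and $\Gamma_\varpi$ descend to $\quoz{(E^1 \otimes_{E^0} E^1)}{K_2}$, and Lemmas \ref{lmmInvolRep2AfterQuot} and \ref{lmmCommutesCCCN} say that these descended maps intertwine $\overline{\rep_2}$ with $\invol$, respectively $\Gamma_\varpi$, on $E^2$. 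Using that $\invol$ is anti‑multiplicative and that $\invol(\tau_{s_i}) = \tau_{s_i^{-1}}$ (see \eqref{eqInvolTauw}), the right‑module identity $\overline{\rep_2}(\alpha h) = \overline{\rep_2}(\alpha)\,h$ follows from the left‑module identities by writing $\alpha h = \invol\big(\invol(h)\invol(\alpha)\big)$ and applying $\overline{\rep_2}\circ\invol = \invol\circ\overline{\rep_2}$ together with the left‑module identity for $\invol(h)$; and using $\Gamma_\varpi(\tau_{s_0}) = \tau_{s_1}$ (see \eqref{eqCCCNOnH}) together with $\Gamma_\varpi \circ \overline{\rep_2} = \overline{\rep_2} \circ \Gamma_\varpi$, the left‑module identity for $\tau_{s_1}$ follows from the one for $\tau_{s_0}$. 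Hence everything comes down to showing
\[
	\rep_2(\tau_{s_0} \cdot \alpha) - \tau_{s_0} \cdot \rep_2(\alpha) \in K_2 \qquad \text{for all } \alpha \in E^2.
\]

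The set of $\alpha$ satisfying this last relation is a $k$‑subspace, and — arguing as for $S$, using that $\overline{\rep_2}$ is a $k[\quoz{T^0}{T^1}]$‑bimodule homomorphism and that $\tau_{s_0}\tau_\omega = \tau_{s_0\omega s_0^{-1}}\tau_{s_0}$ — it is in fact a $k[\quoz{T^0}{T^1}]$‑sub‑bimodule of $E^2$. Since by \eqref{eqTauOmegaE2Left} and \eqref{eqTauOmegaE2Right} every basis element $\alpha_w^\sigma$ of $E^2$ is, up to a nonzero scalar, obtained from some $\alpha_{w_0}^\sigma$ with $w_0 \in \{1,\ s_0(s_1s_0)^i,\ s_1(s_0s_1)^i,\ (s_0s_1)^i,\ (s_1s_0)^i\}$ by left and right multiplication with elements $\tau_\omega$, it suffices to verify the relation on these finitely many families. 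For each such $w_0$ and each $\sigma \in \{-,0,+\}$ this is a direct computation: when $\ell(s_0 w_0) = \ell(w_0)+1$ one reads $\tau_{s_0}\cdot\alpha_{w_0}^\sigma$ off \eqref{eqExplicitDeg2AddUp}, reads $\rep_2(\alpha_{w_0}^\sigma)$ and $\rep_2(\tau_{s_0}\alpha_{w_0}^\sigma)$ off Lemma \ref{lmmDefRep2}, and evaluates $\tau_{s_0}\cdot\rep_2(\alpha_{w_0}^\sigma)$ on simple tensors via \eqref{eqFormulasTauQp}; when $\ell(s_0 w_0) = \ell(w_0)-1$ one uses \eqref{eqTauS0OnE2LeftNotAddUp}, the quadratic relation $\tau_{s_0}^2 = -e_1\tau_{s_0}$, and — for the low‑length cases $w_0 \in \{1,s_0,s_1\}$ — the recursive clause in the definition of $\rep_2$. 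In every case the difference $\rep_2(\tau_{s_0}\alpha_{w_0}^\sigma) - \tau_{s_0}\rep_2(\alpha_{w_0}^\sigma)$ works out to a left‑$E^0$‑combination of the generators of $K_2$ listed in Remark \ref{remDefCandidateKernel2}; the generators $\bz{s_0}\otimes\bz{s_0} + \dots$ and $\bz{s_1}\otimes\bz{s_1} + \dots$ and the two $\invol$‑type generators $\bp{s_0}\otimes\bz{s_0}+\bz{s_0}\otimes\bm{s_0}$, $\bm{s_1}\otimes\bz{s_1}+\bz{s_1}\otimes\bp{s_1}$ are exactly what is needed to absorb the $\az{\cdot}$‑contributions and the length‑drop terms. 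I expect the main obstacle to be precisely the bookkeeping in this last step: one has to track the $\ell(w_0)=1$ versus $\ell(w_0)\geq 2$ dichotomy in \eqref{eqTauS0OnE2LeftNotAddUp}, the many idempotent coefficients (handled by \eqref{eqIdempotentsE1}, \eqref{eqFormulasIdempotentsLeftRight} and \eqref{eqELambdaTauT}), and the interplay between left multiplication by $\tau_{s_0}$ and the recursive definition of $\rep_2$ on the $\ell(w)=0$ part of $E^2$.
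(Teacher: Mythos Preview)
Your approach is essentially the paper's: reduce via the subalgebra argument and the $k[\quoz{T^0}{T^1}]$-bimodule property to checking $\tau_{s_0}$ and $\tau_{s_1}$, exploit the symmetries $\invol$ and $\Gamma_\varpi$ (via Lemmas \ref{lmmK2Invariant}, \ref{lmmCommutesCCCN}, \ref{lmmInvolRep2AfterQuot}) to cut the work down, then compute directly on the basis using \eqref{eqExplicitDeg2AddUp}, \eqref{eqTauS0OnE2LeftNotAddUp} and the generators of $K_2$. The paper organises the final step slightly differently: rather than fixing $\tau_{s_0}$ and running over all $\alpha$, it fixes the ``$s_1v$-half'' of the basis together with $\ap{1}$ and checks \emph{both} $\tau_{s_0}$ and $\tau_{s_1}$ there, then applies $\Gamma_\varpi$ to reach the other half and $\am{1}$. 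The two organisations are $\Gamma_\varpi$-equivalent, but be aware that the paper's is smoother at one point: the recursive clause defining $\rep_2(\ap{\omega})$ involves $\tau_{s_1}$, so in your ordering the case ``$\tau_{s_0}$ on $\ap{\omega}$'' cannot be handled by the recursion alone --- you either expand $\rep_2(\ap{\omega})$ explicitly and compute directly, or you first establish ``$\tau_{s_0}$ on $\alpha_{s_0w}^\sigma$'' and pull it back through $\Gamma_\varpi$ and $\invol$ as the paper does in its step (iii-b). This is the bookkeeping you anticipate, not a gap, but it is the one place where your order of reductions makes the argument less linear than it looks.
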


\begin{proof}
We will show that the following equalities are true:
\begin{itemize}
\item[(i)]
	$\rep_2(\tau_{s_0} \cdot \alpha_{s_1v}^\sigma) \equiv \tau_{s_0} \cdot \rep_2(\alpha_{s_1v}^\sigma)$ modulo $K_2$ for $v \in \widetilde{W}$ with $\ell(s_1v) = \ell(v) + 1$ and for $\sigma \in \{-,0,+\}$ (we will see that in this case we actually have a true equality in $E^1 \otimes_{E^0} E^1$, with no need to consider the quotient modulo $K_2$);
\item[(ii)]
	$\rep_2(\tau_{s_1} \cdot \alpha_{s_1v}^\sigma) \equiv \tau_{s_1} \cdot \rep_2(\alpha_{s_1v}^\sigma)$ modulo $K_2$ for $v \in \widetilde{W}$ with $\ell(s_1v) = \ell(v) + 1$ and for $\sigma \in \{-,0,+\}$;
\item[(iii)]
	$\rep_2(\tau_{s_i} \cdot \ap{1}) \equiv \tau_{s_i} \cdot \rep_2(\ap{1})$ modulo $K_2$ for $i \in \{0,1\}$.
\end{itemize}
Before checking these three properties, let us show that, if they hold, then the lemma is proved. We saw in Lemma \ref{lmmCommutesCCCN} that $\Gamma_\varpi$ commutes with $\rep_2$ and in Lemma \ref{lmmK2Invariant} that $K_2$ is $\Gamma_\varpi$-invariant, and so by applying $\Gamma_\varpi$ to the equalities in (i), (ii) and (iii) (and using \eqref{eqCCCNOnH} and \eqref{eqCCCNOnE2}), we get:
\begin{itemize}
\item[(i\textquotesingle)]
	$\rep_2(\tau_{s_1} \cdot \alpha_{s_0w}^\sigma) \equiv \tau_{s_1} \cdot \rep_2(\alpha_{s_0w}^\sigma) \mod K_2$ for $w \in \widetilde{W}$ with $\ell(s_0w) = \ell(w) + 1$ and for $\sigma \in \{-,0,+\}$ (as before, we already have equality in $E^1 \otimes_{E^0} E^1$);
\item[(ii\textquotesingle)]
	$\rep_2(\tau_{s_0} \cdot \alpha_{s_0w}^\sigma) \equiv \tau_{s_0} \cdot \rep_2(\alpha_{s_0w}^\sigma) \mod K_2$ for $w \in \widetilde{W}$ with $\ell(s_0w) = \ell(w) + 1$ and for $\sigma \in \{-,0,+\}$;
\item[(iii\textquotesingle)]
	$\rep_2(\tau_{s_i} \cdot \am{1}) \equiv \tau_{s_i} \cdot \rep_2(\am{1}) \mod K_2$ for $i \in \{0,1\}$.
\end{itemize}
Note that, for $\omega \in \quoz{T^0}{T^1}$, in (iii) and (iii\textquotesingle) we can consider the analogous congruences with $\ap{\omega}$ (respectively, of $\am{\omega}$) in place of $\ap{1}$ (respectively, of $\am{1}$): the new congruences are still true because $\rep_2$ is a homomorphism of left $k[\quoz{T^0}{T^1}]$-modules (by Lemma \ref{lmmBimodulesGpAlgebraFiniteTorus}, or just by the definition of $\rep_2$ in Lemma \ref{lmmDefRep2}).

All in all, this shows that the congruence
\begin{equation}\label{eqCongruence}
	\rep_2(x \cdot \alpha) \equiv x \cdot \rep_2(\alpha) \mod K_2
\end{equation}
is true for $\alpha$ running through a $k$-basis of $E^2$ and for $x \in \{\tau_{s_0},\tau_{s_1}\}$. But since $\rep_2$ is a homomorphism of left $k[\quoz{T^0}{T^1}]$-modules, the same congruence is also true for $x = \tau_{\omega}$ for $\omega \in \quoz{T^0}{T^1}$. In other words \eqref{eqCongruence} is true for $\alpha$ running through a $k$-basis of $E^2$ and for $x$ running through a set of generators of $E^0$ as a $k$-algebra, and hence it follows that it must be true for all $\alpha \in E^2$ and all $x \in E^0$, completing the proof that $\rep_2$ becomes a homomorphism of left $E^0$-modules after modding out $K_2$.

To prove the same claim for the right $E^0$-action, since $K_2$ is $\invol$-invariant by Lemma \ref{lmmK2Invariant}, applying $\invol$ to \eqref{eqCongruence} we obtain that
\[
	(\invol \circ \rep_2)(x \cdot \alpha) \equiv (\invol \circ \rep_2)(\alpha) \cdot \invol(x) \mod K_2
\]
for all $x \in E^0$ and all $\alpha \in E^2$. Since the maps $\invol \circ \rep_2$ and $\rep_2 \circ \invol$ coincide after modding out by $K_2$ (Lemma \ref{lmmInvolRep2AfterQuot}), we deduce that
\[
	\rep_2(\invol(\alpha) \cdot \invol(x)) \equiv \rep_2(\invol(\alpha)) \cdot \invol(x) \mod K_2,
\]
completing the proof that $\rep_2$ becomes a homomorphism of right $E^0$-modules after modding out $K_2$.

Now, it remains to prove (i), (ii) and (iii).
\begin{itemize}
\item[(i)]
	One has
	\begin{align*}
		\tau_{s_0} \cdot \rep_2(\am{s_1v})
		&=
		- \tau_{s_0} \cdot \bp{1} \otimes \bz{s_1v}
		\\&=
		0
		&&\text{by \eqref{eqFormulasTauQp}}
		\\&=
		\rep_2(\tau_{s_0} \cdot \am{s_1v})
		&&\text{by \eqref{eqExplicitDeg2AddUp}.}
	\end{align*}
	Moreover,
	\begin{align*}
		\tau_{s_0} \cdot \rep_2(\az{s_1v})
		&=
		\tau_{s_0} \cdot \bp{1} \otimes \bm{s_1v}
		\\&=
		0
		&&\text{by \eqref{eqFormulasTauQp}}
		\\&=
		\rep_2(\tau_{s_0} \cdot \az{s_1v})
		&&\text{by \eqref{eqExplicitDeg2AddUp}.}
	\end{align*}
	And finally,
	\begin{align*}
		\tau_{s_0} \cdot \rep_2(\ap{s_1v})
		&=
		\tau_{s_0} \cdot \bz{s_1} \otimes \bp{v}
		\\&=
		- \bz{s_0s_1} \otimes \bp{v}
		&&\text{by \eqref{eqFormulasTauQp}}
		\\&=
		\bz{s_0} \otimes \bm{s_1v}
		&&\text{by \eqref{eqFormulaRighteasyQp} and \eqref{eqFormulasTauQp}}
		\\&=
		- \rep_2(\am{s_0s_1v})
		\\&=
		\rep_2(\tau_{s_0} \cdot \ap{s_1v})
		&&\text{by \eqref{eqExplicitDeg2AddUp}.}
	\end{align*}
\item[(ii-a)]
	Let us prove that $\rep_2 \big( \tau_{s_1} \cdot \am{s_1v} \big) \equiv \tau_{s_1} \cdot \rep_2 ( \am{s_1v} )$ modulo $K_2$:
	we first assume that $\ell(v) \geqslant 1$, and compute
	\begin{align*}
		\rep_2(\tau_{s_1} \cdot \am{s_1v})
		&=
		- e_1 \rep_2(\am{s_1 v}) - \rep_2(\ap{c_{-1} v})
		\\&\qquad\qquad\qquad\qquad\text{by \eqref{eqTauS1OnE2LeftNotAddUp} and Lemma \ref{lmmBimodulesGpAlgebraFiniteTorus}}
		\\&=
		e_1 \bp{1} \otimes \bz{s_1 v} - \bm{1} \otimes \bz{c_{-1} v},
	\end{align*}
	and
	\begin{align*}
		\tau_{s_1} \cdot \rep_2(\am{s_1v})
		&=
		- \tau_{s_1} \cdot \bp{1} \otimes \bz{s_1v}
		\\&=
		\tau_{s_1} \cdot \bp{s_1} \otimes \bz{v}
		\\&\qquad\qquad\qquad\qquad\text{by \eqref{eqFormulasTauQp} and \eqref{eqFormulaRighteasyQp}}
		\\&=
		-e_1 \bp{s_1}  \otimes \bz{v}
		+ 2e_{\idd^{-1}} \bz{s_1}  \otimes \bz{v}
		\\&\qquad+ e_{\idd^{-2}} \bm{s_1} \otimes \bz{v}
		- \bm{c_{-1}} \otimes \bz{v} 
		\\&\qquad\qquad\qquad\qquad\text{by \eqref{eqTauS1OnE1LeftNotAddUp}}
		\\&=
		e_1 \bp{1}  \otimes \bz{s_1v}
		+ 2e_{\idd^{-1}} \bz{s_1}  \otimes \bz{s_0} \cdot \tau_{s_0^{-1} v}
		\\&\qquad- e_{\idd^{-2}} \tau_{s_1} \cdot \bp{1} \otimes \bz{s_0} \cdot \tau_{s_0^{-1} v}
		- \bm{1} \otimes \bz{c_{-1} v} 
		\\&\qquad\qquad\qquad\qquad\text{by \eqref{eqFormulasTauQp}, \eqref{eqFormulaRighteasyQp}, \eqref{eqOmegaOnE1Left} and \eqref{eqOmegaOnE1Right}.}
	\end{align*}
	Since both $\bz{s_1}  \otimes \bz{s_0}$ and $\bp{1} \otimes \bz{s_0}$ lie in the list of generators defining $K_2$, we conclude that $\rep_2 \big( \tau_{s_1} \cdot \am{s_1v} \big) \equiv \tau_{s_1} \cdot \rep_2 ( \am{s_1v} )$ modulo $K_2$.
	
	We now consider the case $\ell(v) = 0$: by definition of $\rep_2$ we have
	\[
		\rep_2(\ap{c_{-1}v})
		=
		\rep_2 \big( \ap{c_{-1}v} + \tau_{s_1} \cdot \am{s_1 v} \big) - \tau_{s_1} \cdot \rep_2 \big( \am{s_1 v} \big),
	\]
	and then the claimed congruence is actually an equality in $E^1 \otimes_{E^0} E^1$ by $k$-linearity of $\rep_2$.
\item[(ii-b)]
	Let us prove that $\rep_2 \big( \tau_{s_1} \cdot \az{s_1v} \big) \equiv \tau_{s_1} \cdot \rep_2 ( \az{s_1v} )$ modulo $K_2$: we compute
	\begin{align*}
		\rep_2(\tau_{s_1} \cdot \az{s_1v})
		&=
		\begin{cases}
			\begin{matrix*}[l]
				- e_1 \rep_2(\az{s_1v}) -2 e_{\idd^{-1}} \rep_2(\ap{s_1v})
				\\\qquad
				- \rep_2(\az{c_{-1}v})
			\end{matrix*}
			&\text{if $\ell(v) \geqslant 1$}
			\\
			- e_1 \rep_2(\az{s_1v}) -2 e_{\idd^{-1}} \rep_2(\ap{s_1v})
			&\text{if $\ell(v) = 0$}
		\end{cases}
		\\&\qquad\qquad\qquad\qquad\text{by \eqref{eqTauS1OnE2LeftNotAddUp} and Lemma \ref{lmmBimodulesGpAlgebraFiniteTorus}}
		\\&=
		\begin{cases}
			\begin{matrix*}[l]
				- e_1 \bp{1} \otimes \bm{s_1v} -2 e_{\idd^{-1}} \bz{s_1} \otimes \bp{v}
				\\\qquad
				+ \bm{1} \otimes \bp{c_{-1}v}	
			\end{matrix*}
			&\text{if $\ell(v) \geqslant 1$}
			\\
			- e_1 \bp{1} \otimes \bm{s_1v} -2 e_{\idd^{-1}} \bz{s_1} \otimes \bp{v}
			&\text{if $\ell(v) = 0$,}
		\end{cases}
	\end{align*}
	and
	\begin{align*}
		\tau_{s_1} \cdot \rep_2(\az{s_1v})
		&=
		\tau_{s_1} \cdot \bp{1} \otimes \bm{s_1v}
		\\&=
		- \tau_{s_1} \cdot \bp{s_1} \otimes \bp{v}
		\\&\qquad\qquad\qquad\qquad\qquad\text{by \eqref{eqFormulasTauQp} and \eqref{eqFormulaRighteasyQp}}
		\\&=
		e_1 \bp{s_1} \otimes \bp{v}
		- 2e_{\idd^{-1}} \bz{s_1} \otimes \bp{v}
		\\&\qquad - e_{\idd^{-2}} \bm{s_1} \otimes \bp{v}
		+ \bm{c_{-1}} \otimes \bp{v}
		\\&\qquad\qquad\qquad\qquad\qquad\text{by \eqref{eqTauS1OnE1LeftNotAddUp}}
		\\&=
		- e_1 \bp{1} \otimes \bm{s_1v}
		- 2e_{\idd^{-1}} \bz{s_1} \otimes \bp{v}
		\\&\qquad - e_{\idd^{-2}} \bm{s_1} \otimes \bp{v}
		+ \bm{1} \otimes \bp{c_{-1}v}
		\\&\qquad\qquad\qquad\qquad\qquad\text{by \eqref{eqFormulasTauQp}, \eqref{eqFormulaRighteasyQp}, \eqref{eqOmegaOnE1Left} and \eqref{eqOmegaOnE1Right}.}
	\end{align*}
	Now, if $\ell(v) \geqslant 1$, then using \eqref{eqFormulasTauQp} and \eqref{eqFormulaRighteasyQp} we see that
	\[
		\bm{s_1} \otimes \bp{v} = - (\bm{s_1} \tau_{s_0}) \otimes \bm{s_0^{-1} v} = 0,
	\]
	and hence we conclude that $\rep_2(\tau_{s_1} \cdot \az{s_1v}) = \tau_{s_1} \cdot \rep_2(\az{s_1v})$.
	
	If instead $\ell(v) = 0$, then using \eqref{eqFormulasTauQp} and \eqref{eqOmegaOnE1Right} we see that
	\begin{align*}
		\bm{s_1} \otimes \bp{v}
		&=
		- \tau_{s_1} \cdot \bp{1} \otimes \bp{1} \cdot \tau_v
		\in
		K_2,
		\\
		\bm{1} \otimes \bp{c_{-1}v}
		&=
		\bm{1} \otimes \bp{1} \cdot \tau_{c_{-1}v}
		\in
		K_2,
	\end{align*}
	and hence we conclude that $\rep_2(\tau_{s_1} \cdot \az{s_1v}) \equiv \tau_{s_1} \cdot \rep_2(\az{s_1v})$ modulo $K_2$.
\item[(ii-c)]
	Let us prove that $\rep_2 \big( \tau_{s_1} \cdot \ap{s_1v} \big) \equiv \tau_{s_1} \cdot \rep_2 ( \ap{s_1v} )$ modulo $K_2$:
	we compute
		\begin{align*}
			\rep_2 \big( \tau_{s_1} \cdot \ap{s_1v} \big)
			&=
			- \rep_2 \big( e_1 \ap{s_1v} \big)
			&&\text{by \eqref{eqTauS1OnE2LeftNotAddUp}}
			\\&=
			- e_1 \rep_2 \big( \ap{s_1v} \big)
			&&\text{by Lemma \ref{lmmBimodulesGpAlgebraFiniteTorus}}
			\\&=
			- e_1 \bz{s_1} \otimes \bp{v},
		\end{align*}
		and
		\begin{align*}
			\tau_{s_1} \cdot \rep_2 ( \ap{s_1v} )
			&=
			\tau_{s_1} \cdot \bz{s_1} \otimes \bp{v}
			\\&=
			-e_1 \bz{s_1} \otimes \bp{v} - e_{\idd^{-1}}\bm{s_1} \otimes \bp{v}
			&&\text{by \eqref{eqTauS1OnE1LeftNotAddUp}.}
		\end{align*}
	If $\ell(v) \geqslant 1$, we can write $v$ as $s_0v'$ for some $v' \in \widetilde{W}$ such that lengths add up. Using formulas \eqref{eqFormulasTauQp} and \eqref{eqFormulaRighteasyQp}, we obtain
	\begin{align*}
		\bm{s_1} \otimes \bp{v}
		&=
		- (\bm{s_1} \cdot \tau_{s_0}) \otimes \bm{v}
		=
		0.
	\end{align*}
	If instead $\ell(v) = 0$, using formulas \eqref{eqFormulasTauQp} and \eqref{eqOmegaOnE1Right}, we obtain
	\begin{align*}
		- e_{\idd^{-1}}\bm{s_1} \otimes \bp{v}
		&=
		e_{\idd^{-1}} \tau_{s_1} \cdot \bp{1} \otimes \bp{1} \cdot \tau_v
		\in
		K_2.
	\end{align*}
	Therefore, both in the case $\ell(v) \geqslant 1$ and in the case $\ell(v) = 0$, we obtain that $\rep_2 \big( \tau_{s_1} \cdot \ap{s_1v} \big) \equiv \tau_{s_1} \cdot \rep_2 ( \ap{s_1v} )$ modulo $K_2$.
\item[(iii-a)]
	Let us prove that $\rep_2 \big( \tau_{s_1} \cdot \ap{1} \big) \equiv \tau_{s_1} \cdot \rep_2 ( \ap{1} )$ modulo $K_2$:
	we compute
	\begin{align*}
		\tau_{s_1} \cdot \rep_2(\ap{1})
		&=
		\tau_{s_1} \cdot \left( \rep_2 \big( \ap{1} + \tau_{s_1} \cdot \am{s_1^{-1}} \big) - \tau_{s_1} \cdot \rep_2 \big( \am{s_1^{-1}} \big) \right)
		\\&\equiv
		\rep_2 \big( \tau_{s_1} \cdot (\ap{1} + \tau_{s_1} \cdot \am{s_1^{-1}}) \big) - \tau_{s_1}^2 \cdot \rep_2 \big( \am{s_1^{-1}} \big)
		\mod K_2
		\\&\qquad\qquad
		\substack{\text{by (ii), since $\ap{1} + \tau_{s_1} \cdot \am{s_1^{-1}} \in \bigoplus_{\omega \in \quoz{T^0}{T^1}} H^2(I,\xx(s_1\omega))$}}
		\\&\equiv
		\rep_2 \big( \tau_{s_1} \cdot (\ap{1} + \tau_{s_1} \cdot \am{s_1^{-1}}) \big) - \rep_2 \big( \tau_{s_1}^2 \cdot \am{s_1^{-1}} \big)
		\mod K_2
		\\&\qquad\qquad
		\substack{\text{again by (ii)}}
		\\&=
		\rep_2(\tau_{s_1} \cdot \ap{1}).
	\end{align*}
\item[(iii-b)]
	Let us prove that $\rep_2 \big( \tau_{s_0} \cdot \ap{1} \big) \equiv \tau_{s_0} \cdot \rep_2 ( \ap{1} )$ modulo $K_2$. Since $K_2$ is $\invol$-invariant, this is equivalent to showing that
	\[
		(\invol \circ \rep_2) \big( \tau_{s_0} \cdot \ap{1} \big) \equiv (\invol \circ \rep_2) ( \ap{1} ) \cdot \tau_{s_0^{-1}}
		\mod K_2.
	\]
	Recall from Lemma \ref{lmmInvolRep2AfterQuot} that the maps $\invol \circ \rep_2$ and $\rep_2 \circ \invol$ coincide after post-composing with the quotient map onto $\quoz{(E^1 \otimes_{E^0} E^1)}{K_2}$. Therefore our claim is equivalent to showing that
	\[
		(\rep_2 \circ \invol) \big( \tau_{s_0} \cdot \ap{1} \big) \equiv (\rep_2 \circ \invol) ( \ap{1} ) \cdot \tau_{s_0^{-1}}
		\mod K_2.
	\]
	I.e., using the formula \eqref{eqInvolOnE2} for the action of $\invol$ on $E^2$, this is equivalent to showing that
	\begin{equation}\label{eqAp1Claim}
		\rep_2 \big( \ap{1} \cdot \tau_{s_0^{-1}} \big) \equiv \rep_2 ( \ap{1} ) \cdot \tau_{s_0^{-1}}
		\mod K_2.
	\end{equation}
	By definition of $\rep_2$, the right hand side of the above equation is
	\begin{equation}\label{eqAp1Reversed}
		\rep_2 ( \ap{1} ) \cdot \tau_{s_0^{-1}}
		=
		\rep_2 \big( \ap{1} + \tau_{s_1} \cdot \am{s_1^{-1}} \big)
		\cdot \tau_{s_0^{-1}}
		- \tau_{s_1} \cdot \rep_2 \big( \am{s_1^{-1}} \big)
		\cdot \tau_{s_0^{-1}}.
	\end{equation}
	From (i) we know that
	\[
		\tau_{s_0} \cdot \rep_2(\alpha) = \rep_2(\tau_{s_0} \cdot \alpha)
	\]
	for all $\alpha \in \oplus_{\omega \in \quoz{T^0}{T^1}} H^2(I,\xx(\omega s_1))$, and therefore for the same $\alpha$'s we have
	\[
		(\invol \circ \rep_2)(\alpha) \cdot \tau_{s_0^{-1}} = (\invol \circ \rep_2)(\tau_{s_0} \cdot \alpha).
	\]
	As explained above, we can replace $\invol \circ \rep_2$ with $\rep_2 \circ \invol$ if we content ourselves with a congruence modulo $K_2$. As the subspace $\oplus_{\omega \in \quoz{T^0}{T^1}} H^2(I,\xx(\omega s_1))$ is $\invol$-invariant, we then obtain 
	\[
		\rep_2(\alpha) \cdot \tau_{s_0^{-1}} \equiv \rep_2(\alpha \cdot \tau_{s_0^{-1}}) \mod K_2
	\]
	for all $\alpha \in \oplus_{\omega \in \quoz{T^0}{T^1}} H^2(I,\xx(\omega s_1))$. Since both $\ap{1} + \tau_{s_1} \cdot \am{s_1^{-1}}$ and $\am{s_1^{-1}}$ lie in such subspace, applying this latter fact to \eqref{eqAp1Reversed} we obtain
	\begin{equation}\label{eqAp1Reversed2}\begin{aligned}
		\rep_2 ( \ap{1} ) \cdot \tau_{s_0^{-1}}
		&=
		\rep_2 \big( \ap{1} + \tau_{s_1} \cdot \am{s_1^{-1}} \big)
		\cdot \tau_{s_0^{-1}}
		\\&\qquad - \tau_{s_1} \cdot \rep_2 \big( \am{s_1^{-1}} \big)
		\cdot \tau_{s_0^{-1}}
		\\&\equiv
		\rep_2 \big( \ap{1} \cdot \tau_{s_0^{-1}} + \tau_{s_1} \cdot \am{s_1^{-1}} \cdot \tau_{s_0^{-1}} \big)
		\\&\qquad - \tau_{s_1} \cdot \rep_2 \big( \am{s_1^{-1}} \cdot \tau_{s_0^{-1}} \big)
		\mod K_2.
	\end{aligned}\end{equation}
	Now, since $\am{s_1^{-1}} \cdot \tau_{s_0^{-1}}$ lies in $H^2(I,\xx(s_1s_0))$, by (ii) we have that
	\[
		\tau_{s_1} \cdot \rep_2 \big( \am{s_1^{-1}} \cdot \tau_{s_0^{-1}} \big)
		\equiv
		\rep_2 \big( \tau_{s_1} \cdot \am{s_1^{-1}} \cdot \tau_{s_0^{-1}} \big)
		\mod K_2.
	\]
	Therefore, continuing the computation in \eqref{eqAp1Reversed2}, we obtain
	\[
		\rep_2 ( \ap{1} ) \cdot \tau_{s_0^{-1}}
		\equiv
		\rep_2 \big( \ap{1} \cdot \tau_{s_0^{-1}} \big)
		\mod K_2,
	\]
	as we wanted to show in \eqref{eqAp1Claim}.
\qedhere
\end{itemize}
\end{proof}

\begin{prop}\label{propKerDeg2}
The kernel of the degree $2$ multiplication map
\[
	\funcInline{\mult_2}{T_{E_0}^2 E^1 = E^1 \otimes_{E^0} E^1}{E^2}
\]
is $K_2$ (defined in Remark \ref{remDefCandidateKernel2}).
\end{prop}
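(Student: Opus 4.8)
The plan is to apply the abstract recipe of \S\ref{subsecStrategy} with $A = E^0$, $M = E^1 \otimes_{E^0} E^1$, $N = E^2$, $\mult = \mult_2$ (which is surjective by Lemma \ref{lmmDefRep2}) and with the $k$-linear section $\rep = \rep_2$ of Lemma \ref{lmmDefRep2}. Concretely, I would take as $k$-algebra generators of $A$ the elements $\tau_{s_0}$, $\tau_{s_1}$ together with the $\tau_\omega$ for $\omega \in T^0/T^1$; as $k$-basis of $N$ the basis $(\alpha_w^\sigma)$ of $E^2$; and as $E^0$-bimodule generators $(m_j)$ of $M$ the explicit list produced in Lemma \ref{lmmGeneratorsT2}. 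The recipe then states that $\ker(\mult_2)$ is the sub-$E^0$-bimodule of $E^1 \otimes_{E^0} E^1$ generated by the three families of elements $\rep_2(a_i n_l) - a_i \rep_2(n_l)$, $\rep_2(n_l a_i) - \rep_2(n_l) a_i$, and $m_j - (\rep_2 \circ \mult_2)(m_j)$.

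First I would establish the inclusion $\ker(\mult_2) \subseteq K_2$ by checking that each of these three families lies in $K_2$. The first two families are handled by Lemma \ref{lmmLeftModules}: the composite of $\rep_2$ with the projection $E^1 \otimes_{E^0} E^1 \twoheadrightarrow (E^1 \otimes_{E^0} E^1)/K_2$ is a homomorphism of $E^0$-bimodules, i.e.\ $\rep_2(x\alpha) - x\rep_2(\alpha)$ and $\rep_2(\alpha x) - \rep_2(\alpha) x$ lie in $K_2$ for all $x \in E^0$ and all $\alpha \in E^2$. The third family is handled by Lemma \ref{lmmSomeElementsInTheKer}, which identifies the sub-$E^0$-bimodule generated by the elements $m_j - \rep_2(\mult_2(m_j))$ — with $(m_j)$ the generators of Lemma \ref{lmmGeneratorsT2} — as the one generated by the first twelve elements in the definition of $K_2$; in particular it is contained in $K_2$. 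Hence $\ker(\mult_2) \subseteq K_2$.

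For the reverse inclusion it suffices to observe that every generator of $K_2$ listed in Remark \ref{remDefCandidateKernel2} lies in $\ker(\mult_2)$. The first twelve are shown to do so in Lemma \ref{lmmSomeElementsInTheKer}. The last two are written in Remark \ref{remDefCandidateKernel2} as right multiples of $(\rep_2 \circ \invol)(\am{1}) - (\invol \circ \rep_2)(\am{1})$ and of $(\rep_2 \circ \invol)(\ap{1}) - (\invol \circ \rep_2)(\ap{1})$; since $\mult_2$ is $E^0$-bilinear and $\mult_2 \circ \invol = \invol \circ \mult_2$ by \eqref{eqMultiplicationAndCCCNAndInvol}, one has $\mult_2 \circ (\rep_2 \circ \invol) = \invol = \mult_2 \circ (\invol \circ \rep_2)$ on $E^2$, so both differences — and therefore also their right multiples — lie in $\ker(\mult_2)$. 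Thus $K_2 \subseteq \ker(\mult_2)$, and combining with the previous step we conclude $\ker(\mult_2) = K_2$.

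Because all the genuinely computational work has already been carried out in Lemmas \ref{lmmGeneratorsT2}, \ref{lmmSomeElementsInTheKer} and \ref{lmmLeftModules}, no real difficulty remains in the proof of the proposition itself. The only point deserving care is bookkeeping: one must make sure the abstract recipe of \S\ref{subsecStrategy} is applied with mutually compatible data, so that the families of generators it produces literally coincide with the objects appearing in the cited lemmas — in particular that the $E^0$-bimodule generators $(m_j)$ are exactly those of Lemma \ref{lmmGeneratorsT2} and that the chosen section is $\rep_2$.
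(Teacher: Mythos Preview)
Your proposal is correct and follows essentially the same approach as the paper. The paper packages the argument slightly differently --- it shows $\overline{\rep_2}\circ\overline{\mult_2}=\id$ on $(E^1\otimes_{E^0}E^1)/K_2$ directly, checking this on the bimodule generators of Lemma \ref{lmmGeneratorsT2} using that $\overline{\rep_2}$ is an $E^0$-bimodule map by Lemma \ref{lmmLeftModules} --- but this is exactly the abstract recipe of \S\ref{subsecStrategy} that you invoke, and the same three lemmas (\ref{lmmGeneratorsT2}, \ref{lmmSomeElementsInTheKer}, \ref{lmmLeftModules}) together with Remark \ref{remDefCandidateKernel2} carry all the weight in both versions.
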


\begin{proof}
We consider the maps
\[\begin{tikzcd}[column sep = huge]
	\quoz{(E^1 \otimes_{E^0} E^1)}{K_2}
	\ar[r, shift left, "{\overline{\mult_2}}"]
	&
	E^2,
	\ar[l, shift left, "{\overline{\rep_2}}"]		
\end{tikzcd}\]
induced by $\mult_2$ and $\rep_2$ (note that $\overline{\mult_2}$ is well defined because in Remark \ref{remDefCandidateKernel2} we have observed that $K_2 \subseteq \ker(\mult_2)$).

To prove the proposition it suffices to show that $\overline{\rep_2} \circ \overline{\mult_2} = \id_{\quoz{(E^1 \otimes_{E^0} E^1)}{K_2}}$: this can be checked on a set of generators of $E^1 \otimes_{E^0} E^1$ as an $E^0$-bimodule, since Lemma \ref{lmmLeftModules} states that $\overline{\rep_2}$ is a homomorphism of $E^0$-bimodules: then the claim is clear from Lemma \ref{lmmSomeElementsInTheKer} and the definition of $K_2$.
\end{proof}

\subsection{The \texorpdfstring{$3$\textsuperscript{rd}}{3rd} graded piece of the kernel}

To study the kernel of the multiplication map $\funcInline{\mult_3}{T^3_{E^0} E^1}{E^3}$, we follow the same strategy as for the kernel of $\mult_2$. Since $E^3$ admits a somewhat simpler basis than $E^2$, some things will be easier.

\begin{lmm}\label{lmmDefRep3}
The multiplication map $\funcInline{\mult_3}{T^3_{E^0} E^1}{E^3}$ is surjective, and the following is a section of it as a homomorphism of $k$-vector spaces.
\[\begin{tikzcd}[row sep = -0.6em]
\rep_3 \colon &[-3.5em] E^3 \ar[r] &[-1em] T_{E^0}^3E^1 &[-1em]
\\
& \phi_{(s_1s_0)^i\omega} \ar[r, mapsto] & - \bp{1} \otimes \bz{(s_1s_0)^i} \otimes \bm{1} \cdot \tau_\omega & \scriptstyle\text{for $i \in \zpiu$, $\omega \in \quoz{T^0}{T^1}$,}
\\
& \phi_{(s_0s_1)^i\omega} \ar[r, mapsto] & \bm{1} \otimes \bz{(s_0s_1)^i} \otimes \bp{1} \cdot \tau_\omega  & \scriptstyle\text{for $i \in \zpiu$, $\omega \in \quoz{T^0}{T^1}$,}
\\
& \phi_{s_0(s_1s_0)^i\omega} \ar[r, mapsto] & - \bm{1} \otimes \bz{s_0(s_1s_0)^i} \otimes \bm{1} \cdot \tau_\omega  & \scriptstyle\text{for $i \in \nn$, $\omega \in \quoz{T^0}{T^1}$,}
\\
& \phi_{s_1(s_0s_1)^i\omega} \ar[r, mapsto] & \bp{1} \otimes \bz{s_1(s_0s_1)^i} \otimes \bp{1} \cdot \tau_\omega  & \scriptstyle\text{for $i \in \nn$, $\omega \in \quoz{T^0}{T^1}$,}
\\
& \phi_{\omega} \ar[r, mapsto] & (\tau_{s_0} + e_1) \cdot \rep_3(\phi_{s_0^{-1}}) \cdot \tau_\omega  & \scriptstyle\text{for $\omega \in \quoz{T^0}{T^1}$.}
\end{tikzcd}\]
It can also be rewritten as
\[\begin{tikzcd}[row sep = -0.6em]
\rep_3 \colon &[-3.5em] E^3 \ar[r] &[-1em] T_{E^0}^3E^1 &[-1em]
\\
& \phi_{s_1v} \ar[r, mapsto] & \bp{1} \otimes \bz{s_1} \otimes \bp{v} &\scriptstyle\text{if $\ell(s_1v) = \ell(v) + 1$,}
\\
& \phi_{s_0w} \ar[r, mapsto] & - \bm{1} \otimes \bz{s_0} \otimes \bm{w} &\scriptstyle\text{if $\ell(s_0w) = \ell(w) + 1$,}
\\
& \phi_{\omega} \ar[r, mapsto] & (\tau_{s_0} + e_1) \cdot \rep_3(\phi_{s_0^{-1}\omega}) & \scriptstyle\text{for $\omega \in \quoz{T^0}{T^1}$.}
\end{tikzcd}\]
Moreover, $\rep_3$ is a homomorphism of $k[\quoz{T^0}{T^1}]$-bimodules. Finally, defining
\[
	F^1E^3
	\defeq
	\bigoplus_{\mathclap{\substack{w \in \widetilde{W} \\\text{s.t. } \ell(w) \geqslant 1}}} H^3(I,\xx(w)),
\]
one has that the following diagrams are commutative:
\begin{align*}
&\begin{tikzcd}[row sep = 2.7em, column sep = 4em, ampersand replacement = \&]
	F^1E^3
	\ar[r, "{ \restr{\rep_3}{F^1E^3}}"]
	\ar[d, "{ \Gamma_\varpi}"']
	\&
	T^3_{E^0} E^1
	\ar[d, "{ \Gamma_\varpi}"]
	\\
	F^1E^3
	\ar[r, "{ \restr{\rep_3}{F^1E^3}}"']
	\&
	T^3_{E^0} E^1,
\end{tikzcd}
&&\begin{tikzcd}[row sep = 2.7em, column sep = 4em, ampersand replacement = \&]
	F^1E^3
	\ar[r, "{ \restr{\rep_3}{F^1E^3}}"]
	\ar[d, "{ \invol}"']
	\&
	T^3_{E^0} E^1
	\ar[d, "{ \invol}", "{\substack{ \beta \otimes \beta' \\ \downmapsto \\ - \invol(\beta') \otimes \invol(\beta) }}"']
	\\
	F^1E^3
	\ar[r, "{ \restr{\rep_3}{F^1E^3}}"']
	\&
	T^3_{E^0} E^1.
\end{tikzcd}
\end{align*}
\end{lmm}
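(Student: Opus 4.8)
The plan is to follow the template already used for $\rep_2$ in Lemmas \ref{lmmDefRep2}, \ref{lmmBimodulesGpAlgebraFiniteTorus}, \ref{lmmCommutesCCCN} and \ref{lmmInvolRep2}, exploiting that $E^3$ carries the simple basis $(\phi_v)_{v\in\widetilde{W}}$ (so no analogue of the awkward degree-$2$ phenomenon of Lemma \ref{lmmInvolBadBehaviour} intervenes once we restrict to positive length). First I would record surjectivity of $\mult_3$: by Proposition \ref{propAlmostGenByE1} (here $f\cdot\dim\TT=1$) the space $H^3(I,\xx(w))$ lies in the image of $\mult$, hence of $\mult_3$ since $\mult$ is graded, for every $w$ with $\ell(w)\geqslant 1$; and for $\omega\in\quoz{T^0}{T^1}$ formula \eqref{eqFormulasTopDegSL2}, applied to $w=s_0^{-1}\omega$ (which has length $1$ and satisfies $\ell(s_0w)=\ell(w)-1$), gives $\tau_{s_0}\cdot\phi_{s_0^{-1}\omega}=\phi_{\omega}-e_1\cdot\phi_{s_0^{-1}\omega}$, so that $\phi_{\omega}=(\tau_{s_0}+e_1)\cdot\phi_{s_0^{-1}\omega}$ is in the image too. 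This last identity is exactly what forces the definition of $\rep_3$ on length-zero elements; the two cases $v=s_1v'$ (with $\ell(s_1v')=\ell(v')+1$) and $v=s_0w$ (with $\ell(s_0w)=\ell(w)+1$) partition the elements of positive length, so $\rep_3$ is unambiguously defined.

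Next I would check that the two displayed formulas for $\rep_3$ agree and that $\mult_3\circ\rep_3=\id_{E^3}$. The equivalence of the two descriptions is obtained by sliding Hecke operators across tensor factors using \eqref{eqA}--\eqref{eqH} and \eqref{eqOmegaOnE1Right}; for instance, rewriting $\bp{s_0(s_1s_0)^{i-1}}=-\tau_{s_0(s_1s_0)^{i-1}}\cdot\bm{1}$ via \eqref{eqD} and then $\bz{s_1}\cdot\tau_{s_0(s_1s_0)^{i-1}}=\bz{(s_1s_0)^i}$ via \eqref{eqG} turns $\bp{1}\otimes\bz{s_1}\otimes\bp{s_0(s_1s_0)^{i-1}\omega}$ into $-\bp{1}\otimes\bz{(s_1s_0)^i}\otimes\bm{1}\cdot\tau_{\omega}$, and the other three families are handled identically (using \eqref{eqC}, \eqref{eqE}, \eqref{eqF}). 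For the section property, take $v$ with $\ell(s_1v)=\ell(v)+1$: by Remark \ref{remMultipleCupprod}, $\bp{1}\cdot\bz{s_1}\cdot\bp{v}=(\bp{1}\cdot\tau_{s_1v})\cupprod(\bz{s_1}\cdot\tau_v)\cupprod(\tau_{s_1}\cdot\bp{v})$, which by \eqref{eqB}, \eqref{eqG} and \eqref{eqFormulasTauQp} equals $-\bp{s_1v}\cupprod\bz{s_1v}\cupprod\bm{s_1v}$; since reversing three degree-one classes contributes the sign $(-1)^{3}=-1$ under graded-commutativity, this is $\bm{s_1v}\cupprod\bz{s_1v}\cupprod\bp{s_1v}=\phi_{s_1v}$ by \eqref{eqCond2}. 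The case $\phi_{s_0w}$ is analogous (with \eqref{eqA} instead of \eqref{eqB}), and the length-zero case then follows from $\phi_{\omega}=(\tau_{s_0}+e_1)\cdot\phi_{s_0^{-1}\omega}$, because $\mult_3$ is a homomorphism of left $E^0$-modules.

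The structural statements come next, following the pattern of the degree-$2$ lemmas. That $\rep_3$ respects the right $k[\quoz{T^0}{T^1}]$-action is immediate from its definition; for the left action I would reuse the argument of Lemma \ref{lmmBimodulesGpAlgebraFiniteTorus}, using \eqref{eqOmegaOnE1Left}, \eqref{eqOmegaOnE1Right} and \eqref{eqFormulasTopDegSL2} to see that $\tau_{\omega}\cdot\rep_3(\phi_w)$ and $\rep_3(\tau_{\omega}\cdot\phi_w)=\rep_3(\phi_{\omega w})$ differ by a scalar which is forced to be $1$ upon applying $\mult_3$. For the $\Gamma_\varpi$-square on $F^1E^3$ (which makes sense since $\Gamma_\varpi$ preserves length, so $F^1E^3$ is $\Gamma_\varpi$-stable) I would evaluate both composites on the basis elements $\phi_{s_1v}$ and $\phi_{s_0w}$ directly, using $\Gamma_\varpi(\phi_w)=\phi_{\varpi w\varpi^{-1}}$ from \eqref{eqCCCNOnE3}, the formulas \eqref{eqCCCNOnE1} for $\Gamma_\varpi$ on $E^1$, and $\varpi s_1\varpi^{-1}=s_0$; since $\Gamma_\varpi$ interchanges the families $\phi_{s_1v}$ and $\phi_{s_0w}$, this suffices.

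Finally, for the $\invol$-square on $F^1E^3$ I would imitate the proof of Lemma \ref{lmmInvolRep2}: since $\Gamma_\varpi$ commutes with $\rep_3$ (just proved) and with $\invol$ (see \eqref{eqCCCNInvolCommute}), it is enough to verify the identity on the $\phi_{s_1v}$ with $\ell(s_1v)=\ell(v)+1$; and since $\restr{\rep_3}{F^1E^3}$ is a $k[\quoz{T^0}{T^1}]$-bimodule map and $\invol$ is anti-multiplicative, $\invol\circ\rep_3\circ\invol$ is again such a map, so one may further reduce to $v$ of the form $(s_0s_1)^i$ or $s_0(s_1s_0)^i$, where the identity is checked directly from \eqref{eqInvolOnE3}, \eqref{eqInvolOnE1} and Lemma \ref{lmmDeg1GeneratedByTheFourElements} (using $c_{-1}=s_i^2$ to rewrite $(s_iv)^{-1}=v^{-1}s_i c_{-1}^{-1}$). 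I expect the only genuine difficulty to be the sign bookkeeping — coming from graded-commutativity in the section property, from the formulas \eqref{eqFormulasTauQp} and \eqref{eqInvolOnE1}, and from the factor $(-1)^{(i-1)i/2}$ built into $\invol$ on $T^\ast_{E^0}E^1$; conceptually everything runs parallel to, and is in fact smoother than, the degree-$2$ case.
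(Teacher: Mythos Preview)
Your proposal is correct and follows essentially the same approach as the paper's proof: both verify the section property via the cup-product identity \eqref{eqCond2}, establish the $k[\quoz{T^0}{T^1}]$-bimodule structure by the argument of Lemma \ref{lmmBimodulesGpAlgebraFiniteTorus}, and treat the $\Gamma_\varpi$- and $\invol$-compatibility by reducing to the degree-$2$ templates (Lemmas \ref{lmmCommutesCCCN} and \ref{lmmInvolRep2}). The only cosmetic difference is that for the section property the paper quotes the computations \eqref{eqFirstBigComputation}--\eqref{eqFourthBigComputation} from the proof of Proposition \ref{propAlmostGenByE1}, whereas you redo the same calculation directly from Remark \ref{remMultipleCupprod} together with \eqref{eqA}, \eqref{eqB}, \eqref{eqG} and \eqref{eqFormulasTauQp}; and for the $\invol$-square the paper remarks that the \emph{first} description of $\rep_3$ (with explicit words $(s_0s_1)^i$, $s_0(s_1s_0)^i$, etc.) makes the verification slightly shorter than the reduction you propose via Lemma \ref{lmmInvolRep2}.
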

Note the last line in each of the definitions of $\rep_3$ makes sense because $\rep_3(\phi_{s_0^{-1}})$ is already defined lines before.

\begin{proof}
Let us first check that $\rep_3$, defined in the first way, is a section of $\mult_3$ (and hence, in particular, $\mult_3$ is surjective). Using the computations \eqref{eqFirstBigComputation}, \eqref{eqSecondBigComputation}, \eqref{eqThirdBigComputation} and \eqref{eqFourthBigComputation} in the proof of Proposition \ref{propAlmostGenByE1}, one obtains
\begin{align*}
	\bm{1} \cdot \bz{s_0(s_1s_0)^i} \cdot \bm{1}
	&=
	- \bm{s_0(s_1s_0)^i} \cupprod \bz{s_0(s_1s_0)^i} \cupprod \bp{s_0(s_1s_0)^i},
	\\
	\bp{1} \cdot \bz{s_1(s_0s_1)^i} \cdot \bp{1}
	&=
	- \bp{s_1(s_0s_1)^i} \cupprod \bz{s_1(s_0s_1)^i} \cupprod \bm{s_1(s_0s_1)^i},
	\\
	\bm{1} \cdot \bz{(s_0s_1)^i} \cdot \bp{1}
	&=
	\bm{(s_0s_1)^i} \cupprod \bz{(s_0s_1)^i} \cupprod \bp{(s_0s_1)^i},
	\\
	\bp{1} \cdot \bz{(s_1s_0)^i} \cdot \bm{1}
	&=
	\bp{(s_1s_0)^i} \cupprod \bz{(s_1s_0)^i} \cupprod \bm{(s_1s_0)^i}.
\end{align*}
Recalling from \eqref{eqCond2} that $\phi_w = \bm{w} \cupprod \bz{w} \cupprod \bp{w}$ for all $w \in \widetilde{W}$ such that $\ell(w) \geqslant 1$, we see that $\mult_3 \circ \rep_3$ is the identity map on $F^1 E^3$. For the last line in the definition of $\rep_3$ one uses instead the formula $(\tau_{s_0} + e_1) \cdot \phi_{s_0^{-1}} = \phi_1$ (immediate from \eqref{eqFormulasTopDegSL2}), and so we conclude that $\rep_3$ is indeed a section of $\mult_3$.

The claim about the second description of $\rep_3$ follows from the formulas of Lemma \ref{lmmDeg1GeneratedByTheFourElements}: for example, for all $i \in \zpiu$ and all $\omega \in \quoz{T^0}{T^1}$, we have
\begin{align*}
	- \bp{1} \otimes \bz{(s_1s_0)^i} \otimes \bm{1} \cdot \tau_\omega
	&=
	- \bp{1} \otimes \bz{s_1} \otimes (\tau_{s_0(s_1s_0)^{i-1}} \cdot \bm{1}) \cdot \tau_\omega
	\\&=
	\bp{1} \otimes \bz{s_1} \otimes \bp{s_0(s_1s_0)^{i-1}} \cdot \tau_\omega
	\\&=
	\bp{1} \otimes \bz{s_1} \otimes \bp{s_0(s_1s_0)^{i-1} \omega}.
\end{align*}

The fact that $\rep_3$ is a homomorphism of right $K[\quoz{T^0}{T^1}]$-modules is basically true by construction (using the formula $\phi_{w \omega} = \phi_w \cdot \tau_{\omega}$ for all $w \in \widetilde{W}$ and all $\omega \in \quoz{T^0}{T^1}$, see \eqref{eqFormulasTopDegSL2}).

The fact that $\rep_3$ is also a homomorphism of left $K[\quoz{T^0}{T^1}]$-modules can be proved exactly as in Lemma \ref{lmmBimodulesGpAlgebraFiniteTorus}, replacing the references to the formulas involving $E^2$ with those involving $E^3$ (namely, \eqref{eqFormulasTopDegSL2}).

The fact that the restriction of $\rep_3$ to $F^1E^3$ commutes with $\invol$ and $\Gamma_\varpi$ can be proved in the same way as in Lemmas \ref{lmmCommutesCCCN} and \ref{lmmInvolRep2}, of course replacing the references to the formulas for $E^2$ with the formulas for $E^3$: namely, \eqref{eqCCCNOnE3}, \eqref{eqInvolOnE3} and \eqref{eqFormulasTopDegSL2}. Actually, the first description of $\rep_3$ makes the computations for $\invol$ easier than the analogous proof for $\rep_2$.
\end{proof}

\begin{lmm}\label{lmmCommutesCCCNInvolDeg3}
The map
\[
\func
{\rep_3'}
{E^3}
{T_{E^0}^3 E^1 = E^1 \otimes_{E^0} E^1 \otimes_{E^0} E^1}
{\phi_w}
{\begin{cases}
	\rep_3(\phi_w)
	&\text{if $\ell(w) \geqslant 1$,}
	\\
	\begin{matrix*}[l]
	\scriptstyle \frac{1}{4} \big( \rep_3(\phi_w) + (\Gamma_\varpi \circ \rep_3 \circ \Gamma_\varpi)(\phi_w)
	\\\scriptstyle \quad + (\invol \circ \rep_3 \circ \invol)(\phi_w) + (\Gamma_\varpi \circ \invol \circ \rep_3 \circ \invol \circ \Gamma_\varpi)(\phi_w) \big)
	\\\scriptstyle \quad\quad=\frac{1}{4} \big( \rep_3(\phi_1) + \Gamma_\varpi(\rep_3(\phi_1))
	\\\scriptstyle \quad\quad\quad + \invol(\rep_3(\phi_1)) + \Gamma_\varpi (\invol(\rep_3(\phi_1))) \big) \cdot \tau_w
	\end{matrix*}
	&\text{if $\ell(w) = 0$.}
	\end{cases}}
\]
is well defined, is a section of the multiplication map $\mult_3$ as a homomorphism of $k \left[ \quoz{T^0}{T^1} \right]$-bimodules and commutes with $\Gamma_\varpi$ and with $\invol$.
\end{lmm}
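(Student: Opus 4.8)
The plan is to reduce the lemma to Lemma~\ref{lmmDefRep3} together with one elementary observation. First I would record the \textbf{key point}: the element $\rep_3(\phi_1)\in T^3_{E^0}E^1$ commutes with the subalgebra $k[\quoz{T^0}{T^1}]$. Indeed, $\rep_3$ is a homomorphism of $k[\quoz{T^0}{T^1}]$-bimodules (Lemma~\ref{lmmDefRep3}) and $\tau_\omega\cdot\phi_1=\phi_\omega=\phi_1\cdot\tau_\omega$ by \eqref{eqFormulasTopDegSL2}, hence $\tau_\omega\cdot\rep_3(\phi_1)=\rep_3(\phi_1)\cdot\tau_\omega$ for all $\omega\in\quoz{T^0}{T^1}$. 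Since $\invol$ and $\Gamma_\varpi$ permute the elements $\tau_\omega$ (by \eqref{eqInvolTauw} and \eqref{eqCCCNOnH}), it follows that $\invol(\rep_3(\phi_1))$, $\Gamma_\varpi(\rep_3(\phi_1))$ and $\Gamma_\varpi(\invol(\rep_3(\phi_1)))$ also commute with $k[\quoz{T^0}{T^1}]$.

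Next I would prove well-definedness, i.e.\ that the two displayed formulas for $\rep_3'(\phi_w)$ with $\ell(w)=0$ agree. For each of the four composites $X\in\{\rep_3,\ \Gamma_\varpi\rep_3\Gamma_\varpi,\ \invol\rep_3\invol,\ \Gamma_\varpi\invol\rep_3\invol\Gamma_\varpi\}$ and each $\omega\in\quoz{T^0}{T^1}$, I claim $X(\phi_\omega)=X(\phi_1)\cdot\tau_\omega$. For $X=\rep_3$ this is just right $k[\quoz{T^0}{T^1}]$-linearity; the three remaining cases use in addition $\Gamma_\varpi(\phi_\omega)=\phi_{\varpi\omega\varpi^{-1}}$, $\invol(\phi_\omega)=\phi_{\omega^{-1}}$, the formulas \eqref{eqInvolTauw} and \eqref{eqCCCNOnH}, and — crucially, for the two composites containing $\invol$ — the commutation from the key point (which converts the left multiplication by $\tau_\omega$ produced by the anti-automorphism into the required right multiplication). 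Since moreover $X(\phi_1)$ is exactly the corresponding summand of the second displayed formula, summing these four identities and dividing by $4$ (legitimate since $\charr k=p\neq 2$) yields precisely the claimed equality of the two formulas.

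For the remaining assertions it is convenient to record the uniform description: $\rep_3'$ equals $\rep_3$ on $F^1E^3$ and $\tfrac14\sum_{g\in\mathcal G}g\cdot\rep_3$ on the length-zero summand, where $\mathcal G=\{\id,\Gamma_\varpi,\invol,\Gamma_\varpi\invol\}$ acts on $k$-linear maps $E^3\to T^3_{E^0}E^1$ by $g\cdot f=g\circ f\circ g^{-1}$ (here $\Gamma_\varpi,\invol$ restrict to $k$-linear involutions of the degree-$3$ pieces, they commute by \eqref{eqCCCNInvolCommute}, so $\mathcal G$ is a Klein four-group of order invertible in $k$; and both preserve the decomposition $E^3=F^1E^3\oplus(\text{length-zero part})$, since conjugation by $\varpi$ and $w\mapsto w^{-1}$ preserve lengths). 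Then: (i) $\mult_3$ commutes with $\Gamma_\varpi$ and $\invol$ by \eqref{eqMultiplicationAndCCCNAndInvol} and $\mult_3\circ\rep_3=\id$ (Lemma~\ref{lmmDefRep3}), hence $\mult_3\circ(g\cdot\rep_3)=g\circ(\mult_3\circ\rep_3)\circ g^{-1}=\id$ for each $g$, so $\mult_3\circ\rep_3'=\id$ on the length-zero part and trivially on $F^1E^3$, i.e.\ $\rep_3'$ is a section; (ii) the average $\tfrac14\sum_g g\cdot\rep_3$ is $\mathcal G$-equivariant by the usual reindexing argument, and on $F^1E^3$ the equivariance of $\rep_3'=\rep_3$ is the last part of Lemma~\ref{lmmDefRep3}, so on the two stable summands $\Gamma_\varpi\circ\rep_3'=\rep_3'\circ\Gamma_\varpi$ and $\invol\circ\rep_3'=\rep_3'\circ\invol$; (iii) $\rep_3'$ is a $k[\quoz{T^0}{T^1}]$-bimodule homomorphism: on $F^1E^3$ by Lemma~\ref{lmmDefRep3}, and on the length-zero part because $X(\phi_\nu)=X(\phi_1)\tau_\nu$ together with the commutation of each $X(\phi_1)$ with $k[\quoz{T^0}{T^1}]$ gives $\rep_3'(\tau_{\omega'}\cdot\phi_\nu\cdot\tau_{\omega''})=\tau_{\omega'}\cdot\rep_3'(\phi_\nu)\cdot\tau_{\omega''}$, which is enough since multiplication by the $\tau_\omega$ preserves lengths.

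I do not expect a real obstacle: all of the substantive work — that $\rep_3$ is a vector-space section, a $k[\quoz{T^0}{T^1}]$-bimodule map, and intertwines $\Gamma_\varpi$ and $\invol$ on $F^1E^3$ — is already contained in Lemma~\ref{lmmDefRep3}, and the rest is the formal averaging bookkeeping over $\mathcal G$. The only genuinely new ingredient is the agreement of the two formulas in the definition of $\rep_3'$, whose entire content is the one-line observation that $\rep_3(\phi_1)$ commutes with $k[\quoz{T^0}{T^1}]$.
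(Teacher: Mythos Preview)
Your proof is correct and follows essentially the same averaging argument as the paper: both reduce everything to Lemma~\ref{lmmDefRep3} and the formal properties of the Klein four-group $\{\id,\Gamma_\varpi,\invol,\Gamma_\varpi\invol\}$. The only difference is organizational: the paper first establishes the section, bimodule, and equivariance properties from the first formula (checking that each conjugate $g\circ\rep_3\circ g^{-1}$ is itself a $k[\quoz{T^0}{T^1}]$-bimodule map) and only afterwards deduces the second formula from the right $k[\quoz{T^0}{T^1}]$-linearity of $\rep_3'$ together with $\Gamma_\varpi(\phi_1)=\invol(\phi_1)=\phi_1$, whereas you isolate the commutation $\tau_\omega\cdot\rep_3(\phi_1)=\rep_3(\phi_1)\cdot\tau_\omega$ up front and use it to prove the equality of the two formulas directly.
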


\begin{proof}
Let us consider
\begin{align*}
	F^1E^3
	&\defeq
	\bigoplus_{\mathclap{\substack{w \in \widetilde{W} \\\text{s.t. } \ell(w) \geqslant 1}}} H^3(I,\xx(w)),
\\
	F_0E^3
	&\defeq
	\bigoplus_{\mathclap{\omega \in \quoz{T^0}{T^1}}} H^3(I,\xx(\omega)).
\end{align*}
The fact that $\restr{\rep_3'}{F^1E^3}$ is a homomorphism of $k[\quoz{T^0}{T^1}]$-bimodules, the fact that $\mult_3 \circ \restr{\rep_3'}{F^1E^3}$ is the identity and the fact that $\restr{\rep_3'}{F^1E^3}$ commutes with $\Gamma_\varpi$ and $\invol$ are all clear from the previous lemma, and we now have to study what happens on $F_0E^3$.

We will check in the end that the two definitions of $\rep_3'$ on $F_0E^3$ coincide, but for the moment we stick to the first definition: using that $\mult_3 \circ \restr{\rep_3}{F_0E^3}$ is the identity and that $\mult_3$ commutes with $\Gamma_\varpi$ and $\invol$, we see that $\mult_3 \circ \restr{\rep_3'}{F_0E^3}$ is the identity. Let $\omega \in \quoz{T^0}{T^1}$. Since $\restr{\rep_3}{F_0E^3}$ is a homomorphism of $k[\quoz{T^0}{T^1}]$-bimodules, since $\Gamma_\varpi$ transforms the left (respectively, right) action of $\tau_\omega$ into the left (respectively, right) action by $\tau_{\omega^{-1}}$ and since $\invol$ transforms the left (respectively, right) action of $\tau_\omega$ into the right (respectively, left) action by $\tau_{\omega^{-1}}$, we conclude that $\restr{\rep_3'}{F_0E^3}$ is a homomorphism of $k[\quoz{T^0}{T^1}]$-bimodules (also using \eqref{eqFormulasTopDegSL2}).

The fact that $\restr{\rep_3'}{F_0E^3}$ commutes with $\Gamma_\varpi$ and $\invol$ is clear from the construction and the fact that $\Gamma_\varpi$ and $\invol$ commute by \eqref{eqCCCNInvolCommute} (on $E^\ast$ and hence on $T^\ast_{E^0} E^1$).

For $\omega \in \quoz{T^0}{T^1}$, the alternative description
\[
	\rep_3'(\phi_\omega)
	=
	\frac{1}{4} \Big( \rep_3(\phi_1) + \Gamma_\varpi(\rep_3(\phi_1))
	+ \invol(\rep_3(\phi_1)) + \Gamma_\varpi (\invol(\rep_3(\phi_1))) \Big) \cdot \tau_\omega
\]
is clear from the fact that $\restr{\rep_3'}{F_0E^3}$ is a homomorphism of right $k[\quoz{T^0}{T^1}]$-modules, together with the fact that both $\Gamma_\varpi$ and $\invol$ fix $\phi_1$ (see \eqref{eqCCCNOnE3} and \eqref{eqInvolOnE3}).
\end{proof}

\begin{rem}\label{remRep3Phi1Explicit}
Using the formulas for the action of $\Gamma_\varpi$ and $\invol$ on $E^1$ (respectively, \eqref{eqCCCNOnE1} and \eqref{eqInvolOnE1}) one readily computes explicitly the four summands in the definition of $\rep_3'(\phi_1)$:
\begin{align*}
	\rep_3(\phi_1)
	&=
	- (\tau_{s_0} + e_1) \cdot \bm{1} \otimes \bz{s_0^{-1}} \otimes \bm{1},
\\	
	\Gamma_\varpi(\rep_3(\phi_1))
	&=
	(\tau_{s_1} + e_1) \cdot \bp{1} \otimes \bz{s_1^{-1}} \otimes \bp{1},
\\
	\invol(\rep_3(\phi_1))
	&=
	- \bm{1} \otimes \bz{s_0} \otimes \bm{1} \cdot (\tau_{s_0^{-1}} + e_1),
\\
	\Gamma_\varpi (\invol(\rep_3(\phi_1)))
	&=
	\bp{1} \otimes \bz{s_1} \otimes \bp{1} \cdot (\tau_{s_1^{-1}} + e_1).
\end{align*}
\end{rem}

\begin{lmm}\label{lmmTwoCongruences}
Let $K_{2,3}$ be the sub-$E^0$-bimodule of $T_{E^0}^3 E^1$ generated by the kernel of the degree $2$ multiplication map, i.e.,
\[
	K_{2,3} \defeq \overline{\ker(\mult_2) \otimes_{E^0} E^1} + \overline{E^1 \otimes_{E^0} \ker(\mult_2)} \subseteq T_{E^0}^3 E^1,
\]
where $\overline{(?)}$ denotes the image of $(?)$ in $T_{E^0}^3 E^1$. One has the following congruences:
\begin{align*}
	\Gamma_\varpi (\invol(\rep_3(\phi_1))) &\equiv \rep_3(\phi_1) \mod K_{2,3},
	\\
	\invol(\rep_3(\phi_1)) &\equiv \Gamma_\varpi(\rep_3(\phi_1)) \mod K_{2,3}.
\end{align*}
\end{lmm}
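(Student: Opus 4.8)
The plan is to reduce the second congruence to the first, and then to prove the first by an explicit reduction modulo $K_{2,3}$ starting from the formulas of Remark~\ref{remRep3Phi1Explicit}.

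First I would observe that the second congruence follows from the first. By Proposition~\ref{propKerDeg2} one has $\ker(\mult_2)=K_2$, which is stable under $\invol$ and $\Gamma_\varpi$ by Lemma~\ref{lmmK2Invariant}. Since $\Gamma_\varpi$ preserves the order of the tensor factors in $T_{E^0}^3 E^1$ while $\invol$ reverses it, the anti-automorphism $\invol$ interchanges $\overline{\ker(\mult_2)\otimes_{E^0}E^1}$ with $\overline{E^1\otimes_{E^0}\ker(\mult_2)}$, whereas $\Gamma_\varpi$ fixes each of them; hence $\invol(K_{2,3})=\Gamma_\varpi(K_{2,3})=K_{2,3}$. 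Applying $\invol$ to the first congruence and using $\invol^2=\id$ together with $\invol\circ\Gamma_\varpi=\Gamma_\varpi\circ\invol$ (see \eqref{eqCCCNInvolCommute}) then yields $\Gamma_\varpi(\rep_3(\phi_1))\equiv\invol(\rep_3(\phi_1))\bmod K_{2,3}$, which is the second congruence. So it suffices to prove the first.

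For the first congruence the main tool is a reduction principle. By right-exactness of $({}_-)\otimes_{E^0}E^1$ and of $E^1\otimes_{E^0}({}_-)$ together with $\ker(\mult_2)=K_2$, the quotient map $T_{E^0}^3 E^1\twoheadrightarrow T_{E^0}^3 E^1/K_{2,3}$ factors through both $\mult_2\otimes\id$ and $\id\otimes\mult_2$; concretely, for every $x\in E^1\otimes_{E^0}E^1$ one has $x-\rep_2(\mult_2(x))\in K_2$ by Proposition~\ref{propKerDeg2}, so modulo $K_{2,3}$ one may replace the first two (or the last two) tensor factors of an element of $T_{E^0}^3 E^1$ by a representative of $\rep_2(\mult_2({}_-))$ of them, and more generally one may add any $E^0$-bimodule multiple of one of the explicit generators of $K_2$ from Remark~\ref{remDefCandidateKernel2}, inserted into the first two or the last two positions. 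Starting from the formulas of Remark~\ref{remRep3Phi1Explicit},
\[
	\rep_3(\phi_1)=-(\tau_{s_0}+e_1)\cdot\bm{1}\otimes\bz{s_0^{-1}}\otimes\bm{1},\qquad
	\Gamma_\varpi(\invol(\rep_3(\phi_1)))=\bp{1}\otimes\bz{s_1}\otimes\bp{1}\cdot(\tau_{s_1^{-1}}+e_1),
\]
I would reduce both sides to a common normal form modulo $K_{2,3}$: expand $(\tau_{s_0}+e_1)\cdot\bm{1}$ and $\bp{1}\cdot(\tau_{s_1^{-1}}+e_1)$ using the braid and quadratic relations \eqref{eqQuadrRelSL2} and the action formulas \eqref{eqFormulasTauQp}, \eqref{eqFormulaRighteasyQp}; move the resulting scalars from $k[\quoz{T^0}{T^1}]$ across the tensor product using the idempotent-commutation formulas \eqref{eqIdempotentsE1}, \eqref{eqFormulasIdempotentsLeftRight} and the balancedness of $\otimes_{E^0}$; compute the degree-$2$ products that appear via \eqref{eqCupYoneda} and \eqref{eqFormulaCupUniform} (for all of them the relevant lengths add up) so that $\rep_2$ can be read off from Lemma~\ref{lmmDefRep2}; and cancel the remaining discrepancies using the generators $\bp{s_0}\otimes\bz{s_0}+\bz{s_0}\otimes\bm{s_0}$ and $\bm{s_1}\otimes\bz{s_1}+\bz{s_1}\otimes\bp{s_1}$ of $K_2$ (and, if needed, the two quadratic generators) placed in the correct pair of slots. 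It is convenient to note first, from the definition of $\rep_3$ in Lemma~\ref{lmmDefRep3}, its commutation with $\invol$ and $\Gamma_\varpi$ on $F^1E^3$, and \eqref{eqInvolOnE3}, \eqref{eqCCCNOnE3}, \eqref{eqInvolTauw}, \eqref{eqCCCNOnH}, that $\rep_3(\phi_1)=(\tau_{s_0}+e_1)\cdot\rep_3(\phi_{s_0^{-1}})$ and $\Gamma_\varpi(\invol(\rep_3(\phi_1)))=\rep_3(\phi_{s_1})\cdot(\tau_{s_1^{-1}}+e_1)$ with $\phi_{s_0^{-1}}, \phi_{s_1}\in F^1E^3$, which makes the two reductions visibly parallel.

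The hard part will be this last computation rather than any conceptual point: keeping track of how the factors of $k[\quoz{T^0}{T^1}]$ travel across the three tensor slots, and recognising the correct $E^0$-bimodule combinations of generators of $K_2$ in the leftover terms. A useful sanity check at every stage is that $\mult_3$ must send both sides to $\phi_1$ — this follows from $\mult_3\circ\rep_3=\id$ and the compatibility of $\mult_3$ with $\invol$ and $\Gamma_\varpi$ (see \eqref{eqMultiplicationAndCCCNAndInvol}) — so any manipulation that alters $\mult_3$ of a side signals an error.
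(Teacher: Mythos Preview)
Your reduction of the second congruence to the first is correct; the paper does essentially the same thing but applies $\Gamma_\varpi$ rather than $\invol$ to the first congruence (either works, since $K_{2,3}$ is stable under both).

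For the first congruence, however, what you have written is a plan, not a proof, and the plan as stated does not isolate the step that actually makes the computation close. Repeatedly replacing a pair of adjacent factors by $\rep_2(\mult_2({}_-))$ of them does not by itself simplify anything: $\rep_2(\mult_2(x))$ is again an element of $E^1\otimes_{E^0}E^1$, and you have not specified a normal form toward which such rewriting converges. The generators of $K_2$ you single out ($\bp{s_0}\otimes\bz{s_0}+\bz{s_0}\otimes\bm{s_0}$ and its $\Gamma_\varpi$-partner) are in fact not the ones the paper uses.

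The paper's argument is short and specific. One first computes inside $E^1$ (not inside $E^1\otimes_{E^0}E^1$):
\[
	(\tau_{s_0}+e_1)\cdot\bm{1}\cdot\tau_{s_0}
	=
	-2e_{\idd}\bz{s_0}+e_{\idd^2}\bp{s_0}-\bp{c_{-1}}.
\]
Tensoring on the right by $\bz{s_1}$ and using that $\bz{s_0}\otimes\bz{s_1}$ and $\bp{s_0}\otimes\bz{s_1}=-\tau_{s_0}\bm{1}\otimes\bz{s_1}$ lie in $\ker(\mult_2)$ gives
\[
	\big((\tau_{s_0}+e_1)\cdot\bm{1}\cdot\tau_{s_0}\big)\otimes\bz{s_1}\;\equiv\;-\,\bp{1}\otimes\bz{s_1^{-1}}\pmod{\ker(\mult_2)}.
\]
Applying $\Gamma_\varpi$ and then $\invol$ (both preserve $\ker(\mult_2)$) yields the mirror relation
\[
	\bz{s_0}\otimes\big(\tau_{s_1}\cdot\bp{1}\cdot(\tau_{s_1}+e_1)\big)\;\equiv\;-\,\bz{s_0^{-1}}\otimes\bm{1}\pmod{\ker(\mult_2)}.
\]
These two degree-$2$ congruences are then chained: starting from $\Gamma_\varpi(\invol(\rep_3(\phi_1)))=\bp{1}\otimes\bz{s_1^{-1}}\otimes\bp{1}\,(\tau_{s_1}+e_1)$, use the first relation on the left pair of factors, then the balancing identity $\tau_{s_0}\cdot\bz{s_1}=-\bz{s_0}\cdot\tau_{s_1}$ (from \eqref{eqFormulasTauQp} and \eqref{eqFormulaRighteasyQp}) to shift the middle factor, and finally the second relation on the right pair, arriving exactly at $\rep_3(\phi_1)$. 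The whole thing is a few lines; no normal form and no appeal to $\rep_2$ is needed.
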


\begin{proof}
First of all we note that the second congruence follows from the first one, since $K_{2,3}$ is $\Gamma_\varpi$-invariant (which is true because the multiplication map commutes with $\Gamma_\varpi$, see \eqref{eqMultiplicationAndCCCNAndInvol}). To show the first congruence, we first compute a couple of useful equalities and congruences: we have
\begin{align*}
	(\tau_{s_0} + e_1) \cdot \bm{1} \cdot \tau_{s_0}
	&=
	(\tau_{s_0} + e_1) \cdot \bm{s_0}
	\\&=
	- 2 e_{\idd} \bz{s_0} + e_{\idd^2} \bp{s_0} - \bp{c_{-1}},
\end{align*}
by \eqref{eqFormulaRighteasyQp} and \eqref{eqTauS0OnE1LeftNotAddUp}.
Since both $\bz{s_0} \otimes \bz{s_1}$ and $\bp{s_0} \otimes \bz{s_1} = - \tau_{s_0} \cdot \bm{1} \otimes \bz{s_1}$ lie in $\ker(\mult_2)$ (see, e.g., Lemma \ref{lmmSomeElementsInTheKer} and see \eqref{eqTauS0OnE1LeftNotAddUp} for the claimed equality), tensoring both sides of the last equality by $\bz{s_1}$ on the right, we get the congruence
\begin{equation}\label{eqCongOne}
	((\tau_{s_0} + e_1) \cdot \bm{1} \cdot \tau_{s_0}) \otimes \bz{s_1}
	\equiv
	- \bp{1} \otimes \bz{s_1^{-1}}
	\mod \ker(\mult_2)
\end{equation}
(also using the formulas for the left and right action of $\tau_{c_{-1}}$ on $E^1$: see \eqref{eqOmegaOnE1Left} and \eqref{eqOmegaOnE1Right}).
Now we apply $\Gamma_\varpi$ to the last congruence, obtaining again a congruence since $\ker(\mult_2)$ is $\Gamma_\varpi$-invariant:
\[
	- ((\tau_{s_1} + e_1) \cdot \bp{1} \cdot \tau_{s_1}) \otimes \bz{s_0}
	\equiv
	\bm{1} \otimes \bz{s_0^{-1}}
	\mod \ker(\mult_2),
\]
where we have used the formulas for the action of $\Gamma_\varpi$ on $E^0$ and $E^1$ (see \eqref{eqCCCNOnH} and \eqref{eqCCCNOnE2}).
Now we apply $\invol$ (or rather $-\invol$) to the last congruence, obtaining again a congruence since $\ker(\mult_2)$ is $\invol$-invariant (by \eqref{eqMultiplicationAndCCCNAndInvol}):
\[
	- \invol(\bz{s_0}) \otimes \invol\big((\tau_{s_1} + e_1) \cdot \bp{1} \cdot \tau_{s_1}\big)
	\equiv
	\invol(\bz{s_0^{-1}}) \otimes \invol(\bm{1})
	\mod \ker(\mult_2),
\]
i.e.,
\[
	\bz{s_0^{-1}} \otimes (\tau_{s_1^{-1}} \cdot \bp{1} \cdot (\tau_{s_1^{-1}} + e_1))
	\equiv
	- \bz{s_0} \otimes \bm{1}
	\mod \ker(\mult_2),
\]
using the formulas for the action of $\invol$ on $E^0$ and $E^1$ (see \eqref{eqInvolTauw} and \eqref{eqInvolOnE1}). Equivalently, using the formulas for the action of $\tau_{c_{-1}}$ (\eqref{eqOmegaOnE1Left} and \eqref{eqOmegaOnE1Right}),
\begin{equation}\label{eqCongTwo}
	\bz{s_0} \otimes (\tau_{s_1} \cdot \bp{1} \cdot (\tau_{s_1} + e_1))
	\equiv
	- \bz{s_0^{-1}} \otimes \bm{1}
	\mod \ker(\mult_2),
\end{equation}
We now get the desired congruence $\Gamma_\varpi (\invol(\rep_3(\phi_1))) \equiv \rep_3(\phi_1)$ modulo $K_{2,3}$, by putting together \eqref{eqCongOne} and \eqref{eqCongTwo}: indeed denoting by $\equiv$ congruences modulo $K_{2,3}$, we have
\begin{align*}
	&\Gamma_\varpi (\invol(\rep_3(\phi_1)))
	\\&\quad=
	\bp{1} \otimes \bz{s_1} \otimes \bp{1} \cdot (\tau_{s_1^{-1}} + e_1)
	&&\text{by Remark \ref{remRep3Phi1Explicit}}
	\\&\quad=
	\bp{1} \otimes \bz{s_1} \otimes \bp{1} \cdot \tau_{c_{-1}} \cdot (\tau_{s_1} + e_1)
	\\&\quad=
	\bp{1} \otimes \bz{s_1^{-1}} \otimes \bp{1} \cdot (\tau_{s_1} + e_1)
	&&\text{by \eqref{eqOmegaOnE1Left} and \eqref{eqOmegaOnE1Right}}
	\\&\quad\equiv
	- ((\tau_{s_0} + e_1) \cdot \bm{1} \cdot \tau_{s_0}) \otimes \bz{s_1} \otimes \bp{1} \cdot (\tau_{s_1} + e_1)
	&&\text{by \eqref{eqCongOne}}
	\\&\quad=
	(\tau_{s_0} + e_1) \cdot \bm{1} \otimes \bz{s_0} \otimes (\tau_{s_1} \cdot \bp{1} \cdot (\tau_{s_1} + e_1))
	&&\text{by \eqref{eqFormulasTauQp} and \eqref{eqFormulaRighteasyQp}}
	\\&\quad\equiv
	- (\tau_{s_0} + e_1) \cdot \bm{1} \otimes \bz{s_0^{-1}} \otimes \bm{1}
	&&\text{by \eqref{eqCongTwo}}
	\\&\quad=
	\rep_3(\phi_1).
&&\qedhere
\end{align*}
\end{proof}

\begin{prop}\label{propKerDeg3}
As in the last lemma, let $K_{2,3}$ be the sub-$E^0$-bimodule of $T_{E^0}^3 E^1$ generated by the kernel of the degree $2$ multiplication map, i.e.,
\[
	K_{2,3} \defeq \overline{\ker(\mult_2) \otimes_{E^0} E^1} + \overline{E^1 \otimes_{E^0} \ker(\mult_2)} \subseteq T_{E^0}^3 E^1,
\]
where $\overline{(?)}$ denotes the image of $(?)$ in $T_{E^0}^3 E^1$. Furthermore, let $K_{\operatorname{extra},3}$ be the sub-left-$k \left[ \quoz{T^0}{T^1} \right]$-module generated by the following element:
\begin{align*}
	&\Gamma_\varpi(\rep_3(\phi_1)) - \rep_3(\phi_1)
	\\&\qquad=
	(\tau_{s_1} + e_1) \cdot \bp{1} \otimes \bz{s_1^{-1}} \otimes \bp{1}
	+
	(\tau_{s_0} + e_1) \cdot \bm{1} \otimes \bz{s_0^{-1}} \otimes \bm{1}.	
\end{align*}
One has that the kernel $\ker(\mult_3)$ of the multiplication map in degree $3$ coincides with $K_{2,3} + K_{\operatorname{extra},3}$. Furthermore, $K_{\operatorname{extra},3}$ is also the sub-right-$k \left[ \quoz{T^0}{T^1} \right]$-module generated by the above element.
\end{prop}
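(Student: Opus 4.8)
The plan is to follow the abstract strategy of \S\ref{subsecStrategy}, exactly as in the proof of Proposition \ref{propKerDeg2}, with the section $\rep_3'$ of Lemma \ref{lmmCommutesCCCNInvolDeg3} playing the role that $\rep_2$ played in degree $2$. Set $K_3 \defeq K_{2,3} + K_{\operatorname{extra},3}$.

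First I would check the easy inclusion $K_3 \subseteq \ker(\mult_3)$. The part $K_{2,3} \subseteq \ker(\mult_3)$ is immediate, since $\mult_3$ factors through $\mult_2$ applied to an adjacent pair of tensor factors and $\ker(\mult_2) = K_2$ by Proposition \ref{propKerDeg2}. The part $K_{\operatorname{extra},3} \subseteq \ker(\mult_3)$ follows because $\mult_3$ commutes with $\Gamma_\varpi$ (diagram \eqref{eqMultiplicationAndCCCNAndInvol}) and $\Gamma_\varpi(\phi_1) = \phi_1$ by \eqref{eqCCCNOnE3}, so that $\mult_3\big(\Gamma_\varpi(\rep_3(\phi_1)) - \rep_3(\phi_1)\big) = \phi_1 - \phi_1 = 0$, and $\mult_3$ is left $E^0$-linear. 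In the same spirit I would prove the ``furthermore'': writing $v$ for the generator of $K_{\operatorname{extra},3}$ in the explicit form of Remark \ref{remRep3Phi1Explicit}, and using the commutation formulas \eqref{eqOmegaOnE1Left} and \eqref{eqOmegaOnE1Right} for $\tau_\omega$ ($\omega \in \quoz{T^0}{T^1}$) on $E^1$, one transports $\tau_\omega$ from one end of each simple tensor to the other (the group $\quoz{T^0}{T^1}\cong\ff_p^\times$ is abelian and acts on the relevant basis elements of $E^1$ through powers of $\idd$), so that $v\cdot\tau_\omega$ is a scalar multiple of $\tau_{\omega^{\pm 1}}\cdot v$ and vice versa; hence the left and the right $k[\quoz{T^0}{T^1}]$-module generated by $v$ coincide. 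I would then record that $K_3$ is $\Gamma_\varpi$- and $\invol$-invariant: $K_{2,3}$ is, because $\mult_2$ commutes with both maps (cf.\ Lemma \ref{lmmK2Invariant}), $\invol$ interchanging the two summands of $K_{2,3}$ and $\Gamma_\varpi$ preserving each; and for $K_{\operatorname{extra},3}$ one has $\Gamma_\varpi(v) = -v$ at once, while Lemma \ref{lmmTwoCongruences} together with \eqref{eqCCCNInvolCommute} gives $\invol(v) \equiv -v \pmod{K_{2,3}}$, so $\invol(K_{\operatorname{extra},3}) \subseteq K_3$ using that $K_{\operatorname{extra},3}$ is two-sided.

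The heart of the argument is the reverse inclusion $\ker(\mult_3) \subseteq K_3$, for which I would show that the induced map $\overline{\mult_3}\colon \quoz{(T_{E^0}^3E^1)}{K_3}\to E^3$ is an isomorphism by producing an $E^0$-bimodule section. Let $q$ denote the quotient map. First note that $q\circ\rep_3' = q\circ\rep_3$: the two sections agree on $F^1E^3$ by the definition of $\rep_3'$, and on the length-zero part Lemma \ref{lmmTwoCongruences} together with the defining formula for $\rep_3'(\phi_1)$ gives $\rep_3'(\phi_1) \equiv \tfrac{1}{2}\big(\rep_3(\phi_1)+\Gamma_\varpi(\rep_3(\phi_1))\big) \equiv \rep_3(\phi_1)$, the first congruence modulo $K_{2,3}$ and the second modulo $K_{\operatorname{extra},3}$, the general length-zero case then following since both $\rep_3$ and $\rep_3'$ are right $k[\quoz{T^0}{T^1}]$-linear. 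Next I would prove, mimicking Lemma \ref{lmmLeftModules}, that $\overline{\rep_3}\defeq q\circ\rep_3'$ is a homomorphism of $E^0$-bimodules: since $\rep_3'$ is already a $k[\quoz{T^0}{T^1}]$-bimodule map and $\tau_{s_0},\tau_{s_1}$ together with $k[\quoz{T^0}{T^1}]$ generate $E^0$, it suffices to check the congruences $\rep_3'(\tau_{s_i}\cdot\phi_w) \equiv \tau_{s_i}\cdot\rep_3'(\phi_w) \pmod{K_3}$ for $i\in\{0,1\}$, to deduce the right-hand analogue by applying $\invol$ (using that $K_3$ is $\invol$-invariant and that $q\circ\rep_3'$ is $\invol$-equivariant modulo $K_3$), and to reduce via $\Gamma_\varpi$ to the cases $w = s_1v$ with $\ell(s_1v) = \ell(v)+1$ and $w = \omega$ of length zero. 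These are verified by direct computation from the action formulas \eqref{eqFormulasTopDegSL2} on $E^3$ and \eqref{eqFormulasTauQp}, \eqref{eqFormulaRighteasyQp}, \eqref{eqTauS1OnE1LeftNotAddUp} on $E^1$ (most terms vanishing because $\tau_{s_0}\cdot\bp{w} = 0$ and $\tau_{s_1}\cdot\bm{w} = 0$ when lengths add up); in the length-zero case one reduces to the previous one through $\rep_3(\phi_\omega) = (\tau_{s_0}+e_1)\cdot\rep_3(\phi_{s_0^{-1}\omega})$, the quadratic relation $\tau_{s_0}^2 = -e_1\tau_{s_0}$, and the congruence $\Gamma_\varpi(\rep_3(\phi_1)) \equiv \rep_3(\phi_1) \pmod{K_{\operatorname{extra},3}}$ --- which is precisely where the summand $K_{\operatorname{extra},3}$ is needed. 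Finally, $\overline{\rep_3}$ being an $E^0$-bimodule section of $\overline{\mult_3}$, the identity $\overline{\rep_3}\circ\overline{\mult_3} = \id$ may be checked on a set of $E^0$-bimodule generators of $T_{E^0}^3E^1$ --- obtained by tensoring the generators of $E^1\otimes_{E^0}E^1$ from Lemma \ref{lmmGeneratorsT2} once more with the bimodule generators $\bm{1},\bp{1},\bz{s_0},\bz{s_1}$ of $E^1$ and simplifying --- where each generator $m$ satisfies $m - \rep_3'(\mult_3(m))\in K_3$: the ``degree-$2$-type'' generators land in $K_{2,3}$ by the relations of Remark \ref{remDefCandidateKernel2} in an adjacent slot, and the remaining ones (those reducing to the length-zero part of $E^3$) again by the $K_{\operatorname{extra},3}$-relation.

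I expect the main obstacle to be the bimodule-homomorphism verification just described, i.e.\ the congruences $\rep_3'(\tau_{s_i}\cdot\phi_w)\equiv\tau_{s_i}\cdot\rep_3'(\phi_w)$: this is a lengthy case analysis, entirely parallel to Lemma \ref{lmmLeftModules} but now with three tensor factors, and it is exactly the point where one is forced to enlarge $K_{2,3}$ by $K_{\operatorname{extra},3}$.
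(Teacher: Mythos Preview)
Your overall plan is sound, but it diverges from the paper's proof in a way that creates a genuine gap.

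The paper does \emph{not} follow the abstract strategy of \S\ref{subsecStrategy} here. Instead of trying to show that $q\circ\rep_3'$ is an $E^0$-bimodule map modulo $K_3$, it sets $V \defeq K_{2,3} + K_{\operatorname{extra},3} + \image(\rep_3')$ and proves directly that $V = T_{E^0}^3 E^1$ by case analysis on simple tensors in $T^3$: one writes $x = y\otimes z$, replaces $z$ by $\rep_2(\mult_2(z))$ modulo $K_{2,3}$, reads off the explicit form of $\rep_2$, reduces by $\Gamma_\varpi$-symmetry to elements of the shapes $y\otimes u\otimes\bm{1}$ and $y\otimes u\otimes\bz{s_0}$, and handles those by further reductions and by the congruence $\rep_3(\phi_1)\equiv\rep_3'(\phi_1)$ modulo $K_3$. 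From $V=T^3$ and $K_3\subseteq\ker(\mult_3)$ one concludes $K_3=\ker(\mult_3)$.

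Your route, by contrast, needs the quotient $T^3/K_3$ to carry an $E^0$-bimodule structure in order even to speak of $q\circ\rep_3'$ being an $E^0$-bimodule homomorphism; equivalently, you need $K_3$ to be an $E^0$-subbimodule of $T^3$. But $K_3 = K_{2,3}+K_{\operatorname{extra},3}$ is, a priori, only the sum of an $E^0$-bimodule and a $k[T^0/T^1]$-module. Concretely: by the quadratic relation one has $\tau_{s_1}\cdot\Gamma_\varpi(\rep_3(\phi_1))=0$, so $\tau_{s_1}\cdot\big(\Gamma_\varpi(\rep_3(\phi_1))-\rep_3(\phi_1)\big)=-\tau_{s_1}\cdot\rep_3(\phi_1)=\bm{s_1s_0}\otimes\bz{s_0^{-1}}\otimes\bm{1}$, and one must show this lies in $K_3$ \emph{before} the bimodule argument gets off the ground. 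Your sketch does not address this; the sentence ``in the length-zero case one reduces \dots\ via $\Gamma_\varpi(\rep_3(\phi_1))\equiv\rep_3(\phi_1)\pmod{K_{\operatorname{extra},3}}$'' is circular at exactly this point, since applying $\tau_{s_1}$ to that congruence presupposes $\tau_{s_1}\cdot K_{\operatorname{extra},3}\subseteq K_3$. The paper's approach sidesteps the issue entirely: it never needs $K_3$ to be a bimodule as input, only that $V$ is a $k[T^0/T^1]$-bimodule invariant under $\Gamma_\varpi$ and $\invol$, and these are established in preliminary observations before the case analysis.

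Your argument for the ``furthermore'' clause and for the invariance of $K_3$ under $\Gamma_\varpi$ and $\invol$ is essentially the same as the paper's; what differs is the core of the proof.
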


\begin{proof}
The last claim is easy to see using that $\rep_3$ is a homomorphism of $k \left[ \quoz{T^0}{T^1} \right]$-bimodules (Lemma \ref{lmmCommutesCCCNInvolDeg3}), that $\Gamma_\varpi$ transforms multiplication by $\tau_\omega$ into multiplication by $\tau_{\omega^{-1}}$ and that $k \left[ \quoz{T^0}{T^1} \right]$ centralizes $\phi_1$.

We now turn to the proof of the main claim. Let us define
\[
	V \defeq K_{2,3} + K_{\operatorname{extra},3} + \image(\rep_3') \subseteq T_{E^0}^3 E^1,
\]
where the map $\rep_3'$ was defined in Lemma \ref{lmmCommutesCCCNInvolDeg3}.
To prove the proposition, it suffices to show that $V = T_{E^0}^3 E^1$ (i.e., the inclusion from right to left): indeed assuming this equality, we obtain
\[
	K_{2,3} + K_{\operatorname{extra},3} + \image(\rep_3') = \ker(\mult_3) \oplus  \image(\rep_3'),
\]
and this, together with the \virgolette{easy inclusion} $K_{2,3} + K_{\operatorname{extra},3} \subseteq \ker(\mult_3)$, shows that we have $K_{2,3} + K_{\operatorname{extra},3} = \ker(\mult_3)$, as we wanted (for the inclusion $K_{\operatorname{extra},3} \subseteq \ker(\mult_3)$, recall from \eqref{eqMultiplicationAndCCCNAndInvol} that $\Gamma_\varpi$ commutes with $\mult$ and from \eqref{eqCCCNOnE3} that $\Gamma_\varpi$ fixes $\phi_1$).

To reduce the amount of computations, we first make some preliminary observations.
\begin{enumerate}[label=(\alph*)]
\item\label{item1PropDeg3}
	We remark that one has the congruences
	\[
		\Gamma_\varpi(\invol(\phi_1)) \equiv \rep_3(\phi_1) \equiv \Gamma_\varpi(\rep_3(\phi_1)) \equiv \invol(\rep_3(\phi_1))
		\mod K_{2,3} + K_{\operatorname{extra},3}.
	\]
	Indeed the first and last congruence follow from Lemma \ref{lmmTwoCongruences}, while the second congruence holds by definition of $K_{\operatorname{extra},3}$. Looking at the definition of $\rep_3'$, this shows that
	\begin{equation}\label{eqRep3Rep3Prime}\begin{aligned}
		\rep_3'(\phi_1)
		&\equiv
		\rep_3(\phi_1)
		\equiv
		\Gamma_\varpi(\rep_3(\phi_1))
		\\&\equiv
		\invol(\rep_3(\phi_1))
		\equiv
		\Gamma_\varpi(\invol(\rep_3(\phi_1)))
		\mod K_{2,3} + K_{\operatorname{extra},3}.
	\end{aligned}\end{equation}
\item\label{item2PropDeg3}
	We remark that $V$ is invariant for $\Gamma_\varpi$ and $\invol$.
	\begin{itemize}
		\item The term $K_{2,3}$ is invariant for $\Gamma_\varpi$ and $\invol$ because both involutions commute with $\mult_2$ (see \eqref{eqMultiplicationAndCCCNAndInvol}).
		\item The term $\image(\rep_3')$ is invariant for $\Gamma_\varpi$ and $\invol$ because these involutions commute with $\rep_3'$ (Lemma \ref{lmmCommutesCCCNInvolDeg3}).
		\item The term $K_{\operatorname{extra},3}$ is visibly invariant for $\Gamma_\varpi$. Moreover, applying $\invol$ to the difference $\Gamma_\varpi(\rep_3(\phi_1)) - \rep_3(\phi_1)$ and using that $\Gamma_\varpi$ and $\invol$ commute on $E^\ast$ by \eqref{eqCCCNInvolCommute} and hence also on $T_{E^0}^3 E^1$, we obtain the element $\Gamma_\varpi(\invol(\rep_3(\phi_1))) - \invol(\rep_3(\phi_1))$, which lies in $K_{2,3} + K_{\operatorname{extra},3}$ by part \ref{item1PropDeg3}. Also considering the behaviour of $\invol$ with respect to multiplication by $\tau_\omega$ for $\omega \in \quoz{T^0}{T^1}$, we obtain that $\invol(K_{\operatorname{extra},3}) \subseteq K_{2,3} + K_{\operatorname{extra},3}$.
	\end{itemize}
\item\label{item3PropDeg3}
	We further remark that $V$ is a sub-$k \left[ \quoz{T^0}{T^1} \right]$-bimodule of $T_{E^0}^3 E^1$: indeed the term $K_{2,3}$ is clearly even a sub-$E^0$-bimodule, the term $K_{\operatorname{extra},3}$ is a sub-$k \left[ \quoz{T^0}{T^1} \right]$-bimodule as already discussed and the term $\image(\rep_3')$ is a sub-$k \left[ \quoz{T^0}{T^1} \right]$-bimodule because $\rep_3'$ is a homomorphism of $k \left[ \quoz{T^0}{T^1} \right]$-bimodules (Lemma \ref{lmmCommutesCCCNInvolDeg3}).
\end{enumerate}

Now we can start the actual proof of the proposition. Let us consider $x \in T_{E^0}^3 E^1$ and let us prove that $x \in V$. Without loss of generality, we can of course assume that $x$ is of the form $y \otimes z$ for $y \in E^1$ and $z \in E^1 \otimes_{E^0} E^1$. Considering the equality
\[
	x = y \otimes z = y \otimes (z - \rep_2(\mult_2(z))) + y \otimes \rep_2(\mult_2(z)),
\]
we see that the first summand on the right hand side lies in $\overline{E^1 \otimes_{E^0} \ker(\mult_2)}$ and hence in $V$. Therefore, without loss of generality, we can assume that $x$ is of the form $y \otimes z'$ for some $y \in E^1$ and some $z' \in \image(\rep_2)$. From Lemma \ref{lmmInvolRep2} we obtain that $\rep_2(F^1E^2) = \invol(\rep_2(F^1E^2))$. Using the formulas for the action of $\invol$ on $E^1$ \eqref{eqInvolOnE1} and looking at the explicit definition of $\rep_2$ in Lemma \ref{lmmDefRep2} we then see that every $z' \in \image(\rep_2)$ can be written as a sum of simple tensors of the following forms:
\begin{align*}
	&u \otimes \bm{1},
	&
	&u \otimes \bp{1},
	&
	&u \otimes \bz{s_0},
	&
	&u \otimes \bz{s_1},
\end{align*}
for $u \in E^1$.
Hence, without loss of generality, we can assume that $z'$ is of one of those forms. To simplify a little further, we note that since $V$ is invariant for $\Gamma_\varpi$, it follows that it suffices to prove that the elements of the following forms lie in $V$:
\begin{enumerate}[label=(\roman*)]
\item\label{itemiPropDeg3}
	$y \otimes u \otimes \bm{1}$ for some $u \in E^1$,
\item\label{itemiiPropDeg3}
	$y \otimes u \otimes \bz{s_0}$ for some $u \in E^1$,
\end{enumerate}
because by applying $\Gamma_\varpi$ to such elements we immediately obtain what is left (see \eqref{eqCCCNOnE1}). We treat separately elements of the forms \ref{itemiPropDeg3} and \ref{itemiiPropDeg3}.
\begin{enumerate}
\item[\ref{itemiPropDeg3}]
Let us start the proof that every element of the form
\begin{align*}
	&y \otimes u \otimes \bm{1} &&\text{(for some $y,u \in E^1$)}
\end{align*}
lies in $V$. Similarly to what we did before, we can consider the equality
\[
	y \otimes u \otimes \bm{1} = (y \otimes u - \rep_2(\mult_2(y \otimes u)))  \otimes \bm{1} + \rep_2(\mult_2(y \otimes u)) \otimes \bm{1},
\]
from which we see that we may consider instead elements of the form $r \otimes \bm{1}$ for $r \in \image (\rep_2)$, and, looking at the explicit definition of $\rep_2$ (Lemma \ref{lmmDefRep2}), we can further assume that $r \otimes \bm{1}$ is of one of the following forms:
\begin{align*}
&\bp{1} \otimes \bz{s_1v} \otimes \bm{1} &&\text{for $v \in \widetilde{W}$ such that $\ell(s_1v) = \ell(v) + 1$,}
\\
&\bp{1} \otimes \bm{s_1v} \otimes \bm{1} &&\text{for $v \in \widetilde{W}$ such that $\ell(s_1v) = \ell(v) + 1$,}
\\
&\bz{s_1} \otimes \bp{v} \otimes \bm{1} &&\text{for $v \in \widetilde{W}$ such that $\ell(s_1v) = \ell(v) + 1$,}
\\
&\bz{s_0} \otimes \bm{w} \otimes \bm{1} &&\text{for $w \in \widetilde{W}$ such that $\ell(s_0w) = \ell(w) + 1$,}
\\
&\bm{1} \otimes \bp{s_0w} \otimes \bm{1} &&\text{for $w \in \widetilde{W}$ such that $\ell(s_0w) = \ell(w) + 1$,}
\\
&\bm{1} \otimes \bz{s_0w} \otimes \bm{1} &&\text{for $w \in \widetilde{W}$ such that $\ell(s_0w) = \ell(w) + 1$,}
\\
&\tau_{s_0} \cdot \rep_2 \big( \ap{s_0^{-1}\omega} \big) \otimes \bm{1} &&\text{for $\omega \in \quoz{T^0}{T^1}$,}
\\
&\tau_{s_1} \cdot \rep_2 \big( \am{s_1^{-1}\omega} \big) \otimes \bm{1} &&\text{for $\omega \in \quoz{T^0}{T^1}$.}
\end{align*}
If $v$ is as above, from Lemma \ref{lmmDeg1GeneratedByTheFourElements} and from \eqref{eqOmegaOnE1Left} one sees that either $\bm{s_1v} \in k^\times \tau_{s_1v} \bm{1}$ or $\bm{s_1v} \in k^\times \tau_{s_1v} \bp{1}$. This justifies the first line below:
\begin{align*}
	\bm{s_1v} \otimes \bm{1} &\in E^0 \cdot \{\bm{1} \otimes \bm{1}, \bp{1} \otimes \bm{1}\},
	\\
	\bp{v} \otimes \bm{1} &\in E^0 \cdot \{\bm{1} \otimes \bm{1}, \bp{1} \otimes \bm{1}\},
	\\
	\bm{w} \otimes \bm{1} &\in E^0 \cdot \{\bm{1} \otimes \bm{1}, \bp{1} \otimes \bm{1}\},
	\\
	\bp{s_0w} \otimes \bm{1} &\in E^0 \cdot \{\bm{1} \otimes \bm{1}, \bp{1} \otimes \bm{1}\},
\end{align*}
and the other lines can be justified with a completely analogous argument.

Since the two elements $\bm{1} \otimes \bm{1}$ and $\bp{1} \otimes \bm{1}$ lie in $\ker(\mult_2)$ (see, e.g., Lemma \ref{lmmSomeElementsInTheKer}), we see that the elements $\bp{1} \otimes \bm{s_1v} \otimes \bm{1}$, $\bz{s_1} \otimes \bp{v} \otimes \bm{1}$, $\bz{s_0} \otimes \bm{w} \otimes \bm{1}$ and $\bm{1} \otimes \bp{s_0w} \otimes \bm{1}$ in the above list all lie in $V$. Also recalling that $V$ is a left-$k \left[ \quoz{T^0}{T^1} \right]$-submodule, we see that we are reduced to showing that the following elements lie in $V$:
\begin{equation}\label{eqNonSo}\begin{aligned}
&\bp{1} \otimes \bz{(s_1s_0)^i} \otimes \bm{1} && \text{for $i \in \zpiu$,}
\\
&\bp{1} \otimes \bz{s_1(s_0s_1)^i} \otimes \bm{1} && \text{for $i \in \nn$,}
\\
&\bm{1} \otimes \bz{(s_0s_1)^i} \otimes \bm{1} && \text{for $i \in \zpiu$,}
\\
&\bm{1} \otimes \bz{s_0(s_1s_0)^i} \otimes \bm{1} && \text{for $i \in \nn$,}
\\
&\tau_{s_0} \cdot \bm{1} \otimes \bz{s_0^{-1}} \otimes \bm{1}, &&
\\
&\tau_{s_1} \cdot \bp{1} \otimes \bz{s_1^{-1}} \otimes \bm{1}. &&
\end{aligned}\end{equation}
Up to a sign, the first line (respectively, the fourth line) is $\rep_3'(\phi_{(s_1s_0)^i})$ (respectively, $\rep_3'(\phi_{s_0(s_1s_0)^i})$), and hence they both lie in $V$. Moreover, we know that $\bz{s_1} \otimes \bm{1} \in \ker(\mult_2)$ (see for example Lemma \ref{lmmSomeElementsInTheKer}), from which it follows that
\begin{align*}
	\bz{(s_0s_1)^i} \otimes \bm{1} &= - \tau_{(s_0s_1)^{i-1}s_0} \cdot \bz{s_1} \otimes \bm{1} \in \ker(\mult_2) &&\text{for all $i \in \zpiu$,}
	\\
	\bz{s_1(s_0s_1)^i} \otimes \bm{1} &= \tau_{(s_1s_0)^i} \cdot \bz{s_1} \otimes \bm{1} \in \ker(\mult_2) &&\text{for all $i \in \nn$}
\end{align*}
(where we used \eqref{eqH}).
Therefore, it follows that also the second, the third and the sixth line in \eqref{eqNonSo} lie in $V$. It remains to consider the element $\tau_{s_0} \cdot \bm{1} \otimes \bz{s_0^{-1}} \otimes \bm{1}$. First of all, we see that there is no harm in considering instead the element $(\tau_{s_0} + e_1) \cdot \bm{1} \otimes \bz{s_0^{-1}} \otimes \bm{1} = - \rep_3(\phi_1)$, since $e_1 \cdot \bm{1} \otimes \bz{s_0^{-1}} \otimes \bm{1} \in \image(\rep_3') \subseteq V$. But we have seen in formula \eqref{eqRep3Rep3Prime} that $\rep_3(\phi_1)$ is congruent to $\rep_3'(\phi_1)$ modulo $K_{2,3} + K_{\operatorname{extra},3}$, and hence $\rep_3(\phi_1) \in V$, thus completing the proof that all the lines in \eqref{eqNonSo} lie in $V$.
\item[\ref{itemiiPropDeg3}]
We have to consider the elements of the form
\begin{align*}
	&y \otimes u \otimes \bz{s_0} = y \otimes u \otimes \bz{s_0} &&\text{(for some $y,u \in E^1$)}
\end{align*}
and prove that they lie in $V$. As before, we see that we may only treat the elements of the form $r \otimes \bz{s_0}$ for $r \in \image (\rep_2)$, and, looking at the explicit definition of $\rep_2$ (Lemma \ref{lmmDefRep2}), we can further assume that $r \otimes \bz{s_0}$ is of one of the following forms:
\begin{align*}
&\bp{1} \otimes \bz{s_1v} \otimes \bz{s_0} &&\text{for $v \in \widetilde{W}$ such that $\ell(s_1v) = \ell(v) + 1$,}
\\
&\bp{1} \otimes \bm{s_1v} \otimes \bz{s_0} &&\text{for $v \in \widetilde{W}$ such that $\ell(s_1v) = \ell(v) + 1$,}
\\
&\bz{s_1} \otimes \bp{v} \otimes \bz{s_0} &&\text{for $v \in \widetilde{W}$ such that $\ell(s_1v) = \ell(v) + 1$,}
\\
&\bz{s_0} \otimes \bm{w} \otimes \bz{s_0} &&\text{for $w \in \widetilde{W}$ such that $\ell(s_0w) = \ell(w) + 1$,}
\\
&\bm{1} \otimes \bp{s_0w} \otimes \bz{s_0} &&\text{for $w \in \widetilde{W}$ such that $\ell(s_0w) = \ell(w) + 1$,}
\\
&\bm{1} \otimes \bz{s_0w} \otimes \bz{s_0} &&\text{for $w \in \widetilde{W}$ such that $\ell(s_0w) = \ell(w) + 1$,}
\\
&\tau_{s_0} \cdot \rep_2 \big( \ap{s_0^{-1}\omega} \big) \otimes \bz{s_0} &&\text{for $\omega \in \quoz{T^0}{T^1}$,}
\\
&\tau_{s_1} \cdot \rep_2 \big( \am{s_1^{-1}\omega} \big) \otimes \bz{s_0} &&\text{for $\omega \in \quoz{T^0}{T^1}$.}
\end{align*}
In part \ref{itemiPropDeg3} we have proved that all the elements of the form $x \otimes y \otimes \bm{1}$ (for some $x,y \in E^1$) lie in $V$. As we have already said that $V$ is $\invol$-invariant and $\Gamma_\varpi$-invariant, we deduce that the elements of the following forms lie in $V$:
\begin{align*}
	&\bm{1} \otimes x \otimes y &&\text{for some $x,y \in E^1$,}
	\\
	&\bp{1} \otimes x \otimes y &&\text{for some $x,y \in E^1$.}
\end{align*}
This shows that most of the elements in the above list are in $V$. For the remaining ones, we first use Lemma \ref{lmmDeg1GeneratedByTheFourElements} and \eqref{eqOmegaOnE1Left} as in part \ref{itemiPropDeg3} to reduce to checking that the following elements lie in $V$:
\begin{align*}
& \bz{(s_1s_0)^i} \otimes \bm{1} \otimes \bz{s_0} && \text{for $i \in \zpiu$,}
\\
& \bz{s_1(s_0s_1)^i} \otimes \bp{1} \otimes \bz{s_0} && \text{for $i \in \nn$,}
\\
& \bz{(s_0s_1)^i} \otimes \bp{1} \otimes \bz{s_0} && \text{for $i \in \zpiu$,}
\\
& \bz{s_0(s_1s_0)^i} \otimes \bm{1} \otimes \bz{s_0} && \text{for $i \in \nn$,}
\\
&\tau_{s_0} \cdot \bm{1} \otimes \bz{s_0^{-1}} \otimes \bz{s_0}, &&
\\
&\tau_{s_1} \cdot \bp{1} \otimes \bz{s_1^{-1}} \otimes \bz{s_0}. &&
\end{align*}
We recall from Lemma \ref{lmmSomeElementsInTheKer} that $\bp{1} \otimes \bz{s_0} \in \ker(\mult_2)$ and that the same is true for $\bz{s_1} \otimes \bz{s_0}$, and so it only remains to consider the following elements:
\begin{align}
& \bz{(s_1s_0)^i} \otimes \bm{1} \otimes \bz{s_0} && \text{for $i \in \zpiu$,}
\label{eqBetaS0One}
\\
& \bz{s_0(s_1s_0)^i} \otimes \bm{1} \otimes \bz{s_0} && \text{for $i \in \nn$,}
\label{eqBetaS0Two}
\\
&\tau_{s_0} \cdot \bm{1} \otimes \bz{s_0^{-1}} \otimes \bz{s_0}. &&
\label{eqBetaS0Three}
\end{align}
Let us treat the last line, where, multiplying by $\tau_{c_{-1}}$, we can replace $s_0^{-1}$ by $s_0$. We consider the following element of $\ker(\mult_2)$ (Lemma \ref{lmmSomeElementsInTheKer}):
\[
	\bz{s_0} \otimes \bz{s_0} + e_{\idd^{-1}} \cdot \bm{1} \otimes \bz{s_0} + e_{\idd} \cdot \bz{s_0} \otimes \bm{1} - e_1 \cdot \bm{1} \otimes \bp{s_0}.
\]
Tensoring by $\tau_{s_0} \cdot \bm{1}$ on the left we get
\begin{align*}
	&\tau_{s_0} \cdot \bm{1} \otimes \bz{s_0} \otimes \bz{s_0}
	+ \tau_{s_0} \cdot \bm{1} \otimes e_{\idd^{-1}} \bm{1} \otimes \bz{s_0}
	\\&\qquad+ \tau_{s_0} \cdot \bm{1} \otimes e_{\idd} \bz{s_0} \otimes \bm{1}
	- \tau_{s_0} \cdot \bm{1} \otimes e_1 \bm{1} \otimes \bp{s_0}
	\\&\qquad\qquad\in K_{2,3}.
\end{align*}
Since $\bm{1} \otimes \bm{1}$ lies in $\ker(\rep_2)$, we can delete the two terms where this element appears, getting that
\[
	\tau_{s_0} \cdot \bm{1} \otimes \bz{s_0} \otimes \bz{s_0}
	+ \tau_{s_0} \cdot \bm{1} \otimes e_{\idd} \bz{s_0} \otimes \bm{1}
	\in K_{2,3}.
\]
In part \ref{itemiPropDeg3} we have seen that the element $\tau_{s_0} \cdot \bm{1} \otimes \bz{s_0^{-1}} \otimes \bm{1}$ lies in $V$, and so this proves that \eqref{eqBetaS0Three} lies in $V$.

It remains to prove that the elements in \eqref{eqBetaS0One} and in \eqref{eqBetaS0Two} lie in $V$. We claim that $\bz{s_0} \otimes \bm{1} \otimes \bz{s_0}$ lies in $K_{2,3}$. If we show this, then we are done because $K_{2,3}$ is a sub-$E^0$-bimodule, and using \eqref{eqH} we get that the elements in the lines \eqref{eqBetaS0One} and \eqref{eqBetaS0Two} lie in $K_{2,3}$ as well and hence also to $V$. So, let us show that $\bz{s_0} \otimes \bm{1} \otimes \bz{s_0}$ lies in $K_{2,3}$.
\begin{equation}\label{eqbzbmbz}\begin{aligned}
	\bz{s_0} \otimes \bm{1} \otimes \bz{s_0}
	&=
	\rep_2(- \am{s_0}) \otimes \bz{s_0}
	\\&=
	\rep_2(\tau_{s_0} \cdot \ap{1}) \otimes \bz{s_0}
	\\&\qquad\qquad\qquad\qquad\text{(by \eqref{eqExplicitDeg2AddUp})}
	\\&\equiv
	\big( \tau_{s_0} \cdot \rep_2(\ap{1}) \big) \otimes \bz{s_0}
	\mod K_{2,3}
\end{aligned}\end{equation}
We now compute $\rep_2(\ap{1})$ explicitly
\begin{align*}
	\rep_2(\ap{1})
	&=
	\rep_2 \big( \ap{1} + \tau_{s_1} \cdot \am{s_1^{-1}} \big) - \tau_{s_1} \cdot \rep_2 \big( \am{s_1^{-1}} \big)
	\\&\qquad\qquad\qquad\text{by Lemma \ref{lmmDefRep2}}
	\\&=
	\rep_2 \big( - e_1 \am{s_1^{-1}} + e_{\idd^{-1}} \az{s_1^{-1}} + e_{\idd^{-2}} \ap{s_1^{-1}} \big) - \tau_{s_1} \cdot \rep_2 \big( \am{s_1^{-1}} \big)
	\\&\qquad\qquad\qquad\text{by \eqref{eqTauS1OnE2LeftNotAddUp}}
	\\&=
	e_1 \cdot \bp{1} \otimes \bz{s_1^{-1}}
	+ e_{\idd^{-1}} \cdot \bp{1} \otimes \bm{s_1^{-1}}
	\\&\qquad+ e_{\idd^{-2}} \cdot \bz{s_1} \otimes \bp{c_{-1}}
	+ \tau_{s_1} \cdot \bp{1} \otimes \bz{s_1^{-1}}
	\\&\qquad\qquad\qquad\text{by Lemma \ref{lmmDefRep2} and Lemma \ref{lmmBimodulesGpAlgebraFiniteTorus}}
	\\&=
	e_1 \cdot \bp{c_{-1}} \otimes \bz{s_1}
	- e_{\idd^{-1}} \cdot (\bp{1} \cdot \tau_{s_1^{-1}}) \otimes \bp{1}
	\\&\qquad+ e_{\idd^{-2}} \cdot \bz{s_1^{-1}} \otimes \bp{1}
	+ \tau_{s_1} \cdot \bp{c_{-1}} \otimes \bz{s_1}
	\\&\qquad\qquad\qquad\text{by \eqref{eqOmegaOnE1Left}, \eqref{eqOmegaOnE1Right}, \eqref{eqF}}
\end{align*}
Going back to \eqref{eqbzbmbz}, and recalling from Lemma \ref{lmmSomeElementsInTheKer} that both $\bp{1} \otimes \bz{s_0}$ and $\bz{s_1} \otimes \bz{s_0}$ lie in $\ker(\mult_2)$, we conclude that $\bz{s_0} \otimes \bm{1} \otimes \bz{s_0}$ lies in $K_{2,3}$, as claimed. This concludes the proof that all the elements of the forms \eqref{eqBetaS0One} and \eqref{eqBetaS0Two} lie in $V$, and with it the proof that all the elements of the form \ref{itemiiPropDeg3} lie in $V$.
\qedhere
\end{enumerate}
\end{proof}

\subsection{The \texorpdfstring{$4$\textsuperscript{th}}{4th} graded piece of the kernel}

Since $E^4 = 0$, the kernel of the multiplication map $\funcInline{\mult_4}{T_{E^0}^4 E^1}{E^4}$ is of course the whole $T_{E^0}^4 E^1$. As we computed generators for $\ker(\mult_2)$ as an $E^0$-bimodule (Proposition \ref{propKerDeg2}), and we computed $\ker(\mult_3)$ in terms of $\ker(\mult_2)$ and an additional generator (Proposition \ref{propKerDeg3}), we now would like to compute $\ker(\mult_4) = T_{E^0}^4 E^1$ in terms of $\ker(\mult_3)$ (and, a priori, some other generators). The following result achieves this, showing that no further generators are needed and hence that $\ker(\mult)$ is generated as a two-sided ideal by its $2$\textsuperscript{nd} and $3$\textsuperscript{rd} graded pieces.

\begin{prop}\label{propDeg4}
Let $\funcInline{\mult}{T_{E^0}^\ast E^1}{E^\ast}$ be the multiplication map, and let
\[
	K_{3,4} \defeq \overline{\ker(\mult_3) \otimes_{E^0} E^1} + \overline{E^1 \otimes_{E^0} \ker(\mult_3)} \subseteq T_{E^0}^4 E^1,
\]
where $\overline{(?)}$ denotes the image of $(?)$ in $T_{E^0}^4 E^1$. One has that $K_{3,4} = T_{E^0}^4 E^1$.
\end{prop}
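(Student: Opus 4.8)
The plan is to mirror, in degree $4$, the argument used for Proposition \ref{propKerDeg3}, with the simplification that $E^4 = 0$ makes $\mult_4$ the zero map, so $\ker(\mult_4) = T_{E^0}^4 E^1$ and only the inclusion $T_{E^0}^4 E^1 \subseteq K_{3,4}$ has to be shown. First I would record that $K_{3,4}$ is invariant under $\Gamma_\varpi$ and under $\invol$: indeed $\ker(\mult_3)$ is invariant under both (since $\mult$ commutes with $\Gamma_\varpi$ and $\invol$, see \eqref{eqMultiplicationAndCCCNAndInvol}), and on $T_{E^0}^4 E^1$ the map $\Gamma_\varpi$ preserves each of the two summands $\overline{\ker(\mult_3)\otimes_{E^0}E^1}$ and $\overline{E^1\otimes_{E^0}\ker(\mult_3)}$ defining $K_{3,4}$, while $\invol$ interchanges them. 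Next, using the section $\rep_3'$ of $\mult_3$ (Lemma \ref{lmmCommutesCCCNInvolDeg3}), for $z\in T_{E^0}^3 E^1$ and $\beta\in E^1$ one writes
\[
	z\otimes\beta = \big(z-\rep_3'(\mult_3(z))\big)\otimes\beta + \rep_3'(\mult_3(z))\otimes\beta,
\]
where the first summand lies in $\overline{\ker(\mult_3)\otimes_{E^0}E^1}\subseteq K_{3,4}$. So it suffices to show $\rep_3'(w)\otimes\beta\in K_{3,4}$ for all $w\in E^3$ and $\beta\in E^1$; by the $\invol$-invariance just noted, the mirror statement for $\beta\otimes\rep_3'(w)$ then follows too.

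Then I would cut down the number of cases. Since $\rep_3'$ is a homomorphism of $k[\quoz{T^0}{T^1}]$-bimodules and $\mult_3$ is an $E^0$-bimodule homomorphism with $\mult_3\circ\rep_3' = \id$, the displayed reduction can be applied repeatedly to absorb $E^0$-coefficients; combined with the facts that $(\phi_w)_{w\in\widetilde{W}}$ is a basis of $E^3$ (with every $w$ of the form $\omega\cdot(\text{word in }s_0,s_1)$, as $\Omega$ is trivial) and that $E^1$ is generated as an $E^0$-bimodule by $\bm1,\bp1,\bz{s_0},\bz{s_1}$ (Lemma \ref{lmmDeg1GeneratedByTheFourElements}), this reduces us to the elements $\rep_3'(\phi_v)\otimes\gamma$ with $\gamma\in\{\bm1,\bp1,\bz{s_0},\bz{s_1}\}$ and $v\in\widetilde{W}$. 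When $\ell(v)\geq 1$, Lemma \ref{lmmDefRep3} expresses $\rep_3'(\phi_v) = \rep_3(\phi_v)$ as a single simple tensor $\beta'\otimes\gamma_0\otimes\beta''$ with $\beta'\in\{\bm1,\bp1\}$; writing $\rep_3'(\phi_v)\otimes\gamma = \beta'\otimes(\gamma_0\otimes\beta''\otimes\gamma)$ and applying the reduction once more to the inner element, we are left with $\beta'\otimes\rep_3'(w')$ for some $w'\in E^3$ and $\beta'\in\{\bm1,\bp1\}$, and iterating the same move, as soon as the $\rep_3'$ of a positive-length basis vector is unfolded again the tensor exhibits two consecutive leading factors both in $\{\bm1,\bp1\}$. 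Since $\bm1\otimes\bm1$, $\bm1\otimes\bp1$, $\bp1\otimes\bm1$, $\bp1\otimes\bp1$ all lie in $\ker(\mult_2)$ (Lemma \ref{lmmSomeElementsInTheKer}) and $\overline{\ker(\mult_2)\otimes_{E^0}E^1}\subseteq K_{2,3}\subseteq\ker(\mult_3)$ (see Lemma \ref{lmmTwoCongruences} and Proposition \ref{propKerDeg3}), every such tensor lies in $\overline{\ker(\mult_3)\otimes_{E^0}E^1}\subseteq K_{3,4}$, after moving the intervening $E^0$-coefficients across the $\otimes_{E^0}$.

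It then remains to treat $\rep_3'(\phi_\omega)\otimes\gamma$ for $\omega\in\quoz{T^0}{T^1}$; the $k[\quoz{T^0}{T^1}]$-bimodule structure reduces this to $\rep_3'(\phi_1)\otimes\gamma$. Here one substitutes the explicit four summands of $\rep_3'(\phi_1)$ from Remark \ref{remRep3Phi1Explicit}, evaluates each $(\tau_{s_i^{\pm1}}+e_1)$ acting on the adjacent $\bm1$ or $\bp1$ by \eqref{eqFormulasTauQp}, and uses the idempotent formulas \eqref{eqIdempotentsE1} together with the vanishings $\bp1\cdot\tau_{s_0}=0$ and $\bm1\cdot\tau_{s_1}=0$ (from \eqref{eqFormulaRighteasyQp}): when $\gamma\in\{\bm1,\bp1\}$ each surviving simple tensor again displays a consecutive pair among $\bm1\otimes\bm1,\bm1\otimes\bp1,\bp1\otimes\bm1,\bp1\otimes\bp1$, and when $\gamma\in\{\bz{s_0},\bz{s_1}\}$ one argues exactly as in part (ii) of the proof of Proposition \ref{propKerDeg3}, invoking in addition the less obvious generators of $\ker(\mult_2)$ from Lemma \ref{lmmSomeElementsInTheKer} (such as $\bz{s_0}\otimes\bz{s_0}+e_{\idd^{-1}}\cdot\bm1\otimes\bz{s_0}+e_{\idd}\cdot\bz{s_0}\otimes\bm1-e_1\cdot\bm1\otimes\bp{s_0}$ and $\bp{s_0}\otimes\bz{s_0}+\bz{s_0}\otimes\bm{s_0}$) to reduce the relevant degree-$4$ tensors into $\overline{\ker(\mult_3)\otimes_{E^0}E^1}$. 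The main obstacle is precisely this last batch of cases: the bookkeeping is delicate, since it is no longer enough to spot an adjacent $\ker(\mult_2)$-pair and one must run short chains of rewritings (of the kind in part (ii) of Proposition \ref{propKerDeg3}) before the membership in $K_{3,4}$ becomes visible. Once these are checked, all cases are exhausted and $K_{3,4} = T_{E^0}^4 E^1$ follows.
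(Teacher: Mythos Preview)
Your approach is correct but takes a considerably longer route than the paper's. The key simplification you are missing is this: work with $\rep_3$ rather than $\rep_3'$, and observe from the explicit formulas in Lemma~\ref{lmmDefRep3} that \emph{every} element of $\image(\rep_3)$---including $\rep_3(\phi_\omega)$ for $\omega\in\quoz{T^0}{T^1}$---is a $k$-linear combination of simple tensors whose \emph{rightmost} factor is $\bm1$ or $\bp1$ (the trailing $\tau_\omega$ commutes with these up to a scalar). With this, the paper's argument is essentially two lines: write $x\otimes y$ with $x\in E^1$, $y\in T_{E^0}^3E^1$, reduce $y$ to $\image(\rep_3)$, hence to $u\otimes\bm1$ or $u\otimes\bp1$ with $u\in T_{E^0}^3E^1$; then reduce $u$ again to $\image(\rep_3)$, producing two adjacent rightmost factors from $\{\bm1,\bp1\}$, which lie in $\ker(\mult_2)$ by Lemma~\ref{lmmSomeElementsInTheKer}, and hence the whole tensor lies in $\overline{E^1\otimes_{E^0}\ker(\mult_3)}\subseteq K_{3,4}$.

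Your route instead peels from the left, exploiting that $\rep_3'(\phi_v)$ \emph{starts} with $\bm1$ or $\bp1$ when $\ell(v)\geq 1$. But the second tensor factor is then $\bz{s_i}$, so a further iteration is needed, and the length-zero case $\rep_3'(\phi_\omega)$ must be treated separately with what you yourself call ``delicate bookkeeping'' (including the ad hoc fact $\bz{s_0}\otimes\bm1\otimes\bz{s_0}\in K_{2,3}$ from part~(ii) of Proposition~\ref{propKerDeg3}). None of this is needed: the paper's observation about the rightmost factor of $\image(\rep_3)$ holds uniformly, so no case distinction on $\ell(v)$ arises and neither $\rep_3'$, nor $\Gamma_\varpi$- and $\invol$-invariance, nor any computation beyond the four vanishings $\bm1\otimes\bm1,\bm1\otimes\bp1,\bp1\otimes\bm1,\bp1\otimes\bp1\in\ker(\mult_2)$ is required.
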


\begin{proof}
We have to prove that every element of $ T_{E^0}^4 E^1$ lies in $K_{3,4}$, and clearly it suffices to prove this for \virgolette{simple tensors} of the form $x \otimes y$ for $x \in E^1$ and $y \in T_{E^0}^3 E^1$. Considering the equality
\[
	x \otimes y = x \otimes \left( y - \rep_3(\mult_3(y)) \right) + x \otimes \rep_3(\mult_3(y)),
\]
we see that it suffices to prove our claim for elements of the form $x \otimes y'$ for $x \in E^1$ and $y' \in \image(\mult_3)$. Now looking at the explicit definition of $\rep_3$ in Lemma \ref{lmmDefRep3}, we see that every $y' \in \image(\mult_3)$ can be written as a $k$-linear combination of tensors of the form $z \otimes t \otimes \bm{1}$ for some $z,t \in E^1$ and of the form $z \otimes t \otimes \bp{1}$ for some $z,t \in E^1$ (the presence of $\tau_\omega$ in the definition of $\rep_3$ is irrelevant, since it commutes, up to a scalar, with both $\bm{1}$ and $\bp{1}$). We are thus reduced to showing that the elements of the form $u \otimes \bm{1}$ and $u \otimes \bp{1}$, for $u \in T_{E^0}^3E^1$, lie in $K_{3,4}$. With the same trick as before, we see that we can assume that $u \in \image(\mult_3)$. Using the same property of $\image(\mult_3)$ as above, we are thus reduced to showing that the elements of the form
\begin{align*}
	&z \otimes t \otimes \bm{1} \otimes \bm{1},
	&
	&z \otimes t \otimes \bp{1} \otimes \bm{1},
	\\
	&z \otimes t \otimes \bm{1} \otimes \bp{1},
	&
	&z \otimes t \otimes \bp{1} \otimes \bp{1},
\end{align*}
(for $z,t \in E^1$) all lie in $K_{3,4}$. This is clear, since all of the elements $\bm{1} \otimes \bm{1}$, $\bp{1} \otimes \bm{1}$, $\bm{1} \otimes \bp{1}$, and $\bp{1} \otimes \bp{1}$ lie in $\ker(\mult_2)$ (see, e.g., Lemma \ref{lmmSomeElementsInTheKer}).
\end{proof}

\subsection{First main result: finite presentation over the tensor algebra of \texorpdfstring{$E^1$}{E\textasciicircum{}1}}

We are now going to state the first main result of this section, consisting in a presentation of the $\ext$-algebra $E^\ast$ in terms of the tensor algebra $T_{E^0}^\ast E^1$.

\begin{thm}\label{thmFinalResultKernel}
Recall the assumptions stated at the beginning of \S\ref{sectionFinPres}.
The multiplication map
\[
	\funcInline{\mult}{T_{E^0}^\ast E^1}{E^\ast}
\]
is surjectve and its kernel is generated by its $2$\textsuperscript{nd} and $3$\textsuperscript{rd} graded pieces. Furthermore, the kernel is finitely generated as a two-sided ideal and the following is an explicit list of generators:
\begin{align*}
&\begin{aligned}
	&\bm{1} \otimes \bm{1},
	&&\quad\bp{1} \otimes \bm{1},
	&&\quad\bz{s_1} \otimes \bm{1},
	&&\quad\bp{1} \otimes \bz{s_0},
	&&\quad\bz{s_1} \otimes \bz{s_0},
	\\
	&\bm{1} \otimes \bp{1},
	&&\quad\bp{1} \otimes \bp{1},
	&&\quad\bz{s_0} \otimes \bp{1},
	&&\quad\bm{1} \otimes \bz{s_1},
	&&\quad\bz{s_0} \otimes \bz{s_1},
\end{aligned}
\\
	&\bz{s_0} \otimes \bz{s_0} + e_{\idd^{-1}} \cdot \bm{1} \otimes \bz{s_0} + e_{\idd} \cdot \bz{s_0} \otimes \bm{1} - e_1 \cdot \bm{1} \otimes \bp{s_0}, 
\\
	&\bz{s_1} \otimes \bz{s_1} - e_{\idd} \cdot \bp{1} \otimes \bz{s_1} - e_{\idd^{-1}} \cdot \bz{s_1} \otimes \bp{1} - e_1 \cdot \bp{1} \otimes \bm{s_1},
\\
	&\bp{s_0} \otimes \bz{s_0} + \bz{s_0} \otimes \bm{s_0} = - \tau_{s_0} \cdot \bm{1} \otimes \bz{s_0} + \bz{s_0} \otimes \bm{1} \cdot \tau_{s_0},
\\
	&\bm{s_1} \otimes \bz{s_1} + \bz{s_1} \otimes \bp{s_1} = - \tau_{s_1} \cdot \bp{1} \otimes \bz{s_1} + \bz{s_1} \otimes \bp{1} \cdot \tau_{s_1},
\\
	&(\tau_{s_1} + e_1) \cdot \bp{1} \otimes \bz{s_1^{-1}} \otimes \bp{1}
	+
	(\tau_{s_0} + e_1) \cdot \bm{1} \otimes \bz{s_0^{-1}} \otimes \bm{1}.	
\end{align*}
\end{thm}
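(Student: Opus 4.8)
The plan is to obtain the theorem by assembling the results already proved in this section, with essentially no new computation. First I would dispose of surjectivity. One has $\mult_0 = \id_{E^0}$ and $\mult_1 = \id_{E^1}$, since $T_{E^0}^0 E^1 = E^0$ and $T_{E^0}^1 E^1 = E^1$; the maps $\mult_2$ and $\mult_3$ are surjective by Lemmas \ref{lmmDefRep2} and \ref{lmmDefRep3}; and $E^i = 0$ for $i \geq 4$, so $\mult_i$ is trivially surjective there. Since $E^\ast = E^0 \oplus E^1 \oplus E^2 \oplus E^3$, this gives surjectivity of $\mult$.

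For the kernel, let $J \subseteq T_{E^0}^\ast E^1$ be the two-sided ideal generated by the fifteen displayed elements (fourteen of degree $2$, one of degree $3$). Each lies in $\ker(\mult)$: the fourteen degree-$2$ elements by Remark \ref{remDefCandidateKernel2}, and the degree-$3$ element $(\tau_{s_1} + e_1) \cdot \bp{1} \otimes \bz{s_1^{-1}} \otimes \bp{1} + (\tau_{s_0} + e_1) \cdot \bm{1} \otimes \bz{s_0^{-1}} \otimes \bm{1} = \Gamma_\varpi(\rep_3(\phi_1)) - \rep_3(\phi_1)$ because $\mult_3$ sends it to $\Gamma_\varpi(\phi_1) - \phi_1 = 0$ (as in the proof of Proposition \ref{propKerDeg3}). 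Since $\ker(\mult)$ is a two-sided ideal, $J \subseteq \ker(\mult)$. The reverse inclusion I would check graded piece by graded piece. In degrees $0$ and $1$ there is nothing to prove. In degree $2$, the degree-$2$ component of $J$ is the sub-$E^0$-bimodule of $E^1 \otimes_{E^0} E^1$ generated by the fourteen degree-$2$ elements, which is exactly $K_2$, and $K_2 = \ker(\mult_2)$ by Proposition \ref{propKerDeg2}. In degree $3$, the degree-$3$ component of $J$ contains $\overline{E^1 \otimes_{E^0} K_2} + \overline{K_2 \otimes_{E^0} E^1} = K_{2,3}$, together with the $E^0$-bimodule generated by the degree-$3$ element; the latter contains $K_{\operatorname{extra},3}$, since by Proposition \ref{propKerDeg3} the module $K_{\operatorname{extra},3}$ is already generated by that single element as a one-sided $k[\quoz{T^0}{T^1}]$-module. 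Hence the degree-$3$ component of $J$ contains $K_{2,3} + K_{\operatorname{extra},3} = \ker(\mult_3)$ by Proposition \ref{propKerDeg3}.

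It remains to treat degrees $\geq 4$, where $\ker(\mult_n) = T_{E^0}^n E^1$ because $E^n = 0$; I would show $T_{E^0}^n E^1 \subseteq J$ by induction on $n$. For $n = 4$, the degree-$4$ component of $J$ contains $\overline{E^1 \otimes_{E^0} \ker(\mult_3)} + \overline{\ker(\mult_3) \otimes_{E^0} E^1} = K_{3,4}$, which is all of $T_{E^0}^4 E^1$ by Proposition \ref{propDeg4}. For $n \geq 5$, writing $T_{E^0}^n E^1 = (T_{E^0}^{n-1} E^1) \otimes_{E^0} E^1$ and using the inductive hypothesis $T_{E^0}^{n-1} E^1 \subseteq J$ together with the fact that $J$ is a right ideal and $E^1 \subseteq T_{E^0}^\ast E^1$, one gets $T_{E^0}^n E^1 \subseteq J \cdot E^1 \subseteq J$. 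This shows $\ker(\mult) = J$; in particular $\ker(\mult)$ is finitely generated as a two-sided ideal by the fifteen listed elements, all of which lie in $\ker(\mult_2)$ and $\ker(\mult_3)$, which is exactly the assertion. There is no genuine obstacle left at this stage — the real work is in Propositions \ref{propKerDeg2}, \ref{propKerDeg3} and \ref{propDeg4} — and the only points needing a little care are the elementary bookkeeping that the $E^0$-bimodule generated by the fourteen degree-$2$ elements is precisely $K_2$ (and not something larger) and the passage from one-sided to two-sided generation in degrees $\geq 4$.
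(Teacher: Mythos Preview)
Your proposal is correct and follows exactly the same approach as the paper's proof, which simply cites Lemmas \ref{lmmDefRep2}, \ref{lmmDefRep3} for surjectivity and Propositions \ref{propKerDeg2}, \ref{propKerDeg3}, \ref{propDeg4} for the kernel. You have spelled out in more detail the graded-piece bookkeeping and the induction for degrees $\geq 5$, but this is precisely the assembly the paper intends; your caveat about the degree-$2$ component of $J$ being ``precisely $K_2$'' is a non-issue since $K_2$ is by definition (Remark \ref{remDefCandidateKernel2}) the $E^0$-bimodule generated by those fourteen elements, and no higher-degree generator can contribute to degree $2$.
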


\begin{proof}
Surjectivity has been proved in Lemmas \ref{lmmDefRep2} and \ref{lmmDefRep3}, while the description of the kernel follows from Propositions \ref{propKerDeg2}, \ref{propKerDeg3} and \ref{propDeg4}.
\end{proof}

\begin{rem}
The statement of the theorem does not exclude the possibility that $\ker(\mult)$ is actually generated by its $2$\textsuperscript{nd} graded piece. It is actually possible to show that this is not the case (see \cite[\S4.9]{thesis}). However, the proof is quite long and we do not present it here.
\end{rem}

\subsection{Second main result: finite presentation as a \texorpdfstring{$k$-algebra}{k-algebra}}

The purpose of this section is to show that the $\ext$-algebra $E^\ast$ is finitely presented a $k$-algebra, and to compute an explicit presentation (Theorem \ref{thmFinitePreskalg}). For this, besides Theorem \ref{thmFinalResultKernel} we will need a presentation of $E^0$ as a $k$-algebra and a presentation of $E^1$ as an $E^0$-bimodule (where by $E^0$-bimodule we mean $E^0 \otimes_k (E^0)^{\opp}$-left-module and not $E^0 \otimes_{\zz} (E^0)^{\opp}$-left-module). We begin by studying the latter problem.

For later purposes, we fix a generator $\omega_0$ of the cyclic group $T^0/T^1 \cong \ff_p^\times$:
\begin{equation}\label{eqOmegaZero}
	T^0/T^1 = \{ 1, \omega_0, \omega_0^2, \dots, \omega_0^{p-2} \}.
\end{equation}

\begin{lmm}\label{lmmPresentationE1}
Let $M$ be the free $E^0 \otimes_k (E^0)^{\opp}$-left-module of rank $4$ generated by the four symbols $\widehat{\bm{1}}$, $\widehat{\bp{1}}$, $\widehat{\bz{s_0}}$ and $\widehat{\bz{s_1}}$. Let us consider the submodule $N$ generated by the following elements:
\begin{align}
	&\tau_{s_1} \cdot \widehat{\bm{1}},
	&
	&\tau_{s_0} \cdot \widehat{\bp{1}},
	\label{eqBigListLine1}
	\\
	&\widehat{\bp{1}} \cdot \tau_{s_0},
	&
	&\widehat{\bm{1}} \cdot \tau_{s_1},
	\label{eqBigListLine2}
	\\
	&\begin{matrix*}[l]
		(\tau_{s_0} + e_1) \cdot \widehat{\bm{1}} \cdot (\tau_{s_0} + e_1)
		\\\qquad
		{} + 2 e_{\idd} \widehat{\bz{s_0}} + \tau_{\omega_0}^{\frac{p-1}{2}} \cdot \widehat{\bp{1}},
	\end{matrix*}
	&
	&\begin{matrix*}[l]
		(\tau_{s_1} + e_1) \cdot \widehat{\bp{1}} \cdot (\tau_{s_1} + e_1)
		\\\qquad
		{} - 2 e_{\idd^{-1}} \widehat{\bz{s_1}} + \tau_{\omega_0}^{\frac{p-1}{2}} \cdot \widehat{\bm{1}},
	\end{matrix*}
	\label{eqBigListLine3}
	\\
	&\tau_{s_0} \cdot \widehat{\bz{s_1}} + \widehat{\bz{s_0}} \cdot \tau_{s_1},
	&
	&\tau_{s_1} \cdot \widehat{\bz{s_0}} + \widehat{\bz{s_1}} \cdot \tau_{s_0},
	\label{eqBigListLine4}
	\\
	&(\tau_{s_0} + e_1) \cdot \widehat{\bz{s_0}} + e_{\idd} \tau_{s_0} \cdot \widehat{\bm{1}},
	&
	&(\tau_{s_1} + e_1) \cdot \widehat{\bz{s_1}} - e_{\idd^{-1}} \tau_{s_1} \cdot \widehat{\bp{1}},
	\label{eqBigListLine5}
	\\
	&\widehat{\bz{s_0}} \cdot (\tau_{s_0} + e_1) + e_{\idd^{-1}} \widehat{\bm{1}} \cdot \tau_{s_0},
	&
	&\widehat{\bz{s_1}} \cdot (\tau_{s_1} + e_1) - e_{\idd} \widehat{\bp{1}} \cdot \tau_{s_1},
	\label{eqBigListLine6}
	\\
	&\tau_{\omega_0} \cdot \widehat{\bm{1}} - u_{\omega_0}^{-2} \widehat{\bm{1}} \cdot \tau_{\omega_0},
	&
	&\tau_{\omega_0} \cdot \widehat{\bp{1}} - u_{\omega_0}^{2} \widehat{\bp{1}} \cdot \tau_{\omega_0},
	\label{eqBigListLine7}
	\\
	&\tau_{\omega_0} \cdot \widehat{\bz{s_0}} - \widehat{\bz{s_0}} \cdot \tau_{\omega_0^{-1}},
	&
	&\tau_{\omega_0} \cdot \widehat{\bz{s_1}} - \widehat{\bz{s_1}} \cdot \tau_{\omega_0^{-1}}.
	\label{eqBigListLine8}
\end{align}
For $\sigma \in \{ \bm{1}, \bp{1}, \bz{s_0}, \bz{s_1} \}$, we denote by $\widetilde{\sigma}$ the image of $\widehat{\sigma}$ in $\quoz{M}{N}$. One has an isomorphism of $E^0 \otimes_k (E^0)^{\opp}$-left-modules
\[
	\funcNN
		{\quoz{M}{N}}
		{E^1}
		{
			\begin{array}{c}
				\widetilde{\sigma}
				\\
				\scriptstyle\text{(for $\sigma \in \{ \bm{1}, \bp{1}, \bz{s_0}, \bz{s_1} \}$)}
			\end{array}
		}
		{\sigma.}
\]
\end{lmm}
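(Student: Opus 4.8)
The plan is to use that $\bm 1$, $\bp 1$, $\bz{s_0}$, $\bz{s_1}$ generate $E^1$ as an $E^0$-bimodule (Lemma \ref{lmmDeg1GeneratedByTheFourElements}(ii)): this gives a surjective homomorphism of $E^0\otimes_k(E^0)^{\opp}$-left-modules $\pi\colon M\to E^1$ with $\pi(\widehat\sigma)=\sigma$ for $\sigma\in\{\bm 1,\bp 1,\bz{s_0},\bz{s_1}\}$, and the content of the statement is exactly that $\ker\pi=N$, i.e.\ that $\pi$ descends to an isomorphism $\overline\pi\colon \overline M\to E^1$ with $\overline M\defeq M/N$.

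First I would check the easy inclusion $N\subseteq\ker\pi$, i.e.\ that each generator of $N$ listed in \eqref{eqBigListLine1}--\eqref{eqBigListLine8} maps to $0$ under $\pi$. This is a direct computation with the multiplication formulas of \S\ref{subsecNotationSL2}: the relations in \eqref{eqBigListLine1} and \eqref{eqBigListLine2} are the vanishing statements in \eqref{eqFormulasTauQp} and \eqref{eqFormulaRighteasyQp} at elements of length $\leqslant 1$; those in \eqref{eqBigListLine4}--\eqref{eqBigListLine6} come from combining \eqref{eqFormulasTauQp}, \eqref{eqTauS0OnE1LeftNotAddUp}, \eqref{eqTauS1OnE1LeftNotAddUp}, \eqref{eqRightBadLengthS0} and \eqref{eqRightBadLengthS1}; \eqref{eqBigListLine3} combines the length-one cases of \eqref{eqTauS0OnE1LeftNotAddUp}--\eqref{eqTauS1OnE1LeftNotAddUp} with \eqref{eqFormulaRighteasyQp}, after noting $\tau_{\omega_0}^{(p-1)/2}=\tau_{c_{-1}}$ (by \eqref{eqOmegaU} and \eqref{eqcminus1}); and \eqref{eqBigListLine7}--\eqref{eqBigListLine8} are instances of \eqref{eqOmegaOnE1Left}--\eqref{eqOmegaOnE1Right} (with $u_{\omega_0}$ as in \eqref{eqInvolOnE1}). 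Hence $\pi$ factors through a surjection $\overline\pi\colon\overline M\to E^1$.

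The heart of the proof is injectivity of $\overline\pi$, for which I would exhibit a $k$-spanning set $\mathcal B$ of $\overline M$ that $\overline\pi$ maps bijectively onto the $k$-basis \eqref{eqBasis} of the degree-one part $E^1=\bigoplus_{w\in\widetilde W}H^1(I,\xx(w))$; granting this, $\mathcal B$ is automatically $k$-linearly independent in $\overline M$ (its image is a basis), so $\overline\pi$ is an isomorphism. The set $\mathcal B$ is the collection of distinguished representatives of \eqref{eqBasis} furnished by Lemma \ref{lmmDeg1GeneratedByTheFourElements}(i) — for instance $\widetilde{\bm 1}\cdot\tau_w$ for $\ell(s_1w)=\ell(w)+1$, $\tau_{(s_1s_0)^i}\cdot\widetilde{\bm 1}$ and $\tau_{s_0(s_1s_0)^i}\cdot\widetilde{\bm 1}$, the symmetric families built from $\widetilde{\bp 1}$, and $\widetilde{\bz{s_i}}\cdot\tau_w$ for $\ell(s_iw)=\ell(w)+1$, together with their $T^0/T^1$-translates. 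To see that $\mathcal B$ spans $\overline M$: by the braid relations in $E^0$, $\overline M$ is spanned over $k$ by products $\tau_v\cdot\widetilde\sigma\cdot\tau_w$ ($v,w\in\widetilde W$); relations \eqref{eqBigListLine7}--\eqref{eqBigListLine8} let one collect every torus factor $\tau_\omega$ ($\omega\in T^0/T^1$) on one side of $\widetilde\sigma$, and relations \eqref{eqBigListLine1}--\eqref{eqBigListLine6} then drive a straightening procedure — essentially a transcription of the identities \eqref{eqA}--\eqref{eqH} — rewriting $\tau_v\cdot\widetilde\sigma\cdot\tau_w$, by induction on $\ell(v)+\ell(w)$, as a $k$-combination of elements of $\mathcal B$: \eqref{eqBigListLine1}--\eqref{eqBigListLine2} kill the ``wrong-handed'' products ($\tau_{s_1}\widetilde{\bm 1}=0$, etc.), \eqref{eqBigListLine4} trades a left $s_i$-factor on a $\widetilde{\bz{s_j}}$ for a right one, and \eqref{eqBigListLine5}--\eqref{eqBigListLine6} dispose of the remaining non-length-additive products. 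Equivalently one may package the whole injectivity step as: $\spann_k\mathcal B$ is stable under left and right multiplication by the algebra generators $\tau_{s_0},\tau_{s_1},\tau_{\omega_0}$ of $E^0$, so that (as the $\widetilde\sigma$ lie in $\mathcal B$ and generate $\overline M$) $\spann_k\mathcal B=\overline M$.

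The main obstacle is the straightening in the low-length cases: for $w$ of length $0$ or $1$ the formulas \eqref{eqTauS0OnE1LeftNotAddUp}, \eqref{eqTauS1OnE1LeftNotAddUp}, \eqref{eqRightBadLengthS0}, \eqref{eqRightBadLengthS1} carry extra correction terms supported at $H^1(I,\xx(w))$ itself, and it is precisely here that the specific coefficients occurring in \eqref{eqBigListLine3}, \eqref{eqBigListLine5} and \eqref{eqBigListLine6} (the factors $2e_{\idd}$, $2e_{\idd^{-1}}$, the idempotents $e_1$, $e_{\idd^{\pm1}}$, and $\tau_{\omega_0}^{(p-1)/2}=\tau_{c_{-1}}$) have to be used verbatim for the rewriting to close up inside $\spann_k\mathcal B$. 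One must also guarantee that the reduction is confluent; I would do so by fixing the order of operations (first move all torus factors to the right via \eqref{eqBigListLine7}--\eqref{eqBigListLine8}; then remove non-length-additive left $s_i$-factors via \eqref{eqBigListLine1}, \eqref{eqBigListLine4}, \eqref{eqBigListLine5}; finally do the same on the right via \eqref{eqBigListLine2}, \eqref{eqBigListLine6}), observing that $e_1$ is central in $E^0$ — from the quadratic relation $\tau_{s_i}^2=-e_1\tau_{s_i}=-\tau_{s_i}e_1$ one gets $e_1\tau_{s_i}=\tau_{s_i}e_1$, and $e_1$ commutes with $\tau_t$ by \eqref{eqELambdaTauT} — which prevents the correction terms from proliferating and makes the induction on word length terminate.
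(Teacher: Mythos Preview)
Your proposal is correct and follows essentially the same route as the paper: verify that each listed relation holds in $E^1$ (well-definedness), invoke Lemma \ref{lmmDeg1GeneratedByTheFourElements} for surjectivity, and prove injectivity by showing that the distinguished lifts of the $k$-basis of $E^1$ span $M/N$, arguing by induction on $\ell(v)+\ell(w)$ for products $\tau_v\cdot\widetilde\sigma\cdot\tau_w$ and using \eqref{eqBigListLine1}--\eqref{eqBigListLine8} exactly as you describe. One small remark: the confluence discussion is superfluous --- since linear independence of $\mathcal B$ is inherited from its image being a basis of $E^1$, you only need that $\mathcal B$ \emph{spans} $M/N$, not that the rewriting terminates in a unique normal form, so no order-of-operations argument is required.
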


\begin{proof}
To show that the claimed isomorphism is well-defined, we need to show that the elements of $E^1$ that we obtain from the elements in \eqref{eqBigListLine1}-\eqref{eqBigListLine8} by replacing $\widehat{\sigma}$ with $\sigma$ (for $\sigma \in \{ \bm{1}, \bp{1}, \bz{s_0}, \bz{s_1} \}$) are all zero. For line \eqref{eqBigListLine1} the claim follows from \eqref{eqFormulasTauQp}, for line \eqref{eqBigListLine2} the claim follows from \eqref{eqFormulaRighteasyQp}. Regarding line \eqref{eqBigListLine3}, using again such results and also \eqref{eqOmegaOnE1Left}, \eqref{eqIdempotentsE1} and \eqref{eqTauS0OnE1LeftNotAddUp}, we compute
\begin{align*}
	&(\tau_{s_0} + e_1) \cdot \bm{1} \cdot (\tau_{s_0} + e_1)
	\\&\qquad\qquad=
	\tau_{s_0} \cdot \bm{1} \cdot \tau_{s_0}
	+
	e_1 \cdot \bm{1} \cdot \tau_{s_0}
	+
	\tau_{s_0} \cdot \bm{1} \cdot e_1
	+
	e_1 \cdot \bm{1} \cdot e_1
	\\&\qquad\qquad=
	\tau_{s_0} \cdot \bm{s_0}
	+
	e_1 \bm{s_0}
	-
	e_{\idd^2} \bp{s_0}
	\\&\qquad\qquad=
	\left(
		-
		e_1 \bm{s_0}
		- 2
		e_{\idd} \bz{s_0}
		+
		e_{\idd^2} \bp{s_0}
		-
		\bp{c_{-1}}
	\right)
	+
	e_1 \bm{s_0}
	-
	e_{\idd^2} \bp{s_0}
	\\&\qquad\qquad=
	- 2
	e_{\idd} \bz{s_0}
	-
	\tau_{\omega_0}^{\frac{p-1}{2}} \cdot \bp{1}.
\end{align*}
This shows the claim for the first part of line \eqref{eqBigListLine3}, and for the second part one can just apply $\Gamma_\varpi$ and use the pertaining formulas.

To show that the elements in line \eqref{eqBigListLine4} are zero it suffices to use again the formulas \eqref{eqFormulasTauQp} and \eqref{eqFormulaRighteasyQp}. Similarly, for line \eqref{eqBigListLine5} we use \eqref{eqTauS0OnE1LeftNotAddUp} and \eqref{eqTauS1OnE1LeftNotAddUp} and for line \eqref{eqBigListLine6} we use instead \eqref{eqRightBadLengthS0} and \eqref{eqRightBadLengthS1}. Finally, for lines \eqref{eqBigListLine7} and \eqref{eqBigListLine8} we use \eqref{eqOmegaOnE1Left} and \eqref{eqOmegaOnE1Right}.

Now, it remains to prove injectivity and surjectivity of our homomorphism. Surjectivity is clear from the fact that the elements $\bm{1}$, $\bp{1}$, $\bz{s_0}$ and $\bz{s_1}$ generate $E^1$ as an $E^0$-bimodule (see Lemma \ref{lmmDeg1GeneratedByTheFourElements}). To prove injectivity, we fix a $k$-basis $\mathcal{B}$ of $E^1$, and, using surjectivity, for all $b \in \mathcal{B}$ we consider a preimage $m_b \in \quoz{M}{N}$, to be chosen later. If we prove that the elements $m_b$ generate $\quoz{M}{N}$ as a $k$-vector space then injectivity of $P$ follows.

To pursue the above strategy, let us consider the following table: the elements of $\quoz{M}{N}$ in the first column get mapped to the elements of the second column by the formulas in Lemma \ref{lmmDeg1GeneratedByTheFourElements} and \eqref{eqOmegaOnE1Right}. Moreover, the elements in the second column form a basis of $E^1$.
\[\begin{tikzcd}[row sep = -0.6em]
	\widetilde{\bz{s_1}} \cdot \tau_{w}
	\ar[r, mapsto]&
	\bz{s_1w}
	&\scriptstyle \text{for $w \in \widetilde{W}$ such that $\ell(s_1w) = \ell(w) + 1$,}
	\\
	\widetilde{\bz{s_0}} \cdot \tau_{w}
	\ar[r, mapsto]&
	\bz{s_0w}
	&\scriptstyle \text{for $w \in \widetilde{W}$ such that $\ell(s_0w) = \ell(w) + 1$,}
	\\
	\widetilde{\bm{1}} \cdot \tau_{w}
	\ar[r, mapsto]&
	\bm{w}
	&\scriptstyle \text{for $w \in \widetilde{W}$ such that $\ell(s_1w) = \ell(w) + 1$,}
	\\
	\widetilde{\bp{1}} \cdot \tau_{w}
	\ar[r, mapsto]&
	\bp{w}
	&\scriptstyle \text{for $w \in \widetilde{W}$ such that $\ell(s_0w) = \ell(w) + 1$,}
	\\
	\tau_{(s_1s_0)^j} \cdot \widetilde{\bm{1}} \cdot \tau_{\omega}
	\ar[r, mapsto]&
	 \bm{(s_1s_0)^j\omega}
	&\scriptstyle \text{for $\omega \in \quoz{T^0}{T^1}$ and $j \in \zpiu$,}
	\\
	\tau_{s_0(s_1s_0)^j} \cdot \widetilde{\bm{1}} \cdot \tau_{\omega}
	\ar[r, mapsto]&
	 - \bp{s_0(s_1s_0)^j\omega}
	&\scriptstyle \text{for $\omega \in \quoz{T^0}{T^1}$ and $j \in \nn$,}
	\\
	\tau_{(s_0s_1)^j} \cdot \widetilde{\bp{1}} \cdot \tau_{\omega}
	\ar[r, mapsto]&
	\bp{(s_0s_1)^j\omega}
	&\scriptstyle \text{for $\omega \in \quoz{T^0}{T^1}$ and $j \in \zpiu$,}
	\\
	\tau_{s_1(s_0s_1)^j} \cdot \widetilde{\bp{1}} \cdot \tau_{\omega}
	\ar[r, mapsto]&
	 - \bm{s_1(s_0s_1)^j\omega}
	&\scriptstyle \text{for $\omega \in \quoz{T^0}{T^1}$ and $j \in \nn$.}
\end{tikzcd}\]
Therefore, it remains to show that the elements in the first column generate $\quoz{M}{N}$ as a $k$-vector space. For this, it suffices to prove that, for all $v, w \in \widetilde{W}$, the following elements lie in the sub-$k$-vector space $V$ generated by the first column:
\begin{align*}
	&\tau_v \cdot \widetilde{\bm{1}} \cdot \tau_w,
	&
	&\tau_v \cdot \widetilde{\bp{1}} \cdot \tau_w,
	&
	&\tau_v \cdot \widetilde{\bz{s_0}} \cdot \tau_w,
	&
	&\tau_v \cdot \widetilde{\bz{s_1}} \cdot \tau_w.
\end{align*}
We are going to prove this claim by induction on $\ell(v) + \ell(w)$.

Let us consider the case $\ell(v) + \ell(w) = 0$. This condition means that $v$ and $w$ lie in $\quoz{T^0}{T^1}$, and so that they are powers of $\omega_0$. Using \eqref{eqBigListLine7}, we see that the elements $\tau_v \cdot \widetilde{\bm{1}} \cdot \tau_w$ (respectively, $\tau_v \cdot \widetilde{\bp{1}} \cdot \tau_w$) can be rewritten, up to a coefficient, in the form $\widetilde{\bm{1}} \cdot \tau_\omega$ (respectively, $\widetilde{\bp{1}} \cdot \tau_\omega$) for a suitable $\omega \in \quoz{T^0}{T^1}$, and so it lies in $V$, as we wanted to show. In the same way one deals with the elements $\tau_v \cdot \widetilde{\bz{s_0}} \cdot \tau_w$ and $\tau_v \cdot \widetilde{\bz{s_1}} \cdot \tau_w$, this time using \eqref{eqBigListLine8}.

Now it remains to consider the induction step. We distinguish some cases.
\begin{itemize}
\item
	Let us consider first $\tau_v \cdot \widetilde{\bm{1}} \cdot \tau_w$ and $\tau_v \cdot \widetilde{\bp{1}} \cdot \tau_w$.
	
	Since both $\widehat{\bp{1}} \cdot \tau_{s_0}$ and $\widehat{\bm{1}} \cdot \tau_{s_1}$ lie in $N$ by \eqref{eqBigListLine2}, we see that it suffices to consider the following cases:
	\begin{align*}
		&\tau_v \cdot \widetilde{\bm{1}} \cdot \tau_\omega &&\text{for some $\omega \in \quoz{T^0}{T^1}$,}
		\\
		&\tau_v \cdot \widetilde{\bp{1}} \cdot \tau_\omega &&\text{for some $\omega \in \quoz{T^0}{T^1}$,}
		\\
		&\tau_v \cdot \widetilde{\bm{1}} \cdot \tau_{s_0 w'} &&\text{for some $w' \in \widetilde{W}$ such that $\ell(s_0 w') = \ell(w') + 1$,}
		\\
		&\tau_v \cdot \widetilde{\bp{1}} \cdot \tau_{s_1 w'} &&\text{for some $w' \in \widetilde{W}$ such that $\ell(s_1 w') = \ell(w') + 1$.}
	\end{align*}
	Let us look at the first two elements. Since both $\tau_{s_1} \cdot \widehat{\bm{1}}$ and $\tau_{s_0} \cdot \widehat{\bp{1}}$ lie in $N$ by \eqref{eqBigListLine1}, it suffices to consider the following cases:
	\begin{align*}
		&\tau_{(s_1s_0)^j \omega'} \cdot \widetilde{\bm{1}} \cdot \tau_\omega &&\text{for some $j \in \nn$ and some $\omega,\omega' \in \quoz{T^0}{T^1}$,}
		\\
		&\tau_{s_0(s_1s_0)^j \omega'} \cdot \widetilde{\bm{1}} \cdot \tau_\omega &&\text{for some $j \in \nn$ and some $\omega,\omega' \in \quoz{T^0}{T^1}$,}
		\\
		&\tau_{(s_0s_1)^j \omega'} \cdot \widetilde{\bp{1}} \cdot \tau_\omega &&\text{for some $j \in \nn$ and some $\omega,\omega' \in \quoz{T^0}{T^1}$,}
		\\
		&\tau_{s_1(s_0s_1)^j \omega'} \cdot \widetilde{\bp{1}} \cdot \tau_\omega &&\text{for some $j \in \nn$ and some $\omega,\omega' \in \quoz{T^0}{T^1}$.}
	\end{align*}
	If $\omega'=1$ then these elements are in $V$ because they are in the list of generators of $V$, and in the general case we can reduce to this situation exactly as we did in the case $\ell(v) + \ell(w) = 0$.
	
	Now we have to consider the elements
	\begin{align*}
		&\tau_v \cdot \widetilde{\bm{1}} \cdot \tau_{s_0} \cdot \tau_{w'} &&\text{for some $w' \in \widetilde{W}$ such that $\ell(s_0 w') = \ell(w') + 1$,}
		\\
		&\tau_v \cdot \widetilde{\bp{1}} \cdot \tau_{s_1} \cdot \tau_{w'} &&\text{for some $w' \in \widetilde{W}$ such that $\ell(s_1 w') = \ell(w') + 1$.}
	\end{align*}
	If $\ell(v) = 0$, we reduce as usual to the case $v=1$, in which case we see that the above elements are in the list of generators of $V$. So we might assume that $\ell(v) \geqslant 1$. In this case, using again that both $\tau_{s_1} \cdot \widehat{\bm{1}}$ and $\tau_{s_0} \cdot \widehat{\bp{1}}$ lie in $N$, we are reduced to considering elements of the following forms:
	\begin{align*}
		&\tau_{v'} \cdot \tau_{s_0} \cdot \widetilde{\bm{1}} \cdot \tau_{s_0} \cdot \tau_{w'},
		\\
		&\tau_{v'} \cdot \tau_{s_1} \cdot \widetilde{\bp{1}} \cdot \tau_{s_1} \cdot \tau_{w'},
	\end{align*}
	where $v'$ and $w'$ are elements of $\widetilde{W}$ such that lengths add up in the corresponding multiplications (i.e., $\tau_{v'} \cdot \tau_{s_0}$ and $\tau_{s_0} \cdot \tau_{w'}$ in the case of the first line).
	Now, looking at \eqref{eqBigListLine3}, we see that we can write these two elements as linear combinations of elements of the form
	\begin{align*}
		&\tau_{v''} \cdot \widetilde{\bm{1}} \cdot \tau_{w''},
		&
		&\tau_{v''} \cdot \widetilde{\bp{1}} \cdot \tau_{w''},
		&
		&\tau_{v''} \cdot \widetilde{\bz{s_0}} \cdot \tau_{w''},
		&
		&\tau_{v''} \cdot \widetilde{\bz{s_1}} \cdot \tau_{w''}
	\end{align*}
	for $v'',w'' \in \widetilde{W}$ such that $\ell(v'') + \ell(w'') < \ell(v) + \ell(w)$. The induction hypothesis then allow us to conclude that the elements $\tau_{v' s_0} \cdot \widetilde{\bm{1}} \cdot \tau_{s_0 w'}$ and $\tau_{v' s_1} \cdot \widetilde{\bp{1}} \cdot \tau_{s_1 w'}$ lie in $V$.
\item
	Now we consider the elements $\tau_v \cdot \widetilde{\bz{s_i}} \cdot \tau_w$ (for $i \in \{0,1\}$) under the additional assumption that $\ell(vs_i) = \ell(v) + 1$ and $\ell(s_iw) = \ell(w) + 1$.
	
	Making $v$ explicit, we see that we are dealing with the following elements:
	\begin{align*}
		\tau_{(s_0s_1)^i\omega} \cdot \widetilde{\bz{s_0}} \cdot \tau_w
		&&\scriptstyle{\text{for $i \in \nn$, $\omega \in \quoz{T^0}{T^1}$, $w \in \widetilde{W}$ with $\ell(s_0w) = \ell(w) + 1$,}}
		\\
		\tau_{s_1(s_0s_1)^i\omega} \cdot \widetilde{\bz{s_0}} \cdot \tau_w
		&&\scriptstyle{\text{for $i \in \nn$, $\omega \in \quoz{T^0}{T^1}$, $w \in \widetilde{W}$ with $\ell(s_0w) = \ell(w) + 1$,}}
		\\
		\tau_{(s_1s_0)^i\omega} \cdot \widetilde{\bz{s_1}} \cdot \tau_w
		&&\scriptstyle{\text{for $i \in \nn$, $\omega \in \quoz{T^0}{T^1}$, $w \in \widetilde{W}$ with $\ell(s_1w) = \ell(w) + 1$,}}
		\\
		\tau_{s_0(s_1s_0)^i\omega} \cdot \widetilde{\bz{s_1}} \cdot \tau_w
		&&\scriptstyle{\text{for $i \in \nn$, $\omega \in \quoz{T^0}{T^1}$, $w \in \widetilde{W}$ with $\ell(s_1w) = \ell(w) + 1$.}}
	\end{align*}
	As before, we see that we can assume without loss of generality that $\omega = 1$, and then, using \eqref{eqBigListLine4} repeatedly, we see that, up to a sign, the above elements can be rewritten in the form $\widetilde{\bz{s_i}} \cdot \tau_v$ for some $i \in \{0,1\}$ and some $v \in \widetilde{W}$ such that $\ell(s_i v) = \ell(v) + 1$.
\item
	Now we consider the elements $\tau_v \cdot \widetilde{\bz{s_i}} \cdot \tau_w$ (for $i \in \{0,1\}$) under the additional assumption that $\ell(s_iw) = \ell(w) - 1$.
	
	We only deal with the case $i=0$, the other case being completely analogous. Using \eqref{eqBigListLine6}, we see that
	\begin{align*}
		\tau_v \cdot \widetilde{\bz{s_0}} \cdot \tau_w
		&=
		\tau_v \cdot \widetilde{\bz{s_0}} \cdot \tau_{s_0} \cdot \tau_{s_0^{-1}w}
		\\&=
		\tau_v \cdot \left( - \widetilde{\bz{s_0}} \cdot e_1 - e_{\idd^{-1}} \widetilde{\bm{1}} \cdot \tau_{s_0} \right) \cdot \tau_{s_0^{-1}w}
		\\&=
		- \tau_v \cdot \widetilde{\bz{s_0}} \cdot \tau_{s_0^{-1}w}e_1
		- \tau_v e_{\idd^{-1}}  \cdot \widetilde{\bm{1}} \cdot \tau_{w}.
	\end{align*}
	The first term in the last line is a linear combination of terms of the form $\tau_{v'} \cdot \widetilde{\bz{s_0}} \cdot \tau_{w'}$ for $v', w' \in \widetilde{W}$ such that $\ell(v') + \ell(w') = \ell(v) + \ell(w) - 1$, and so by inductive hypothesis it lies in $V$. On the other hand, the second term is a linear combination of terms of the form $\tau_{v'} \cdot \widetilde{\bm{1}} \cdot \tau_{w'}$ for $v', w' \in \widetilde{W}$ such that $\ell(v') + \ell(w') = \ell(v) + \ell(w)$, and these also lie in $V$, as already proved.
\item	
	Now it remains to consider the elements $\tau_v \cdot \widetilde{\bz{s_i}} \cdot \tau_w$ (for $i \in \{0,1\}$) under the additional assumption that $\ell(vs_i) = \ell(v) - 1$. This case is completely analogous to the previous one, using \eqref{eqBigListLine5} in place of \eqref{eqBigListLine6}.
\qedhere
\end{itemize}
\end{proof}

In the next lemma we compute a presentation of the Hecke algebra $E^0=H$. This is straightforward (and probably well-known) under our assumptions. Also compare the finite presentation of $H$ in the case $G = \GL_n(\field)$ determined in \cite[\S2.1]{grosse}. The following lemma does not use anything about the $\ext$-algebra, and it is actually valid in the case $G = \SL_2(\field)$, without assumptions on $\field$ or $p$, but replacing the occurrences of $p$ with $q$.

\begin{lmm}\label{lmmPresentationE0}
The pro-$p$ Iwahori--Hecke algebra $H=E^0$ can be expressed by generators and relations as follows. Let
\[
	k \left\langle \widehat{\tau_{\omega_0}}, \widehat{\tau_{s_0}}, \widehat{\tau_{s_1}} \right\rangle
\]
be the ring of non-commutative polynomials in three indeterminates called $\widehat{\tau_{\omega_0}}$, $\widehat{\tau_{s_0}}$ and $\widehat{\tau_{s_1}}$. Furthermore, let $I$ be the two-sided ideal of $k \left\langle \widehat{\tau_{\omega_0}}, \widehat{\tau_{s_0}}, \widehat{\tau_{s_1}} \right\rangle$ generated by the following elements
\begin{align}
	&\widehat{\tau_{\omega_0}}^{p-1} - 1,
	\label{eqSmallList1}
	\\
	&\widehat{\tau_{\omega_0}} \cdot \widehat{\tau_{s_0}} - \widehat{\tau_{s_0}} \cdot \widehat{\tau_{\omega_0}}^{p-2},
	&
	&\widehat{\tau_{\omega_0}} \cdot \widehat{\tau_{s_1}} - \widehat{\tau_{s_1}} \cdot \widehat{\tau_{\omega_0}}^{p-2},
	\label{eqSmallList2}
	\\
	&\widehat{\tau_{s_0}}^2 - \sum_{i=0}^{p-2} \widehat{\tau_{\omega_0}}^{i} \cdot \widehat{\tau_{s_0}},
	&
	&\widehat{\tau_{s_1}}^2 - \sum_{i=0}^{p-2} \widehat{\tau_{\omega_0}}^{i} \cdot \widehat{\tau_{s_1}}.
	\label{eqSmallList3}
\end{align}
Let $\widetilde{\tau_{\omega_0}}$, $\widetilde{\tau_{s_0}}$ and $\widetilde{\tau_{s_1}}$ be respectively the images of $\widehat{\tau_{\omega_0}}$, $\widehat{\tau_{s_0}}$ and $\widehat{\tau_{s_1}}$ in the quotient algebra $\quoz{k \left\langle \widehat{\tau_{\omega_0}}, \widehat{\tau_{s_0}}, \widehat{\tau_{s_1}} \right\rangle}{I}$. One has an isomorphism of $k$-algebras
\[
	\funcAboveTriple
		{}
		{\quoz{k \left\langle \widehat{\tau_{\omega_0}}, \widehat{\tau_{s_0}}, \widehat{\tau_{s_1}} \right\rangle}{I}}
		{H=E^0}
		{\widetilde{\tau_{\omega_0}}}
		{\tau_{\omega_0},}
		{\widetilde{\tau_{s_0}}}
		{\tau_{s_0},}
		{\widetilde{\tau_{s_1}}}
		{\tau_{s_1}.}
\]
\end{lmm}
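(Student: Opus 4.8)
The plan is to exhibit the claimed isomorphism by first constructing the homomorphism, then checking it is well-defined, and finally proving bijectivity by a dimension/basis count. The homomorphism
$
	\func{}{k \langle \widehat{\tau_{\omega_0}}, \widehat{\tau_{s_0}}, \widehat{\tau_{s_1}} \rangle}{H}{\widehat{\tau_{\omega_0}}}{\tau_{\omega_0}}
$
(and similarly for the other two generators) is defined on the free associative algebra by the universal property, so the only content in ``well-definedness'' is that the three relations \eqref{eqSmallList1}--\eqref{eqSmallList3} are sent to zero in $H$. For \eqref{eqSmallList1} this is the statement that $\tau_{\omega_0}$ has order $p-1$ in $H$, which follows from the braid relation $\tau_v \cdot \tau_w = \tau_{vw}$ (since $\ell$ vanishes on $T^0/T^1$) together with the parametrisation $T^0/T^1 \cong \ff_p^\times$ from \eqref{eqOmegaU} and the choice of generator $\omega_0$ in \eqref{eqOmegaZero}. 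For \eqref{eqSmallList2} one uses that $s_0 \omega_0 s_0^{-1} = \omega_0^{-1} = \omega_0^{p-2}$ in $\widetilde{W}$ (conjugation inverts the torus) and again the braid relation, noting that all the relevant lengths add up; the analogous computation handles $s_1$. For \eqref{eqSmallList3} one uses the quadratic relation $\tau_{s_i}^2 = - e_1 \cdot \tau_{s_i}$ from \eqref{eqQuadrRelSL2} together with the explicit expansion $- e_1 = \sum_{t \in T^0/T^1} \tau_t = \sum_{i=0}^{p-2} \tau_{\omega_0^i}$ coming from \eqref{eqDefELambda} (the trivial character) and \eqref{eqOmegaZero}; again the braid relation turns $\tau_{\omega_0^i} \cdot \tau_{s_i}$ into $\tau_{\omega_0^i s_i}$, so this is exactly the image of \eqref{eqSmallList3}.

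Surjectivity is then immediate: by the discussion of the Iwahori--Matsumoto basis in \S\ref{subsecNotationSL2}, every $\tau_w$ with $w \in \widetilde{W}$ is a product of $\tau$'s indexed by a torus element $\omega$ and an alternating string of $s_0$'s and $s_1$'s, and using the braid relation each such $\tau_w$ is a monomial in $\tau_{\omega_0}, \tau_{s_0}, \tau_{s_1}$; since the $\tau_w$ span $H$, the map is onto.

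The remaining, and main, point is injectivity. I would prove it by showing that the quotient algebra $A \defeq \quoz{k \langle \widehat{\tau_{\omega_0}}, \widehat{\tau_{s_0}}, \widehat{\tau_{s_1}} \rangle}{I}$ is spanned over $k$ by a set of monomials in natural bijection with the Iwahori--Matsumoto basis $(\tau_w)_{w \in \widetilde{W}}$, so that $\dim_k A \leqslant \# \widetilde{W} = \dim_k H$ and a surjection between them must be an isomorphism. Concretely, I would show that $A$ is spanned by the images of the monomials
\[
	\widehat{\tau_{\omega_0}}^{\,a}, \quad
	\widehat{\tau_{\omega_0}}^{\,a} \cdot \widehat{\tau_{s_0}} \cdot \widehat{\tau_{s_1}} \cdot \widehat{\tau_{s_0}} \cdots, \quad
	\widehat{\tau_{\omega_0}}^{\,a} \cdot \widehat{\tau_{s_1}} \cdot \widehat{\tau_{s_0}} \cdot \widehat{\tau_{s_1}} \cdots
\]
for $a \in \{0,\dots,p-2\}$ and alternating strings of arbitrary (finite) length, i.e. exactly the monomials matching the listed $k$-basis of $E^0$. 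Every monomial in the three generators can be rewritten, using the relations, into this form: relation \eqref{eqSmallList1} bounds the exponent of $\widehat{\tau_{\omega_0}}$, relation \eqref{eqSmallList2} lets one move all powers of $\widehat{\tau_{\omega_0}}$ to the left (at the cost of changing the exponent), and relation \eqref{eqSmallList3} lets one eliminate any two consecutive equal letters $\widehat{\tau_{s_i}} \cdot \widehat{\tau_{s_i}}$, strictly reducing the number of $s$-letters. A straightforward induction on the length of the $s$-string then shows every monomial lies in the span of the distinguished ones. This is the step that requires the most care, since one must make sure the rewriting terminates and genuinely produces only elements from the finite spanning list; but it is a routine confluence argument. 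Comparing cardinalities with the explicit basis of $E^0$ recalled in \S\ref{subsecNotationSL2} finishes the proof.
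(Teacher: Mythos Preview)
Your approach is essentially the same as the paper's: verify the relations hold in $H$ (same checks), deduce surjectivity from the Iwahori--Matsumoto basis, and for injectivity show that the quotient is spanned by the ``normal-form'' monomials $\widetilde{\tau_{\omega_0}}^{\,a}$ times an alternating string in $\widetilde{\tau_{s_0}},\widetilde{\tau_{s_1}}$, using \eqref{eqSmallList1}--\eqref{eqSmallList2} to push torus factors to the left and \eqref{eqSmallList3} plus induction on the $s$-length to remove repeated letters. The paper does exactly this.

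One point to tighten: the sentence ``$\dim_k A \leqslant \# \widetilde{W} = \dim_k H$ and a surjection between them must be an isomorphism'' is not a valid argument in infinite dimension (a surjective $k$-linear map between countable-dimensional spaces need not be injective). What actually makes the argument work, and what the paper uses, is that your spanning set of $A$ is mapped \emph{bijectively onto a basis} of $H$; hence any linear relation among the spanning elements would map to a nontrivial relation among basis elements of $H$, which is impossible. So the spanning set is in fact a basis of $A$ and the map is an isomorphism. Replace the cardinality/dimension comparison by this observation and the proof is complete.
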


\begin{proof}
The braid relations, the quadratic relations and the properties of the group $\widetilde{W}$ imply that the homomorphism in the statement is indeed well-defined and surjective.

To show injectivity, we proceed as in Lemma \ref{lmmPresentationE1}. Let us consider the following table, where the elements in the first column are mapped to the elements on the right column by our $k$-algebra homomorphism
$
	\funcInlineNN
		{\quoz{k \left\langle \widehat{\tau_{\omega_0}}, \widehat{\tau_{s_0}}, \widehat{\tau_{s_1}} \right\rangle}{I}}
		{E^0}
$, and where the elements in the right column form a $k$-basis of $E^0$:
\[\begin{tikzcd}[row sep = -0.6em]
	\widetilde{\tau_{\omega_0}}^i
	\ar[r, mapsto]
	&\tau_{\omega_0^i}
	&\substack{\text{for $i \in \{0, \dots, p-2\}$,}}
	\\
	\widetilde{\tau_{\omega_0}}^i \cdot \widetilde{\tau_{s_1}} \cdot (\widetilde{\tau_{s_0}} \cdot \widetilde{\tau_{s_1}})^j
	\ar[r, mapsto]
	&\tau_{\omega_0^i s_1(s_0s_1)^j}
	&\substack{\text{for $i \in \{0, \dots, p-2\}$ and $j \in \nn$,}}
	\\
	\widetilde{\tau_{\omega_0}}^i \cdot \widetilde{\tau_{s_0}} \cdot (\widetilde{\tau_{s_1}} \cdot \widetilde{\tau_{s_0}})^j
	\ar[r, mapsto]
	&\tau_{\omega_0^i s_0(s_1s_0)^j}
	&\substack{\text{for $i \in \{0, \dots, p-2\}$ and $j \in \nn$,}}
	\\
	\widetilde{\tau_{\omega_0}}^i \cdot (\widetilde{\tau_{s_0}} \cdot \widetilde{\tau_{s_1}})^j
	\ar[r, mapsto]
	&\tau_{\omega_0^i (s_0s_1)^j}
	&\substack{\text{for $i \in \{0, \dots, p-2\}$ and $j \in \zpiu$,}}
	\\
	\widetilde{\tau_{\omega_0}}^i \cdot (\widetilde{\tau_{s_1}} \cdot \widetilde{\tau_{s_0}})^j
	\ar[r, mapsto]
	&\tau_{\omega_0^i (s_1s_0)^j}
	&\substack{\text{for $i \in \{0, \dots, p-2\}$ and $j \in \zpiu$.}}
\end{tikzcd}\]

As in Lemma \ref{lmmPresentationE1}, to show injectivity of our homomorphism, it suffices to show that the $k$-vector space $V$ generated by the elements in the left column is the full $\quoz{k \left\langle \widehat{\tau_{\omega_0}}, \widehat{\tau_{s_0}}, \widehat{\tau_{s_1}} \right\rangle}{I}$. So, it suffices to prove that every element of the form
\[
	\widetilde{\tau_{w_1}} \cdots \widetilde{\tau_{w_n}},
\]
for $n \in \nn$ and $w_1, \dots, w_n \in \{ \omega_0, s_0, s_1 \}$, lies in $V$. Using the relations \eqref{eqSmallList1} and \eqref{eqSmallList2}, we see that we may further reduce to elements of the form
\[
	\widetilde{\tau_{\omega_0}}^i \cdot \widetilde{\tau_{s_{l_1}}} \cdots \widetilde{\tau_{s_{l_m}}}
\]
for $i \in \{0, \dots, p-2\}$, for $m \in \nn$ and for $l_1, \dots, l_m \in \{0,1\}$. We now prove that an element of this form lies in $V$ by induction on $m$. If $m = 0$, then the result is clear. Furthermore, for general $m$ the result is clear in the case that there are no consecutive indices $l_{j}$ and $l_{j+1}$ both equal to $0$ or both equal to $1$. If instead there is at least one such pair of indices, then we use \eqref{eqSmallList3} (together with \eqref{eqSmallList1} and \eqref{eqSmallList2} again) to reduce to the case of smaller $m$.
\end{proof}

\begin{lmm}\label{lmmTecnicoPresentation}
Let $A^\ast$ be a positively graded algebra over a field $F$ (associative, with $1$, not necessarily commutative) such that the natural map
$
	\funcInlineNN{T^\ast_{A^0} A^1}{A^\ast}
$
is surjective. Let us consider generators the kernel of the above map (as a two-sided ideal), obtaining a presentation
\[
	\funcInlineNN{\frac{T^\ast_{A^0} A^1}{(a_i)_{i \in \mathbf{I}}}}{A^\ast}.
\]
Let us consider a presentation of $A^1$ as an $A^0 \otimes_k (A^0)^{\opp}$-left module
\[
	\funcInlineNN{\frac{\bigoplus_{j \in \mathbf{J}} \big( A^0 \otimes_k (A^0)^{\opp} \big) X_j}{(b_k)_{k \in \mathbf{K}}}}{A^1},
\]
and let us consider a presentation of $A^0$ as a $k$-algebra
\[
	\funcInlineNN{\frac{F \langle Y_l \rangle_{l \in \mathbf{L}}}{(c_m)_{m \in \mathbf{M}}}}{A^0}.
\]
For all $i \in \mathbf{I}$ (respectively, for all $k \in \mathbf{K}$) let us choose a lift $\dot{a}_i$ (respectively, $\dot{b}_k$) of $a_i$ (respectively, of $b_k$) in $F \langle X_j, Y_l \rangle_{j \in \mathbf{J}, l \in \mathbf{L}}$.

One then has an induced presentation of $A^\ast$ as a $k$-algebra
\[
	\funcInlineNN
		{\frac
			{F \langle X_j, Y_l \rangle_{j \in \mathbf{J}, l \in \mathbf{L}}}
			{\ideal{ \dot{a}_i, \dot{b}_k, c_m }{ i \in \mathbf{I}, k \in \mathbf{K}, m \in \mathbf{M} }}
		}{A^\ast}.
\]
\end{lmm}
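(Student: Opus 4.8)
The plan is to present $A^\ast$ as the last term of a short chain of quotients of the free $F$-algebra $B \defeq F\langle X_j, Y_l \rangle_{j \in \mathbf{J},\, l \in \mathbf{L}}$, killing the three families of relations one family at a time. There is an obvious $F$-algebra homomorphism $p \colon B \to A^\ast$ sending each $Y_l$ to its image in $A^0 \subseteq A^\ast$ and each $X_j$ to its image in $A^1 \subseteq A^\ast$; it is surjective because $A^0$ is generated over $F$ by the $Y_l$, $A^1$ is generated over $A^0$ by the $X_j$, and $A^\ast$ is generated by $A^0$ and $A^1$ together (the map $T^\ast_{A^0} A^1 \to A^\ast$ being onto by hypothesis). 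First I would record the easy half: $p$ annihilates every $c_m$ (these generate $\ker(F\langle Y_l\rangle \to A^0)$), hence factors through $B/(c_m)$; it then annihilates every $\dot b_k$ (since $\dot b_k$ lifts $b_k$, which is killed in $A^1$); and it annihilates every $\dot a_i$ (since $a_i \in \ker(T^\ast_{A^0} A^1 \to A^\ast)$). So $p$ descends to a surjection $B/(\dot a_i, \dot b_k, c_m) \twoheadrightarrow A^\ast$, and the content of the lemma is the injectivity of this map.

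To get injectivity I would identify the intermediate quotients explicitly. Because the generator sets $\{X_j\}$ and $\{Y_l\}$ are disjoint, $B$ is the free product over $F$ of $F\langle X_j\rangle$ and $F\langle Y_l\rangle$; dividing by the relations $c_m$, which involve only the $Y_l$, yields $B/(c_m) \cong A^0 *_F F\langle X_j\rangle$, the free $A^0$-ring on the set $\{X_j\}_{j \in \mathbf{J}}$. Since the free $A^0$-ring on a set is the tensor algebra over $A^0$ of the free $A^0$-bimodule on that set, we get $B/(c_m) \cong T^\ast_{A^0}(\widehat M)$, where $\widehat M \defeq \bigoplus_{j \in \mathbf{J}} (A^0 \otimes_k (A^0)^{\opp}) X_j$ is exactly the module occurring in the given presentation of $A^1$, here appearing as the degree-$1$ part of $T^\ast_{A^0}(\widehat M)$. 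Under this identification the image of $\dot b_k$ in $B/(c_m)$ is $b_k \in \widehat M$, since $\dot b_k$ was chosen to reduce to $b_k$ modulo the $c_m$.

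Next I would invoke two routine facts. The elementary one: for a ring quotient $q \colon R \to R/J$ and an element $r \in R$ one has $R/(J + (r)) \cong (R/J)/(q(r))$ as rings; iterating, $B/(c_m, \dot b_k) \cong T^\ast_{A^0}(\widehat M)/(b_k)$ and then $B/(c_m, \dot b_k, \dot a_i) \cong \bigl(T^\ast_{A^0}(\widehat M)/(b_k)\bigr)/(\dot a_i)$ (still writing $\dot a_i$ for its image), all ideals being two-sided. The structural one: the functor $T^\ast_{A^0}$ from $A^0$-bimodules to $A^0$-rings is left adjoint to the forgetful functor, hence preserves cokernels; since the given presentation exhibits $A^1$ as $\widehat M/K$ with $K$ the sub-bimodule generated by the $b_k$, this yields a natural isomorphism $T^\ast_{A^0}(\widehat M)/(K) \cong T^\ast_{A^0}(A^1)$, and the two-sided ideal $(K)$ equals the two-sided ideal $(b_k)$ (a two-sided ideal containing the $b_k$ automatically contains the sub-bimodule they generate, as that is just multiplication by elements of $A^0 \subseteq T^\ast_{A^0}(\widehat M)$). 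Combining the two facts, $B/(c_m, \dot b_k) \cong T^\ast_{A^0}(A^1)$ via an isomorphism carrying the image of $\dot a_i$ to $a_i$, and the induced map to $A^\ast$ is the multiplication map of the first presentation; as the $a_i$ generate its kernel as a two-sided ideal, $B/(\dot a_i, \dot b_k, c_m) \cong T^\ast_{A^0}(A^1)/(a_i) \cong A^\ast$, the composite being $p$, as required.

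I expect the only point needing genuine care to be the ideal bookkeeping along the chain — verifying at each stage that the preimage under the relevant projection of the ideal generated by a set of images equals the ideal generated by the chosen preimages together with the kernel of that projection — together with the one identity I would actually write out in full, namely the compatibility $T^\ast_{A^0}(\widehat M/K) \cong T^\ast_{A^0}(\widehat M)/(K)$ of the tensor algebra with cokernels of bimodules (which, as indicated, is formal from $T^\ast_{A^0}$ being a left adjoint). Everything else is mechanical.
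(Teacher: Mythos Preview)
Your proof is correct and follows essentially the same route as the paper's. The paper phrases the key identification $B/(c_m,\dot b_k)\cong T^\ast_{A^0}A^1$ directly via the universal property of the tensor algebra (for any $F$-algebra $R$ with an algebra map $A^0\to R$ and a bimodule map $A^1\to R$, there is a unique algebra map $T^\ast_{A^0}A^1\to R$), whereas you factor this through the intermediate step $B/(c_m)\cong T^\ast_{A^0}(\widehat M)$ and then invoke that the left adjoint $T^\ast_{A^0}$ preserves cokernels; these are two packagings of the same fact.
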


\begin{proof}
For the moment let us not assume that the map
$
	\funcInlineNN{T^\ast_{A^0} A^1}{A^\ast}
$
is surjective.

Let $\funcInline{\iota_0}{A^0}{T_{A^0}^\ast A^1}$ and $\funcInline{\iota_1}{A^1}{T_{A^0}A^1}$ be the canonical inclusions. It is easy to check that the triple $(T^\ast_{A^0}A^1, \iota_0, \iota_1)$ enjoys the following universal property: for all triples $(R,\xi_0,\xi_1)$ consisting of a $F$-algebra $R$ (associative, not necessarily commutative), a homomorphism of $F$-algebras $\funcInline{\xi_0}{A^0}{R}$ and a homomorphism of left $A^0 \otimes_F (A^0)^{\opp}$-modules $\funcInline{\xi_1}{A^1}{R}$ (where $R$ is a left $A^0 \otimes_F (A^0)^{\opp}$-module via $\xi_0$), there exists a unique homomorphism of $F$-algebras $\funcInline{\eta}{T_{A^0}^\ast A^1}{R}$ making the following diagrams commute:
\begin{align*}
&\begin{tikzcd}[row sep = 0.2em, ampersand replacement = \&]
	\&
	T_{A^0}^\ast A^1
	\ar[dd, "{\eta}"]
	\\
	A^0
	\ar[ur, "{\iota_0}"]
	\ar[dr, "{\xi_0}"']
	\&
	\\
	\&
	R,
\end{tikzcd}
&\begin{tikzcd}[row sep = 0.2em, ampersand replacement = \&]
	\&
	T_{A^0}^\ast A^1
	\ar[dd, "{\eta}"]
	\\
	A^1
	\ar[ur, "{\iota_1}"]
	\ar[dr, "{\xi_1}"']
	\&
	\\
	\&
	R.
\end{tikzcd}
\end{align*}
This universal property provides an inverse to the obvious map
\[
	\funcInlineNN
		{\frac
			{F \langle X_j, Y_l \rangle_{j \in \mathbf{J}, l \in \mathbf{L}}}
			{\ideal{ \dot{b}_k, c_m }{ k \in \mathbf{K}, m \in \mathbf{M} }}
		}{T^\ast_{A^0}A^1},
\]
and the claim of the lemma follows (assuming that the map
$
	\funcInlineNN{T^\ast_{A^0} A^1}{A^\ast}
$
is surjective).
\end{proof}

The preceding lemma directly allows us to compute a presentation of $E^\ast$, since we have already computed the three presentations required in the statement of the lemma, in Theorem \ref{thmFinalResultKernel}, Lemma \ref{lmmPresentationE0} and Lemma \ref{lmmPresentationE1} respectively.

Let us work with the ring of noncommutative polynomials in seven indeterminates
$
	k \left\langle \widehat{\tau_{\omega_0}}, \widehat{\tau_{s_0}}, \widehat{\tau_{s_1}}, \widehat{\bm{1}}, \widehat{\bp{1}}, \widehat{\bz{s_0}}, \widehat{\bz{s_1}} \right\rangle
$
(where the indeterminates have been labelled in accordance with the notation of Lemma \ref{lmmPresentationE1} and Lemma \ref{lmmPresentationE0}).

For all $\lambda \in \widehat{\quoz{T^0}{T^1}}$, let us give the following definition, mimicking the definition of $e_\lambda$:
\[
	\varepsilon_\lambda \defeq - \sum_{i=0}^{p-1} \lambda(\omega_0^{-i}) \widehat{\tau_{\omega_0}}^i \in k[\widehat{\tau_{\omega_0}}].
\]

Let us consider the following elements of
$
	k \left\langle \widehat{\tau_{\omega_0}}, \widehat{\tau_{s_0}}, \widehat{\tau_{s_1}}, \widehat{\bm{1}}, \widehat{\bp{1}}, \widehat{\bz{s_0}}, \widehat{\bz{s_1}} \right\rangle
$:
\begin{equation}\label{eqList3}\begin{aligned}
&\begin{aligned}
	&\widehat{\bm{1}} \cdot \widehat{\bm{1}}, &&\qquad\qquad\qquad\widehat{\bp{1}} \cdot \widehat{\bm{1}}, &&\qquad\qquad\qquad\widehat{\bz{s_1}} \cdot \widehat{\bm{1}},
	\\
	&\widehat{\bm{1}} \cdot \widehat{\bp{1}}, &&\qquad\qquad\qquad\widehat{\bp{1}} \cdot \widehat{\bp{1}}, &&\qquad\qquad\qquad\widehat{\bz{s_0}} \cdot \widehat{\bp{1}},
	\\
	&\widehat{\bp{1}} \cdot \widehat{\bz{s_0}}, &&\qquad\qquad\qquad\widehat{\bz{s_1}} \cdot \widehat{\bz{s_0}},
	\\
	&\widehat{\bm{1}} \cdot \widehat{\bz{s_1}}, &&\qquad\qquad\qquad\widehat{\bz{s_0}} \cdot \widehat{\bz{s_1}},
\end{aligned}
\\
	&\widehat{\bz{s_0}} \cdot \widehat{\bz{s_0}} + \varepsilon_{\idd^{-1}} \cdot \widehat{\bm{1}} \cdot \widehat{\bz{s_0}} + \varepsilon_{\idd} \cdot \widehat{\bz{s_0}} \cdot \widehat{\bm{1}} + \varepsilon_1 \cdot \widehat{\bm{1}} \cdot \widehat{\tau_{s_0}} \cdot \widehat{\bm{1}}, 
\\
	&\widehat{\bz{s_1}} \cdot \widehat{\bz{s_1}} - \varepsilon_{\idd} \cdot \widehat{\bp{1}} \cdot \widehat{\bz{s_1}} - \varepsilon_{\idd^{-1}} \cdot \widehat{\bz{s_1}} \cdot \widehat{\bp{1}} + \varepsilon_1 \cdot \widehat{\bp{1}} \cdot \widehat{\tau_{s_1}} \cdot \widehat{\bp{1}},
\\
	&\widehat{\bz{s_0}} \cdot \widehat{\bm{1}} \cdot \widehat{\tau_{s_0}} - \widehat{\tau_{s_0}} \cdot \widehat{\bm{1}} \cdot \widehat{\bz{s_0}},
\\
	&\widehat{\bz{s_1}} \cdot \widehat{\bp{1}} \cdot \widehat{\tau_{s_1}} - \widehat{\tau_{s_1}} \cdot \widehat{\bp{1}} \cdot \widehat{\bz{s_1}},
\\
	&(\widehat{\tau_{s_1}} + \varepsilon_1) \cdot \widehat{\bp{1}} \cdot \widehat{\bz{s_1}} \cdot \widehat{\bp{1}}
	+
	(\widehat{\tau_{s_0}} + \varepsilon_1) \cdot \widehat{\bm{1}} \cdot \widehat{\bz{s_0}} \cdot \widehat{\bm{1}}.	
\end{aligned}\end{equation}
We claim that this is a set of lifts to
$
	k \left\langle \widehat{\tau_{\omega_0}}, \widehat{\tau_{s_0}}, \widehat{\tau_{s_1}}, \widehat{\bm{1}}, \widehat{\bp{1}}, \widehat{\bz{s_0}}, \widehat{\bz{s_1}} \right\rangle
$
of a set of generators of the kernel of the multiplication map $\funcInline{\mult}{T^\ast_{E^0} E^1}{E^\ast}$ (as a two-sided ideal). Indeed, this list has been obtained in the following way from the list of generators of the $\ker(\mult)$ given in Theorem \ref{thmFinalResultKernel}: the occurrences of $\bm{1}$ have been replaced by $\widehat{\bm{1}}$, the occurrences of $\bp{1}$ have been replaced by $\widehat{\bp{1}}$, the occurrences of $\bz{s_i}$ have been replaced by $\widehat{\bz{s_i}}$ for $i \in \{0,1\}$, the occurrences of $\tau_{s_i}$ have been replaced by $\widehat{\tau_{s_i}}$ for $i \in \{0,1\}$ and the occurrences of $e_\lambda$ have been replaced by $\varepsilon_\lambda$ for $\lambda \in \{ 1, \idd, \idd^{-1} \}$. Furthermore, the occurrences of $\beta^\pm_{s_i}$ (for $i \in \{0,1\}$) have first been replaced according to the following rules (see the formulas in Lemma \ref{lmmDeg1GeneratedByTheFourElements}):
\begin{align*}
	\bp{s_0} &= - \tau_{s_0} \cdot \bm{1},
	&
	\bm{s_1} &= - \tau_{s_1} \cdot \bp{1},
	\\
	\bm{s_0} &= \bm{1} \cdot \tau_{s_0},
	&
	\bp{s_1} &= \bp{1} \cdot \tau_{s_1},
\end{align*}
and then, of course, according to the rules above. Finally, as regards the generator
$
	(\tau_{s_1} + e_1) \cdot \bp{1} \otimes \bz{s_1^{-1}} \otimes \bp{1}
	+
	(\tau_{s_0} + e_1) \cdot \bm{1} \otimes \bz{s_0^{-1}} \otimes \bm{1}
$,
for simplicity we have also deleted the exponent \virgolette{$-1$} (which can be achieved up to multiplying by $\tau_{c_{-1}}$, thus not affecting the fact that we are dealing with a set of generators).
This shows that \eqref{eqList3} is a set of lifts to
$
	k \left\langle \widehat{\tau_{\omega_0}}, \widehat{\tau_{s_0}}, \widehat{\tau_{s_1}}, \widehat{\bm{1}}, \widehat{\bp{1}}, \widehat{\bz{s_0}}, \widehat{\bz{s_1}} \right\rangle
$
of a set of generators of $\ker(\mult)$, as claimed. In the notation of the previous lemma, the list \eqref{eqList3} corresponds to the family $(\dot{a}_i)_{i \in \mathbf{I}}$.

Furthermore, let consider the following list of elements, obtaining from the list in Lemma \ref{lmmPresentationE1} by replacing $\tau_{\omega_0}$ with $\widehat{\tau_{\omega_0}}$, by replacing $\tau_{\omega_0^{-1}}=\tau_{\omega_0}^{p-2}$ with $\widehat{\tau_{\omega_0}}^{p-2}$ by replacing $\tau_{s_i}$ with $\widehat{\tau_{s_i}}$ for $i \in \{0,1\}$ and by replacing $e_\lambda$ by $\varepsilon_\lambda$ for $\lambda \in \{ 1, \idd, \idd^{-1} \}$:
\begin{equation}\label{eqListE1}\begin{aligned}
	&\widehat{\tau_{s_1}} \cdot \widehat{\bm{1}},
	&
	&\qquad\qquad\widehat{\tau_{s_0}} \cdot \widehat{\bp{1}},
	\\
	&\widehat{\bp{1}} \cdot \widehat{\tau_{s_0}},
	&
	&\qquad\qquad\widehat{\bm{1}} \cdot \widehat{\tau_{s_1}},
	\\
	&
	\begin{matrix*}[l]
		(\widehat{\tau_{s_0}} + \varepsilon_1) \cdot \widehat{\bm{1}} \cdot (\widehat{\tau_{s_0}} + \varepsilon_1)
		\\\qquad
		+ 2 \varepsilon_{\idd} \widehat{\bz{s_0}} + \widehat{\tau_{\omega_0}}^{\frac{p-1}{2}} \cdot \widehat{\bp{1}},
	\end{matrix*}
	&
	&\qquad\qquad
	\begin{matrix*}[l]
		(\widehat{\tau_{s_1}} + \varepsilon_1) \cdot \widehat{\bp{1}} \cdot (\widehat{\tau_{s_1}} + \varepsilon_1)
		\\\qquad
		- 2 \varepsilon_{\idd^{-1}} \widehat{\bz{s_1}} + \widehat{\tau_{\omega_0}}^{\frac{p-1}{2}} \cdot \widehat{\bm{1}},
	\end{matrix*}
	\\
	&\widehat{\tau_{s_0}} \cdot \widehat{\bz{s_1}} + \widehat{\bz{s_0}} \cdot \widehat{\tau_{s_1}},
	&
	&\qquad\qquad\widehat{\tau_{s_1}} \cdot \widehat{\bz{s_0}} + \widehat{\bz{s_1}} \cdot \widehat{\tau_{s_0}},
	\\
	&(\widehat{\tau_{s_0}} + \varepsilon_1) \cdot \widehat{\bz{s_0}} + \varepsilon_{\idd} \widehat{\tau_{s_0}} \cdot \widehat{\bm{1}},
	&
	&\qquad\qquad(\widehat{\tau_{s_1}} + \varepsilon_1) \cdot \widehat{\bz{s_1}} - \varepsilon_{\idd^{-1}} \widehat{\tau_{s_1}} \cdot \widehat{\bp{1}},
	\\
	&\widehat{\bz{s_0}} \cdot (\widehat{\tau_{s_0}} + \varepsilon_1) + \varepsilon_{\idd^{-1}} \widehat{\bm{1}} \cdot \widehat{\tau_{s_0}},
	&
	&\qquad\qquad\widehat{\bz{s_1}} \cdot (\widehat{\tau_{s_1}} + \varepsilon_1) - \varepsilon_{\idd} \widehat{\bp{1}} \cdot \widehat{\tau_{s_1}},
	\\
	&\widehat{\tau_{\omega_0}} \cdot \widehat{\bm{1}} - u_{\omega_0}^{-2} \widehat{\bm{1}} \cdot \widehat{\tau_{\omega_0}},
	&
	&\qquad\qquad\widehat{\tau_{\omega_0}} \cdot \widehat{\bp{1}} - u_{\omega_0}^{2} \widehat{\bp{1}} \cdot \widehat{\tau_{\omega_0}},
	\\
	&\widehat{\tau_{\omega_0}} \cdot \widehat{\bz{s_0}} - \widehat{\bz{s_0}} \cdot \widehat{\tau_{\omega_0}}^{p-2},
	&
	&\qquad\qquad\widehat{\tau_{\omega_0}} \cdot \widehat{\bz{s_1}} - \widehat{\bz{s_1}} \cdot \widehat{\tau_{\omega_0}}^{p-2}.
\end{aligned}\end{equation}
In the notation of the previous lemma, this list corresponds to the family $(\dot{b}_k)_{k \in \mathbf{K}}$.

Furthermore, let us recall the list of elements Lemma \ref{lmmPresentationE0} (which we simply rewrite using the notation $\varepsilon_1$):
\begin{equation}\label{eqListE0}\begin{aligned}
	&\widehat{\tau_{\omega_0}}^{p-1} - 1,
	\\
	&\widehat{\tau_{\omega_0}} \cdot \widehat{\tau_{s_0}} - \widehat{\tau_{s_0}} \cdot \widehat{\tau_{\omega_0}}^{p-2},
	&
	&\widehat{\tau_{\omega_0}} \cdot \widehat{\tau_{s_1}} - \widehat{\tau_{s_1}} \cdot \widehat{\tau_{\omega_0}}^{p-2},
	\\
	&\widehat{\tau_{s_0}}^2 + \varepsilon_1 \widehat{\tau_{s_0}},
	&
	&\widehat{\tau_{s_1}}^2 + \varepsilon_1 \widehat{\tau_{s_1}}.
\end{aligned}\end{equation}
In the notation of the previous lemma, this list corresponds to the family $(c_m)_{m \in \mathbf{M}}$.

\begin{thm}\label{thmFinitePreskalg}
Recall the assumptions stated at the beginning of the current \S\ref{sectionFinPres}.
The $\ext$-algebra $E^\ast$ is finitely presented as a $k$-algebra. More precisely, it can be expressed as the quotient of the ring of non-commutative polynomials in seven indeterminates
$
	k \left\langle \widehat{\tau_{\omega_0}}, \widehat{\tau_{s_0}}, \widehat{\tau_{s_1}}, \widehat{\bm{1}}, \widehat{\bp{1}}, \widehat{\bz{s_0}}, \widehat{\bz{s_1}} \right\rangle
$
by the two-sided ideal generated by the elements in the lists \eqref{eqList3}, \eqref{eqListE1} and \eqref{eqListE0}.
\end{thm}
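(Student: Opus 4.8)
The plan is to apply the general assembly statement Lemma~\ref{lmmTecnicoPresentation}, with ground field $F = k$ and $A^\ast = E^\ast$, $A^0 = E^0$, $A^1 = E^1$. All three inputs it requires are already available. First, by Theorem~\ref{thmFinalResultKernel} the multiplication map $\mult \colon T_{E^0}^\ast E^1 \longrightarrow E^\ast$ is surjective and its kernel is generated, as a two-sided ideal, by the finite explicit list recorded there; this provides the family $(a_i)_{i \in \mathbf{I}}$. Second, Lemma~\ref{lmmPresentationE1} gives a finite presentation of $E^1$ as an $E^0 \otimes_k (E^0)^{\opp}$-left-module, with the four free generators $\widehat{\bm{1}}$, $\widehat{\bp{1}}$, $\widehat{\bz{s_0}}$, $\widehat{\bz{s_1}}$ and the relations \eqref{eqBigListLine1}--\eqref{eqBigListLine8}; this provides the family $(b_k)_{k \in \mathbf{K}}$. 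Third, Lemma~\ref{lmmPresentationE0} gives a finite presentation of $E^0$ as a $k$-algebra, with generators $\widehat{\tau_{\omega_0}}$, $\widehat{\tau_{s_0}}$, $\widehat{\tau_{s_1}}$ and relations \eqref{eqSmallList1}--\eqref{eqSmallList3}; this provides the family $(c_m)_{m \in \mathbf{M}}$. The key structural point is that the generator sets of the latter two presentations are disjoint and together are exactly the seven indeterminates appearing in the statement.

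The next step is to write down, inside $k\left\langle \widehat{\tau_{\omega_0}}, \widehat{\tau_{s_0}}, \widehat{\tau_{s_1}}, \widehat{\bm{1}}, \widehat{\bp{1}}, \widehat{\bz{s_0}}, \widehat{\bz{s_1}} \right\rangle$, the lifts $(\dot{a}_i)$, $(\dot{b}_k)$ and the elements $(c_m)$ demanded by Lemma~\ref{lmmTecnicoPresentation}. For $(c_m)$ one keeps the list \eqref{eqSmallList1}--\eqref{eqSmallList3} verbatim, which is \eqref{eqListE0}. For $(\dot{b}_k)$ one lifts the relations of Lemma~\ref{lmmPresentationE1} to the free algebra by the evident substitution --- replacing $\tau_{\omega_0^{-1}}$ by $\widehat{\tau_{\omega_0}}^{p-2}$, each $\tau_{s_i}$ by $\widehat{\tau_{s_i}}$, and each $e_\lambda$ by $\varepsilon_\lambda$ --- which produces the list \eqref{eqListE1}. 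For $(\dot{a}_i)$ one must lift the generators of $\ker(\mult)$ from Theorem~\ref{thmFinalResultKernel}; since some of these involve the elements $\bp{s_0}$, $\bm{s_0}$, $\bm{s_1}$, $\bp{s_1}$, which are not among the chosen bimodule generators, one first rewrites them using the formulas $\bp{s_0} = -\tau_{s_0} \cdot \bm{1}$, $\bm{s_0} = \bm{1} \cdot \tau_{s_0}$, $\bm{s_1} = -\tau_{s_1} \cdot \bp{1}$, $\bp{s_1} = \bp{1} \cdot \tau_{s_1}$ of Lemma~\ref{lmmDeg1GeneratedByTheFourElements}, then applies the same substitution of hatted symbols and $\varepsilon_\lambda$'s, arriving at the list \eqref{eqList3}. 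In the last generator of Theorem~\ref{thmFinalResultKernel} one additionally drops the exponent ``$-1$'' on $s_0^{-1}$ and $s_1^{-1}$: this is harmless because $\bz{s_i^{-1}}$ differs from $\bz{s_i}$ only by right multiplication by the invertible element $\tau_{c_{-1}}$, so the two-sided ideal generated is unchanged. This is precisely the verification carried out in the discussion immediately preceding the statement.

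With these identifications in hand, Lemma~\ref{lmmTecnicoPresentation} (whose surjectivity hypothesis on $\mult \colon T_{E^0}^\ast E^1 \longrightarrow E^\ast$ holds by Theorem~\ref{thmFinalResultKernel}) produces directly the isomorphism of $k$-algebras
\[
	E^\ast \;\cong\; \left. k\left\langle \widehat{\tau_{\omega_0}}, \widehat{\tau_{s_0}}, \widehat{\tau_{s_1}}, \widehat{\bm{1}}, \widehat{\bp{1}}, \widehat{\bz{s_0}}, \widehat{\bz{s_1}} \right\rangle \middle/ J \right. ,
\]
where $J$ is the two-sided ideal generated by the elements of the lists \eqref{eqList3}, \eqref{eqListE1} and \eqref{eqListE0}; since each of those lists is finite, $E^\ast$ is finitely presented. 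The only part of the argument that is not purely formal is the check, in the previous paragraph, that \eqref{eqList3} is genuinely a lift of a two-sided-ideal generating set of $\ker(\mult)$ --- that is, that the substitution of $\bp{s_0}$, $\bm{s_0}$, $\bm{s_1}$, $\bp{s_1}$ and the removal of the inverses are legitimate --- and this is where the main care must go; but it requires no computation beyond Theorem~\ref{thmFinalResultKernel} and Lemma~\ref{lmmDeg1GeneratedByTheFourElements}.
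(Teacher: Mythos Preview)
Your proposal is correct and follows exactly the same approach as the paper: apply Lemma~\ref{lmmTecnicoPresentation} with the three inputs from Theorem~\ref{thmFinalResultKernel}, Lemma~\ref{lmmPresentationE1}, and Lemma~\ref{lmmPresentationE0}, after checking that the lists \eqref{eqList3}, \eqref{eqListE1}, \eqref{eqListE0} are the required lifts (including the rewriting of $\beta^{\pm}_{s_i}$ via Lemma~\ref{lmmDeg1GeneratedByTheFourElements} and the harmless removal of the ``$-1$'' exponents by multiplication with the unit $\tau_{c_{-1}}$). The paper's proof is a one-line reference to Lemma~\ref{lmmTecnicoPresentation} and the discussion preceding the theorem, which is precisely what you have unpacked.
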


\begin{proof}
This follows from Lemma \ref{lmmTecnicoPresentation} and the subsequent discussion (which relied on the presentations obtained in Theorem \ref{thmFinalResultKernel}, Lemma \ref{lmmPresentationE0} and Lemma \ref{lmmPresentationE1}).
\end{proof}

\begin{rem}
The relation
\[(\tau_{s_1} + e_1) \cdot \bp{1} \cdot (\tau_{s_1} + e_1) - 2 e_{\idd^{-1}} \bz{s_1} + \tau_{\omega_0}^{\frac{p-1}{2}} \cdot \bm{1} = 0\]
(which we used in to produce the corresponding element in the list \eqref{eqListE1}) allows us to express $\bm{1}$ in terms of $\bp{1}$, $\bz{s_1}$, $\tau_{s_1}$ and $\tau_{\omega_0}$.
This shows that $E^\ast$ is actually generated by $\tau_{\omega_0}$, $\tau_{s_0}$, $\tau_{s_1}$, $\bp{1}$, $\bz{s_0}$ and $\bz{s_1}$ as a $k$-algebra, without the need to add $\bm{1}$. Using the presentation we obtained in the last theorem, it is then immediate (but cumbersome) to obtain a presentation of $E^\ast$ as a quotient of $k \left\langle \widehat{\tau_{\omega_0}}, \widehat{\tau_{s_0}}, \widehat{\tau_{s_1}}, \widehat{\bp{1}}, \widehat{\bz{s_0}}, \widehat{\bz{s_1}} \right\rangle$ by replacing $\widehat{\bm{1}}$ with
\[ - \widehat{\tau_{\omega_0}}^{\frac{p-1}{2}} \cdot (\widehat{\tau_{s_1}} + \varepsilon_1) \cdot \widehat{\bp{1}} \cdot (\widehat{\tau_{s_1}} + \varepsilon_1) + 2 \widehat{\tau_{\omega_0}}^{\frac{p-1}{2}} \cdot \varepsilon_{\idd^{-1}} \widehat{\bz{s_1}}\]
wherever it appears in the lists \eqref{eqList3}, \eqref{eqListE1} and \eqref{eqListE0}.
\end{rem}

\bibliographystyle{alpha}
\bibliography{biblio}
\addcontentsline{toc}{section}{References}

\begin{flushleft}
\textsc{%
University of British Columbia,\\
Mathematics Department,\\
1984 Mathematics Road,\\
Vancouver, BC V6T 1Z2, Canada
}\\
E-mail address:
\texttt{e.bodon@math.ubc.ca}
\end{flushleft}
\end{document}